\newtheorem{theorem}{Theorem}
\newtheorem{proposition}{Proposition}[section]
\newtheorem{corollary}{Corollary}
\newtheorem{lemma}{Lemma}[section]
\theoremstyle{definition}
\newtheorem{definition}{Definition}[section]
\newtheorem{remark}{Remark}
\theoremstyle{definition}
\newtheorem{assumption}{Assumption}
\numberwithin{equation}{section}
\DeclareMathOperator{\R}{\mathbb{R}}
\DeclareMathOperator{\V}{\mathcal{V}}
\DeclareMathOperator{\F}{\mathcal{F}}
\let\S\relax
\DeclareMathOperator{\S}{\mathcal{S}}
\DeclareMathOperator{\bd}{\partial}
\DeclareMathOperator{\bigO}{\mathcal{O}}
\DeclareMathOperator{\tens}{\otimes}
\newcommand{\red}[1]{\textcolor{red}{#1}}
\newcommand{\RR}{\mathbb{R}}
\newcommand{\eps}{\varepsilon}
\newcommand{\Sweak}{\mathcal{S}_{weak}}
\newcommand{\Nuo}{\mathcal{V}_0}
\newcommand{\Nu}{\mathcal{V}}
\newcommand{\ds}{\displaystyle}
\newcommand{\dt}{\partial_t}
\newcommand{\dx}{\partial_x}
\title{Sharp $a$-contraction estimates for small extremal shocks}
\author[Golding]{William M. Golding}
\address[William M. Golding]{\newline Department of Mathematics, \newline The University of Texas at Austin, Austin, TX 78712, USA}
\email{wgolding@utexas.edu}
\author[Krupa]{Sam G.  Krupa}
\address[Sam G. Krupa]{\newline Max Planck Institute for Mathematics in the Sciences, Inselstr. 22, 04103 Leipzig, Germany}
\email{Sam.Krupa@mis.mpg.de}
\author[Vasseur]{Alexis F. Vasseur}
\address[Alexis F. Vasseur]{\newline Department of Mathematics, \newline The University of Texas at Austin, Austin, TX 78712, USA}
\email{vasseur@math.utexas.edu}
\date{\today}
\subjclass[2010]{35L65, 35L67, 35B35}
\keywords{a-contraction, Compressible Euler system,  Uniqueness, Stability,  Relative entropy, Conservation law}
\thanks{\textbf{Acknowledgment.} 
A. Vasseur is partially supported by the NSF grant: DMS 1614918. W. Golding and S. Krupa were partially supported by  NSF-DMS Grant 1840314.
}
\begin{document}
\maketitle

\begin{abstract}
In this paper, we study the $a$-contraction property of small extremal shocks for 1-d systems of hyperbolic conservation laws endowed with a single convex entropy, when subjected to large perturbations. We show that the weight coefficient $a$ can be chosen with amplitude proportional to  the size of the shock. The main result of this paper is a key building block in the companion paper, [{arXiv:2010.04761}, 2020], in which uniqueness and BV-weak stability results for $2\times 2$ systems of hyperbolic conservation laws are proved. 
\end{abstract}

\tableofcontents

\section{Introduction}

We consider  1-d hyperbolic systems of conservation laws with $n$ unknowns
\begin{align}\label{cl}
u_t+(f(u))_x=0  \qquad t>0, \ x\in \RR,
\end{align}
where $(t,x)\in {\mathbb R}^+\times \mathbb R$ are space and time, and $u=(u_1,\ldots,u_n)\in \Nuo\subseteq \RR^n$ are the unknowns. We assume the set of states $\Nuo$ is convex and bounded, and denote $\Nu$ its interior. Then, the flux function, $f=(f_1,\cdot\cdot\cdot, f_n)\in [C(\Nuo)]^n\cap [C^4(\Nu)]^n$, is assumed to be continuous on $\Nuo$ and $C^4$ on $\Nu$.
We assume that there exists a strictly convex entropy functional $\eta\in C(\Nuo)\cap C^3(\Nu)$ and an associated entropy flux functional $q\in C(\Nuo)\cap C^3(\Nu)$ such that
\begin{equation}\label{ineq:entropy}
q'=\eta'f', \qquad \mathrm{on} \ \ \Nu.
\end{equation}
We consider only entropic solutions to \eqref{cl}. That is, we consider solutions verifying
\begin{align}\label{entropy}
(\eta(u))_t+(q(u))_x\leq 0  \qquad t>0, \ x\in \RR,
\end{align}
in the sense of distributions. Throughout the paper, we use a single entropy $\eta$. Therefore, we do \emph{not} assume that (\ref{entropy}) holds for all entropies.
We describe the precise set of hypotheses on $f$ in Assumption \ref{assum}. The assumptions on $f$ are fairly general.
\vskip.3cm
Before describing the result in detail, we note that all Assumptions \ref{assum} are verified by both the system of isentropic Euler equations  for  $\gamma>1$, and the full Euler system (see for instance \cite{Leger2011}, or \cite{CKV1}). Therefore, the main result of this paper applies in both cases. 
We recall that the isentropic Euler equation is given by
\begin{equation}\label{eq:Euler}
\begin{array}{l}
\rho_t+(\rho v)_x=0, \qquad t>0, x\in \RR,\\
(\rho v)_t+(\rho v^2+\rho^\gamma)_x=0, \qquad t>0, x\in\RR,
\end{array}
\end{equation}
and is endowed with the physical entropy $\eta(u)=\rho v^2/2+\rho^{{\gamma}}/(\gamma-1)$, where $u=(\rho,\rho v)$. For any fixed constant $C>0$, we can define the space of states as the  invariant region:
\begin{equation}\label{InvReg}
\Nuo=\{ u=(\rho, \rho v)\in \RR^+\times\RR: \ \ -C< w_1(u)=v-c_1\rho^\frac{\gamma-1}{2} \leq  w_2(u)=v+c_1\rho^\frac{\gamma-1}{2}<C \}.
\end{equation}
Note that $\Nuo=\Nu \cup \{(0,0)\}$ where $(0,0)$ is the vacuum state, which justifies the precise distinction of $\Nu$ and $\Nuo$  (see  \cite{VASSEUR2008323, Leger2011}).
\vskip0.3cm
In the case of the full Euler equation, we have 
 \begin{equation}\label{eq:fullEuler}
\left\{\begin{array}{l}
\ds{\dt \rho+\dx (\rho u)=0}\\[0.3cm]
\ds{\dt (\rho u )+\dx (\rho u^2+ P)=0}\\[0.3cm]
\ds{\dt (\rho E )+\dx (\rho u E + uP)=0,}
\end{array}\right.
\end{equation}
where $E = \frac{1}{2}u^2 + e$. The equation of state for a polytropic gas is given by 
\begin{equation}\label{polytropic}
P = (\gamma-1) \rho e
\end{equation}
where $\gamma >1$. We consider the entropy/entropy-flux pair
\begin{equation}\label{Eulerconvex}
\eta(\rho, \rho u, \rho E) = (\gamma -1) \rho \ln \rho - \rho \ln e, \qquad G(\rho, \rho u, \rho E) = (\gamma -1) \rho u \ln \rho - \rho u \ln e,
\end{equation}
where, in conservative variables, we have $e = \ds{\frac{\rho E}{\rho} - \frac{(\rho u)^2}{2 \rho^2}}$. We can also define a state domain which includes the vacuum as, for instance,
$$
\Nuo=\{ u=(\rho, \rho v, \rho E)\in \RR^+\times\RR\times \RR: \ \ \rho+|v|+|E|\leq C \}.
$$
In this work, we further restrict our attention to solutions verifying the so-called Strong Trace Property.
\begin{definition}[Strong Trace Property]\label{defi_trace}
Let $u\in L^\infty(\RR^+\times\RR)$. We say that $u$ verifies the strong trace property if for any Lipschitzian curve $t\to h(t)$, there exists two bounded functions $u_-,u_+\in L^\infty(\RR^+)$ such that for any $T>0$
$$
\lim_{n\to\infty}\int_0^T\sup_{y\in(0,1/n)}|u(t,h(t)+y)-u_+(t)|\,dt=\lim_{n\to\infty}\int_0^T\sup_{y\in(-1/n,0)}|u(t,h(t)+y)-u_-(t)|\,dt=0.
$$
\end{definition}
For convenience, we will use the notation $u_+(t)=u(t, h(t)+)$, and $u_-(t)=u(t, h(t)-)$.
We can now precisely define the space of weak solutions  considered in this paper:
\begin{equation}\label{eq:weaksol}
\Sweak=\{u\in L^\infty(\RR^+\times \RR:\Nuo) \ \ \mathrm{weak \ solution \ to } \ \eqref{cl}\eqref{ineq:entropy}, \ \ \text{verifying Definition \ref{defi_trace}}  \}.
\end{equation}
Note that this space has no smallness condition. Since the Strong Trace Property is weaker than $BV$, any $BV$ entropic solution to  \eqref{cl}\eqref{ineq:entropy} belongs to $\Sweak$.
\vskip.3cm
Glimm showed in  \cite{gl} the existence of global in time, small BV, entropic solutions to   \eqref{cl} \eqref{ineq:entropy}. Bressan and Goatin \cite{MR1701818} established uniqueness of these solutions under the Tame Oscillation Condition, improving an earlier theorem of Bressan and LeFloch \cite{MR1489317}.
Separately, uniqueness was also shown when the Tame Oscillation Condition is replaced by the assumption that the trace of solutions along space-like curves has bounded variation, see \cite{MR1757395}. Although our main theorem, Theorem \ref{thm_main}, is stated for $n\times n$ systems, the companion paper \cite{CKV1} provides an important application of it in the case of $2\times 2$ systems. Together, this paper and \cite{CKV1} improve the known uniqueness result for small $BV$ solutions by removing the a priori assumptions mentioned above. More precisely, $BV$ solutions are stable in a larger class of weak solutions, the class $\Sweak$ defined above, providing a $BV$/weak stability result.
\vskip0.3cm
The main result of this paper is a precise estimate on the weight function $a$ needed to obtain a weighted $L^2$-contraction property for small extremal shocks (known as the $a$-contraction property). The estimate on the weight is used crucially in \cite{CKV1}. Dafermos and DiPerna \cite{MR546634,MR523630} showed the weak/strong stability and uniqueness in the case of  Lipschitz solutions by studying the evolution of the quantity:%. In the spirit of the   $L^1$ theory of Kruzkov \cite{zbMATH03341462_kruzkov_translated}, from a single convex entropy $\eta$, it is possible to generate a whole family of entropy indexed by $w\in \Nu_0$ as
$$
\eta(v|w)=\eta(v)-\eta(w)-\nabla\eta(w)\cdot(v-w),
$$
where $w$ is a strong solution, and $v$ is only a weak solution. The quantity $\eta(v|w)$, defined for $(v,w)\in \Nuo\times \Nu$, is called the relative entropy of $v$ with respect to $w$, and is equivalent to $|v-w|^2$. The role of the $a$-contraction  property is to extend the study of the relative entropy to the case where the strong solution $w$ is replaced by a discontinuous traveling wave, a shock, and the weak solution $v$ belongs to $\Sweak$. As we will illustrate below, the study of the relative entropy in this setting is the building block upon which the general theory of weak/BV stability and uniqueness of \cite{CKV1} is built. In the presence of shocks, it is necessary to introduce shifts and weights (the $a$ coefficients, see \cite{serre_vasseur}).
 % In the scalar case, it is well-known that entropy solutions satisfy an $L^1$ contraction estimate due to Kruzkov \cite{zbMATH03341462_kruzkov_translated}. However, the $L^1$ contraction property may fail spectacularly for systems \red{add citations for examples}. The role of the $a$-contraction property is to replace the $L^1$ distance with a suitable pseudo-distance under which the semigroup of entropy solutions is contractive (not quasi-contractive).
 In particular, following  \cite{MR3519973}, for any $u\in\Sweak$, we define a pseudo-distance $E_t(u)$,
\begin{equation}\label{E_defn}
\begin{aligned}
E_t(u)&=a_1\int_{-\infty}^{h(t)} \eta(u(t,x)|u_L)\,dx+a_2\int_{h(t)}^\infty  \eta(u(t,x)|u_R)\,dx, %\quad\eta(v|w)=\eta(v)-\eta(w)-\nabla\eta(w)(v-w)% \text{ for}(v,w)\in \Nuo\times \Nu,
\end{aligned}
\end{equation}
where $t\mapsto h(t)$ is a Lipshitz shift function, $a_1,a_2 > 0$ are weights, $(u_L,u_R,\sigma)$ is a fixed entropic shock with $u_L,u_R\in \Nu$, and $\eta$ is the strictly convex entropy for which $u(t,x)$ satisfies \eqref{entropy}. The weight coefficients $a_1,a_2$ depend only on $f$, $\eta$, and the choice of shock $(u_L,u_R,\sigma)$. However, the shift usually depends not only on $f$, $\eta$, and $(u_L,u_R,\sigma)$, but, additionally, on the particular weak solution $u$ under consideration. Since the relative entropy is equivalent to $|v-w|^2$,
%The shift $h(t)$ may only depend on $f$, $\eta$, and the choice of shock $(u_L,u_R,\sigma)$, not on the particular weak solution $u$ under consideration. The quantity $\eta(v|w)$, defined for $(v,w)\in \Nuo\times \Nu$, is called the relative entropy of $v$ with respect to $w$, and is equivalent to $|v-w|^2$. 
if $S(t,x)$ is the shock wave corresponding to $(u_L,u_R,\sigma)$, then $E_t$ satisfies 
\begin{equation}\label{E_approx}
E_t(u) \approx \int_{\R} a(t,x) |u(t,x ) - S(t,x+h(t))|^2 \ dx,\qquad \text{where }a(t,x) = \begin{cases} a_1 &\text{if }x < h(t)\\ a_2 &\text{if }x > h(t)\end{cases},
\end{equation}
in an appropriate sense. The variation in the weight $a(t,x)$, determined by $a_1$ and $a_2$, measures how far $E_t(\cdot)$ is from the standard $L^2$ distance. The main result of this paper is that for any extremal shock $(u_L,u_R,\sigma)$, there are amplitudes $a_1,a_2$, for which the pseudo-distance $E_t$, defined from $a_1,\ a_2$, and $(u_L,u_R,\sigma)$ via (\ref{E_defn}) satisfies the contraction estimate
\begin{equation}\label{a_cont_estimate}
E_t(u)  \le E_0(u),
\end{equation}
for any $u\in \Sweak$  and an associated  shift $t \mapsto h(t)$. Moreover, we give a quantitative control on the variation in the weight $a(t,x)$. The inequality (\ref{a_cont_estimate}) is known as the $a$-contraction property, with weight function $a$ and shift $h$. The control on the weight $a$ enables the estimate (\ref{a_cont_estimate}) to be related back to and used in the standard small $BV$ framework. More precisely, the main theorem of this paper is:

%%%%%%%%%%%%%%%%%%%%%%%%%%%%
\begin{theorem}\label{thm_main}

Consider a system \eqref{cl}  verifying all the Assumptions \ref{assum}.
Let $d\in \Nu$. Then there exist constants $\alpha_1,\alpha_n, \hat{\lambda}$ and  $C_1, \eps>0$, with   $\alpha_1<\alpha_n$, % and  $\hat{\lambda}\geq 2L$, 
 such that the following is true:\vskip0.1cm 
Consider any 1-shock or n-shock  $(u_L,u_R)$   with $|u_L-d|+|u_R-d|\leq \eps$
 and let  $s_0 $ be the strength of the shock $s_0 =|u_L-u_R|$.
%, and let $i=1$ if it is a 1_shock, and $i=2$ if it is a  
For any $a_1>0,a_2>0$ verifying 
\begin{eqnarray}
1+\frac{C_1s_0 }{2} \leq \frac{a_1}{a_2}\leq  1+2C_1s_0 &&\text{ if } (u_L,u_R) \text{ is a 1-shock}\label{control_a_one}\\
1-2C_1 s_0\leq \frac{a_1}{a_2}\leq 1-\frac{C_1s_0 }{2} && \text{ if } (u_L,u_R) \text{ is an n-shock}\label{control_a_two},
\end{eqnarray}
 and for any $u\in \Sweak$, any $\bar{t}\in[0,\infty)$, and any $x_0\in \RR$,
there exists a Lipschitz shift function  $h\colon[\bar{t},\infty)\to\mathbb{R}$, with $h(\bar{t})=x_0$,  such that the following dissipation functional verifies
\begin{align} \nonumber
&a_2\left[q(u(h(t)+,t);u_R)-\dot{h}(t) \ \eta(u(h(t)+,t)|u_R)\right]\\ \label{diss:shock}
&\qquad\qquad  -a_1\left[q(u(h(t)-,t);u_L)-\dot{h}(t) \ \eta(u(h(t)-,t)|u_L)\right] \leq 0,
\end{align}
for almost all  $t\in[\bar{t},\infty)$. % $1/2<y<2$.
%\vskip0.2cm
Moreover, if $(u_L,u_R)$ is a 1-shock, then for almost all  $t\in[\bar{t},\infty)$,  %any $v\in\mathbb{R}^2$ such that $\abs{v-a}\leq \epsilon$, 
\begin{align}\label{control_one_shock1020}
-\frac{\hat{\lambda}}{2}\leq \dot{h}(t) \leq \alpha_1<\inf_{v\in B_{2\eps}(d)}\lambda_2(v).
\end{align}
%\begin{align}
%h_1(t)\leq x_0 +\lambda_2(v)(t-\bar{t}).
%\end{align}
Similarly, if $(u_L,u_R)$ is a n-shock,  then for almost all  $t\in[\bar{t},\infty)$, %then for any $v\in\mathbb{R}^2$ such that $\abs{v-a}\leq \epsilon$, 
\begin{align}\label{control_two_shock1020}
\sup_{v\in B_{2\eps}(d)}\lambda_{n-1}(v)< \alpha_n\leq \dot{h}(t) \leq\frac{\hat{\lambda}}{2}.
\end{align}

%\textbf{Note that in \eqref{control_one_shock1020} and \eqref{control_two_shock1020} we had to change $B_{\eps}(d)$ (in front tracking+shifts paper) to $B_{2\eps}(d)$ to account for the the fact that $u_-$ can be $\frac{1}{C}$ away from $u_L\in B_{\eps}(d)$}
%\begin{align}
%h_2(t)\geq x_0 +\lambda_1(v)(t-\bar{t}).
%\end{align}

%\vskip0.2cm

%Furthermore, for a fixed function $u\in \Sweak$, if $h_1$ is the shift function associated to a 1-shock with $x_0^1$, and $h_2$ is the shift function associated to a 2-shock with $x_0^2$, verifying $x_0^1\leq x^2_0$, then 
%\begin{align}
%h_1(t)\leq h_2(t) \qquad \text{for all } t\geq \bar{t},
%\end{align}
%and if $h_1(t)= h_2(t)$ for some $t\in[\bar{t},T)$, and $h_1$ and $h_2$ are both differentiable at $t$, then
%\begin{align}
%\dot{h}_1(t) < \dot{h}_2(t).
%\end{align}

\end{theorem}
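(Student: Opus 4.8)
The plan is to reduce \eqref{diss:shock} to a pointwise dissipation inequality at the interface and then to construct $h$ by the generalized-characteristic / Filippov scheme of the relative entropy method for systems (cf.\ \cite{Leger2011}, and the two-sided weighted version in \cite{serre_vasseur, MR3519973}). Write $q(v;w)=q(v)-q(w)-\eta'(w)\cdot(f(v)-f(w))$ for the relative entropy flux, and for $(u_-,u_+)\in\Nuo\times\Nuo$ and $V\in\RR$ put
\[
F(u_-,u_+,V)=a_2\bigl[q(u_+;u_R)-V\,\eta(u_+|u_R)\bigr]-a_1\bigl[q(u_-;u_L)-V\,\eta(u_-|u_L)\bigr],
\]
which is affine in $V$, has slope $a_1\eta(u_-|u_L)-a_2\eta(u_+|u_R)$, and vanishes at $(u_L,u_R,\sigma)$. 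Since $u\in\Sweak$ verifies the Strong Trace Property, the one-sided traces along any Lipschitz curve are well defined and \eqref{diss:shock} reads $F(u(h(t)-,t),u(h(t)+,t),\dot h(t))\le0$ for a.e.\ $t$. It therefore suffices to exhibit a Borel velocity $(u_-,u_+)\mapsto V_*(u_-,u_+)$, defined on the interface configurations that can actually occur along a Lipschitz curve — pairs with $u_-=u_+$, and $\eta$-entropic Rankine--Hugoniot pairs — taking values in $[-\hat\lambda/2,\alpha_1]$ for a $1$-shock (resp.\ $[\alpha_n,\hat\lambda/2]$ for an $n$-shock) and satisfying $F(u_-,u_+,V_*(u_-,u_+))\le0$; the velocity bound then forces the resulting $h$ to overtake, rather than ride, any discontinuity of $u$ of speed exceeding $\alpha_1$, so only these admissible configurations are seen along $h$ and \eqref{control_one_shock1020}, \eqref{control_two_shock1020} hold automatically.

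Fix the constants first. By strict hyperbolicity (Assumption \ref{assum}) and continuity one may choose $\eps>0$ with $\sup_{B_{2\eps}(d)}\lambda_1<\inf_{B_{2\eps}(d)}\lambda_2$, take $\alpha_1$ strictly between these, and let $\hat\lambda$ bound all characteristic speeds on $\Nuo$, so that $-\hat\lambda/2$ lies below every relevant speed; a $1$-shock with $|u_L-d|+|u_R-d|\le\eps$ then has speed within $\bigO(\eps)$ of $\lambda_1(d)$, hence $\sigma<\alpha_1$ once $\eps$ is small. The field $V_*$ is built so that it is close to $\sigma$ near the shock, pinned near $-\hat\lambda/2$ for states far from $d$, and equal to the jump speed on Rankine--Hugoniot pairs. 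The content is the verification $F\le0$, which divides into a local and a global part.

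Locally, parametrize the $1$-Hugoniot locus of $d$ by its strength and expand $F(u_-,u_+,\sigma)$ to second order in $(u_--u_L,u_+-u_R)$; by the entropy identity $q''-\eta'\cdot f''=\eta''f'$ the second-order part is a quadratic form governed by $a_2\,\eta''(u_R)(f'(u_R)-\sigma I)$ and $a_1\,\eta''(u_L)(f'(u_L)-\sigma I)$, and, once the slope term is absorbed, the window \eqref{control_a_one} for $a_1/a_2$ is exactly the range of ratios making this form $\le0$; expanding in $s_0$ identifies $C_1$ through $\eta''$ and $\sigma'(0)$ (equivalently the curvature of the Hugoniot locus). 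Globally there are two regimes. For a continuous configuration $u_-=u_+=v$ with $v\in B_{2\eps}(d)$ one takes $V_*=\lambda_1(v)<\alpha_1$; then $q(v;u_R)-\lambda_1(v)\eta(v|u_R)$ and $q(v;u_L)-\lambda_1(v)\eta(v|u_L)$ differ only at lower order in $s_0$ and are, up to lower order, nonnegative, so $a_1>a_2$ forces $F\le0$ — this is exactly where extremality (the gap $\lambda_1<\lambda_2$) and not merely hyperbolicity is used, since for a non-extremal shock the speed falls strictly between two characteristic families and no weight asymmetry can compensate both sides. For $v$ far from $d$ the relative-entropy terms dominate and are comparable ($s_0$ being small), and again $a_1>a_2$ gives $F(v,v,-\hat\lambda/2)\le0$. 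For an $\eta$-entropic Rankine--Hugoniot pair one combines the jump inequality $q(u_+)-q(u_-)\le s(\eta(u_+)-\eta(u_-))$ with the sign of $v\mapsto q(v;u_L)-s\,\eta(v|u_L)$ and $v\mapsto q(v;u_R)-s\,\eta(v|u_R)$, controlled on all of $\Nuo$ for an extremal shock by the geometry of the Hugoniot locus (Liu admissibility and monotonicity of the entropy dissipation along the locus); here I would quote the relevant estimates from \cite{MR3519973, serre_vasseur}. The $n$-shock case is symmetric, with $\lambda_{n-1}<\lambda_n$ replacing $\lambda_1<\lambda_2$, and yields \eqref{control_a_two}, \eqref{control_two_shock1020}.

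I expect the main obstacle to be the sharpness of the velocity control — getting $\dot h\le\alpha_1<\inf_{B_{2\eps}(d)}\lambda_2$ rather than merely a bound by the largest characteristic speed (as in \cite{MR3519973}): one must show the constructed generalized characteristic never has to ride a discontinuity of $u$ faster than $\alpha_1$ — any such discontinuity belongs to a non-first family, so $h$ overtakes it and it contributes only on a null set of times — while keeping $F\le0$ along every admissible interface motion, and it is in reconciling these two demands that the extremal structure and the precise weight window \eqref{control_a_one} are needed together. The remaining work is bookkeeping, namely checking that $\alpha_1,\alpha_n,\hat\lambda,C_1,\eps$ can be chosen depending only on $d$; this is immediate since all the expansions above are uniform on $B_{2\eps}(d)$ as $s_0\to0$.
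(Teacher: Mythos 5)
Your skeleton is sound — reduce \eqref{diss:shock} to a pointwise dissipation inequality at the interface, build $h$ by the Filippov machinery with an upper-semicontinuous velocity, split into a ``both traces continuous'' case and a Rankine--Hugoniot case, and obtain the speed bounds directly from the containment property of the Filippov flow. That is indeed the architecture of the paper's Section 4. But the actual content of the proof is \emph{not} the Filippov construction; it is the two quantitative dissipation estimates that make the casework close, and those are where the proposal breaks down.

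First, your sign claim for the continuous case is wrong. You assert that $q(v;u_R)-\lambda_1(v)\eta(v|u_R)$ and $q(v;u_L)-\lambda_1(v)\eta(v|u_L)$ are ``up to lower order, nonnegative,'' and deduce $F\le0$ from $a_1>a_2$. In fact both are negative of order $s_0^3$ at the relevant boundary points of $\Pi$ (e.g.\ at $v=u_L$, $q(u_L;u_R)-\lambda_1(u_L)\eta(u_L|u_R)=\bigl[q(u_L;u_R)-\sigma\eta(u_L|u_R)\bigr]-(\lambda_1(u_L)-\sigma)\eta(u_L|u_R)$, and the second term is $\sim s_0^3$ while the first is $\lesssim s_0^3$, so the difference is negative). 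What is actually needed is the uniform cubic bound $D_{cont}(v)\le -Ks_0^3$ on all of $\Pi_{C,s_0}$ (the paper's Proposition~\ref{prop_cont}), and that is a three-step argument — characterize and locate the maximizer of $D_{cont}$ on $\Pi$, evaluate $D_{cont}$ along the shock curve $\S^1_{u_L}$, and compare. A ``second-order expansion plus sign of the quadratic form'' does not produce it, because the set $\Pi$ has diameter $\sim C^{-1}\gg s_0$ and states in $\Pi$ are not small perturbations of $u_L,u_R$.

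Second, for the Rankine--Hugoniot case you defer to \cite{MR3519973,serre_vasseur}. That is circular: those references prove the $a$-contraction without quantitative control of $a_1/a_2$ in terms of $s_0$. The whole point of the theorem is the window \eqref{control_a_one}--\eqref{control_a_two}, and the result that $D_{RH}(u_-,u_+,\sigma_\pm)\le0$ for \emph{every} admissible $1$-shock with left state in $\Pi_{C,s_0}$ (Proposition~\ref{prop_shock}) is new and is proved in Section~\ref{secWill2} by a delicate decomposition of $\Pi$ at two length scales and an ODE argument that precludes spurious critical points of $D_{max}$. Quoting the prior literature leaves exactly the gap the paper is meant to fill.

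Third, you omit the step needed when the Filippov flow sees traces with \emph{both} $u_\pm$ outside $\Pi$: one must know that along the direction of $\nabla\tilde\eta$, the flux quantity $\tilde q$ grows no faster than a fixed multiple $C^*$ of $\tilde\eta$ (the paper's Lemma~\ref{lem:ext}), so that penalizing the velocity by $-(C^*+2L)$ outside $\Pi$ restores the correct sign. Without this, your ``pinned near $-\hat\lambda/2$ for states far from $d$'' velocity has no reason to produce $F\le0$ there. In short: the proposal gets the Filippov scaffolding right but neither states nor proves (nor correctly cites) the sharp dissipation estimates $D_{cont}\lesssim -s_0^3$ on $\Pi$ and $D_{RH}\le0$ for admissible shocks with left state in $\Pi$, which are the substance of the theorem and of the paper.
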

%%%%%%%%%%%%%%%%%%%%%%%%%%%%%%%%%%%
This theorem implies the following $a$-contraction property of small extremal shocks:
\begin{corollary}\label{cor}
Under the same assumptions and notations as Theorem \ref{thm_main}, there exists a constant $C>0$ such that for any $1$-shock or $n$-shock $(u_L,u_R)$ small enough, there are amplitudes $a_1,a_2 > 0$ with $\left|\frac{a_1}{a_2} - 1\right| \le C|u_L-u_R|$, verifying the following: For any weak solution $u\in \Sweak$, there exists a Lipschitz shift $t\mapsto h(t)$ 
such that the pseudo-distance,
$$
E_t(u) = a_1\int_{-\infty}^{h(t)} \eta(u(t,x)|u_L)\,dx+a_2\int_{h(t)}^\infty  \eta(u(t,x)|u_R)\,dx,
$$
is contractive. More precisely, $E$ is a non-increasing function of time: For every $t>0$,
$$
E_t(u) \leq E_0(u).
$$
\end{corollary}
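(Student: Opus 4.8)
The plan is to read the corollary off Theorem~\ref{thm_main} combined with the standard relative-entropy identity for the pseudo-distance $E_t$ along a Lipschitz shift; no estimate beyond Theorem~\ref{thm_main} is required. \emph{Choice of weights and shift.} Fix a shock $(u_L,u_R)$ with $|u_L-d|+|u_R-d|\le\eps$ and set $s_0=|u_L-u_R|$. Theorem~\ref{thm_main} provides constants depending only on $f,\eta,d$ — among them $C_1$ — and permits \emph{any} $a_1,a_2>0$ satisfying \eqref{control_a_one} for a $1$-shock or \eqref{control_a_two} for an $n$-shock; I would fix one such admissible pair. Either constraint forces $\left|\frac{a_1}{a_2}-1\right|\le 2C_1 s_0 = 2C_1|u_L-u_R|$, so the constant asserted in the corollary is $C:=2C_1$. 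Given $u\in\Sweak$, I would then apply Theorem~\ref{thm_main} with $\bar t=0$ and, say, $x_0=0$, obtaining a Lipschitz shift $h\colon[0,\infty)\to\RR$ along which the interface dissipation functional \eqref{diss:shock} is $\le 0$ for a.e.\ $t$ (the speed bounds \eqref{control_one_shock1020}--\eqref{control_two_shock1020} are also furnished, but are not needed for this corollary).

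\emph{Relative-entropy bound.} We may assume $E_0(u)<\infty$, else there is nothing to prove. Here I would invoke the by-now-classical relative-entropy computation along a shift (cf.\ \cite{Leger2011,serre_vasseur,MR3519973}, and in the present generality \cite{CKV1}): since $u$ is a weak solution of \eqref{cl} satisfying the entropy inequality \eqref{entropy}, for each constant state $u_\star\in\{u_L,u_R\}$ one has $\partial_t\eta(u|u_\star)+\partial_x q(u;u_\star)\le 0$ in the sense of distributions (the extra terms created in replacing $\eta$ by $\eta(\cdot|u_\star)$ are affine in $\partial_t u+\partial_x f(u)$ and hence vanish). Integrating this over the moving regions $\{x<h(t)\}$ and $\{x>h(t)\}$ — reading off the traces $u(h(t)\pm,t)$ from the Strong Trace Property of Definition~\ref{defi_trace}, and using finite speed of propagation (available since $E_0(u)<\infty$) to kill the relative-entropy fluxes at $x=\pm\infty$ — yields, for all $0\le t_1\le t_2$,
\begin{align*}
E_{t_2}(u)-E_{t_1}(u)\le\int_{t_1}^{t_2}\Big(&a_2\big[q(u(h(s)+,s);u_R)-\dot h(s)\,\eta(u(h(s)+,s)|u_R)\big]\\
&{}-a_1\big[q(u(h(s)-,s);u_L)-\dot h(s)\,\eta(u(h(s)-,s)|u_L)\big]\Big)\,ds.
\end{align*}

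\emph{Conclusion.} The integrand on the right-hand side is precisely the left-hand side of \eqref{diss:shock}, which Theorem~\ref{thm_main} guarantees to be $\le 0$ for a.e.\ $s$. Therefore $E_{t_2}(u)\le E_{t_1}(u)$ whenever $0\le t_1\le t_2$; in particular $E_t(u)\le E_0(u)$ for every $t>0$ and $t\mapsto E_t(u)$ is non-increasing.

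The main obstacle lies entirely in the relative-entropy step: making the moving-boundary integration rigorous for a merely bounded solution $u\in\Sweak$ (as opposed to $BV$ or Lipschitz), using only the strong-trace information of Definition~\ref{defi_trace}, and controlling the fluxes at spatial infinity. This is, however, exactly the technical apparatus already developed in \cite{Leger2011,MR3519973,CKV1}, so for the corollary it may be cited rather than reproved; all the genuinely new content is contained in Theorem~\ref{thm_main}, which supplies the favorable sign of the interface dissipation along a suitably constructed shift.
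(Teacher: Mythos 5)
Your proposal is correct and follows essentially the same route as the paper: the paper's Section 2 sketches exactly this argument, recording the key Gronwall-type inequality $\frac{dE(t)}{dt}\le \dot h(t)\bigl(a_1\eta(u_-|u_L)-a_2\eta(u_+|u_R)\bigr)-a_1q(u_-;u_L)+a_2q(u_+;u_R)$ (the right side being identical to the left side of \eqref{diss:shock}), and then devotes the remainder of the paper to establishing Theorem~\ref{thm_main}, which supplies the nonpositivity of that quantity. Your choice $C=2C_1$, read directly off the constraints \eqref{control_a_one}--\eqref{control_a_two}, and your citation of the standard strong-trace/relative-entropy machinery for the integration-over-moving-domains step are the same moves the authors make.
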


\begin{comment}
By the existence of a convex entropy $\eta$, we can define an associated pseudo-distance for any $(a,b)\in \Nuo\times \Nu$:
\begin{align}
\eta(a|b)=\eta(a)-\eta(b)-\nabla\eta(b)(a-b).
\end{align}
The quantity  $\eta(a|b)$ is called the relative entropy of $a$ with respect to $b$, and is equivalent to $|a-b|^2$.  
We also define the relative entropy-flux: For $(a,b)\in \Nuo\times \Nu$,
\begin{align}\label{def_q}
q(a;b)=q(a)-q(b)-\nabla\eta(b)\cdot(f(a)-f(b)).
\end{align}
Our main result is the following.
\end{comment}

As stated above, this provides an $L^2$ stability result up to the shift $h$, since we have the following straightforward lemma due to the convexity of $\eta$ and Taylor's theorem.
%\textbf{(copied verbatim from \cite{move_entire_solution_system}....)}
\begin{lemma}[From \cite{Leger2011,VASSEUR2008323}]\label{l2_rel_entropy_lemma}
 For any fixed compact set $V\subset \mathcal{V}$, there exists $c^*,c^{**}>0$ such that for all $(u,v)\in \Nuo\times V$,
\begin{align}
c^*|u-v|^2\leq \eta(u|v)\leq c^{**}|u-v|^2.
\end{align}
The constants $c^*,c^{**}$ depend on bounds on the second derivative of $\eta$ in $V$, and on the continuity of $\eta$ on $\Nuo$. 
\end{lemma}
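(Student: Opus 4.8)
The plan is to prove both inequalities by splitting according to whether $u$ lies close to or far from the compact set $V$, reducing the near regime to a Taylor expansion and the far regime to a compactness argument; recall that $\eta(u|v)=\eta(u)-\eta(v)-\nabla\eta(v)\cdot(u-v)$. Since $V$ is compact and contained in the open set $\Nu$, I would first fix $\delta>0$ so small that the closed $\delta$-neighborhood $K:=\{w\in\RR^n:\operatorname{dist}(w,V)\le\delta\}$ is a compact subset of $\Nu$. On $K$ the Hessian $\nabla^2\eta$ is continuous and positive definite, so its eigenvalues lie between two constants $0<\mu_\ast\le\mu^\ast$ that depend only on $\eta|_K$, i.e.\ only on $V$: $\mu_\ast|z|^2\le z^\top\nabla^2\eta(w)z\le\mu^\ast|z|^2$ for all $w\in K$ and $z\in\RR^n$. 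If $|u-v|\le\delta$ then $u\in\Nu$ and the segment $[u,v]$ lies in $K$; applying Taylor's theorem with Lagrange remainder along this segment gives $\eta(u|v)=\tfrac12(u-v)^\top\nabla^2\eta(\xi)(u-v)$ for some $\xi\in[u,v]\subset K$, hence $\tfrac{\mu_\ast}{2}|u-v|^2\le\eta(u|v)\le\tfrac{\mu^\ast}{2}|u-v|^2$.

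It then remains to treat the regime $|u-v|>\delta$, where, $\Nuo$ being bounded, one also has $|u-v|\le M:=\operatorname{diam}(\Nuo)<\infty$; so it suffices to bound $\eta(u|v)$ above and below by positive constants uniformly over $\{(u,v)\in\Nuo\times V:|u-v|\ge\delta\}$. The upper bound follows directly from the formula: $\eta$ is continuous, hence bounded, on $\Nuo$, while $\eta$ and $\nabla\eta$ are bounded on the compact $V\subset\Nu$ and $|u-v|\le M$, so $\eta(u|v)\le C_0$ and thus $\eta(u|v)\le C_0\le(C_0/\delta^2)|u-v|^2$. For the lower bound I would use that $(u,v)\mapsto\eta(u|v)$ is continuous on $\Nuo\times V$ and, by strict convexity of $\eta$ together with $v\in\Nu$, is strictly positive whenever $u\ne v$ — the affine map tangent to $\eta$ at the interior point $v$ stays strictly below the strictly convex restriction of $\eta$ to the segment joining $v$ to $u$, which lies in $\Nu$ except possibly at the endpoint $u$ — and then invoke compactness of $\{|u-v|\ge\delta\}$ (extending $\eta$ continuously to $\overline{\Nuo}$ if $\Nuo$ is not closed, which preserves convexity and the positivity) to obtain $m_0:=\inf\{\eta(u|v):|u-v|\ge\delta\}>0$, so that $\eta(u|v)\ge m_0\ge(m_0/M^2)|u-v|^2$. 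Combining the two regimes, $c^\ast=\min\{\mu_\ast/2,\,m_0/M^2\}$ and $c^{\ast\ast}=\max\{\mu^\ast/2,\,C_0/\delta^2\}$ work.

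The small-scale step is routine — multivariate Taylor plus continuity of $\nabla^2\eta$ on a compact subset of $\Nu$ — and is exactly where the dependence on bounds for the second derivative of $\eta$ near $V$ enters. The main obstacle, and the reason the constants are declared also to depend on the continuity of $\eta$ on all of $\Nuo$, is the large-scale lower bound: near $\partial\Nu$ (for instance at the vacuum) $\eta$ need not be twice differentiable, so no quadratic expansion is available and one must extract a uniform positive lower bound for $\eta(u|v)$ purely from strict convexity and compactness — which in turn requires $\eta$ to be genuinely continuous (and bounded) up to the boundary of the state domain, as is the case for the state domains chosen above for the isentropic and full Euler systems.
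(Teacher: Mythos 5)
The paper does not prove this lemma; it is imported from \cite{Leger2011,VASSEUR2008323}, and the authors later point to \cite[Appendix~A]{Leger2011} for the argument. Your near/far decomposition with a Taylor expansion on a compact $\delta$-neighborhood $K\subset\Nu$ of $V$ is exactly the right strategy and the near-field computation is correct. However, the far-field part has a genuine gap: both the upper bound (``$\eta$ is continuous, hence bounded, on $\Nuo$'') and the lower bound (``invoke compactness of $\{|u-v|\ge\delta\}$'') tacitly use compactness of $\Nuo$, but $\Nuo$ is only assumed \emph{bounded}, not closed --- indeed for isentropic Euler $\Nuo=\Nu\cup\{(0,0)\}$ is not closed. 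Your fix of ``extending $\eta$ continuously to $\overline{\Nuo}$'' is not automatic from $\eta\in C(\Nuo)$ alone; continuity on a non-closed set does not give uniform continuity, and so the extension need not exist. (It does exist for the Euler entropies, and I believe that ``bounded on $\Nuo$'' is the hidden content of the phrase ``depend... on the continuity of $\eta$ on $\Nuo$,'' but a proof in the abstract setting should flag this as an assumption rather than deduce it.)

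For the far-field \emph{lower} bound there is a cleaner argument that dispenses with compactness entirely and plugs directly into your near-field estimate. Fix $v\in V$, $u\in\Nuo$ with $|u-v|>\delta$, and set $e=(u-v)/|u-v|$. The map $t\mapsto\varphi(t):=\eta(v+te\,|\,v)=\eta(v+te)-\eta(v)-t\,\nabla\eta(v)\cdot e$ is convex on $[0,|u-v|]$, with $\varphi(0)=0$ and $\varphi'(0^+)=0$; a nonnegative convex function with vanishing derivative at its left endpoint is nondecreasing, so
\begin{equation*}
\eta(u|v)=\varphi(|u-v|)\ \ge\ \varphi(\delta)=\eta\bigl(v+\delta e\,\big|\,v\bigr)\ \ge\ \tfrac{\mu_*}{2}\delta^2,
\end{equation*}
using that $v+\delta e\in K$ and your near-field bound. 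Dividing by $\operatorname{diam}(\Nuo)^2$ gives $c^*$ directly. For the far-field \emph{upper} bound, the quantity you call $C_0$ really does require $\sup_{\Nuo}\eta<\infty$; you should either add ``$\eta$ bounded on $\Nuo$'' to the running hypotheses (it holds in all the examples to which the paper applies the lemma) or state it as the content of the dependence on ``the continuity of $\eta$ on $\Nuo$.'' With those two adjustments the proof is complete and matches the spirit of the source.
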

%This elementary  lemma follows directly from Taylor's theorem  (see \cite{Leger2011,VASSEUR2008323}).
\vskip0.3cm
The relative entropy $\eta(u | v)$ and the corresponding relative entropy method were first introduced by Dafermos \cite{MR546634} to show the weak/strong stability of Lipschitz solutions to \eqref{cl}\eqref{ineq:entropy}.   %It was then used by DiPerna in  \cite{MR523630} to show the uniqueness of shocks (see also Chen Frid \cite{MR1911734} for the uniqueness of the Riemann problem of the Euler equation). 
The strength of this method stems from the fact that if $u$ is a weak solution of \eqref{cl}, \eqref{ineq:entropy}, then $u$ verifies a full family of entropy inequalities. Corresponding to the relative entropy $\eta(a | b)$, we define $q(a ; b)$, the relative entropy flux via:
\begin{align}\label{def_q}
q(a;b)=q(a)-q(b)-\nabla\eta(b)\cdot(f(a)-f(b)).
\end{align}
Then, for any $b\in \Nu$ constant, each $\eta(\cdot | b)$ is an entropy for \eqref{cl} and each $u\in\Sweak$ satisfies
\begin{equation}\label{ineq:relative}
(\eta(u|b))_t+(q(u;b))_x\leq 0,
\end{equation}
in the sense of distributions.
Similar to the Kruzkov theory for scalar conservation laws  \cite{zbMATH03341462_kruzkov_translated}, \eqref{ineq:relative} provides a full family of entropies measuring the distance of the solution to any fixed values $b$ in $\Nu$. The main difference is that the distance is equivalent to the square of the $L^2$ norm rather than the $L^1$ norm. As in the Kruzkov theory, \eqref{ineq:relative} directly implies the stability of constant solutions (by integrating \eqref{ineq:relative} in $x$). Modulating \eqref{ineq:relative} with a smooth function $(t,x)\mapsto b(t,x) $ provides the well-known weak-strong uniqueness result. 
\vskip0.3cm 
However, when considering discontinuous functions $b$ with shock fronts, the situation diverges significantly from that of Kruzkov's, because the $L^2$ norm is not as well suited as the $L^1$ norm for the study of stability of shocks. Nevertheless, the method was used by DiPerna \cite{MR523630} to show the uniqueness of single shocks (see also Chen, Frid, and Li, \cite{MR1911734} for the Riemann problem of the Euler equation). 
\vskip.3cm
In \cite{VASSEUR2008323}, it was proposed that the relative entropy method could be used to obtain stability of discontinuous solutions. The guiding principle of this program is that the $L^2$ distance captures the stability of shock profiles quite well, although only up to a shift, which is more sensitive to $L^2$ perturbations \cite{Leger2011}. Leger, in \cite{Leger2011_original}, showed that in the scalar case, shock profiles satisfy the $a$-contraction property with $a_1 = a_2$; the contraction property holds in $L^2$ up to the shift $t\mapsto h(t)$.
% reminiscent to the $L^1$ contraction of the Kruzkov theory.
 However, it was shown in \cite{serre_vasseur} that the $L^2$ contraction property is usually false for systems, necessitating the introduction of the weights $a_1$ and $a_2$.
%However, it can be recovered by weighting the relative entropy \cite{MR3519973} as in Corollary \ref{cor}.
In \cite{MR3519973}, it is shown that for a large class of systems, introducing the weights $a_1$ and $a_2$ does indeed recover the $L^2$ contraction property, in the form of the $a$-contraction estimate (\ref{a_cont_estimate}). However, \cite{MR3519973} does not show any precise control over the weights $a_1$ and $a_2$.
This is the main improvement of Theorem \ref{thm_main} over the existing theory, which is crucial for relating the results in the $a$-contraction theory, i.e. Corollary \ref{cor}, back to the general small $BV$ stability theory.
%Note that the definition of the functional $E$ with  $a_1=a_2$, and $h(t)=st$ would imply the contraction property of the shock for the relative entropy. But the result, to be valid, needs the weights $a_i$, and the shifts $h$, giving the name to the method: a-contraction with shifts.
\vskip0.3cm
In particular, the next step in the program of $BV$ stability is showing small $BV$ solutions are stable (and unique) in the larger $\Sweak$. A BV/weak stability result of this type is shown for $2\times 2$ systems in the companion paper \cite{CKV1} using Theorem \ref{thm_main}, and hence most of the $a$-contraction theory, as a black box. %For general $n\times n$ systems,  Glimm showed in  \cite{gl} the existence of global in time small BV entropic solutions to   \eqref{cl} \eqref{ineq:entropy}. Uniqueness of these solutions   was established by Bressan and Goatin \cite{MR1701818} under the Tame Oscillation Condition. It improved an earlier theorem by Bressan and LeFloch \cite{MR1489317}. Uniqueness was also known to  prevail when the Tame Oscillation Condition is replaced by the assumption that the trace of solutions along space-like curves has bounded variation, see Bressan and Lewicka \cite{MR1757395}. Since any $BV$ solutions belongs to $\Sweak$, the result of \cite{CKV1} (together with Theorem \ref{thm_main}) shows that in the case of 2 unknowns, these a priori assumptions are not needed to obtain the uniqueness result.
As written above, since any $BV$ solution belongs to $\Sweak$, the result of \cite{CKV1} (together with Theorem \ref{thm_main}) shows that in the case of 2 unknowns, both the Tame Oscillation Condition and the condition of bounded variation along space-like curves are unnecessary assumptions in the uniqueness theory for small $BV$ solutions.
\vskip0.3cm
Let us sketch the method used in \cite{CKV1}.
Consider a sequence $u_n\in \Sweak$ such that the initial values $u^0_n\in L^\infty$ converge in $L^2$ to a small BV function $u^0_{BV}$. 
To make the leap from a single shock to the general small $BV$ solution $u_{BV}$ with initial value $u^0_{BV}$, in \cite{CKV1}, one replaces $u_{BV}$ with a piecewise constant function $v_n$ %approximation $u_{BV}^\eps$ of $u_{BV}$, 
generated from an approximation of the initial value $u^0_{BV}$. The approximation $v_n$ is obtained using the front tracking algorithm \cite{MR1492096, Bbook}, but with artificial shifts on the fronts. We apply Theorem \ref{thm_main} to the finitely many discontinuities present in $v_n$ to obtain the contraction inequality \eqref{a_cont_estimate} when one modifies the definition of the pseudo-distance from (\ref{E_defn}) to read
\begin{equation}\label{E2}
E_t^n(u) = \int_{\R} a_n(t,x) \eta(u_n(t,x) | v_n(t,x)) \ dx,
\end{equation}
where $a_n$ is now piecewise constant and $v_n$ is defined from the shift functions $t \mapsto h_{n,j}(t)$, each associated to a singularity $j$. 
Due to the presence of the shifts $h_{n,j}$, the function $v_n$ cannot be directly compared to $u_{BV}$: In fact, $v_n$ is no longer even a solution to \eqref{cl}. Instead, it is shown that $v_n$ remains small in $BV$ and uniformly verifies the bounded variation along space-like curves condition, introduced in  \cite{MR1757395}. By controlling the weight function $a_n$ uniformly by above and by below, the $a$-contraction estimate associated (\ref{E2}) implies that since both  $u_n^0$ (initial value of the solution $u_n\in \Sweak$) and $v^0_n$ converge in $L^2$ to $u^0_{BV}$, for every $t>0$, $u_n(t)$ also converges in $L^2$ to $v(t)$, the limit of the associated $v_n(t)$. %Note that since the $v_n$ depend, via the shifts, on $u_n$, the function $v$ depends both on the weak solution $u$. 
The function $v$ is a solution to (\ref{cl}) (since each $u_n$ solves (\ref{cl})), is small $BV$, and satisfies the bounded variation along space-like curves condition (since each $v_n$ does). By the uniqueness theorem of \cite{MR1757395} for small $BV$ solutions, we conclude $u=v=u_{BV}$. %This provides also the uniqueness by fixing $u=u_n$ for all $n$, in case $u^0=u_BV^0$.
%$$
%\mathcal{L}=\{u\in L^\infty(\R^+;BV(\R)), \text{with small} BV {variation}, \text{verifying the bounded oscillations condition  along space-like curves}\}.
%$$
\vskip0.1cm
To rigorously justify this argument, two important properties have to be verified at the level of the $a$-contraction. The first concerns control by above and below of the ``composite weight function,'' $a_n(t,x)$. Since weights $a_1,a_2$ are generated for each singularity in $v_n$, to control the weight function when the number of singularities grows and $n$ tends to infinity, it is crucial to show that the variation of the weight can be chosen proportionally to the size of the shock $u_L,u_R$. This is the main difficulty tackled in this article in
\eqref{control_a_one} and \eqref{control_a_two}.
\vskip0.1cm
The second important property concerns the functions $v_n$: We must verify $v_n$ stay small in $BV$ and uniformly verify the  bounded variation along space-like curves condition.
This is ensured by the front tracking method, provided that all the artificial shifts generated by the $a$-contraction method maintain the separation of wave families.
%This introduces two important difficulties. The first concerns the ``composite shift,'' $t\mapsto h^\eps(t)$.
 The method generates one artificial shift $h(t)$ for each singularity in $v_n$, depending heavily on the structure of $v_n$ itself. Therefore, one must prevent the situation in which an artificial shift %generated at a single singularity
  allows a 1-shock to collide from the left with a 2-shock. Such a situation would cause the whole process to collapse. Following \cite{a_contraction_riemann_problem}, this problem is solved by \eqref{control_one_shock1020} and \eqref{control_two_shock1020}.
\vskip0.3cm
In a parallel program, a similar method is developed with the goal of proving the Bianchini-Bressan conjecture, namely  the convergence from Navier-Stokes to Euler for small $BV$ initial datum \cite{Bbook}. The aim is to obtain a $BV$-weak stability result, where the space $\Sweak$ is replaced by the set of weak inviscid limits of solutions to Navier-Stokes equation.  The case of a single shock (analogous to Theorem \ref{thm_main} when working with $\Sweak$) was proved in \cite{2019arXiv190201792K}. Although this program is similar in spirit to that of the present paper, the proofs in \cite{2019arXiv190201792K} differ greatly from those in this paper.

\section{Proof Outline}

The main idea behind the proof of the $a$-contraction property, as stated in Corollary \ref{cor}, is integrating \eqref{ineq:relative} for $b=u_L$ on $x<h(t)$, and  for $b=u_R$ on $x>h(t)$. The strong trace property of $u$ guarantees that for almost every $t>0$, 
\begin{equation}\label{ineqE}
\frac{dE(t)}{dt} \leq h'(t) (a_1 \eta(u_-(t)|u_L)-a_2\eta(u_+(t)|u_R))-a_1q(u_-(t);u_L)+a_2q(u_+(t);u_R).
\end{equation}
Without loss of generality, we consider the case where $u_L,u_R$ is a $1$-shock.
We define a subset of  the state space $\Nu$  as 
 \begin{equation}\label{defPi}
 \Pi:=\left\{v\in \V \ | \   \frac{a_1}{a_2}  \eta(v|u_L) - \eta(v|u_R)<0 \right\} .
\end{equation} 
If the weak solution $u$ does not have any discontinuity at $h(t)$ and $u(t,h(t)) =v\in \partial \Pi$, we have at this point $a_1 \eta(v|u_L)-a_2  \eta(v|u_R)=0 $, and so
the coefficient in front of $h'(t)$ in \eqref{ineqE} vanishes. In this case,  the value of the right hand side of \eqref{ineqE} is not changed if we replace $h'(t)$ by
$\lambda_1(v)$, the first eigenvalue of $f'(v)$. In order for $a$-contraction \eqref{ineqE} to remain valid in all cases, we need that for every $v\in \partial \Pi$:
\begin{equation}\label{ineqD}
D_{cont}(v):= [q(v;u_R)-\lambda_1(v)\eta(v|u_R)]-\frac{a_1}{a_2}  [q(v;u_L)-\lambda_1(v)\eta(v|u_L)]\leq 0.
\end{equation}
Note that this inequality, together with the set $\Pi$ depend only on the system \eqref{cl}, not on any actual solutions. %This is a condition that has to be valid for the $a$-contraction to be valid for any weak solutions $u$.
If $a_1=a_2$, $\partial \Pi$ is the hyperplane of all states $v$ equidistant from $u_L$ and $u_R$. For $n\geq 2$, this is a large set, and for many systems, \eqref{ineqD} is not verified for all $v\in \partial\Pi$ (see \cite{serre_vasseur}). However, under reasonable assumptions, it can be shown that \eqref{ineqD} is valid 
for $a_2/a_1$ small enough, that is, for states $v$ close enough to $u_L$. Note that  $\Pi$ can be rewritten as 
$
 \Pi=\left\{v\in \V \ | \   \eta(v|u_L) < (a_2/a_1) \eta(v|u_R) \right\} .
 $
Therefore, for small values of $a_2/a_1$, $\Pi$ is actually a small neighborhood of $u_L$, and \eqref{ineqD} holds on $\partial\Pi$ 
%This shows that the  property becomes  true for $a_1/a_2$ small enough since $\Pi$ becomes a small neighborhood of $u_L$ in this asymptotic
 (see \cite{MR3537479,MR3519973}). %This is the main idea behind the $a$-contraction theory: We can obtain a contraction by weighting the relative entropy. 
\vskip0.3cm
%Note that this argument is too crude to show that $a_1/a_2-1$ can be chosen to the order of the side of the shock $s_0=|u_l-u_R|$. 
Our first proposition is a significant improvement of this argument for small shocks. It shows that \eqref{ineqD}  holds true  in all $\Pi$ with  $a_1/a_2 \approx 1+C_1s_0$, for a big enough fixed constant $C_1$, and any small enough strength $s_0$ of the  shock $u_L,u_R$. To describe this proposition, let us first fix some notation. For any $C>0, s_0>0$, consider shocks $u_L, u_R$ with strength $s_0$.  We then define  
\begin{equation}\label{defn_eta}
\tilde\eta(u) := (1 + Cs_0)\eta(u | u_L) - \eta(u | u_R) \qquad \text{and}\qquad \tilde q(u) := (1+Cs_0)q(u ; u_L) - q(u ; u_R).
\end{equation}
With this notation $D_{cont}$ and $\Pi_{C,s_0}$ have particularly simple forms, namely, 
\begin{equation}\label{defn_Dcont}
D_{cont}(u) = -\tilde q(u) + \lambda_1(u)\tilde\eta(u) \qquad \text{and} \qquad  \Pi=\Pi_{C,s_0} = \{u \ | \ \tilde\eta(u) < 0\}.
\end{equation}
With this language in hand, we state our first important proposition.
\begin{proposition}\label{prop_cont}
Consider a fixed system and a state set $\Nu$ verifying Assumptions \ref{assum}, and $d\in \Nu$. 
There is a  universal constant $K$ depending only on the system and $d$, such that for any $C_1>0$ large enough, there exists  $\tilde{s}_0(C_1)>0$,   such that for any $C>0$ with $C_1/2\leq C\leq 2C_1$, for any  1-shock $u_L,u_R$  with $|u_L-d|+|u_R-d|\leq \tilde s_0$, and $|u_L-u_R|=s_0$ with 
$0 < s_0 < \tilde s_0$ and any $u\in \Pi_{C,s_0}$, we have 
\begin{equation}
D_{cont}(u) \le -Ks_0^3.
\end{equation}
\end{proposition}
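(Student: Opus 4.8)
The plan is to perform a third-order Taylor expansion of $D_{cont}(u)$ around the shock, exploiting the structure of the set $\Pi_{C,s_0}$ to control where $u$ can live. First I would set up coordinates: fix the base point $d$, write everything relative to a reference state (say $u_L$, or better the midpoint), and record the standard expansions of the relative entropy and relative entropy flux. Recall $\eta(u|v) = \tfrac12 (u-v)^\top \eta''(v)(u-v) + \bigO(|u-v|^3)$ and, using $q' = \eta' f'$, a parallel expansion for $q(u;v)$ in which the leading quadratic term is $\tfrac12 (u-v)^\top \eta''(v) f'(v) (u-v)$, which is symmetric. The quantity $\tilde\eta(u) = (1+Cs_0)\eta(u|u_L) - \eta(u|u_R)$ is the object whose negativity defines $\Pi$; expanding it and using the Rankine–Hugoniot relation $f(u_R) - f(u_L) = \sigma(u_R - u_L)$ together with the entropy condition for the shock will show that, to leading order, $\tilde\eta(u) < 0$ forces $u$ to lie in a region whose diameter is $\bigO(s_0)$ — essentially a thin slab/neighborhood around the shock line, whose thickness in the ``transverse'' directions is controlled by the $Cs_0$ perturbation and whose extent along the shock curve is $\bigO(s_0)$. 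The key point is that the size of $C$ (chosen large but fixed) buys a definite transversal margin of order $s_0$, so every $u\in\Pi_{C,s_0}$ satisfies $|u - u_L| \lesssim s_0$ and $|u-u_R|\lesssim s_0$.

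Next I would expand $D_{cont}(u) = -\tilde q(u) + \lambda_1(u)\tilde\eta(u)$ on this region. Since $|u - u_L|, |u-u_R| = \bigO(s_0)$ on $\Pi$, and since $\tilde\eta, \tilde q$ are themselves differences weighted so that the $\bigO(1)$ pieces cancel (the $\eta(\cdot|u_L)$ and $\eta(\cdot|u_R)$ terms agree to leading order because $|u_L - u_R| = s_0$), one expects $\tilde\eta(u), \tilde q(u) = \bigO(s_0^2)$ and hence $D_{cont}(u) = \bigO(s_0^2)$ a priori — but the claim is a strict sign with a gain to $\bigO(s_0^3)$. So the real work is to show the $\bigO(s_0^2)$ terms cancel exactly (or have the right sign) and to extract a strictly negative $\bigO(s_0^3)$ coefficient. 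I would write $\lambda_1(u) = \lambda_1(u_L) + \bigO(s_0)$ and compare $-\tilde q(u) + \lambda_1(u)\tilde\eta(u)$ with $-\tilde q(u) + \lambda_1(u_L)\tilde\eta(u)$; the difference is $(\lambda_1(u)-\lambda_1(u_L))\tilde\eta(u) = \bigO(s_0)\cdot\bigO(s_0^2) = \bigO(s_0^3)$, so it suffices to analyze $-\tilde q(u) + \lambda_1(u_L)\tilde\eta(u)$. Now $-q(u;v) + \lambda_1(u_L)\eta(u|v)$ has, at quadratic order in $(u-v)$, the Hessian $\tfrac12(u-v)^\top \eta''(v)(\lambda_1(u_L) I - f'(v))(u-v)$; for the extremal ($1$-)shock, $\lambda_1$ is the smallest eigenvalue, so $\lambda_1(u_L) I - f'(v)$ is negative semidefinite (to leading order) with a one-dimensional kernel along the first eigenvector $r_1$. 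This is the mechanism that produces the correct sign: the quadratic form is $\le 0$, and on $\partial\Pi$ the only near-degenerate direction is $r_1$, where the cubic terms — governed by genuine nonlinearity of the first field — provide the strictly negative $\bigO(s_0^3)$ term.

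The main obstacle, and where I would spend most of the effort, is the delicate bookkeeping near $\partial\Pi$ in the degenerate direction $r_1$: one must show that for $u\in\Pi_{C,s_0}$, the component of $u - u_L$ along $r_1$ is of size $\bigO(s_0)$ (not smaller), so that the cubic correction is genuinely of order $s_0^3$ and not lost, while simultaneously showing the transverse components are small enough that the negative-definite quadratic form dominates any cross terms. This requires a careful parametrization of $\Pi_{C,s_0}$: using the Hugoniot curve through $u_L$ as one coordinate axis and the $(n-1)$ transverse directions, expand $\tilde\eta$ to see that $\{\tilde\eta < 0\}$ is, to leading order, $\{|w_\perp|^2 \lesssim s_0\,(\text{something linear in }w_\parallel) + Cs_0\cdot(\dots)\}$, an ellipsoid-like region. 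Then substitute this description into the cubic expansion of $D_{cont}$ and verify that the worst case — $u$ near the ``far end'' of $\Pi$ along $r_1$ — still yields $D_{cont}(u) \le -Ks_0^3$ with $K$ depending only on the lower bound for the genuine nonlinearity constant $\nabla\lambda_1\cdot r_1$ and on fixed bounds for $\eta'', f''$, and crucially not on $C$ (once $C_1$ is large enough and $s_0 < \tilde s_0(C_1)$). The uniformity in $C \in [C_1/2, 2C_1]$ and the fact that enlarging $C$ only shrinks $\Pi$ (hence only helps the estimate) should then follow by monotonicity, completing the proof.
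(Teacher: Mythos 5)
Your proposal hinges on the claim that $\tilde\eta(u)<0$ forces $|u-u_L|\lesssim s_0$, so that $\Pi_{C,s_0}$ is an $\bigO(s_0)$-neighborhood of the shock and a cubic Taylor expansion about $u_L$ (or $u_R$) suffices. This is not true, and it is the central obstacle the paper's Section~\ref{secWill1} is built to overcome. As shown in Lemma~\ref{lem_eta}, one has $\mathrm{diam}(\Pi_{C,s_0})\sim C^{-1}$, with both the upper \emph{and} lower bound: the defining inequality $Cs_0|u-u_L|^2 \lesssim s_0\bigl(|u-u_L|+s_0\bigr)$ only pins $u$ down to distance $\bigO(C^{-1})$ from $u_L$, and a line segment of length $\gtrsim C^{-1}$ in the $r_1(u_L)$ direction is actually contained in $\Pi$. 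Since $C$ is large but \emph{fixed} (independent of $s_0$), $C^{-1}\gg s_0$ for $s_0$ small, so $\Pi$ is not localized to scale $s_0$ at all. Consequently your a priori bounds $\tilde\eta(u),\tilde q(u)=\bigO(s_0^2)$ and $D_{cont}(u)=\bigO(s_0^2)$ ``on $\Pi$'' do not hold; they hold only near the shock curve, not on the bulk of $\Pi$. The entire later bookkeeping (the parametrization of $\Pi$ as an ellipsoid of extent $\bigO(s_0)$, the dominance of the quadratic term in the transverse directions, etc.) is then built on a false premise and does not close.

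The paper circumvents this precisely because it cannot localize. Instead of expanding directly on $\Pi$, it (i) shows every critical point of $D_{cont}$ lies on $\bd\Pi$ and characterizes the unique maximizer $u^*\in\bd\Pi$ by $\nu(u^*)\parallel l^1(u^*)$ with $r_1(u^*)$ outward, using the genuine nonlinearity and the curvature estimate $|\nu(u)-\nu(w)|\sim C|u-w|$ of Lemma~\ref{lem_pi} to get uniqueness; (ii) computes the dissipation explicitly at the intersection $u_0$ of the shock curve with $\bd\Pi$, obtaining $D_{cont}(u_0)\lesssim -s_0^3$; and (iii) proves the quantitative proximity $|u^*-u_0|\lesssim s_0/C$ together with the Hessian bound $|\nabla^2 D_{cont}(u^*)|\lesssim Cs_0$ and $\|\nabla^3 D_{cont}\|_{L^\infty(\Pi)}\lesssim Cs_0$, so that the error in passing from $u_0$ to $u^*$ is $\bigO(s_0^3/C)$ and does not spoil the sign. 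That last step is where the choice of $C$ large but fixed actually pays off — not in shrinking $\Pi$ to size $s_0$, but in shrinking the distance between the extremizer and the shock curve and controlling higher derivatives. You would need to replace your localization claim with something along these lines before the rest of your expansion can make sense.
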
 
This proposition proves (\ref{ineqD}) in all of $\Pi$ and provides a sharp bound on the dissipation. These two properties will be needed to show the second proposition. In our study of \eqref{ineqE}, we have considered only the case where the solution $u$ has no discontinuity at $x=h(t)$. However, we frequently have situations where $x=h(t)$ corresponds locally to a Rankine-Hugoniot discontinuity curve of $u$. At such a time $t$,  $(u(t,h(t)-), u(t,h(t)+), \dot{h}(t))$ corresponds to a shock $(u_-,u_+,\sigma_\pm)$ of the system. The second proposition ensures that \eqref{ineqD} remains valid in this situation.
For any such shock $(u_-,u_+,\sigma_\pm)$ of the system with $u_-,u_+\in \Nu$, we define
\begin{equation}\label{defn_DRH}
D_{RH}(u_-,u_+,\sigma_\pm)= [q(u_+;u_R)-\sigma_\pm\eta(u_+|u_R)]-(1+Cs_0)  [q(u_-;u_L)-\sigma_\pm \eta(u_-|u_L)].
\end{equation}
We actually only  need to control this quantity for  1-shocks  (and n-shocks) such that $u_-\in \Pi_{C,s_0}$ (see section \ref{secSam}). In the other situations, the sign on the dissipation is enforced by the choice of $\dot{h}(t)$. 
We state our second main proposition.
\begin{proposition}\label{prop_shock}
Consider a fixed system and a state set $\Nu$ verifying Assumptions \ref{assum}, and $d\in \Nu$. For any  $C_1>0$ large enough, there exists $\tilde{s}_0(C_1)>0 $, such that 
for any $C>0$ with $C_1/2<C<C_1$,  for any  1-shock $u_L,u_R$  with $|u_L-d|+|u_R-d|\leq \tilde s_0$, and $|u_L-u_R|=s_0$ with  $0 < s_0 < \tilde s_0$, and any 1-shock $(u_-,u_+,\sigma_\pm)$, with  $u_- \in \Pi_{C,s_0}$ and $u_+\in \Nu$, we have
\begin{equation}
D_{RH}(u_-,u_+,\sigma_\pm) \le 0.
\end{equation}
\end{proposition}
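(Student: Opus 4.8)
The plan is to reduce the full Rankine–Hugoniot dissipation $D_{RH}(u_-,u_+,\sigma_\pm)$ to the already-controlled quantity $D_{cont}(u_-)$ from Proposition \ref{prop_cont}, picking up only lower-order errors in the shock strength. First I would fix $u_-\in\Pi_{C,s_0}$ and treat $D_{RH}$ as a function of the right state $u_+$ along the 1-shock (Hugoniot) curve through $u_-$, parametrized by its strength $\tau = |u_+-u_-|\ge 0$, with speed $\sigma_\pm = \sigma_\pm(\tau)$ continuous and satisfying $\sigma_\pm(0)=\lambda_1(u_-)$. The key observation is that at $\tau=0$ one has $u_+=u_-$ and $\sigma_\pm=\lambda_1(u_-)$, so $D_{RH}(u_-,u_-,\lambda_1(u_-)) = D_{cont}(u_-) \le -Ks_0^3$ by Proposition \ref{prop_cont} (here I use that $C_1/2<C<C_1$ falls inside the range $C_1/2\le C\le 2C_1$ allowed in Proposition \ref{prop_cont}, perhaps after shrinking $\tilde s_0$). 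So it suffices to show that moving along the Hugoniot curve away from $u_-$ cannot increase $D_{RH}$ past $0$.

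Next I would split into two regimes according to $\tau$. For \emph{small} $\tau$ (say $\tau \le \delta$ for a fixed $\delta$ depending only on the system and $d$), I expand $D_{RH}$ in $\tau$ around $\tau=0$. Writing $D_{RH}(\tau) = D_{cont}(u_-) + \partial_\tau D_{RH}(0)\,\tau + O(\tau^2)$, the point is to bound $\partial_\tau D_{RH}(0)$ and the remainder uniformly. Using the Rankine–Hugoniot relation $f(u_+)-f(u_-) = \sigma_\pm(u_+-u_-)$ and the entropy inequality for the $(u_-,u_+,\sigma_\pm)$ shock (which bounds the entropy dissipation and hence controls $\sigma_\pm'(\tau)$ and the direction $\partial_\tau u_+(0)$, the first right-eigenvector $r_1(u_-)$), one computes that $\partial_\tau D_{RH}(0)$ is $O(s_0)$: indeed $q(u_+;u_R) - \sigma_\pm \eta(u_+|u_R)$ and $(1+Cs_0)[q(u_-;u_L)-\sigma_\pm\eta(u_-|u_L)]$ each differ from their $\tau=0$ values by quantities whose $\tau$-derivatives are controlled by the distance of $u_-$ to both $u_L$ and $u_R$, all of which are $O(s_0)$. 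So $D_{RH}(\tau) \le -Ks_0^3 + O(s_0)\tau + O(\tau^2)$ on $\tau\le\delta$; choosing $\delta$ small makes the $O(\tau^2)$ term absorbable, and then for $\tau \le K s_0^3/(2M s_0) = \tilde c\, s_0^2$ (with $M$ the constant in the $O(s_0)$ bound) the right side is $\le 0$. For the range $\tilde c\, s_0^2 \le \tau \le \delta$, I would instead need a direct sign argument — this is where the cubic gain in Proposition \ref{prop_cont} alone is not enough and one must use genuinely that $(u_-,u_+,\sigma_\pm)$ is an \emph{entropic} 1-shock (Liu/Lax admissibility), which forces $\sigma_\pm < \sigma_\pm(0) = \lambda_1(u_-)$ and makes the coefficient $\eta(u_+|u_R) - (1+Cs_0)\eta(u_-|u_L)$ enter with a favorable sign, since $u_-\in\Pi_{C,s_0}$ means roughly $(1+Cs_0)\eta(u_-|u_L) < \eta(u_-|u_R)$. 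For \emph{large} $\tau$ (bounded below, $u_+$ ranging over a compact subset of $\Nu$ away from $u_-$), $D_{RH}$ is, up to $O(s_0)$ corrections, equal to the dissipation rate of an entropic shock relative to the single state $d$, which is strictly negative and bounded away from $0$ by a constant depending only on the system and $d$; taking $\tilde s_0$ small makes the $O(s_0)$ perturbation harmless.

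The main obstacle I anticipate is the intermediate regime $\tilde c\, s_0^2 \lesssim \tau \lesssim \delta$: here neither the Taylor expansion around $u_-$ (the error $O(\tau^2)$ can dominate the $-Ks_0^3$ gain) nor the compactness argument for large shocks applies directly, so one genuinely needs a quantitative version of the classical Lax-shock entropy dissipation estimate — namely that along the 1-Hugoniot curve the entropy dissipation $[q(u_+;u_-) - \sigma_\pm\eta(u_+|u_-)]$ is $\sim -\tau^3$ and, crucially, that the \emph{relative} quantities to $u_L$ and $u_R$ inherit the right sign because $u_-\in\partial\Pi$-ish and the shock families are aligned. Managing the uniformity of all these constants in $C\in(C_1/2,C_1)$ and $s_0\to 0$ simultaneously, and choosing $\tilde s_0(C_1)$ after $C_1$, is the bookkeeping that makes the proof delicate; I would structure it by first proving the estimate with $d$ in place of $u_L,u_R$ and then perturbing. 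I would likely also invoke Lemma \ref{l2_rel_entropy_lemma} repeatedly to pass between $\eta(\cdot|\cdot)$ and squared distances when tracking these error terms.
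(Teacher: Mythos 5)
The central difficulty here, which your proposal does not resolve, is that the bound $D_{RH}\le 0$ is \emph{tight}: for $u_-=u_L$ and $u_+=u_R$ one has $D_{RH}(u_L,u_R,\sigma_{LR})=0$ exactly, with $\tau=|u_+-u_-|=s_0$. So the cushion $D_{cont}(u_-)\lesssim -s_0^3$ from Proposition \ref{prop_cont}, which is the starting point of your Taylor expansion, is fully consumed as $\tau$ increases from $0$ to $s_0$. Concretely, by the Lax identity (Lemma \ref{lem_lax}) one finds
\begin{equation*}
\frac{d}{d\tau}D_{RH}\big(u_-,\S^1_{u_-}(\tau),\sigma^1_{u_-}(\tau)\big)=\dot\sigma^1_{u_-}(\tau)\bigl[\tilde\eta(u_-)+\eta(u_-\,|\,\S^1_{u_-}(\tau))\bigr],
\end{equation*}
which is \emph{positive} for small $\tau>0$ whenever $u_-$ is interior to $\Pi_{C,s_0}$ (since $\dot\sigma<0$ and $\tilde\eta(u_-)<0$), and the total increase is of size $|\tilde\eta(u_-)|\sim s_0^2$ for $u_-$ near $u_L$ --- two orders larger than the $-Ks_0^3$ reserve. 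So the intermediate range $s_0^2\lesssim\tau\lesssim s_0$, which you flag as the hard part, is not merely technically awkward: it is exactly where the estimate is an equality, and no perturbative or sign-gain argument of the form you sketch can close it. Your heuristic that ``$u_-\in\Pi$ makes the $\sigma_\pm$-coefficient favorable'' is also backwards: the sign that can be exploited (as in the paper's Lemma \ref{lem20}) comes from $\tilde\eta(u_+)\le 0$, i.e.\ $u_+\in\Pi$, combined with Liu admissibility $\sigma_\pm>\lambda^1(u_+)$; this is a hypothesis on $u_+$, not on $u_-$, and it fails precisely for $u_-$ near $\bd\Pi$.

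The paper's proof takes a fundamentally different route. The derivative formula above shows that for each fixed $u_-$, the map $\tau\mapsto D_{RH}(u_-,\S^1_{u_-}(\tau),\sigma^1_{u_-}(\tau))$ has a \emph{unique} interior maximum at the $s^*$ characterized by $\eta(u_-\,|\,\S^1_{u_-}(s^*))=-\tilde\eta(u_-)$ (the paper's Lemma \ref{lem15}, using Assumption (j)). This gives a single maximal shock $(u_-,u^+,\sigma_\pm)$ for each $u_-$, with $|u^+-u_-|\lesssim C^{-1/2}s_0^{1/2}$, and reduces the statement to $D_{max}(u_-):=D_{RH}(u_-,u^+,\sigma_\pm)\le 0$ on $\Pi_{C,s_0}$. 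This also dissolves your ``large $\tau$'' regime, since all competitors beyond $s^*$ are dominated automatically. The remaining and genuinely delicate work --- Propositions \ref{prop_Qs0}--\ref{prop_R+} and the nested decomposition of $\Pi$ into $R^{bd},R^0,R^\pm$ --- shows that $u_L$ is a strict local maximum of $D_{max}$ with value exactly $0$ and that there are no other critical points, using the explicit gradient formula of Lemma \ref{lem22} and an improved $D_{cont}$ bound away from the extremal point $u^*$. None of this structure appears in your proposal, and it cannot be replaced by a Taylor-plus-compactness argument because the conclusion is saturated rather than merely small.
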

 If we consider a 1-shock with  $u_+$ converging to $u_-$, the shock $(u_-,u_+,\sigma_\pm)$ converges to $(u,u,\lambda_1(u))$ with $u=u_-$, and so 
 $D_{RH}(u_-,u_+,\sigma_\pm)$ converges to $D_{RH}(u,u,\lambda(u))=D_{cont}(u)$. This justifies the definition of $D_{cont}$. It shows also that the continuous case is included in the Proposition \ref{prop_shock}. However, the stronger estimate contained in Proposition \ref{prop_cont} is necessary to prove Proposition \ref{prop_shock} in the shock case. 
\vskip0.3cm

The rest of the paper is organized as follows: The abstract assumptions on the system together with the preliminaries are presented in Section \ref{secAssum}. Then, Theorem \ref{thm_main} is proved in Section \ref{secSam} assuming Proposition \ref{prop_cont} and Proposition \ref{prop_shock}. Proposition \ref{prop_cont} is proved in Section \ref{secWill1}. Finally, Proposition \ref{prop_shock} is proved in Section  \ref{secWill2}.
\vskip0.3cm

We attempt to adhere to the following notation conventions in the sequel, which will be especially important in Sections \ref{secWill1} and \ref{secWill2}:
\begin{itemize}
\item Throughout, $C$ denotes the specific constant used in the definition of $\Pi_{C,s_0}$ in (\ref{defn_eta}). Constants denoted $K$ (often with descriptive subscripts or superscripts) will denote universal constants in the sense that they depend only on the system (\ref{cl}) and in particular, not on $C$ or $s_0$. 
\item We will make heavy use of the notation $a \lesssim b$, which we take to mean there exists $K$ depending only the system, that is, on $f$, $\eta$, and $q$, such that $a \le Kb$. We also write $a\sim b$ to denote $a\lesssim b$ and $b\lesssim a$.
\item We often write expressions of the form $a(C,s_0) \lesssim b(C,s_0)$, for sufficiently large $C$ and sufficiently small $s_0$. This should be read as there exists a $K_1 = K_1(f,\eta,q) > 0$ and $\tilde C = \tilde C(f,\eta,q) > 0$ sufficiently large and $\tilde s = \tilde s(f,\eta,q,\tilde C)$ sufficiently small such that for $\tilde C < C < 2\tilde C$ and $\tilde s > s_0 > 0$, $a(C,s_0) \le K_1 b(C,s_0)$. 
\item We also use the notation $a = b + \bigO(g)$ to mean $|a - b| \lesssim g$, where $a,b$ might be tensor-valued. This notation is frequently used in conjunction with $a\lesssim b$.
\item For $A$ a $k_1$-tensor and $B$ a $k_2$-tensor, we define $A\tens B$ as the $k_1 + k_2$ tensor, $a_{i_1\cdots i_{k_1}}b_{j_1\cdots,j_{k_2}}$. When working with tensor expressions, we also adopt the Einstein summation convention that we sum over any repeated indices.
\item We use both $g^\prime$ and $\nabla g$ to denote derivatives of functions. We attempt to use $g^\prime$ for $g$ a vector or matrix-valued function and $\nabla g$ for $g$ a scalar-valued function.
\item Finally, $|\cdot|$ is used indiscriminately to denote finite dimensional norms on vectors, matrices, and higher tensors.
\end{itemize}

\section{Assumptions and preliminaries}\label{secAssum}

We first list the abstract assumptions needed on the system (\ref{cl}). Note that the system is defined entirely by the flux function $f$.

\begin{assumption}\label{assum}
We assume that the flux function has the following regularity on the bounded, convex set of states $\Nu_0$:  $f=(f_1,\cdot\cdot\cdot, f_n)\in [C(\Nuo)]^n\cap [C^4(\Nu)]^n$, where $\Nu$ is the interior of $\Nu_0$. In addition, we assume the following:
\begin{itemize}
\item[(a)] For any $u\in \Nu$, $f'(u)$ is a diagonalizable matrix with eigenvalues verifying   $\lambda_1(u)<\lambda_2(u)$ and $\lambda_{n-1}(u)<\lambda_n(u)$.
For $i=1,n$ we denote $r_i(u)$ a unit eigenvector associated to the eigenvalue $\lambda_i(u)$.
\vskip0.1cm
\item[(b)]  For any $u\in\Nu$, and $i=1,n$, we assume $\lambda'_i(u)\cdot r_i(u)\neq0$.
\vskip0.1cm
%Both characteristics families of the system \eqref{cl} are genuinely nonlinear  in $\Nu$ in the sense of Lax \cite{MR0093653}.
\item[(c)] There exists a strictly convex  function $\eta\in C(\Nuo)\cap C^3(\Nu)$ and a function $q\in C(\Nuo)\cap C^3(\Nu)$ such that
\begin{equation*}\label{eq:entropy}
q'=\eta'f' \qquad \mathrm{on} \ \ \Nu.
\end{equation*}
\vskip0.1cm
\item[(d)] For $u_L\in \Nu$, we denote $s\to  S^1_{u_L}(s)$ the $1$-shock  curve through $u_L$ defined for $s>0$. We choose the parametrization such that $s=|u_L-S^1_{u_L}(s)|$. Therefore, $(u_L, S^1_{u_L}(s), \sigma^1_{u_L}(s))$ is the $1$-shock with left hand  state $u_L$ and strength $s$. Similarly, we define $s\to S^n_{u_R}$ to be the $n$-shock curve such that $(S^n_{u_R}, u_R, \sigma^n_{u_R}(s))$ is the $n$-shock with right hand state $u_R$ and strength $s$. We assume that these curves are defined globally in $\Nu$ for every $u_L\in \Nu$ and $u_R\in \Nu$.

%\item[(d)] For any $ b\in \Nu$, and any left eigenvector  $\ell$ of $ f'(b)$:  the function $u\to \ell\cdot f(u)$ is either convex or concave on $\Nu$.
\item[(e)] There exists $L>0$ such that $|\lambda_i(u)|\leq L$ for any $u\in \Nu$ and $i=1,n$.
\item[(f)] (for 1-shocks) If $(u_L,u_R)$ is an entropic Rankine-Hugoniot discontinuity with shock speed $\sigma$, then $\sigma>\lambda_1(u_R).$
\item[(g)] (for 1-shocks) If $(u_L,u_R)$ (with $u_L\in B_{\epsilon}(d)$) is an entropic Rankine-Hugoniot discontinuity with shock speed $\sigma$ verifying,
\begin{align*}
\sigma\leq \lambda_1(u_L),
\end{align*}
then $u_R$ is in the image of $S^1_{u_L}$. That is, there exists $s_{u_R}\in[0,s_{u_L})$ such that $S^1_{u_L}(s_{u_R})=u_R$ (and hence $\sigma=\sigma^1_{u_L}(s_{u_R})$).
\item[(h)] (for n-shocks) If $(u_L,u_R)$ is an entropic Rankine-Hugoniot discontinuity with shock speed $\sigma$, then $\sigma<\lambda_n(u_L).$
\item[(i)] (for n-shocks) If $(u_L,u_R)$ (with $u_R\in B_{\epsilon}(d)$) is an entropic Rankine-Hugoniot discontinuity with shock speed $\sigma$ verifying,
\begin{align*}
\sigma\geq \lambda_n(u_R),
\end{align*}
then $u_L$ is in the image of $S^n_{u_R}$. That is, there exists $s_{u_L}\in[0,s_{u_R})$ such that $S^n_{u_R}(s_{u_L})=u_L$ (and hence $\sigma=\sigma^n_{u_R}(s_{u_L})$).
\item[(j)]  For $u_L\in \Nu$, and  for all $s>0$,  $\ds{\frac{d}{ds}\eta(u_L | S^1_{u_L}(s))}>0$ (the shock ``strengthens" with $s$).
Similarly, for $u_R\in \Nu$, and for all $s>0$, $\ds{\frac{d}{ds}\eta(u_R | S^n_{u_R}(s))}>0$. Moreover, for each $u_L,u_R\in \Nu$ and $s > 0$, $\frac{d}{ds}\sigma^1_{u_L}(s) < 0$ and $\frac{d}{ds}\sigma^n_{u_R}(s) > 0$.
\end{itemize}
\end{assumption} 
As written in the introduction, these assumptions are classical and fairly general. Assumption (a) is the hyperbolicity assumption. Note that we assume strict hyperbolicity only in $\Nu$, and only for the smallest and largest eigenvalues. Assumption (b) is the genuinely nonlinear condition on the $i$-characteristic families, but only for $i=1$ and $i=n$. Assumption (c) is the existence of a strictly convex entropy. Assumption (d) is always valid locally. We assume it globally in $\Nu$. Assumption (e) is the boundedness of the extremal eigenvalues close to the boundary of $\Nu$, where they may fail to be defined. Assumptions (e) to (i) are related to the Liu condition, but are actually weaker. Finally, Assumption (j) says that the strength of the shock increases with $s$ when measured by the relative entropy. It is a natural condition verified by the Euler systems (see \cite{Leger2011}). Note that it has been shown in Barker, Blake, Freist\"{u}hler, and Zumbrun \cite{MR3338447}, that stability may fail if Assumption (j) is replaced by the property $\frac{d}{ds}\eta( S^1_{u_L}(s))>0$.
Assumptions (a) to (j) are now classical for the $a$-contraction theory (see \cite{Leger2011}).

Note that Assumptions (a) to (j) should be considered global assumptions on the system. However, locally more is true. First, the additional regularity of $f$ in $\Nu$, combined with the hyperbolicity of Assumption (a) implies we may always find eigenvalues and eigenvectors for $f^\prime$, locally satisfying stronger estimates than Assumption \ref{assum} (e).

\begin{lemma}[From \protect{\cite[p.~236]{dafermos_big_book}}]\label{lem_normalization}
For each $1 \le i \le n$, there exist $C^3$ functions $\lambda^i$, $l^i$, and $r_i$ satisfying the normalization conditions $|l^i| = |r_i| = 1$, $l^i \cdot r_j = 0$ for $i \neq j$, and $l^i \cdot r_i > 0$ and the eigenvector, eigenvalue relations,
\begin{equation}
l^i(u)f^\prime(u) = \lambda^i(u)l^i(u) \quad \text{and} \quad f^\prime(u)r_i(u) = \lambda^i(u)r_i(u).
\end{equation}
If $\nabla \lambda^1(u) \cdot r_1(u) \neq 0$ for each $u$, we may also choose $r_1,\cdots,r_n$ so that $\nabla\lambda^1(u) \cdot r_1(u) < 0$. Furthermore, if $\eta$ is an entropy for $f$, $\nabla^2\eta(u)r_i(u) \parallel l^i(u)$.
\end{lemma}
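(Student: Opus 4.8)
The plan is to build all of the eigen-data \emph{locally} near the reference state $d$: first produce a $C^{3}$ choice of the eigenvalue $\lambda^{i}$ by the implicit function theorem, then a $C^{3}$ choice of right and left eigenvectors by solving an essentially linear system via Cramer's rule, and finally read off every normalization identity from linear algebra, using only that $f'(u)$ is diagonalizable and that the eigenvalue in question is \emph{simple} on the neighborhood. For $i=1$ and $i=n$ this simplicity is exactly the content of Assumption \ref{assum}(a); for intermediate $i$ it is the local strict hyperbolicity available here near $d$.

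\textbf{Smoothness.} Write $P(u,\lambda)=\det\!\big(\lambda I-f'(u)\big)$. Since $f\in[C^{4}(\Nu)]^{n}$, the coefficients of $P$ are $C^{3}$ functions of $u$; as $\lambda^{i}(d)$ is a simple root of $P(d,\cdot)$ we have $\partial_{\lambda}P(d,\lambda^{i}(d))\neq0$, so the implicit function theorem yields a $C^{3}$ solution $u\mapsto\lambda^{i}(u)$ of $P(u,\lambda^{i}(u))=0$ near $d$. The right eigenspace $\ker\!\big(f'(u)-\lambda^{i}(u)I\big)$ is one-dimensional for $u$ near $d$; fixing a coordinate vector $e$ with $e\cdot r_{i}(d)\neq0$ and discarding one redundant equation, the system $\big(f'(u)-\lambda^{i}(u)I\big)v=0$, $e\cdot v=1$ has an invertible $C^{3}$ coefficient matrix near $d$, so Cramer's rule gives a nonvanishing $C^{3}$ solution $\tilde r_{i}(u)$; set $r_{i}(u):=\tilde r_{i}(u)/|\tilde r_{i}(u)|$, still $C^{3}$ (the denominator never vanishes) and of unit norm. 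The same construction applied to $f'(u)^{T}$ produces the unit left eigenvectors $l^{i}(u)$, again $C^{3}$.

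\textbf{Algebraic normalizations.} For $i\neq j$, from $l^{i}f'=\lambda^{i}l^{i}$ and $f'r_{j}=\lambda^{j}r_{j}$ we get $(\lambda^{i}-\lambda^{j})(l^{i}\cdot r_{j})=0$, hence $l^{i}\cdot r_{j}=0$. Since $\{r_{1},\dots,r_{n}\}$ is a basis, $l^{i}$ cannot also be orthogonal to $r_{i}$, so $l^{i}\cdot r_{i}\neq0$; replacing $l^{i}$ by $-l^{i}$ if needed (a locally constant sign change, harmless for $C^{3}$ regularity and for $|l^{i}|=1$) gives $l^{i}\cdot r_{i}>0$. If $\nabla\lambda^{1}\cdot r_{1}\neq0$ throughout the (connected) neighborhood, this scalar has constant sign; replacing $r_{1}$ by $-r_{1}$ and simultaneously $l^{1}$ by $-l^{1}$ if necessary arranges $\nabla\lambda^{1}\cdot r_{1}<0$ while preserving $l^{1}\cdot r_{1}>0$.

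\textbf{Entropy compatibility and the main obstacle.} Differentiating $q'=\eta'f'$ gives $\nabla^{2}\eta\,f'+\eta'f''=q''$, where $\eta'f''$ denotes the Hessian of $f$ contracted against $\nabla\eta$; since both $q''$ and $\eta'f''$ are symmetric, so is $\nabla^{2}\eta\,f'$, and with the symmetry of $\nabla^{2}\eta$ this gives $\nabla^{2}\eta\,f'=(f')^{T}\nabla^{2}\eta$. Applying $\nabla^{2}\eta(u)$ to $f'(u)r_{i}(u)=\lambda^{i}(u)r_{i}(u)$ yields $(f'(u))^{T}\big(\nabla^{2}\eta(u)r_{i}(u)\big)=\lambda^{i}(u)\big(\nabla^{2}\eta(u)r_{i}(u)\big)$, so $\nabla^{2}\eta(u)r_{i}(u)$ lies in the one-dimensional left eigenspace of $f'(u)$ for $\lambda^{i}(u)$; it is nonzero because $\nabla^{2}\eta(u)$ is positive definite and $r_{i}(u)\neq0$, hence $\nabla^{2}\eta(u)r_{i}(u)\parallel l^{i}(u)$. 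Nothing here is deep: the only points demanding care are the regularity bookkeeping (checking that $f\in C^{4}$ propagates to $C^{3}$ eigen-data through the implicit function theorem and Cramer's rule, and that normalizing by $|\cdot|$ preserves $C^{3}$ because every vector involved is nonvanishing) and making each sign flip simultaneously on the paired $l^{i}$ so as not to break a normalization already arranged. The one genuinely structural input — one-dimensionality of the eigenspaces, which forces the parallelism conclusions — is precisely hyperbolicity, available for the extremal families by Assumption \ref{assum}(a).
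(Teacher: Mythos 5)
Your construction is essentially the standard one and matches what the paper invokes: the paper does not prove Lemma~\ref{lem_normalization} itself but cites Dafermos for the eigen-data and supplies only a one-line remark deriving $\nabla^2\eta\,r_i \parallel l^i$ from the symmetry $\nabla^2\eta\,f' = (f')^T\nabla^2\eta$ — and your parallelism paragraph is exactly that remark, spelled out. The implicit-function-theorem step for $\lambda^i$, the Cramer's-rule step for $r_i$ and $l^i$ (with normalization by $|\cdot|$ preserved as $C^3$ since the vectors are nonvanishing), the orthogonality $l^i \cdot r_j = 0$ for $i\neq j$ from $(\lambda^i - \lambda^j)(l^i\cdot r_j)=0$, the basis argument for $l^i\cdot r_i \neq 0$, and the paired sign flips to arrange $l^i\cdot r_i>0$ and $\nabla\lambda^1\cdot r_1<0$ are all correct and are the intended argument.

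One imprecision deserves a flag, both in your writeup and, arguably, in the lemma as stated. You write that ``for intermediate $i$ it is the local strict hyperbolicity available here near $d$,'' but Assumption~\ref{assum}(a) does not give strict hyperbolicity for intermediate families: it asserts only diagonalizability together with $\lambda_1<\lambda_2$ and $\lambda_{n-1}<\lambda_n$, so $\lambda_2,\dots,\lambda_{n-1}$ are permitted to coincide. Your own closing sentence acknowledges that one-dimensionality of the eigenspace is the structural input and that it is guaranteed only for the extremal families, which makes the earlier claim look inconsistent. For a simple eigenvalue your argument works verbatim; for a repeated interior eigenvalue, the implicit function theorem on the characteristic polynomial and the rank-$(n-1)$ Cramer system both fail, and the parallelism conclusion $\nabla^2\eta\,r_i\parallel l^i$ is no longer forced (the left eigenspace is higher-dimensional). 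In practice this does not damage the paper, which only exploits the lemma for $i=1,n$ and for the \emph{existence} of an eigenbasis (guaranteed directly by diagonalizability); but your proof should either restrict the smoothness and parallelism claims to $i\in\{1,n\}$ under Assumption~\ref{assum}(a), or state explicitly that it is working under the additional hypothesis that all eigenvalues are simple near $d$, which is the hypothesis under which the cited passage in Dafermos operates.
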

\begin{remark}
From the integrability condition on the entropy $\eta$,
\begin{align*}
    \nabla^2\eta(u)f'(u)=(f'(u))^{T}\nabla^2\eta(u),
\end{align*}
we get $\nabla^2\eta(u)r_i(u) \parallel l^i(u)$ (see \cite[p.~243]{dafermos_big_book}).
\end{remark}

Second, the regularity of $f$ combined with Assumption \ref{assum} (a) and (b) implies precise asymptotics and local estimates on the shock curves $\S^1_u$ and $\S^n_u$. We will need the full strength of these local improvements in Sections \ref{secWill1} and \ref{secWill2} when we work close to a single small shock.
\begin{lemma}[From \protect{\cite[p.~263-265, Theorem 8.2.1, Theorem 8.3.1, Theorem 8.4.2]{dafermos_big_book}}]\label{lem_hugoniot}
For any fixed state $v\in \Nu$, there is an open neighborhood $U$ of $v$
such that for each $i = 1, n$, there exist functions $s_u:U \rightarrow \R$, $\sigma_u^i(s):(U\times [0,s_u)) \rightarrow \R$ and $\S^i_u(s):(U\times [0,s_u))\rightarrow \Nu_0$ satisfying the Rankine-Hugoniot condition, for any $u\in U$ and any $0 \le s < s_u$,
\begin{equation}
f(\S^i_u(s)) - f(u) = \sigma_u^i(s)\left(\S^i_u(s) - u\right)
\end{equation}
and the Lax admissibility criterion, for $u \in U$ and $0 < s < s_u$,
\begin{equation}
\lambda^i(\S^i_u(s)) < \sigma_u^i(s) < \lambda^i(u).
\end{equation}
Furthermore, $u \mapsto s_u$ is Lipschitz, $(u,s) \mapsto \sigma^i_u(s)$ is $C^3$, and $(u,s) \mapsto \S^i_u(s)$ is $C^3$. Finally, $\sigma^i_u(s)$ satisfies the asymptotic expansion,
\begin{equation}
\sigma_u^i(s) = \frac{1}{2}\left(\lambda^i(u) + \lambda^i(\S^i_u(s))\right) + \bigO(s^2)
\end{equation}
and similarly $\S^i_u(s)$ satisfies the asymptotic expansion
\begin{equation}
\S^i_u(s) = u + r_i(u)s + \frac{s^2}{2}\nabla r_i(u) r_i(u) + \bigO(s^3).
\end{equation}
\end{lemma}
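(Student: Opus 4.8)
\emph{Proof sketch (proposal).} The plan is to carry out the classical Lax construction of the $i$-shock curve for $i = 1, n$, localized near $v$, by reducing the Rankine--Hugoniot system to an implicit function theorem problem after dividing out the trivial branch. First I would rewrite the Rankine--Hugoniot relation $f(w) - f(u) = \sigma(w-u)$ using the averaged Jacobian $B(u,w) := \int_0^1 f'\bigl(u + t(w-u)\bigr)\,\D t$, so that it reads $B(u,w)(w-u) = \sigma(w-u)$; i.e.\ $w-u$ must be an eigenvector of $B(u,w)$ with eigenvalue $\sigma$. Since $f \in [C^4(\Nu)]^n$, the matrix $B$ is $C^3$ in $(u,w)$ near the diagonal and $B(u,u) = f'(u)$. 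Because Assumption \ref{assum}(a) makes $\lambda^1(u)$ and $\lambda^n(u)$ simple eigenvalues, the standard perturbation theory of a simple eigenvalue (itself an implicit function theorem argument on the characteristic system) produces $C^3$ functions $\beta^i(u,w)$, $\rho^i(u,w)$, $\ell^i(u,w)$ on a neighborhood of the diagonal with $B(u,w)\rho^i = \beta^i\rho^i$, $\ell^i B(u,w) = \beta^i\ell^i$, $\ell^i\cdot\rho^i = 1$, $\rho^i(u,u) \parallel r_i(u)$, $\ell^i(u,u) \parallel l^i(u)$, $\beta^i(u,u) = \lambda^i(u)$. Then, for $\sigma$ near $\lambda^i(u)$, the Rankine--Hugoniot relation holds if and only if $\sigma = \beta^i(u,w)$ and $w-u$ lies in $\operatorname{span}\rho^i(u,w)$.

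Next I would solve the parallelism condition. Writing $w = u + s\,r_i(u) + \zeta$ with $\zeta$ constrained to the complementary subspace $\ker l^i(u)$, which by Assumption \ref{assum}(a) is the direct complement of $\operatorname{span}r_i(u)$, the condition $w-u \parallel \rho^i(u,w)$ is encoded by the vanishing of the complementary spectral projection $\Psi(u,s,\zeta) := \bigl(I - \rho^i(u,w)\ell^i(u,w)\bigr)(w-u) \in \ker l^i(u)$. One checks $\Psi(u,0,0) = 0$, and that $\partial_\zeta\Psi(u,0,0)$ is the identity on $\ker l^i(u)$ (the term differentiating the projection is killed because $w-u = 0$ at the base point, leaving the projection $I - \rho^i(u,u)\ell^i(u,u)$ restricted to $\ker l^i(u)$, which is the identity). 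The implicit function theorem then yields a $C^3$ map $(u,s) \mapsto \zeta(u,s)$ with $\zeta(u,0) = 0$; moreover $\partial_s\Psi(u,0,0) = \bigl(I - \rho^i(u,u)\ell^i(u,u)\bigr)r_i(u) = 0$ forces $\partial_s\zeta(u,0) = 0$, so $\S^i_u(s) := u + s\,r_i(u) + \zeta(u,s) = u + s\,r_i(u) + \bigO(s^2)$ and $\sigma^i_u(s) := \beta^i(u,\S^i_u(s))$ is $C^3$. The bound $s_u$ is taken as the minimum of finitely many quantities (the radius keeping $w$ in $\Nuo$, plus the radius on which the admissibility estimates below hold), each Lipschitz in $u$, which gives the claimed Lipschitz dependence of $u \mapsto s_u$.

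Then I would extract the asymptotic expansions by matching Taylor coefficients in Rankine--Hugoniot. Writing $\S^i_u(s) = u + s r_i + s^2 a + \bigO(s^3)$ and $\sigma^i_u(s) = \lambda^i(u) + s\sigma_1 + \bigO(s^2)$ and expanding $f(\S^i_u(s)) - f(u)$ to second order, the order-$s$ terms are automatic and the order-$s^2$ terms give $(f'(u) - \lambda^i(u))a = \sigma_1 r_i - \tfrac12 f''(u)(r_i,r_i)$; the Fredholm solvability condition (pairing with $l^i(u)$), together with the identity $l^i(u)\cdot f''(u)(r_i,r_i) = \bigl(\nabla\lambda^i(u)\cdot r_i(u)\bigr)\bigl(l^i(u)\cdot r_i(u)\bigr)$ obtained by differentiating $f'(u)r_i(u) = \lambda^i(u)r_i(u)$ along $r_i$ and pairing with $l^i$, yields $\sigma_1 = \tfrac12\nabla\lambda^i(u)\cdot r_i(u)$, hence $\sigma^i_u(s) = \tfrac12\bigl(\lambda^i(u) + \lambda^i(\S^i_u(s))\bigr) + \bigO(s^2)$ after expanding $\lambda^i(\S^i_u(s))$. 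Solving the same linear equation for $a$, using once more the differentiated eigenrelation, gives $a = \tfrac12\nabla r_i(u)r_i(u)$ up to a multiple of $r_i(u)$; that multiple is removed by the harmless reparametrization $s \mapsto s + \bigO(s^2)$ (this is where the freedom to choose the parametrization is used), producing $\S^i_u(s) = u + s r_i(u) + \tfrac{s^2}{2}\nabla r_i(u)r_i(u) + \bigO(s^3)$. Finally, Liu/Lax admissibility follows from these expansions: with the normalization $\nabla\lambda^i(u)\cdot r_i(u) < 0$ from Lemma \ref{lem_normalization} (genuine nonlinearity, Assumption \ref{assum}(b)), one has $\lambda^i(u) - \lambda^i(\S^i_u(s)) = |\nabla\lambda^i\cdot r_i|\,s + \bigO(s^2) > 0$ and $\sigma^i_u(s) - \lambda^i(\S^i_u(s)) = \lambda^i(u) - \sigma^i_u(s) = \tfrac12|\nabla\lambda^i\cdot r_i|\,s + \bigO(s^2) > 0$ for $0 < s < s_u$, provided $s_u$ is small enough; this can be arranged uniformly for $u$ in a compact neighborhood of $v$, since $\nabla\lambda^i\cdot r_i$ is continuous and nonvanishing there, hence bounded away from zero.

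The main obstacle I expect is the regularity bookkeeping in the desingularization step: verifying that the averaged-Jacobian reformulation is genuinely equivalent to Rankine--Hugoniot for $\sigma$ near $\lambda^i(u)$, that the simple-eigenvalue machinery delivers $C^3$ (not merely $C^1$) dependence — which is exactly why the hypothesis $f \in C^4$ is needed — and that the implicit function theorem is set up in the correct complementary subspace $\ker l^i(u)$ so that the relevant partial derivative is invertible. Once the differentiated eigenrelation is in hand the Taylor matching is routine, and Liu admissibility is then immediate from the sign of the genuine-nonlinearity coefficient. $\blacksquare$
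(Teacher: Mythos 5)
The paper does not prove \Cref{lem_hugoniot}: it is cited verbatim from Dafermos's textbook (Theorems 8.2.1, 8.3.1, 8.4.2), so there is no ``paper's own proof'' to compare against. What you have written is a reconstruction of the classical Lax construction of the $i$-shock curve, which is precisely the argument in the cited source, so your proposal matches the intended proof in spirit and in essentially all details: the averaged Jacobian $B(u,w)$ reformulation, perturbation of the simple eigenvalue $\lambda^i$ to obtain $C^3$ spectral data $\beta^i,\rho^i,\ell^i$, the implicit function theorem in the complementary subspace, Fredholm matching of Taylor coefficients via the differentiated eigenrelation $l^i\cdot f''(r_i,r_i)=(\nabla\lambda^i\cdot r_i)(l^i\cdot r_i)$, and the admissibility sign from genuine nonlinearity. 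The arclength-parametrization check is implicitly handled since $r_i\cdot\nabla r_i r_i = 0$ when $|r_i|\equiv 1$, so the coefficient $\tfrac12\nabla r_i r_i$ is consistent with $s=|\S^i_u(s)-u|$ to the stated order.

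A few small points worth tightening. First, in the implicit function theorem step, $\Psi(u,s,\zeta)=\bigl(I-\rho^i(u,w)\ell^i(u,w)\bigr)(w-u)$ lands in $\ker\ell^i(u,w)$, which varies with $(s,\zeta)$; to apply the IFT on a fixed space you should compose with a fixed projection onto $\ker l^i(u)$ (which is an isomorphism from $\ker\ell^i(u,w)$ to $\ker l^i(u)$ for $(s,\zeta)$ small), or equivalently solve only $n-1$ of the $n$ scalar equations; this is routine but should be stated. Second, \Cref{lem_normalization} fixes only $\nabla\lambda^1\cdot r_1<0$, so for $i=n$ you cannot invoke it for the sign; you should instead note that Assumption \ref{assum}(b) gives $\nabla\lambda^n\cdot r_n\neq 0$ and choose the parametrization direction $s\mapsto\S^n_u(s)$ so that the stated admissibility inequality holds (in the paper's convention the $n$-shock curve is traced from the right state $u_R$ toward the left state, which reverses the orientation of the argument relative to the $i=1$ case). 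Third, what the lemma labels the ``Liu admissibility criterion'' is literally the Lax inequalities; for weak shocks of a genuinely nonlinear field these are equivalent to Liu's E-condition, which is what Dafermos proves (and which is also needed to justify Assumption \ref{assum}(j) via $\dot\sigma^1_{u_L}(s)<0$), so it is worth remarking that your expansion also gives $\dot\sigma^i_u(s)=\tfrac12\nabla\lambda^i(u)\cdot r_i(u)+\bigO(s)$ of the appropriate sign, yielding the Liu condition directly. None of these affect the validity of the approach.
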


\begin{comment}
\begin{lemma}[From \red{Leger}]\label{lem_leger}
Suppose $\eta$ is strictly convex and $\eta \in C^2(\Nu)$. Then, for any $\overline{\Nu} \subset\subset \Nu$, there are constants $K_1$ and $K_2$ depending only on $\overline{\Nu}$ and $\eta$ such that for all $u,v\in\overline{\Nu}$,
\begin{equation}
K_1|u-v|^2 \le \eta(u | v) \le K_2|u-v|^2.
\end{equation}
Moreover, we bound the dependence of $K_1$ and $K_2$ on $\overline{\Nu}$ through, $K_1 \sim \inf_{u\in\Nu}\mu_1(u)$ where $\mu_1(u)$ is the minimal eigenvalue of $\nabla\eta^2$ on $\overline{\Nu}$. Similarly, and $K_2 \sim sup_{u\in \overline{\Nu}} \mu_n(u)$ where $\mu_n$ is the maximal eigenvalue of $\nabla\eta^2$.
\end{lemma}
\end{comment}
\vskip.2cm
%We will also use the following computation due to Lax \cite{MR0093653}. 
The following lemma is now classical and gives the entropy lost due to dissipation at a shock $(u,\S^i_u(s),\sigma^i_u(s))$. The version stated here is taken from \cite{Leger2011}:
\begin{lemma}[\protect{\cite[Lemma 3]{Leger2011}}]\label{lem_lax}
Suppose $\eta$ and $q$ are an entropy-entropy flux pair and $u,v\in \Nu$, then for $1 \le i \le n$,
\begin{equation}
q(\S^i_u(s) ; v) - \sigma^i_u(s)\eta(\S^i_u(s) | v) = q(u ; v) - \sigma^i_u(s)\eta(u | v) + \int_0^s \dot{\sigma}(t) \eta(u | \S^i_u(t)) \ dt.
\end{equation}
\end{lemma}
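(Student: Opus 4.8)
The plan is to prove the identity by regarding both sides as functions of the shock-strength parameter $s$, showing they coincide at $s=0$ and have the same $s$-derivative. Fix $u,v\in\Nu$ and $1\le i\le n$, and abbreviate $w(s)=\S^i_u(s)$, $\sigma(s)=\sigma^i_u(s)$; by the regularity of $f$ these are $C^1$ in $s$ (cf.\ Lemma \ref{lem_hugoniot}). Define
\[
\Phi(s):=q(w(s);v)-\sigma(s)\,\eta(w(s)|v),\qquad
\Psi(s):=q(u;v)-\sigma(s)\,\eta(u|v)+\int_0^s\dot\sigma(t)\,\eta(u|w(t))\,dt.
\]
Since $w(0)=u$, both $\Phi(0)$ and $\Psi(0)$ equal $q(u;v)-\sigma(0)\eta(u|v)$, so it suffices to verify $\Phi'(s)=\Psi'(s)$ and then integrate from $0$ to $s$.

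Differentiating $\Psi$ is immediate: $\Psi'(s)=\dot\sigma(s)\big(\eta(u|w(s))-\eta(u|v)\big)$. For $\Phi$, I would combine three ingredients. First, the entropy compatibility $q'=\eta'f'$ of Assumption \ref{assum}(c), which gives $\tfrac{d}{ds}q(w(s);v)=(\nabla\eta(w)-\nabla\eta(v))f'(w)\dot w$ and $\tfrac{d}{ds}\eta(w(s)|v)=(\nabla\eta(w)-\nabla\eta(v))\dot w$. Second, differentiating the Rankine-Hugoniot relation $f(w(s))-f(u)=\sigma(s)(w(s)-u)$ in $s$, which yields $f'(w)\dot w-\sigma\dot w=\dot\sigma\,(w-u)$. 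Combining these,
\[
\Phi'(s)=(\nabla\eta(w)-\nabla\eta(v))\big(f'(w)\dot w-\sigma\dot w\big)-\dot\sigma\,\eta(w|v)=\dot\sigma\Big[(\nabla\eta(w)-\nabla\eta(v))\cdot(w-u)-\eta(w|v)\Big].
\]
Third, the purely algebraic relative-entropy identity
\[
(\nabla\eta(w)-\nabla\eta(v))\cdot(w-u)-\eta(w|v)=\eta(u|w)-\eta(u|v),
\]
which follows by expanding $\eta(u|w)$, $\eta(u|v)$, and $\eta(w|v)$ from the definition of the relative entropy and cancelling. Hence $\Phi'(s)=\dot\sigma(s)\big(\eta(u|w(s))-\eta(u|v)\big)=\Psi'(s)$, and since $\Phi(0)=\Psi(0)$ we obtain $\Phi(s)=\Psi(s)$, which is exactly the claimed formula.

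I do not expect a genuine obstacle here; the content is a short differentiation once the bookkeeping is arranged. The only mild points requiring care are regularity --- one needs $s\mapsto w(s)$ and $s\mapsto\sigma(s)$ of class $C^1$ so the fundamental theorem of calculus applies, which is provided by Lemma \ref{lem_hugoniot} --- and keeping the base point $v$ fixed in $\Nu$, since $\eta(\cdot|\cdot)$ and $q(\cdot;\cdot)$ require their second argument to lie in $\Nu$ for $\nabla\eta$ to be defined. An entirely equivalent route is to write $\Phi(s)-\Phi(0)=\int_0^s\Phi'(t)\,dt$ and simplify the integrand directly; the derivative-matching presentation above is simply the cleanest way to organize the same computation.
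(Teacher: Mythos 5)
Your proof is correct and is the standard route found in the references the paper cites for this lemma (the paper itself does not reproduce a proof of Lemma \ref{lem_lax}, deferring to \cite{MR3537479}). Differentiating both sides in $s$, using the differentiated Rankine--Hugoniot relation to produce the $\dot\sigma(w-u)$ term, and then invoking the algebraic relative-entropy identity $(\nabla\eta(w)-\nabla\eta(v))\cdot(w-u)-\eta(w|v)=\eta(u|w)-\eta(u|v)$ is exactly the classical Lax computation, and your bookkeeping checks out. One small remark: you should record that the endpoint state must satisfy $\S^i_u(s)\in\Nu$ so that $\nabla\eta$ is defined along the whole curve $w(t)$, $0\le t\le s$ — in the paper this is implicitly ensured by the local smallness in Lemma \ref{lem_hugoniot} and, where the lemma is applied, by the compact containment of $\Pi_{C,s_0}$ in $\Nu$.
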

%For a proof of \Cref{lem_lax}, see \cite{MR3537479}.
Note that \Cref{lem_lax} holds for any $i$-family, $i=1,2,\ldots,n$ and not only extremal families (1-shocks and n-shocks) -- it follows from the Rankine-Hugoniot condition.

\vskip.2cm
%\subsection{Structural Lemmas}
\begin{flushleft}
\uline{\bf{Structural Lemmas}}
\end{flushleft}
\vskip.2cm
For the remainder of this section, we adopt the convention (used again in Sections \ref{secWill1} and \ref{secWill2}) that $(u_L,u_R,\sigma_{LR})$ is a fixed entropic $1$-shock with $u_R = \S^1_{u_L}(s_0)$ and $u_L,u_R\in \Nu$. We now study the geometry and basic properties of the set $\Pi = \Pi_{C,s_0}$ defined in (\ref{defn_Dcont}) using $u_L$ and $u_R$. We also prove several basic estimates on the functions $\tilde\eta$ and $\tilde q$ (defined above in (\ref{defn_eta})), which will be used frequently in the study of $D_{cont}$ and $D_{RH}$ in Sections \ref{secWill1} and \ref{secWill2}, respectively.
We begin with the main lemma for the functions $\tilde\eta$ and $\tilde q$ and the related basic structure of $\Pi$.

\begin{lemma}\label{lem_eta}
For any $C > 0$ and $s_0 > 0$, $\tilde\eta$ and $\tilde q$ are an entropy, entropy-flux pair for the system (\ref{cl}) and the set $\Pi = \Pi_{C,s_0}\subset \Nu$ is strictly convex with nonempty interior. For $C$ sufficiently large and $s_0$ sufficiently small, $\Pi$ is compactly contained within $\Nu$ and $diam(\Pi) \sim C^{-1}$, where the implicit constant depends only on the system. Moreover, we compute the exact formulae:
\begin{equation}\label{eqn_lemeta_desired1}
\nabla\tilde\eta(u) = Cs_0\left(\nabla \eta(u) - \nabla\eta(u_L)\right) - \left(\nabla\eta(u_R) - \nabla\eta(u_L)\right) \quad \text{and} \quad \nabla^2\tilde\eta(u) = Cs_0 \nabla^2\eta(u).
\end{equation}
Finally, for any $C$ sufficiently big and $s_0$ sufficiently small, and any $u\in \Pi_{C,s_0}$,
\begin{equation}\label{eqn_lemeta_desired2}
|\nabla\tilde\eta(u)| \lesssim s_0 \qquad \text{and} \qquad \nabla^2\tilde\eta(u)v \cdot v  \sim Cs_0,
\end{equation}
for any $|v| = 1$, where the implicit constants depend only on the system.
\end{lemma}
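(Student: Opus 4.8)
The plan is to dispose of the algebraic assertions by direct differentiation and then read off the geometry of $\Pi$ from the relative-entropy comparison together with the equivalence $\eta(u|v)\sim|u-v|^2$ of Lemma~\ref{l2_rel_entropy_lemma}. For the entropy-pair claim, I would note that for each fixed $b\in\Nu$ one has $\nabla_u\eta(u|b)=\nabla\eta(u)-\nabla\eta(b)$ and, using $\nabla q=\nabla\eta\,f'$, that $\nabla_u q(u;b)=\nabla q(u)-\nabla\eta(b)f'(u)=(\nabla\eta(u)-\nabla\eta(b))f'(u)$; hence each $(\eta(\cdot|b),q(\cdot;b))$ satisfies the compatibility relation of Assumption~\ref{assum}(c), and since that relation is linear in the pair, so does the combination $(\tilde\eta,\tilde q)$. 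Differentiating once more gives $\nabla^2_u\eta(u|b)=\nabla^2\eta(u)$, and collecting terms yields \eqref{eqn_lemeta_desired1}; in particular $\nabla^2\tilde\eta=Cs_0\nabla^2\eta$ is positive definite on $\Nu$, so $\tilde\eta$ is strictly convex, $\Pi=\{\tilde\eta<0\}$ is convex, and — by the standard facts about sublevel sets of a strictly convex function, once $\Pi$ is known to be bounded — strictly convex. It has nonempty interior because $\tilde\eta(u_L)=-\eta(u_L|u_R)<0$, using $s_0=|u_L-u_R|>0$.

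The core of the proof is the size of $\Pi$. Subtracting the two relative entropies yields the identity
\begin{equation*}
\eta(u|u_R)=\eta(u|u_L)+\eta(u_L|u_R)+(\nabla\eta(u_L)-\nabla\eta(u_R))\cdot(u-u_L),
\end{equation*}
so that $u\in\Pi$ is equivalent to
\begin{equation*}
Cs_0\,\eta(u|u_L)<\eta(u_L|u_R)+(\nabla\eta(u_L)-\nabla\eta(u_R))\cdot(u-u_L).
\end{equation*}
Fix a compact $V$ with $d$ in its interior and $V\Subset\Nu$, take $\tilde{s}_0$ small enough that $u_L,u_R\in V$, and let $c^*,c^{**}$ be the constants of Lemma~\ref{l2_rel_entropy_lemma} on $V$, and $L_\eta$ a Lipschitz constant for $\nabla\eta$ on $V$. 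For $u\in\Pi$, writing $t=|u-u_L|$ and using $\eta(u|u_L)\ge c^*t^2$, $\eta(u_L|u_R)\le c^{**}s_0^2$ and $|\nabla\eta(u_L)-\nabla\eta(u_R)|\le L_\eta s_0$, the second display forces $Cc^*t^2-L_\eta t-c^{**}s_0<0$, hence $t\lesssim C^{-1}$ once $s_0\le\tilde{s}_0(C)$ is small enough that $\sqrt{L_\eta^2+4Cc^*c^{**}s_0}\le 2L_\eta$. This gives $\operatorname{diam}(\Pi)\lesssim C^{-1}$ and $\overline\Pi\subseteq B_{KC^{-1}}(u_L)$; choosing $C$ large first and then $\tilde{s}_0$ small places this ball compactly inside $\Nu$.

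For the matching lower bound I would exhibit an explicit point of $\Pi$ at distance $\sim C^{-1}$ from $u_L$. Write $\nabla\eta(u_L)-\nabla\eta(u_R)=-M(u_R-u_L)$ with $M=\int_0^1\nabla^2\eta(u_L+\theta(u_R-u_L))\,d\theta$, which is positive definite with smallest eigenvalue $\ge\mu_->0$, and test the second display at $u=u_L-\tau M(u_R-u_L)/|M(u_R-u_L)|$: the cross term equals $\tau\,|M(u_R-u_L)|\ge\mu_-\tau s_0$, while $\eta(u|u_L)\le c^{**}\tau^2$, so (dropping $\eta(u_L|u_R)\ge0$) the inequality holds whenever $Cc^{**}\tau<\mu_-$, e.g.\ for $\tau=\mu_-/(2Cc^{**})\sim C^{-1}$. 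Hence $\operatorname{diam}(\Pi)\gtrsim C^{-1}$, and combined with the previous paragraph $\operatorname{diam}(\Pi)\sim C^{-1}$.

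Finally, \eqref{eqn_lemeta_desired2} follows at once: for $u\in\Pi$ we have $|u-u_L|\lesssim C^{-1}$, so \eqref{eqn_lemeta_desired1} gives $|\nabla\tilde\eta(u)|\le Cs_0\,|\nabla\eta(u)-\nabla\eta(u_L)|+|\nabla\eta(u_R)-\nabla\eta(u_L)|\lesssim Cs_0\cdot C^{-1}+s_0\lesssim s_0$, and $\nabla^2\tilde\eta(u)v\cdot v=Cs_0\,\nabla^2\eta(u)v\cdot v\sim Cs_0$ for $|v|=1$ since $\nabla^2\eta$ has eigenvalues bounded above and below on the compact set $\overline\Pi\Subset\Nu$. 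I expect the only genuine difficulty to be the two-sided diameter estimate: seeing that in the inequality defining $\Pi$ the $\mathcal{O}(s_0)$ linear term and the $Cs_0\times(\text{quadratic})$ term balance precisely at scale $|u-u_L|\sim C^{-1}$, and keeping the quantifiers in the right order ($C$ large chosen first, then $\tilde{s}_0=\tilde{s}_0(C)$ small). Everything else is routine Taylor expansion and bookkeeping with $\eta(u|v)\sim|u-v|^2$.
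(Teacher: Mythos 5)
Your proof is correct and follows the paper's overall strategy: direct differentiation for the entropy--entropy-flux claim and \eqref{eqn_lemeta_desired1}, the identity $\nabla^2\tilde\eta = Cs_0\nabla^2\eta \succ 0$ for strict convexity and strict convexity of the sublevel set, nonemptiness from $\tilde\eta(u_L) = -\eta(u_L|u_R) < 0$, and a relative-entropy comparison of the quadratic term against the linear term to pin down the scale $C^{-1}$. The one place you genuinely diverge is the lower bound $\operatorname{diam}(\Pi)\gtrsim C^{-1}$. The paper runs the line $L(t)=u_L+t\,r_1(u_L)$ through $u_L$, Taylor-expands $\tilde\eta$, and identifies the linear coefficient using the shock-curve asymptotic $u_R = u_L + s_0\,r_1(u_L) + \bigO(s_0^2)$ from Lemma~\ref{lem_hugoniot}; the bookkeeping there is delicate because one must pick the half-line along which the linear term of $\tilde\eta$ decreases (so that only the $Cs_0$-quadratic term can restore $\tilde\eta=0$, forcing the crossing to happen at distance $\sim C^{-1}$). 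You instead exhibit a single interior point of $\Pi$ at distance $\tau\sim C^{-1}$ from $u_L$ along the direction $\nabla\eta(u_L)-\nabla\eta(u_R)$ normalized, which makes the linear term of the defining inequality manifestly positive, of size exactly $\tau|M(u_R-u_L)|\ge \mu_-\tau s_0$, with no appeal to the shock-curve geometry. That is a cleaner and slightly more robust computation (it applies to any pair $u_L\neq u_R$ in a fixed compact subset of $\Nu$, not only to shock endpoints), at the modest cost of introducing the averaged Hessian $M$. Your other deviations are cosmetic: you replace the paper's fundamental-theorem-of-calculus bound on $\eta(u|u_R)-\eta(u|u_L)$ by the exact polynomial identity $\eta(u|u_R)=\eta(u|u_L)+\eta(u_L|u_R)+(\nabla\eta(u_L)-\nabla\eta(u_R))\cdot(u-u_L)$, and both routes yield the same quadratic-in-$|u-u_L|$ inequality. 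Your quantifier handling, choosing $C$ large first and then $\tilde s_0 = \tilde s_0(C)$, is exactly what the paper's stated notation convention permits. Finally, you are right to read the first half of \eqref{eqn_lemeta_desired2} as $|\nabla\tilde\eta(u)|\lesssim s_0$; the printed $|\nabla\eta(u)|$ is a typo, and the paper itself uses this estimate in the form $|\nabla\tilde\eta|\lesssim s_0$ (e.g., in the proof of Lemma~\ref{lem_nu}).
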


\begin{proof}
We show $\nabla\tilde q = \nabla \tilde\eta f^\prime$ so that $(\tilde \eta,\tilde q)$ is an entropy, entropy-flux pair for (\ref{cl}). Indeed, differentiating (\ref{defn_eta}) and using $\nabla q = \nabla \eta f^\prime$,
\begin{align}
\nabla \tilde q(u) &= Cs_0\nabla q(u) - (1+Cs_0)\nabla\eta(u_L)f^\prime(u) + \nabla\eta(u_R)f^\prime(u)\\
	&= \left[Cs_0 \nabla\eta(u) - (1+Cs_0)\nabla\eta(u_L) + \nabla\eta(u_R)\right]f^\prime(u)\\
\nabla \tilde \eta(u) &= Cs_0\nabla\eta(u) - (1 + Cs_0)\nabla\eta(u_L) + \nabla\eta(u_R).\label{eqn_lemeta_eqn1}
\end{align}
Now, differentiating (\ref{eqn_lemeta_eqn1}) again yields $\nabla^2\tilde\eta(u) = Cs_0\nabla^2\eta(u)$. This proves $\tilde\eta$ is a strictly convex entropy for (\ref{cl}) and equation (\ref{eqn_lemeta_desired1}) holds.

Next, we note since $\Pi = \{u \in \Nu \ | \ \tilde\eta(u) \le 0\}$, $\Pi$ is strictly convex because $\tilde\eta$ is strictly convex. To obtain the diameter estimate $diam(\Pi) \lesssim C^{-1}$,
we note that if $u \in \Pi$,
\begin{equation}\label{eqn_lemeta_eqn2}
\begin{aligned}
(1 + Cs_0)\eta(u | u_L) &\le \eta(u | u_R) \le \eta(u | u_L) + \int_{u_L}^{u_R} \partial_v\eta(u | v) \ dv\\
	&\lesssim \eta(u | u_L) + |u_R - u_L|(|u - u_L| + |u - u_R|), 
\end{aligned}
\end{equation}
where $\partial_v \eta(u | v) = -\nabla^2\eta(u)(u - v)$. By Lemma \ref{l2_rel_entropy_lemma} and $|u_R - u_L| \sim s_0$, (\ref{eqn_lemeta_eqn2}) implies
\begin{equation}\label{eqn_lemeta_eqn3}
Cs_0|u-u_L|^2 \lesssim s_0(|u - u_L| + s_0).
\end{equation}
So, if $|u - u_L| \ge s_0$, (\ref{eqn_lemeta_eqn3}) implies $|u - u_L| \lesssim C^{-1}$. Therefore, taking $C$ sufficiently large and then $s_0$ sufficiently small, depending only on $C$, say $s_0 \lesssim C^{-1}$, we obtain $diam(\Pi) \lesssim C^{-1}$. Thus, for $C$ sufficiently large $diam(\Pi) \le d(u_L,\Nu_0)$ and $\Pi$ is compactly contained in $\Nu$. 
Furthermore, using $|u - u_L| \lesssim C^{-1}$ for generic $u\in \Pi$ and $|u_L - u_R|\lesssim s_0$, (\ref{eqn_lemeta_desired1}) implies (\ref{eqn_lemeta_desired2}).
Again using Lemma \ref{l2_rel_entropy_lemma}, $\tilde\eta(u_L) = -\eta(u_L | u_R) \lesssim -s_0^2 < 0$ and by continuity $\Pi$ contains an open ball about $u_L$.
Finally, we show the diameter lower bound $diam(\Pi) \gtrsim C^{-1}$ by showing a sufficiently large portion of the line $L(t) = u_L + tr_1(u_L)$ is contained within $\Pi$. Since $\Pi$ is strictly convex and $u_L \in \Pi$, $L(t)$ intersects $\bd\Pi$ exactly twice: once for $t^+>0$ and once for $t^-<0$. Taylor expanding around $u_L$,
\begin{equation}
0 = \tilde\eta(L(t^-)) = \tilde\eta(u_L) + t^-\tilde\nabla\eta(u_L)\cdot r_1(u_L) + \bigO(Cs_0|t^-|^2).
\end{equation}
Therefore, evaluating (\ref{eqn_lemeta_desired1}) at $u = u_L$ and using $u_R = u_L + s_0r_1(u_L) + \bigO(s_0^2)$,
\begin{equation}\label{eqn_lemeta_eqn4}
0 = -s_0 + t^-s_0\nabla^2\eta r_1(u_L)\cdot r_1(u_L) + \bigO(Cs_0|t^-|^2 + s_0^2).
\end{equation}
Because $t^-$ is negative and $\eta$ is strictly convex, rearranging terms in (\ref{eqn_lemeta_eqn4}) yields the bound
\begin{equation}\label{eqn_lemeta_eqn5}
1 + |t^-| \lesssim C|t^-|^2 + s_0.
\end{equation}
Using first (\ref{eqn_lemeta_eqn5}) in the form, $1 \lesssim s_0 + C|t^-|^2$, yields $|t^-| \gtrsim C^{-1/2} \gtrsim Cs_0$ for $s_0$ sufficiently small. Second, using (\ref{eqn_lemeta_eqn5}) again, yields $1 \lesssim C|t^-| + C^{-1}$, which implies $|t^-| \gtrsim C^{-1}$ for $C$ sufficiently large and $s_0$ sufficiently small. So, $$diam(\Pi) \ge |L(0) - L(t^{-})| = |t^{-}| \gtrsim C^{-1}$$ completes the proof.
\end{proof}

Lemma \ref{lem_eta} is fundamental to the forthcoming analysis for several reasons. First, the estimates will be used frequently below. Second, Lemma \ref{lem_eta} is the first indication that $s_0$ and $C$ play fundamentally different roles in the analysis: $C^{-1}$ controls global geometric properties of the set $\Pi$, i.e. the diameter, while $s_0$ controls local behavior more associated to the system (\ref{cl}). This implies that there are essentially two important length scales within $\Pi$, namely the ``large scale'' $C^{-1}$ and the ``small scale'' $s_0$. These themes are repeated in the following three lemmas linking the geometry of $\Pi$ to quantitative estimates on $\tilde\eta$. First, we lower bound the magnitude of $\nabla \tilde\eta$ on $\bd\Pi$ which yields that $\nabla\tilde\eta$ is a non-vanishing outward normal vector on $\Pi$. Second, we show that $-\tilde\eta$ is comparable to the distance to $\bd\Pi$ globally inside $\Pi$. Third, we show that $C$ is essentially the curvature of $\Pi$.

\begin{lemma}\label{lem_nu}
For $C$ sufficiently large, $s_0$ sufficiently small, and any $\overline{u} \in \bd \Pi$,
\begin{equation}\label{eqn_lemnu_desired}
|\nabla \tilde\eta(\overline{u})| \gtrsim s_0.
\end{equation}
In particular, the vector field $\nu(\overline{u}):= \frac{\nabla \tilde\eta(\overline{u})}{|\nabla\tilde\eta(\overline{u})|}$ is the outward unit normal vector field for $\Pi$.
\end{lemma}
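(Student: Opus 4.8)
The plan is to evaluate the explicit formula for $\nabla\tilde\eta$ from Lemma \ref{lem_eta} at a point $\overline u \in \bd\Pi$ and show the two competing terms cannot cancel. Recall from \eqref{eqn_lemeta_desired1} that
\[
\nabla\tilde\eta(\overline u) = Cs_0\bigl(\nabla\eta(\overline u) - \nabla\eta(u_L)\bigr) - \bigl(\nabla\eta(u_R) - \nabla\eta(u_L)\bigr).
\]
Since $\nabla\eta$ is $C^2$, we have $\nabla\eta(\overline u) - \nabla\eta(u_L) = \nabla^2\eta(u_L)(\overline u - u_L) + \bigO(|\overline u - u_L|^2)$ and $\nabla\eta(u_R) - \nabla\eta(u_L) = \nabla^2\eta(u_L)(u_R - u_L) + \bigO(s_0^2)$. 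Using $|u_R - u_L| \sim s_0$ and the $C$-independent bound $|\overline u - u_L| \lesssim C^{-1}$ from Lemma \ref{lem_eta}, the first term is $\bigO(Cs_0 \cdot C^{-1}) = \bigO(s_0)$ with a genuinely quadratic error $\bigO(Cs_0 \cdot C^{-2}) = \bigO(s_0/C)$, while the second term has magnitude $\sim s_0$ (by strict convexity of $\eta$, $\nabla^2\eta(u_L)$ is invertible with $C$-independent bounds). So a priori $|\nabla\tilde\eta(\overline u)| \lesssim s_0$; the content is the lower bound.

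The key step is to rule out cancellation between $Cs_0\,\nabla^2\eta(u_L)(\overline u - u_L)$ and $\nabla^2\eta(u_L)(u_R - u_L)$. First I would observe that $\tilde\eta(\overline u) = 0$ while $\tilde\eta(u_L) = -\eta(u_L|u_R) \lesssim -s_0^2$; Taylor expanding $\tilde\eta$ along the segment from $u_L$ to $\overline u$ and using $\nabla^2\tilde\eta = Cs_0\nabla^2\eta \sim Cs_0$ (Lemma \ref{lem_eta}) gives
\[
0 = \tilde\eta(\overline u) = \tilde\eta(u_L) + \nabla\tilde\eta(u_L)\cdot(\overline u - u_L) + \bigO\bigl(Cs_0|\overline u - u_L|^2\bigr),
\]
which forces $\nabla\tilde\eta(u_L)\cdot(\overline u - u_L) \gtrsim s_0^2$ (using $|\tilde\eta(u_L)| \gtrsim s_0^2$ and the curvature error being controllable). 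Since $|\overline u - u_L| \lesssim C^{-1}$, this already yields $|\nabla\tilde\eta(u_L)| \gtrsim C s_0^2$. Then I would relate $\nabla\tilde\eta(\overline u)$ to $\nabla\tilde\eta(u_L)$: by \eqref{eqn_lemeta_desired1}, $\nabla\tilde\eta(\overline u) - \nabla\tilde\eta(u_L) = Cs_0(\nabla\eta(\overline u) - \nabla\eta(u_L)) = Cs_0\,\bigO(|\overline u - u_L|) = \bigO(s_0)$, so this comparison is not immediately enough on its own. The cleaner route is to work directly: decompose $\overline u - u_L$ in the basis adapted to $\nabla^2\eta(u_L)$, write $u_R - u_L = s_0 r_1(u_L) + \bigO(s_0^2)$ from Lemma \ref{lem_hugoniot}, and use the fact (from the $diam(\Pi) \gtrsim C^{-1}$ argument in the proof of Lemma \ref{lem_eta}, specifically \eqref{eqn_lemeta_eqn4}) that the component of $\overline u - u_L$ against $\nabla^2\eta(u_L) r_1(u_L)$ is controlled; this pins down $Cs_0\nabla^2\eta(u_L)(\overline u - u_L)$ up to $\bigO(s_0)$ error and shows its leading behavior either reinforces or is transverse to $\nabla^2\eta(u_L)(u_R - u_L) \sim s_0$, in either case giving $|\nabla\tilde\eta(\overline u)| \gtrsim s_0$.

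The main obstacle is precisely the possibility of near-cancellation in the leading order: both terms of $\nabla\tilde\eta(\overline u)$ are $\bigO(s_0)$, so a crude triangle-inequality bound gives nothing, and one must exploit the geometry — that $\overline u$ lies on $\bd\Pi$ while $u_L$ is well inside $\Pi$ (at distance $\gtrsim C^{-1}$ from $\bd\Pi$, by the lower diameter bound and strict convexity) — to get a quantitative lower bound. The cleanest packaging is likely the Taylor expansion argument above: the fact that $\tilde\eta$ drops from a value $\gtrsim s_0^2$ in magnitude at $u_L$ to $0$ at $\overline u$ over a distance $\lesssim C^{-1}$, combined with the curvature bound $\nabla^2\tilde\eta \sim Cs_0$ keeping the quadratic remainder at $\bigO(Cs_0 \cdot C^{-2}) = \bigO(s_0/C) \ll s_0^2$ only if $C^{-1} \ll s_0$ — wait, this needs care — so one should instead integrate $\frac{d}{dt}\tilde\eta(u_L + t(\overline u - u_L))$ and use that $\nabla\tilde\eta$ varies by only $\bigO(s_0)$ along the segment (since $\nabla^2\tilde\eta \sim Cs_0$ and the segment has length $\lesssim C^{-1}$) while its integral against $(\overline u - u_L)$ must produce the fixed amount $|\tilde\eta(u_L)| \gtrsim s_0^2 \gtrsim s_0 \cdot C^{-1}$ when $C^{-1} \lesssim s_0$; choosing the threshold $\tilde s_0(C_1)$ small relative to $C_1^{-1}$ (consistent with the statement's quantifier order, where $\tilde s_0$ depends on $C_1$) makes all remainders subordinate and delivers $|\nabla\tilde\eta(\overline u)| \gtrsim s_0$. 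The final sentence of the lemma, that $\nu = \nabla\tilde\eta/|\nabla\tilde\eta|$ is the outward unit normal, is then immediate from $\Pi = \{\tilde\eta < 0\}$, the strict convexity of $\tilde\eta$, and the just-proved non-vanishing of $\nabla\tilde\eta$ on $\bd\Pi$.
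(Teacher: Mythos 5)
Your proposal has a genuine gap in the key lower bound. You correctly set up the cancellation obstacle: both terms of $\nabla\tilde\eta(\overline{u})$ are $\bigO(s_0)$. But the Taylor expansion you run from $u_L$ to $\overline{u}$ cannot detect the lower bound, because the signal $|\tilde\eta(u_L)| = \eta(u_L|u_R) \sim s_0^2$ is \emph{smaller} than the quadratic remainder $\bigO(Cs_0|\overline{u}-u_L|^2) = \bigO(s_0/C)$ in the regime of the lemma, where $C$ is fixed first and then $s_0 \ll C^{-1}$ is taken small depending on $C$. You notice this yourself (``wait, this needs care''), but the proposed fix is self-contradictory: you first need $C^{-1}\lesssim s_0$ so that $s_0^2 \gtrsim s_0 C^{-1}$ dominates the remainder, and then in the very same sentence assert that $\tilde s_0(C_1)$ should be chosen small relative to $C_1^{-1}$, which forces $s_0 \ll C^{-1}$ and destroys the inequality you needed. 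Even the weaker conclusion from the $u_L$-to-$\overline{u}$ comparison is at best $|\nabla\tilde\eta(\overline{u})| \gtrsim s_0^2/|\overline{u}-u_L| \gtrsim Cs_0^2$, which is $\ll s_0$ once $Cs_0 \ll 1$. The alternative decomposition route you sketch is not worked out, and it leans on \eqref{eqn_lemeta_eqn4}, which was derived only for boundary points lying on the line $u_L + t r_1(u_L)$, not for an arbitrary $\overline{u}\in\bd\Pi$.

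The paper sidesteps this entirely by comparing two \emph{boundary} points rather than $u_L$ to a boundary point: take $v\in\bd\Pi$ with $|v-\overline{u}|\sim C^{-1}$, which exists since $diam(\Pi) \sim C^{-1}$. Because $\tilde\eta(v)=\tilde\eta(\overline{u})=0$, the relative entropy collapses to the pure linear term $\tilde\eta(v|\overline{u}) = -\nabla\tilde\eta(\overline{u})\cdot(v-\overline{u})$; applying Lemma \ref{l2_rel_entropy_lemma} to the entropy $\tilde\eta$, whose convexity constant scales as $Cs_0$ by Lemma \ref{lem_eta}, yields $Cs_0|v-\overline{u}|^2 \lesssim |\nabla\tilde\eta(\overline{u})|\,|v-\overline{u}|$, hence $|\nabla\tilde\eta(\overline{u})| \gtrsim Cs_0\cdot C^{-1} = s_0$. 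The decisive difference is that the diameter-scale quadratic gap $Cs_0\cdot C^{-2}=s_0/C$ dominates $s_0^2$ in the correct regime, whereas the $s_0^2$ signal emanating from $u_L$ does not.
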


\begin{proof}
Let $\overline{u}\in \bd \Pi$ and take $v \in \bd \Pi$ such that $|\overline{u} - v| \sim C^{-1}$ (this is possible with a constant independent of $\overline{u}$ by Lemma \ref{lem_eta}). Then, since $\tilde\eta(v) = \tilde \eta(\overline{u}) = 0$, applying Lemma \ref{l2_rel_entropy_lemma} to the entropy $\tilde\eta$,
\begin{equation}\label{eqn_lemnu_eqn1}
C^\prime|v - \overline{u}|^2 \le \tilde\eta(v | \overline{u}) = -\nabla\tilde\eta(\overline{u}) \cdot (v - \overline{u}) \le |\nabla\tilde\eta(\overline{u})| |v - \overline{u}|,
\end{equation}
where $C^\prime$ is the constant from Lemma \ref{l2_rel_entropy_lemma}. Note that since the entropy $\tilde\eta$ depends on $s_0$ and $C$, so too does $C^\prime$. However, a cursory look at the proof of Lemma \ref{l2_rel_entropy_lemma} in \cite[Appendix A]{Leger2011} yields that the asymptotics for the constant $C^\prime$ as $C^\prime(C,s_0) \sim \inf_{u\in \Pi, |w| = 1} \nabla^2\tilde\eta(u)w \cdot w \sim Cs_0$ by Lemma \ref{lem_eta}. Equation (\ref{eqn_lemnu_eqn1}) combined with $C^\prime\sim Cs_0$ and $|\overline{u} - v| \sim C^{-1}$ yields the desired bound, (\ref{eqn_lemnu_desired}).
\end{proof}

\begin{lemma}\label{lem_dist}
For $C$ sufficiently large, $s_0$ sufficiently small, and $u\in \Pi_{C,s_0}$, we have
\begin{equation}
-\tilde\eta(u) \lesssim s_0 d(u,\bd\Pi).
\end{equation}
\end{lemma}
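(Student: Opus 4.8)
The plan is to combine the strict convexity of $\tilde\eta$ with the geometric information already recorded in Lemma \ref{lem_eta}, through a single second‑order Taylor expansion. The underlying idea: $-\tilde\eta$ vanishes on $\bd\Pi$, so $-\tilde\eta(u)$ ought to be controlled by $d(u,\bd\Pi)$ times the size of $\nabla\tilde\eta$, which is $\bigO(s_0)$ throughout $\Pi$, with the quadratic Taylor remainder absorbed using $\mathrm{diam}(\Pi)\lesssim C^{-1}$.

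First I would fix $u\in\Pi=\Pi_{C,s_0}$. Since $\Pi$ is compactly contained in $\Nu$ (Lemma \ref{lem_eta}), the closure $\overline{\Pi}$ is a compact convex subset of $\Nu$ and $\bd\Pi$ is compact and nonempty, so there is $\overline u\in\bd\Pi$ realizing $|u-\overline u|=d(u,\bd\Pi)=:\delta$. Because $\overline\Pi$ is convex, the segment $[u,\overline u]$ lies in $\overline\Pi\subset\Nu$, where $\tilde\eta\in C^3$. Writing $w=\overline u-u$, Taylor's theorem produces $\xi$ on this segment with
\[
0=\tilde\eta(\overline u)=\tilde\eta(u)+\nabla\tilde\eta(u)\cdot w+\frac{1}{2}\nabla^2\tilde\eta(\xi)\,w\cdot w,
\]
and since $\tilde\eta(u)\le 0$, rearranging gives
\[
-\tilde\eta(u)=\nabla\tilde\eta(u)\cdot w+\frac{1}{2}\nabla^2\tilde\eta(\xi)\,w\cdot w\le |\nabla\tilde\eta(u)|\,\delta+\frac{1}{2}|\nabla^2\tilde\eta(\xi)|\,\delta^2 .
\]

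Next I would insert the estimates from Lemma \ref{lem_eta}. From the formula \eqref{eqn_lemeta_desired1}, together with $|u-u_L|\le \mathrm{diam}(\Pi)\lesssim C^{-1}$ (valid for every $u\in\Pi$) and $|u_R-u_L|\sim s_0$, one gets $|\nabla\tilde\eta(u)|\lesssim Cs_0\cdot C^{-1}+s_0\lesssim s_0$; and $\nabla^2\tilde\eta=Cs_0\nabla^2\eta$ gives $|\nabla^2\tilde\eta(\xi)|\lesssim Cs_0$. Hence $-\tilde\eta(u)\lesssim s_0\delta+Cs_0\delta^2$. Finally, $\delta=d(u,\bd\Pi)\le \mathrm{diam}(\Pi)\lesssim C^{-1}$ by Lemma \ref{lem_eta}, so $Cs_0\delta^2=(Cs_0\delta)\,\delta\lesssim (Cs_0\delta)\,C^{-1}\lesssim s_0\delta$, giving $-\tilde\eta(u)\lesssim s_0\delta=s_0\,d(u,\bd\Pi)$, with all implicit constants depending only on the second‑derivative bounds of $\eta$, i.e. only on the system.

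I do not expect a genuine obstacle: the argument reduces to one Taylor expansion plus substitution of estimates from Lemma \ref{lem_eta}. The only points requiring a little care are (i) the existence of the nearest boundary point $\overline u$, which is exactly where compact containment of $\Pi$ in $\Nu$ is used, and (ii) the interplay of the two length scales, namely using $d(u,\bd\Pi)\lesssim C^{-1}$ to absorb the $Cs_0\delta^2$ remainder into $s_0\delta$; without it one would only obtain $-\tilde\eta(u)\lesssim s_0\delta+Cs_0\delta^2$, which is too weak near the ``center'' of $\Pi$, where $\delta\sim C^{-1}$.
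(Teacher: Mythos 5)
Your proof is correct and takes essentially the same approach as the paper. The paper picks the nearest point $\overline u\in\bd\Pi$ and writes $-\tilde\eta(u)=\left|\int_{\overline u}^u\nabla\tilde\eta\right|\lesssim s_0\,|u-\overline u|$, using only the uniform bound $|\nabla\tilde\eta|\lesssim s_0$ on the (convex) set $\Pi$; you do the same, except that you use a second-order Taylor expansion at $u$ and then absorb the Hessian remainder $Cs_0\,\delta^2$ into $s_0\,\delta$ via $\delta\lesssim\mathrm{diam}(\Pi)\lesssim C^{-1}$ -- a step the paper avoids by integrating the gradient along the segment rather than freezing it at $u$.
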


\begin{proof}
Since $\bd\Pi$ is compact, for $u\in \Pi$, there is a minimizer $\overline{u} \in \bd\Pi$ satisfying
\begin{equation}\label{eqn_lemdist_eqn1}
\min_{v\in \bd\Pi}|u-v| = |u-\overline{u}|.
\end{equation}
Note, since $\overline{u}\in\bd \Pi$, $\tilde\eta(\overline{u}) = 0$.
Therefore, by the fundamental theorem of calculus, Lemma \ref{lem_eta}, and (\ref{eqn_lemdist_eqn1}),
\begin{equation}
-\tilde\eta(u) = |\tilde\eta(u)| = \left|\tilde\eta(\overline{u}) + \int_{\overline{u}}^u \nabla\tilde\eta(w) \ dw \right| \lesssim s_0 |u-\overline{u}| = s_0 d(u,\bd\Pi).
\end{equation}
\end{proof}

Finally, the last of our structural lemmas should be read as a statement about the curvature of $\Pi$. Although we need only a bound on the curvature in the sense that the Lipschitz constant of the unit normal is essentially $C$, one can show this in several stronger senses than we do. For instance, it is straightforward to show that each principal curvature (with respect to the outward unit normal) satisfies $\kappa_i\sim -C$. One can show that $\Pi$ has positive curvature with a lower bound in the geometric sense that all sectional curvatures are bounded below by $C^2$. Lower bounds on the curvature, in this sense, are the central hypotheses in several major local-to-global type results in the comparison geometry, which reinforces our intuition that $C$ contains global information about the geometry of $\Pi$.

\begin{lemma}\label{lem_pi}
For $\nu$ the outward unit normal on $\bd\Pi$, for any $u,w\in \bd\Pi$,
\begin{equation}
|\nu(u) - \nu(w)| \sim C|u - w|,
\end{equation}
for $C$ sufficiently large and $s_0$ sufficiently small.
\end{lemma}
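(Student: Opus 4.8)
The plan is to work throughout with $g := \nabla\tilde\eta$, so that by Lemma \ref{lem_nu} we have $\nu(\bar u) = g(\bar u)/|g(\bar u)|$ for every $\bar u \in \bd\Pi$. Everything is extracted from two preliminary facts, both immediate from Lemma \ref{lem_eta}. First, the explicit formula $\nabla\tilde\eta(u) = Cs_0\big(\nabla\eta(u) - \nabla\eta(u_L)\big) - \big(\nabla\eta(u_R) - \nabla\eta(u_L)\big)$, combined with $|u - u_L| \lesssim C^{-1}$ on $\Pi$ and $|u_R - u_L| \sim s_0$, gives $|g| \lesssim s_0$ throughout $\Pi$; together with the lower bound of Lemma \ref{lem_nu} this yields $|g(\bar u)| \sim s_0$ for every $\bar u \in \bd\Pi$. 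Second, since $\nabla^2\tilde\eta = Cs_0\nabla^2\eta$ and $\nabla^2\eta$ is uniformly positive definite and bounded on the compact set $\overline{\Pi} \subset\subset \Nu$ (for $C$ large), integrating along the segment $[u,w] \subset \overline{\Pi}$ gives, for all $u,w\in\overline{\Pi}$,
$$|g(w) - g(u)| \sim Cs_0\,|w-u| \qquad\text{and}\qquad \big(g(w) - g(u)\big)\cdot(w-u) \sim Cs_0\,|w-u|^2 .$$

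For the upper bound, I would fix $u,w \in \bd\Pi$ and split
$$\nu(w) - \nu(u) = \frac{g(w) - g(u)}{|g(w)|} + g(u)\left(\frac{1}{|g(w)|} - \frac{1}{|g(u)|}\right) .$$
The first term has norm $\lesssim |g(w)-g(u)|/s_0 \lesssim C|w-u|$, and the second has norm $\frac{\big||g(w)|-|g(u)|\big|}{|g(w)|} \le \frac{|g(w)-g(u)|}{|g(w)|} \lesssim C|w-u|$, so $|\nu(w)-\nu(u)| \lesssim C|w-u|$.

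The lower bound is the heart of the matter. One should \emph{not} integrate $\nabla\nu$ along $[u,w]$, because $g = \nabla\tilde\eta$ vanishes at the unique interior minimizer of the strictly convex $\tilde\eta$, where $\nu$ is undefined; instead I would use the convexity of $\Pi$. Since $\tilde\eta$ is convex with $\tilde\eta(u) = \tilde\eta(w) = 0$, the supporting-hyperplane inequalities give $g(u)\cdot(w-u) \le 0 \le g(w)\cdot(w-u)$, so both summands below are nonnegative and, bounding denominators by $\sup_\Pi|g| \lesssim s_0$,
$$\big(\nu(w) - \nu(u)\big)\cdot(w-u) = \frac{g(w)\cdot(w-u)}{|g(w)|} + \frac{|g(u)\cdot(w-u)|}{|g(u)|} \ge \frac{\big(g(w) - g(u)\big)\cdot(w-u)}{\sup_\Pi|g|} \gtrsim \frac{Cs_0|w-u|^2}{s_0} = C|w-u|^2 .$$
Cauchy--Schwarz, $|\nu(w)-\nu(u)|\,|w-u| \ge \big(\nu(w)-\nu(u)\big)\cdot(w-u)$, then gives $|\nu(w)-\nu(u)| \gtrsim C|w-u|$, and combined with the upper bound this is the claim.

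I expect the only genuine obstacle to be the lower bound, and specifically the observation that the sign information in the supporting-hyperplane inequalities for the convex set $\Pi$ is exactly what upgrades the monotonicity of $g = \nabla\tilde\eta$ (which scales like $Cs_0$) together with the size $|g| \sim s_0$ on $\bd\Pi$ into the curvature-type bound $\sim C$; one must also resist the naive differential argument, which breaks down near the interior critical point of $\tilde\eta$.
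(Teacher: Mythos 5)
Your argument is correct, and it takes a genuinely different (and cleaner) route from the paper's, especially for the lower bound. The paper also computes $|\nabla\nu| \lesssim C$ on $\bd\Pi$ for the upper bound, but then for the lower bound it runs a cone-based case analysis: it fixes a half-angle $\alpha_0$, treats $w$ outside the cone $P_{\alpha_0}(u)$ by projecting $w - u$ onto the tangent space at $u$ and controlling $\nabla^2\eta(w)(u-w)\cdot e$, and treats $w$ inside the cone separately by noting the normals must nearly reverse so $|\nu(u) - \nu(w)| \gtrsim 1 \gtrsim C|u-w|$ via $\mathrm{diam}(\Pi) \lesssim C^{-1}$. Your substitute — the two supporting-hyperplane inequalities $g(u)\cdot(w-u) \le 0 \le g(w)\cdot(w-u)$ on the level set, combined with strong monotonicity $(g(w)-g(u))\cdot(w-u) \gtrsim Cs_0|w-u|^2$ and $\sup_\Pi|g| \lesssim s_0$, then Cauchy--Schwarz — eliminates the case split entirely and is self-contained at the level of the convexity of $\tilde\eta$. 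Your splitting $\nu(w)-\nu(u) = \frac{g(w)-g(u)}{|g(w)|} + g(u)\bigl(\frac{1}{|g(w)|} - \frac{1}{|g(u)|}\bigr)$ for the upper bound is also slightly more robust than the paper's: the paper bounds $|\nabla\nu|$ pointwise on $\bd\Pi$ and then asserts the Lipschitz bound, which requires integrating $\nabla\nu$ along a path staying on $\bd\Pi$ (where the needed lower bound $|\nabla\tilde\eta| \gtrsim s_0$ holds) and comparing the intrinsic path length to $|u-w|$; your version uses only a Lipschitz bound on $g$ along the chord, which holds throughout $\overline\Pi$, plus the lower bound on $|g|$ at the two endpoints. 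One small correction to the framing: the paper does use the differential inequality $|\nabla\nu| \lesssim C$, but only for the upper bound where the potential singularity at the interior minimizer of $\tilde\eta$ is less dangerous; the obstruction you identify is real but is the reason the paper's lower-bound argument is structured differently rather than a trap the paper falls into. Also note Lemma \ref{lem_eta} as printed says $|\nabla\eta(u)| \lesssim s_0$, but the proof there clearly establishes $|\nabla\tilde\eta(u)| \lesssim s_0$, which is what you (correctly) use.
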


\begin{proof}
By Lemma \ref{lem_nu}, $\nu(u) = \frac{\nabla\tilde\eta(u)}{|\nabla\tilde\eta(u)|}$ and $\nu\in C^2(\bd\Pi;S^n)$. Therefore, we compute
\begin{equation}\label{eqn_lempi_eqn1}
\nabla \nu(u) = \frac{\nabla^2\tilde\eta(u)}{|\nabla\tilde\eta(u)|} - \frac{\nabla\tilde\eta(u)\tens \left(\nabla^2\tilde\eta(u)\nabla\tilde\eta(u)\right)}{|\nabla\tilde\eta(u)|^{3}}.
\end{equation}
By Lemma \ref{lem_eta} and Lemma \ref{lem_nu}, $|\nabla\tilde\eta(u)| \sim s_0$ on $\bd\Pi$ and $|\nabla^2\tilde\eta(u)| \lesssim Cs_0$. Therefore, (\ref{eqn_lempi_eqn1}) implies $|\nabla\nu| \lesssim C$ and hence $|\nu(u) - \nu(w)| \lesssim C|u-w|$ for all $C$ sufficiently large and $s_0$ sufficiently small.

The lower bound is more delicate. Let us fix $u\in \bd\Pi$. For $0 < \alpha < \frac{\pi}{2}$, define the cone centered at $u$ with direction $-\nu(u)$ and angle $\alpha$ via
\begin{equation}
P_\alpha(u) := \set{w \in \Nu \ \big | \ \cos(\alpha) \le -\nu(u) \cdot \frac{(w - u)}{|w - u|}}.
\end{equation}
For $w\in \bd\Pi$ and $w \notin P_\alpha(u)$, define $e = (w-u) - \left[(w-u) \cdot \nu(u)\right]\nu(u)$, so that $e$ is the projection of $w-u$ onto the tangent space of $\bd\Pi$ at $u$. Using $\nu(u) \cdot e = 0$, Lemma \ref{lem_nu}, and the fundamental theorem of calculus, we estimate
\begin{equation}\label{eqn_lempi_eqn2}
\begin{aligned}
|\nu(u) - \nu(w)||e| \ge \left|e\cdot (\nu(u) - \nu(w))\right| &\gtrsim \frac{1}{s_0}|\nabla\tilde\eta(u) -\nabla\tilde\eta(w)|\\
    &=\int_w^u e\cdot \nabla^2\tilde\eta(v) \ dv\\
    &\gtrsim C\int_0^1 e\cdot \nabla^2\eta(w + t(u-w))(u-w)\ dt\\
    &\gtrsim C\nabla^2\eta(w)(u - w) \cdot e.
\end{aligned}
\end{equation}
It remains to lower bound the quantity $A := \nabla^2\eta(w)(u-w) \cdot e$. We decompose $A$ into two parts via
\begin{equation}
A = \left(\nabla^2\eta(w)(u-w) \cdot (u-w)\right) - \left(\left[(u-w)\cdot\nu(u)\right]\nabla^2\eta(w)(u-w)\cdot\nu(u)\right).
\end{equation}
First, since $\eta$ is convex, $\nabla^2\eta(w)(u-w)\cdot (u-w) \gtrsim |u-w|^2$. Second, since $w\notin P_\alpha(u)$,
\begin{equation}\label{eqn_lempi_eqn3}
\left|\left[(u-w)\cdot\nu(u)\right]\nabla^2\eta(w)(u-w)\cdot\nu(u)\right|\lesssim |u-w|^2 \cos(\alpha).
\end{equation}
Therefore, we may pick $\alpha_0$ with $\frac{\pi}{2} -\alpha_0$ sufficiently small, independent of $C$ and $s_0$, such that if $w\notin P_{\alpha_0}(u)$, then $A \gtrsim |u-w|^2 \ge |u-w||e|$. From (\ref{eqn_lempi_eqn2}), it follows that
\begin{equation}\label{eqn_lempi_eqn4}
|\nu(u) - \nu(w)| \gtrsim C|u - w| \quad \text{for }u \in \bd\Pi\text{ and }w \notin P_{\alpha_0}(u).
\end{equation}
It remains to prove the estimate for $w\in P_{\alpha_0}(u)$.
Fix $w \in P_{\alpha_0}(u)$. Then, we note by the strict convexity of $\Pi$, the line $L(t) = u + t(w-u)$ crosses $\bd\Pi$ exactly twice, at $t=0$ and $t=1$. Since $-\nu(u)\cdot (w-u) > \cos(\alpha_0)|w - u|$, $L$ is inward-pointing at time $t = 0$ and so must be outward pointing at time $t = 1$. Using $w\in P_{\alpha_0}(u)$, we obtain
\begin{equation}
    \left[\nu(w) - \nu(u)\right] \cdot \frac{(w - u)}{|w - u|} > \cos(\alpha_0).
\end{equation}
As $\alpha_0$ was chosen independent of $C$ and $s_0$, $|\nu(u) - \nu(w)| \gtrsim 1$ for all $u\in \bd\Pi$ and $w\in P_{\alpha_0}(w)$. Therefore, by Lemma \ref{lem_eta}, we conclude
\begin{equation}\label{eqn_lempi_eqn5}
    |\nu(u) - \nu(w)| \gtrsim 1 \gtrsim C|u - w|  \quad \text{for }u \in \bd\Pi\text{ and }w \in P_{\alpha_0}(u).
\end{equation}
Combining (\ref{eqn_lempi_eqn4}) and (\ref{eqn_lempi_eqn5}) yields the desired lower bound.
\end{proof}

%%%%%%%%%%%%%%%%%%%%%%%%%%%%%%%%%%%%%%%%%%%%%%%%%%%%%%%%%%%%%%%%%%%%%%%%%%%%%%%%%%%%%%%%%%%%%%

\section{Proof of Theorem \ref{thm_main}}\label{secSam}

In this section, we prove that  Proposition \ref{prop_shock} implies Theorem \ref{thm_main}.
%\textbf{For this proof, we need the connection between $\sigma=\abs{u_L-u_R}$ and $s_0$ (are they equal? up to some constants?)}
We construct the shift function $t\to h(t)$ by solving an ODE in the Fillippov sense, in the spirit of  \cite{Leger2011_original, Leger2011}. To enforce the separation of waves \eqref{control_one_shock1020} and   \eqref{control_two_shock1020}, the proof uses the  framework developed first in \cite{move_entire_solution_system}. We specialize to the present situation and simplify the argument.
\vskip0.3cm

Note that we only need to show \Cref{thm_main} for a $1$-shock.
\Cref{thm_main} follows for an $n$-shock by considering the conservation law $u_t-f(u)_x=0$, because an $n$-shock for the system $u_t+f(u)_x=0$ is a $1$-shock for the system $u_t-f(u)_x=0$.
In particular, \eqref{control_one_shock1020} for $1$-shocks provides  \eqref{control_two_shock1020} for $n$-shocks, and  \eqref{control_a_one} for $1$-shocks provides \eqref{control_a_two} for $n$-shocks.
\vskip0.3cm
The proof of Theorem \ref{thm_main} for $1$-shocks is split into 5 steps.
\vskip0.3cm
\noindent
{\bf Step 1: Fixing the constants $C_1$, $\tilde{s}_0$, and $\eps$.} 
Let us fix $d\in \Nu$. From Assumption \ref{assum} (a), $\lambda_1(d)<\lambda_2(d)$.  Therefore, there exist $\alpha_1$ and $\eps_d>0$ such that the ball centered at $d$ with radius $\eps_d$ verifies $B(d,\eps_d)\subset \Nu$, and 
\begin{equation}\label{alpha}
\sup_{v\in B(d,\eps_d)}\lambda_1(v)<\alpha_1< \inf_{v\in B(d,\eps_d)}\lambda_2(v).
\end{equation}
Due to Lemma \ref{lem_eta}, if $C/2$ is big enough, then $\Pi_{C,s_0}\subset B(d,\eps_d/2)\subset \Nu$ for any $u_L, u_R$ such that  $|u_L-d|+|u_R-d|\leq \eps_d/8$ with  $s_0=|u_L-u_R|$.  From now on, we  consider a fixed constant $C_1>0$ verifying this property, Lemma \ref{lem_nu}, and Proposition \ref{prop_shock}. We then fix $\tilde{s}_0>0$ verifying   Proposition \ref{prop_shock} for these fixed $d$ and $C_1$. 
We then take 
\begin{equation}\label{eps}
\eps=\min(\tilde{s}_0, \eps_d/8).
\end{equation}
\vskip0.1cm
For any $u_L, u_R$ such that $|u_L-d|+|d-u_R|\leq \eps$, we denote $s_0=|u_L-u_R|$.
Then for any $a_1,a_2>0$ verifying \eqref{control_a_one}, We denote $C>0$ such that 
\begin{equation}\label{eqc}
\frac{a_1}{a_2}=1+Cs_0.
\end{equation}
From  \eqref{control_a_one}, $C$ verifies  $C_1/2<C<2C_1$, and so verifies Proposition  \ref{prop_shock}.  For the rest of the proof, the constant $C$ is proportional to the fixed constant $C_1$. However, we need to consider all possible strengths, $s_0$, of the shock $(u_L,u_R)$ for $0<s_0\leq \tilde{s}_0$.

\vskip0.4cm
\noindent
{\bf Step 2: Control of $\tilde{q}$ from $\tilde{\eta}$.} We remind the reader that the functionals $\tilde{\eta}$ and $\tilde{q}$, defined in \eqref{defn_eta}, depend on $C$ (which is now proportional to $C_1$ fixed), and on the $1$-shock $u_L,u_R$, especially through its strength $s_0=|u_L-u_R|$. For any $u\in \Nu_0\setminus \Pi_{C,s_0}$, we denote $d(u,\partial\Pi)=\inf\{|u-v| \ : \ v\in  \Pi_{C,s_0}  \}$.  This step is dedicated to the proof of the following lemma.

\begin{lemma}\label{lem:ext}% Let $d\in \Nu$. Then,
There exist  constants  $\delta>0$, and  $C^*>0$, such that  for any $u_L, u_R\in \Nu$ such that 
$|u_L-d|+|u_R-d|\leq \tilde{s}_0$,  %, if $u_L\in B_d(1/K)$, 
%$\Pi_{C,s_0}$ is contained in the ball of radius $\frac{K}{C}$ centered at $d$. Moreover, for any $u\in\{u\in\Nu| \tilde{q}(u)\leq 0\}\cap B_{{\frac{2K}{C}}}(d)$:
\begin{eqnarray}
%&&\tilde{\eta}(u)>0 \text{  for } u\in \Nu\setminus \Pi_{C,s_0},\\
\label{etadirection}
%&&|\nabla \tilde{\eta}(u)-\nabla\tilde{\eta}(\bar{u})|\leq \frac{1}{2}|\nabla\tilde{\eta}(\bar{u})|, \text{ for any } u\in \Nu\setminus \Pi_{C,s_0} \text{  at a distance from } \Pi_{C,s_0} \text{ smaller than } \eps,\\
&&\tilde{\eta}(u)\geq\delta s_0 \ \! d(u,\partial\Pi) \text{ for any  } u\in \Nu_0\setminus \Pi_{C,s_0},\\% \text{  for any } u\in \Nu, \text{ at a distance of  }  \Pi_{C,s_0} \text{ bigger than } \eps,\\
\label{control_q_ratio}
&&\abs{\tilde{q}(u)}\leq C^*\abs{\tilde{\eta}(u)}, \text{ whenever } u\in \Nu_0\setminus \Pi_{C,s_0} \text{ verifies } \tilde{q}(u)\leq 0.
\end{eqnarray}
\end{lemma}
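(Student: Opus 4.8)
The plan is to prove the two estimates separately, with \eqref{etadirection} serving as an input for \eqref{control_q_ratio}. Throughout I will write $\overline\Pi$ for the closure of $\Pi_{C,s_0}$ and use that, by the choices in Step 1 together with Lemma \ref{lem_eta}, $\overline\Pi$ sits strictly inside $\Nu$, say $\overline\Pi\subset K:=\{v:d(v,\overline\Pi)\le\delta_0\}\subset\subset\Nu$ for a fixed $\delta_0$ depending only on the system and $d$.

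For \eqref{etadirection} I would argue by convexity alone. Fix $u\in\Nu_0\setminus\Pi_{C,s_0}$; if $u\in\bd\Pi$ both sides vanish, so assume $u\notin\overline\Pi$ and let $\bar u$ be the projection of $u$ onto the compact convex set $\overline\Pi$, so that $\bar u\in\bd\Pi$ and $|u-\bar u|=d(u,\bd\Pi)$. The first-order optimality condition for the projection places $u-\bar u$ in the normal cone to $\Pi$ at $\bar u$; since $\bd\Pi$ is a $C^3$ hypersurface with non-vanishing gradient (Lemma \ref{lem_nu}), this cone is the ray through $\nabla\tilde\eta(\bar u)$, hence $u-\bar u=d(u,\bd\Pi)\,\nabla\tilde\eta(\bar u)/|\nabla\tilde\eta(\bar u)|$. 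By Lemma \ref{lem_eta}, $\nabla^2\tilde\eta=Cs_0\nabla^2\eta$, so $\tilde\eta$ differs from $Cs_0\eta$ by an affine function and is therefore convex on all of $\Nu_0$ and differentiable at the interior point $\bar u$; the supporting-hyperplane inequality then gives
\[
\tilde\eta(u)\ \ge\ \tilde\eta(\bar u)+\nabla\tilde\eta(\bar u)\cdot(u-\bar u)\ =\ |\nabla\tilde\eta(\bar u)|\,d(u,\bd\Pi)\ \gtrsim\ s_0\,d(u,\bd\Pi),
\]
using $\tilde\eta(\bar u)=0$ and the lower bound $|\nabla\tilde\eta|\gtrsim s_0$ on $\bd\Pi$ from Lemma \ref{lem_nu}. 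This is \eqref{etadirection} with $\delta$ the implied constant.

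For \eqref{control_q_ratio} I would assemble three facts. First, a crude global bound: expanding $\tilde q$ via \eqref{def_q} and \eqref{defn_eta} yields $\tilde q(u)=Cs_0\,q(u)+\big[\nabla\eta(u_R)-(1+Cs_0)\nabla\eta(u_L)\big]\cdot f(u)+\text{const}$, and since $q,f\in C(\Nu_0)$ are bounded, $\nabla\eta(u_L),\nabla\eta(u_R)$ are bounded, $|\nabla\eta(u_R)-\nabla\eta(u_L)|\lesssim s_0$, and $C\sim C_1$ is fixed, one gets $|\tilde q(u)|\lesssim s_0$ on all of $\Nu_0$. Second, a refined gradient bound on $K$: from $\nabla\tilde\eta=Cs_0(\nabla\eta(\cdot)-\nabla\eta(u_L))-(\nabla\eta(u_R)-\nabla\eta(u_L))$ (Lemma \ref{lem_eta}), boundedness of $\nabla\eta$ on the compact set $K$, and $|\nabla\eta(u_R)-\nabla\eta(u_L)|\lesssim s_0$, we get $|\nabla\tilde\eta|\lesssim s_0$ on $K$, hence $|\nabla\tilde q|=|\nabla\tilde\eta\,f'|\lesssim s_0$ on $K$. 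Third, a sign: on $\bd\Pi$ we have $\tilde\eta=0$, so $D_{cont}=-\tilde q$ there, and $D_{cont}\le 0$ on $\overline\Pi$ by Proposition \ref{prop_cont} (equivalently, by letting $u_+\to u_-$ in Proposition \ref{prop_shock}); thus $\tilde q\ge 0$ on $\bd\Pi$.

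With these in hand I would split the argument. If $d(u,\bd\Pi)>\delta_0$, then \eqref{etadirection} gives $|\tilde\eta(u)|\ge\delta s_0\delta_0$, while the crude bound gives $|\tilde q(u)|\lesssim s_0\lesssim|\tilde\eta(u)|$. If $d(u,\bd\Pi)\le\delta_0$, let $\bar u$ be the projection as above and $\nu=\nu(\bar u)$; the segment $s\mapsto\bar u+s\nu$, $s\in[0,t]$ with $t:=d(u,\bd\Pi)$, lies in $K$, so using $\tilde q(\bar u)\ge 0$, the hypothesis $\tilde q(u)\le 0$, and $|\nabla\tilde q|\lesssim s_0$ on $K$,
\[
-\tilde q(u)=-\tilde q(\bar u)-\int_0^{t}\nabla\tilde q(\bar u+s\nu)\cdot\nu\,ds\ \le\ \int_0^{t}|\nabla\tilde q(\bar u+s\nu)|\,ds\ \lesssim\ s_0\,t\ \lesssim\ |\tilde\eta(u)|,
\]
the last step again by \eqref{etadirection}. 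Taking $C^*$ to be the larger of the two implied constants proves \eqref{control_q_ratio}. The main obstacle is the near-boundary regime: the crude bound $|\tilde q|\lesssim s_0$ is useless there because $|\tilde\eta(u)|$ can be as small as $s_0^3$ near $\bd\Pi$, and one must instead exploit the sign $\tilde q\ge 0$ on $\bd\Pi$ (which is exactly where Proposition \ref{prop_cont} is used) together with the smallness $|\nabla\tilde q|\lesssim s_0$ to control how negative $\tilde q$ can become as one moves off $\bd\Pi$.
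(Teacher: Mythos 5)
Your proposal is correct and follows essentially the same route as the paper: prove \eqref{etadirection} by projecting $u$ onto $\bd\Pi$ and combining the convexity of $\tilde\eta$ with the lower bound $|\nabla\tilde\eta|\gtrsim s_0$ on $\bd\Pi$ from Lemma~\ref{lem_nu}, then deduce \eqref{control_q_ratio} by splitting into a far-from-$\bd\Pi$ regime (where the crude bound $|\tilde q|\lesssim s_0$ suffices) and a near-$\bd\Pi$ regime, where one integrates $\nabla\tilde q=\nabla\tilde\eta\, f'$ along the segment from the projection $\bar u$ to $u$ and exploits $\tilde q(\bar u)>0$, which follows from Proposition~\ref{prop_cont} via $D_{cont}(\bar u)=-\tilde q(\bar u)$ on $\bd\Pi$. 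The only cosmetic differences are that the paper parametrizes $\phi(t)=\tilde\eta(\bar u+t(u-\bar u))$ and uses $\phi(1)\ge\phi(0)+\phi'(0)$ rather than quoting the supporting-hyperplane inequality directly, and it cuts the two regimes by whether $u\in B(d,3\eps_d/4)$ rather than by whether $d(u,\bd\Pi)\le\delta_0$.
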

\begin{proof} 
From Lemma \ref{lem_nu}, there exists $\delta>0$ (independent of $u_L,u_R, s_0, C$, since  $\tilde{s}_0$ and $C_1$ are now fixed), such that for any $v\in \partial \Pi_{C,s_0}$:
$$
\nabla\tilde{\eta}(v)\cdot\nu(v)\geq \delta s_0.
$$
For any $u\in \Nu_0$, denote $\bar{u}$ the orthogonal projection of $u$ on $\Pi_{C,s_0}$. Since $\Pi_{C,s_0}$ is smooth and strictly convex, whenever $u\notin \Pi_{C,s_0}$:
$$
u-\bar{u}=|u-\bar{u}|\nu(\bar{u}), \qquad |u-\bar{u}|=d(u,\partial\Pi).
$$
Consider $\phi(t)=\tilde{\eta}(\bar{u}+t(u-\bar{u}))$ for $t\in[0,1]$. We have 
$$
\phi'(0)=(u-\bar{u})\cdot\nabla\tilde{\eta}(\bar{u})\geq \delta s_0 d(u,\partial\Pi).
$$
Moreover $\phi$ is convex so for all $0\leq t\leq 1$:
$$
\phi(t)\geq \phi(0)+t\phi'(0)=\tilde{\eta}(\bar{u})+t \delta s_0 \ \! d(u,\partial\Pi).
$$
Since $\tilde{\eta}(\bar{u})=0$ ($\bar{u}\in \partial \Pi_{C,s_0}$), for $t=1$ we obtain:
$$
\tilde{\eta}(u)=\phi(1)\geq  \delta s_0  \ \! d(u,\partial\Pi).
$$
This proves \eqref{etadirection}.
\vskip0.1cm
Since $q$ is not $C^1$ up to the boundary of $\Nu$, to show \eqref{control_q_ratio} we need to consider the two cases: $u$ far from the boundary of $\Nu$ and $u$ close to the boundary of $\Nu$.  
\vskip0.1cm
First, to handle $u$ close to the boundary of $\Nu$, we recall $q$ and $f$ are continuous functions on the compact set $\overline{\Nu}_0$.
From the definitions \eqref{def_q}, \eqref{defn_eta}, there exists a constant $\tilde{C}>0$ such that for any $u\in \Nu_0$, $|\tilde{q}(u)|\leq \tilde{C}s_0$.
Since $\Pi_{C,s_0}\subset B_d(\eps_d/2)$, together with \eqref{etadirection}, this provides \eqref{control_q_ratio} for any $u\in \Nu_0\setminus  B_d(3\eps_d/4)$ for $C^*=4\tilde{C}/(\delta \eps_d)$.
\vskip0.1cm
Second, to handle $u$ far from the boundary of $\Nu$, we note $B_d(3\eps_d/4)$ is at least $\eps_d/4$ far away from $\R^n\setminus \Nu_0$, so $f'$ and $\nabla\eta$ are uniformly bounded on this ball. 
Consider any $u\in B_d(3\eps_d/4)\setminus \Pi_{C,s_0}$ such that $\tilde{q}(u)\leq 0$. Let $\bar{u}$ be the orthogonal projection of $u$ on 
$\partial  \Pi_{C,s_0}$.
From Proposition \ref{prop_cont}, together with \eqref{defn_Dcont}, we have $\tilde{q}(\bar{u})>0$. Therefore, for a possibly different constant $\tilde{C}$, still independent of $s_0$,  
\begin{equation}
\begin{aligned}\label{rel_entrop_eq}
|\tilde{q}(u)|=-\tilde{q}(u)\leq \tilde{q}(\bar{u})-\tilde{q}(u)=-\int\limits_0^1 \nabla\tilde q((1-t)\bar{u}+tu)\cdot(u-\bar{u})\,dt\\
\qquad =-\int\limits_0^1 \Big(\nabla\tilde\eta((1-t)\bar{u}+tu)f'((1-t)\bar{u}+tu)\Big)\cdot(u-\bar{u})\,dt,\\
\leq \tilde{C}s_0|u-\bar{u}|=\tilde{C}s_0 \ \! d(u,\partial \Pi).
\end{aligned}
\end{equation}
Together with \eqref{etadirection}, this provides \eqref{control_q_ratio}, in this case, with $C^*=\tilde{C}/\delta$. Taking $C^* = \max(\tilde{C}/\delta,4\tilde{C}/(\delta \eps_d))$ gives the result.
\end{proof}

\vskip0.3cm
\noindent
{\bf Step 3: Construction of the shift.} We now fix a shock $u_L,u_R$, and fix the weight $a_1,a_2>0$. From Step 1, this corresponds to fixing 
$s_0$ and $C$. From \eqref{eqc},
$$
A=\{ u \in \Nu_0 \ | \ a_1\eta(u  |  u_L) > a_2\eta(u  | u_R)\}=\Nu_0\setminus \overline{\Pi_{C,s_0}},
$$
which is an open set of $\Nu_0$. Therefore, the indicator function of this set is lower semi-continuous function on $\Nu_0$, and 
$
-\mathbbm{1}_{A}
$
is upper semi-continuous on $\Nu_0$.
 With the hypotheses of the theorem, these constants verify the hypotheses of Proposition \ref{prop_shock}.
Note that the smallest eigenvalue  $ \lambda_1(u)$ of $f'(u)$ may not be defined on $ \Nu_0\setminus \Nu$ (since $f$ is not differentiable on those points). These values of $u$ correspond to the vacuum in the case of the Euler equation. By definition, we extend this function on $\Nu_0$ by
$$
 \lambda_1(u)\coloneqq L \text{ for } u\in \Nu_0\setminus \Nu,
$$
where $L$ is defined by Assumption \ref{assum} (e). Note that  this function $\lambda_1$ is still  upper semi-continuous in $u$ on the whole domain $\Nu_0$. 
We define now the velocity functional:
\begin{equation}\label{define_V}
V(u) \coloneqq \lambda_1(u)-(C^*+2L)\mathbbm{1}_{\{ u \in \Nu_0 \ | \ a_1\eta(u  |  u_L) > a_2\eta(u  | u_R)\}}(u), \text{ for } u\in \Nu_0,
\end{equation}
Where $C_*>0$  is defined in Lemma \ref{lem:ext}. %to be chosen later in the proof (which will be made larger as we go).% and where $L$ is defined by Assumption \ref{assum} (e).
This function is still upper semi-continuous in $u$ on the whole domain $\Nu_0$. 
We want to solve the following ODE with discontinuous right hand side,
\begin{align}\label{eqn:h}
\begin{cases}
\dot{h}(t)=V(u(h(t),t)),\\
h(\bar{t})=x_0.
\end{cases}
\end{align}
The existence of an $h$ satisfying \eqref{eqn:h} in the sense of Filippov is the content of the following lemma, originally proved in \cite[Proposition 1]{Leger2011}.
%taken directly from \cite[Lemma 4.5]{move_entire_solution_system}, but originally proved in \cite[Proposition 1]{Leger2011}.

\begin{lemma}[Existence of Filippov flows]\label{fil_lemma}
Let $V\colon \Nuo\to\mathbb{R}$ be bounded and upper semi-continuous on $\Nuo$ and continuous on $U$ an open, full measure subset of $\Nuo$. Let $u\in \Sweak$, $x_0\in\mathbb{R}$, and $\bar t\in[0,\infty)$.
Then, we can solve \eqref{eqn:h} in the Filippov sense. That is, there exists a Lipschitz function $h\colon [\bar{t},\infty)\to\mathbb{R}$ such that 
\begin{align}
V_{min}(t) &\le \dot{h}(t) \le V_{max}(t),\label{fil_contain}\\
h(\bar t)&=x_0,\label{fil_prop2}\\
\rm{Lip}[h]&\leq \norm{V}_{L^\infty}\label{fil_prop1}
\end{align}
for almost every $t$, where $u_\pm\coloneqq u(h(t)\pm,t)$ and 
\begin{equation}
V_{max}(t) = \max(V(u^+(t)),V(u^-(t))) \quad \mathrm{and} \quad V_{min}(t) = \begin{cases} \min(V(u^+(t)),V(u^-(t))) &\mathrm{if }\ u^+(t),u^-(t) \in U\\ -\norm{V}_{L^\infty} &\mathrm{otherwise}\end{cases}.
\end{equation}
%and $I[a,b]$ denotes the closed interval with endpoints $a$ and $b$.
Furthermore, for almost every $t$,
\begin{align}
f(u_+)-f(u_-)&=\dot{h}(u_+-u_-),\label{fil_entrop1}\\
q(u_+)-q(u_-)&\leq \dot{h}(\eta(u_+)-\eta(u_-)),\label{fil_entrop2}
\end{align}
i.e., for almost every $t$, either $(u_-,u_+,\dot{h})$ is an entropic shock (for the entropy $\eta$) or $u_+=u_-$.
\end{lemma}
%In this article, we do not include a proof of  \eqref{fil_prop1}, \eqref{fil_prop2}, \eqref{fil_contain},\eqref{fil_entrop1} or \eqref{fil_entrop2}.
%The proof of \eqref{fil_prop1}, \eqref{fil_prop2} and \eqref{fil_contain} is very similar to the proof of Proposition 1 in \cite{Leger2011} (and see also \cite[Lemma 4.2]{move_entire_solution_system}). 

Note that for any function $h$ which is Lipschitz continuous, the results \eqref{fil_entrop1} and \eqref{fil_entrop2} are well-known when the solution $u$ is BV. However, %when $u$ is only known to have strong traces (as in the Strong Trace Property), 
 \eqref{fil_entrop1} and \eqref{fil_entrop2} are still valid under the strong trace property on $u$ (see \cite[Lemma 6]{Leger2011} or the appendix in \cite{Leger2011}). 
We can readily apply this lemma to our situation.  We will use its precise results below.%, since  the function $V$ defined by \eqref{define_V} is upper semi-continuous in $u$. Indeed,  the indicator functions of open sets are lower semi-continuous, while the negative of a lower-semicontinuous function is itself upper semi-continuous, $\lambda_1$ is upper semi-continuous, and the sum of upper semi-continuous functions is  upper semi-continuous.

\vskip0.3cm
\noindent
{\bf Step 4: Proof of \eqref{diss:shock}.}
Let us denote $u_\pm\coloneqq u(h(t)\pm,t)$. We will prove \eqref{diss:shock} for each fixed time $t$, by considering the four following cases
(which allow for $u_+=u_-$).
\begin{eqnarray*}
\text{Case 1.  } a_1\eta(u_- |  u_L) > a_2\eta(u_- | u_R) &\text{ and } &a_1\eta(u_+  |  u_L) > a_2\eta(u_+  | u_R),\\
\text{Case 2.  } a_1\eta(u_-  |  u_L) > a_2\eta(u_-  | u_R) &\text{ and } &a_1\eta(u_+  |  u_L) \leq a_2\eta(u_+  | u_R),\\
\text{Case 3.  } a_1\eta(u_-  |  u_L) \leq a_2\eta(u_-  | u_R) & \text{ and } &a_1\eta(u_+  |  u_L) > a_2\eta(u_+  | u_R),\\
\text{Case 4.  } a_1\eta(u_-  |  u_L) \leq a_2\eta(u_-  | u_R)  &\text{ and } &a_1\eta(u_+  |  u_L) \leq a_2\eta(u_+  | u_R).
\end{eqnarray*}

Since we need only prove \eqref{diss:shock} for almost every $t$, by Lemma \ref{fil_lemma}, we may neglect a null set of times and analyze only times for which the following dichotomy holds:
\begin{equation}
u_+(t) = u_-(t) \quad \mathrm{or} \quad (u_-(t),u_+(t),\dot{h}(t)) \text{ is an entropic shock.}
\end{equation}
Further, Lemma \ref{fil_lemma} implies 
\begin{equation}\label{control_dot_h1}
\dot{h}(t) \le \max(\lambda_1(u_+) - (C^* + 2L)\mathbbm{1}_{\Nuo \backslash \overline{\Pi}}(u_+), \lambda_1(u_-) - (C^* + 2L)\mathbbm{1}_{\Nuo \backslash \overline{\Pi}}(u_-))
\end{equation}
and for $u_+,u_- \notin \bd\Pi$ and $u_+,u_- \notin \Nuo\backslash\Nu$,
\begin{equation}\label{control_dot_h2}
\dot{h}(t) \ge \min(\lambda_1(u_+) - (C^* + 2L)\mathbbm{1}_{\Nuo \backslash \overline{\Pi}}(u_+), \lambda_1(u_-) - (C^* + 2L)\mathbbm{1}_{\Nuo \backslash \overline{\Pi}}(u_-))
\end{equation}

\begin{comment}
From \Cref{fil_lemma}, we get
\begin{equation}\label{control_dot_h}
\begin{array}{l}
\displaystyle{\dot{h}(t)\in I\Bigg[\lambda_1(u_+)-(C^*+2L)\mathbbm{1}_{\{ u \in \Nu \ | \ a_1\eta(u  |  u_L) > a_2\eta(u  | u_R)\}}(u_+),}\\[0.3cm]
\displaystyle{\qquad\qquad\qquad\qquad\lambda_1(u_-)-(C^*+2L)\mathbbm{1}_{\{ u \in \Nu \ | \ a_1\eta(u  |  u_L) > a_2\eta(u  | u_R)\}}(u_-)\Bigg].}
\end{array}
\end{equation}
\end{comment}

\noindent
\uline{\emph{Case 1}}: This case corresponds to $u_+,u_-\notin \overline{\Pi}$. In this case, due to \eqref{control_dot_h1} we have
\begin{align*}
\dot{h}(t) \le -(C^* + 2L) +  \max(\lambda_1(u_+),\lambda_1(u_-)).
\end{align*}
Thus, by the choice of constants,
%Thus, by making $C_*$ sufficiently large we can ensure that in this case (\emph{Case 1}) we must have that 
\begin{align}\label{h_fast}
\dot{h}(t) < -L \leq \inf_{u\in \Nu_0}\lambda_1(u),\qquad \dot{h}(t)\leq -C^*.
\end{align}
The first inequality insures that $(u_+(t),u_-(t),\dot{h}(t))$ cannot be an entropic shock, since $\dot{h}(t) < \lambda_1(u_-(t))$ contradicts Assumption \ref{assum} (f). Thus, we must have $u_+ = u_-$.
Define $v\coloneqq u_+= u_-$. Since $v\notin \overline{\Pi}$, $\tilde{\eta}(v)\geq 0$. Therefore, using \eqref{eqc}, and the second inequality of \eqref{h_fast}, we find
\begin{equation}
\begin{aligned}
a_2\left[q(v;u_R)-\dot{h}(t) \ \eta(v|u_R)\right] -a_1\left[q(v;u_L)-\dot{h}(t) \ \eta(v|u_L)\right]  &= a_2\left(-\tilde{q}(v)+\dot{h}(t)\tilde\eta(v)\right)\\
	&\leq a_2\left(-\tilde{q}(v)-C^*\tilde\eta(v)\right).
\end{aligned}
\end{equation}
Since $a_2>0$, on one hand, if $\tilde{q}(v)\geq0$, the right hand side is non-positive. On the other hand, if $\tilde{q}(v)\leq0$, the choice of $C^*$ in \eqref{control_q_ratio} enforces the non-positivity. This proves \eqref{diss:shock} in Case 1.
\vskip0.3cm
\noindent\uline{\emph{Case 2}}: This case corresponds to $u_- \notin \overline{\Pi}$ and $u_+\in \overline{\Pi}$ and consequently, $u_- \neq u_+$ so that $(u_-(t),u_+(t),\dot{h}(t))$ is an entropic shock.
As in Case 1, due to \eqref{control_dot_h1} we have
\begin{align}
\dot{h}(t)\le \max(\lambda_1(u_+),\lambda_1(u_-) - (C^* + 2L)) < \max(\lambda_1(u_+),-L) \le \lambda_1(u_+)
%I \Big[\lambda_1(u_+),\lambda_1(u_-)-(C^*+2L)\Big].
\end{align}
%Thus, we have that $\dot{h}(t)< -L\leq \lambda_1(u_+)$.
However, this contradicts Assumption \ref{assum} (f). Thus, we conclude that this case %(\emph{Case 2})
cannot occur.

\vskip0.3cm
\noindent\uline{\emph{Case 3}}: This case corresponds to $u_- \in \overline{\Pi}$ and $u_+ \notin \overline{\Pi}$, so that once again $(u_-(t),u_+(t),\dot{h}(t))$ is an entropic shock. Consequently, \eqref{control_dot_h1} implies
\begin{equation}
\dot{h}(t) \le \max(\lambda_1(u_+)-(C^* + 2L),\lambda_1(u_-)) \le \lambda_1(u_-),
\end{equation}
and combined with Assumption \ref{assum} (g), this means $(u_-,u_+,\dot{h}(t))=(u_-,u_+,\sigma_\pm)$ is an entropic 1-shock with $u_- \in \overline{\Pi}$. Therefore, from Definition \ref{defn_DRH} and  \Cref{prop_shock}, we obtain
\begin{equation}
a_2\left[q(u_+;u_R)-\dot{h}(t) \ \eta(u_+|u_R)\right] -a_1\left[q(u_-;u_L)-\dot{h}(t) \ \eta(u_-|u_L)\right] =a_2D_{RH}(u_-,u_+,\sigma_\pm)\leq 0.
\end{equation}
We conclude \eqref{diss:shock} holds in this case. 
\vskip0.3cm

\noindent\uline{\emph{Case 4}}: This case corresponds to $u_-,u_+ \in \overline{\Pi}$. In this case, \eqref{control_dot_h1} implies
\begin{align}
\dot{h}(t) \le \max(\lambda_1(u_+),\lambda_1(u_-)).
\end{align}
Now, suppose first that $u_+ = u_- = v$. Then, either $v\in\bd\Pi$, in which case $\tilde\eta(v) = 0$, or \eqref{control_dot_h2} implies $\dot{h}(t) = \lambda_1(v)$. In either case, we find by Proposition \ref{prop_cont},
\begin{equation}
\begin{aligned}
a_2\left[q(v;u_R)-\dot{h}(t) \ \eta(v|u_R)\right] -a_1\left[q(v;u_L)-\dot{h}(t) \ \eta(v|u_L)\right] &= a_2\left(\tilde q(v) - \dot{h}(t)\tilde\eta(v)\right)\\
	&= a_2 D_{cont}(v)\leq 0.
\end{aligned}
\end{equation}
Second, suppose $u_- \neq u_+$, so that once more we deduce from Assumption \ref{assum} (g) that $(u_+,u_-,\dot{h}(t))$ is a 1-shock. We then use Proposition \ref{prop_shock} as in Case 3 to conclude \eqref{diss:shock} holds. 
This concludes the casework.
\vskip0.3cm
{\bf Step 5: Proof of  \eqref{control_one_shock1020}.}

To obtain \eqref{control_one_shock1020}, we note that in all four cases from Step 3, either $\dot{h}(t)\leq -L$ or $\dot{h}(t) \leq \sup_{v\in \Pi} \lambda_1(v)$. Therefore, combined with \eqref{fil_prop1}, for almost all $t>0$:
\begin{equation}
-C^*-3L\le -\|V\|_{L^\infty} \le \dot{h}(t) \le \sup_{v\in \Pi_{C,s_0}} \lambda_1(v).
\end{equation}
From Step 1, $\Pi_{C,s_0} \subset B(d,\eps_d/2)$. Therefore  \eqref{control_one_shock1020} is a consequence of \eqref{alpha} and \eqref{eps} with $\hat{\lambda} =2(C^*+3L)$.

%%%%%%%%%%%%%%%%%%%%%%%%%%%%%%%%%%%%%%%%%%%%%%%%%%%%%%%%%%%%%%%%%%%%%%%

\section{Proof of Proposition \ref{prop_cont}}\label{secWill1}

\subsection{Proof Sketch}

In this section, we prove Proposition \ref{prop_cont}. For this section, we fix $(u_L,u_R,\sigma_{LR})$, a single entropic $1$-shock of the system (\ref{cl}) with $u_R = \S^1_{u_L}(s_0)$. We moreover assume that $u_L \in \Nu$, $s_0$ is sufficiently small so that $u_R \in \Nu$, and $C$ is sufficiently large so that $\Pi_{C,s_0}$ is compactly contained in $\Nu$ (this is possible by Lemma \ref{lem_eta}). We recall that the continuous entropy dissipation function $D_{cont}$ is given by
\begin{equation}
D_{cont}(u) = \left[q(u ; u_R) - \lambda^1(u)\eta(u | u_R)\right] - (1+Cs_0)\left[q(u ; u_L) - \lambda^1(u)\eta(u | u_L)\right] = -\tilde q(u) + \lambda^1(u)\tilde\eta(u),
\end{equation}
where we recall $\tilde q$ and $\tilde \eta$ are given by
\begin{equation}
\tilde\eta(u) = (1+Cs_0)\eta(u|u_L) - \eta(u | u_R) \qquad \text{and}\qquad \tilde q(u) = (1+Cs_0)q(u;u_L) - q(u;u_R).
\end{equation}
Finally, we recall $\Pi_{C,s_0} = \set{u \ | \ \tilde\eta(u) < 0}$.
Now, to prove Proposition \ref{prop_cont}, it suffices to show $D_{cont}(u) \lesssim -s_0^3$ for $C$ sufficiently large, $s_0$ sufficiently small, and any $u\in \Pi_{C,s_0}$.

The proof is motivated by the scalar case in which the uniform negativity in Proposition \ref{prop_cont} is computed explicitly in the case of Burger's equation (see \cite[p.~9-10]{serre_vasseur} and also the proof of Lemma 3.3 in \cite{scalar_move_entire_solution}). As the $1$-characteristic field is genuinely nonlinear (Assumption \ref{assum} (b)), we expect the same dissipation rate for $u \in \S^1_{u_L}$. Therefore, the underlying principle of the proof is to localize the problem to $u\in\S^1_{u_L}$. We show that if the estimate holds near $\S^1_{u_L}$, where we can compute the dissipation rate nearly explicitly, then the estimate holds on all of $\Pi_{C,s_0}$. 
%In particular, \red{by the computations in citation} we expect that states $u \in \Pi$ with $u$ along the shock curve $\S^1_{u_L}$ maximize the continuous entropy dissipation, and control the entropy dissipation away from the shock curve.
More precisely, we proceed in three steps:
\begin{itemize}
\item \uline{Step 1}: We characterize maxima of $D_{cont}$ in $\Pi_{C,s_0}$ and show that for large enough $C$ there is a unique maximum, $u^*$, for $D_{cont}$ on $\Pi_{C,s_0}$.
\item \uline{Step 2}: We compute $D_{cont}(u)$ for states $u$ along the shock curve $\S^1_{u_L}$ and show $D_{cont}(u)$ satisfies the entropy dissipation estimate claimed in Proposition \ref{prop_cont}.
\item \uline{Step 3}: We show that the maximum $u^*$ is sufficiently close to the shock curve to conclude that $u^*$ also satisfies the desired estimate.
\end{itemize}
Before beginning Step 1, we note that because we have already chosen $C$ and $s_0$ so that $\Pi$ is compactly contained within $\Nu$, the system is more regular on $\Pi_{C,s_0}$ than on $\Nuo$. By the regularity assumptions on the system, $f \in C^4(\Pi)$ and $r_i,l^i,\lambda^i,\eta,q \in C^3(\Pi)$, which allows us to use classical techniques of differential calculus to analyze $D_{cont}$ on $\Pi$.

%%%%%%%%%%%%%%%%%%%%%%%%%%%%%%%%%%%%%%%%%%%%%%%%%%%%%%%%%%%%%%%%%%%%%%%%%%%%%%%%%%%%%%%%%%%%%%%%%%%%%%%

\subsection{Step 1: Characterization and Uniqueness of Maxima}

%Since our system is strictly hyperbolic, we obtain $f^\prime(u)$ has right eigenvectors $r_i(u)$ and left eigenvectors $l_i(u)$ corresponding to the eigenvalues $\lambda^i(u)$. Moreover, $r_i$ and $l_i$ can be chosen to vary smoothly in $u$ satisfying $|r_i(u)| = |l_j| = 1$, and  $r_i(u) \cdot l_j(u) = \delta_{ij}$, where $\delta_{ij}$ is the Kroenecker delta. Finally, since we imagine that $r_1(u_L)$ points towards $u_R$, i.e. $r_1$ points in the direction of the shock curve, we have $\lambda^1(u)$ is decreasing along $r_1$ and
%$$r_1 \cdot \nabla\lambda^1 < 0.$$
In this step, we study maxima of $D_{cont}$ in the set $\Pi_{C,s_0}$ and show that taking $C$ sufficiently large yields a unique maximum.
\begin{proposition}\label{prop_max}
For any $C$ sufficiently large and $s_0$ sufficiently small, there is a unique maximum $u^*$ of $D_{cont}$ on $\Pi_{C,s_0}$. Moreover, $u^*$ is the unique point in $\bd\Pi_{C,s_0}$ satisfying the condition
\begin{equation}
\nu(u^*) \parallel l^1(u^*) \qquad {and} \qquad r_1(u^*)\text{ is outward pointing,}
\end{equation}
where $\nu(u)$ is the outward unit normal to $\bd\Pi_{C,s_0}$.
\end{proposition}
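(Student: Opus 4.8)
The plan is to locate the maximum of $D_{cont}$, which exists by continuity on the compact set $\overline{\Pi_{C,s_0}}$. The computational backbone is the gradient identity $\nabla D_{cont}(u) = \nabla\tilde\eta(u)(\lambda^1(u)I - f'(u)) + \tilde\eta(u)\nabla\lambda^1(u)$, obtained by differentiating $D_{cont} = -\tilde q + \lambda^1\tilde\eta$ and using $\nabla\tilde q = \nabla\tilde\eta\,f'$ from Lemma \ref{lem_eta}. Expanding $\nabla\tilde\eta$ in the left-eigenbasis $\{l^i\}$ of Lemma \ref{lem_normalization}, the first term has no $l^1$-component; pairing with $r_1$ and using $l^j\cdot r_1 = 0$ for $j\geq 2$ gives the clean identity $\nabla D_{cont}(u)\cdot r_1(u) = \tilde\eta(u)\,(\nabla\lambda^1(u)\cdot r_1(u))$. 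With the Lemma \ref{lem_normalization} normalization one has $\nabla\lambda^1\cdot r_1 < 0$ (using Assumption \ref{assum}(b)), while $\tilde\eta < 0$ on $\mathrm{int}\,\Pi_{C,s_0}$, so $\nabla D_{cont}\cdot r_1 > 0$ there; hence $\nabla D_{cont}$ never vanishes in the interior and the maximum lies on $\bd\Pi_{C,s_0}$. The same sign shows $D_{cont}$ strictly increases along the forward $r_1$-flow inside $\Pi_{C,s_0}$, a fact I reuse below.

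On $\bd\Pi_{C,s_0}$ one has $\tilde\eta=0$, so $\nabla D_{cont} = \nabla\tilde\eta(\lambda^1 I - f')$, and by Lemma \ref{lem_nu} the outward unit normal is $\nu = \nabla\tilde\eta/|\nabla\tilde\eta|$. At the maximizer $u^*$ the Lagrange condition $\nabla D_{cont}(u^*)\parallel\nu(u^*)$, together with $\nabla D_{cont}(u^*)\cdot r_1(u^*)=0$ (since $\tilde\eta(u^*)=0$), forces either $\nabla D_{cont}(u^*)=0$ — equivalently, $\nabla\tilde\eta(u^*)$ is a left $\lambda^1$-eigenvector of $f'(u^*)$, i.e.\ $\nu(u^*)\parallel l^1(u^*)$ — or else $\nu(u^*)\cdot r_1(u^*)=0$. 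Write $p$ (resp.\ $p'$) for the point of $\bd\Pi_{C,s_0}$ with $\nu=+l^1$ (resp.\ $\nu=-l^1$): at $p$ the vector $r_1$ is outward, at $p'$ it is inward. Existence and uniqueness of $p$ and $p'$ I would get by a contraction-mapping argument: strict convexity and Lemma \ref{lem_pi} make the Gauss map $\nu\colon\bd\Pi_{C,s_0}\to S^{n-1}$ a diffeomorphism with $\|D\nu\|\sim C$ and $\|(D\nu)^{-1}\|\lesssim C^{-1}$, whereas $l^1$, being $C^3$ on a fixed neighborhood of the now compactly contained set $\bd\Pi_{C,s_0}$, varies by $\bigO(C^{-1})$ over it; so $\bar u\mapsto\nu^{-1}(\pm l^1(\bar u))$ is a contraction for $C$ large.

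It remains to show $u^*=p$, i.e.\ to rule out $p'$ and the ``equator'' $\{\bar u\in\bd\Pi_{C,s_0}:\nu(\bar u)\cdot r_1(\bar u)=0\}$; this is the main obstacle. The point $p'$, and every point where $\nu\cdot r_1<0$, has $r_1$ pointing strictly inward, so the forward $r_1$-flow enters $\mathrm{int}\,\Pi_{C,s_0}$, along which $D_{cont}$ strictly increases; hence no such point is the maximum. For the equator I would argue quantitatively, exploiting the two length scales of Lemma \ref{lem_eta}: using the expression for $\nabla\tilde\eta$ provided there, the relation $\nu\cdot r_1=0$ (equivalently $\nabla\tilde\eta\cdot r_1=0$) is solved only at points lying at distance $\sim C^{-1}$ from $u_L$, whereas $p$ — where $\nu(p)\parallel l^1(p)\parallel\nabla^2\eta(u_L)r_1(u_L)$ — lies at distance $\sim s_0$ from $u_L$, in fact within $\bigO(s_0^2)$ of the $1$-shock curve $\S^1_{u_L}$, which is tangent to $r_1(u_L)$ at $u_L$ and crosses $\bd\Pi_{C,s_0}$ near $p$. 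Evaluating $D_{cont}=-\tilde q$ on $\bd\Pi_{C,s_0}$ via the Lax dissipation identity (Lemma \ref{lem_lax}) and the Hugoniot asymptotics (Lemma \ref{lem_hugoniot}), one finds $D_{cont}(p)=\bigO(s_0^3)$ while a direct Taylor estimate gives $D_{cont}$ strictly smaller on the portion of $\bd\Pi_{C,s_0}$ at distance $\gtrsim C^{-1}$ from $u_L$, the gap being $\gg s_0^3$ in the regime $s_0\ll C^{-1}$, so the equator loses to $p$. Since the maximum lies on $\bd\Pi_{C,s_0}$, is a critical point of $D_{cont}|_{\bd\Pi_{C,s_0}}$, is neither $p'$ nor on the equator, and $p$ is the only remaining candidate, we conclude $u^*=p$; uniqueness of the maximizer then follows from the uniqueness of $p$. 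This comparison overlaps with Steps 2 and 3 below, from which one extracts just the coarse bound needed here.
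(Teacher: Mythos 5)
Your skeleton runs parallel to the paper's: interior critical points are ruled out by the $r_1$-directional derivative (this is Lemma \ref{lem1}); the Lagrange condition on $\bd\Pi_{C,s_0}$ combined with $\nabla D_{cont}\cdot r_1\equiv 0$ on $\bd\Pi_{C,s_0}$ gives the dichotomy $\nu(u^*)\parallel l^1(u^*)$ or $\nu(u^*)\cdot r_1(u^*)=0$; the case $\nu=-l^1$ is eliminated by the $r_1$-flow monotonicity (a nice, slightly more elementary variant of the second-derivative argument at the end of Lemma \ref{lem2}); and uniqueness of $p$ is by the contraction argument based on Lemma \ref{lem_pi}, exactly as in Lemma \ref{lem4}. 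The genuine divergence, and the gap, is in how you eliminate the remaining case $\nu(u^*)\cdot r_1(u^*)=0$, equivalently $\nu(u^*)\parallel l^j(u^*)$ for some $j>1$.

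The paper (Lemma \ref{lem2}, Cases 1--3) eliminates that case by a short, local, purely algebraic computation: at such a $u^*$ the derivative of $D_{cont}$ along the inward-pointing $\pm l^j$ integral curve equals $\bigl(\lambda^1(u^*)-\lambda^j(u^*)\bigr)\bigl(\nabla\tilde\eta(u^*)\cdot l^j(u^*)\bigr)$, which is strictly positive because $\lambda^1<\lambda^j$ by Assumption \ref{assum} (a); this contradicts maximality and requires no estimate on $D_{cont}$ at all. You instead propose a global quantitative comparison: the equator lies at distance $\sim C^{-1}$ from $u_L$ and there $D_{cont}\ll -s_0^3\approx D_{cont}(p)$. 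This is plausible (a second-order Taylor expansion of $\tilde q$ about $u_L$, in the frame with $\lambda^1(u_L)=0$ — which one may adopt because $D_{cont}$ is Galilean invariant — together with the constraint $\tilde\eta=0$ gives $D_{cont}\lesssim -s_0/C$ on the equator), but nothing like a ``direct Taylor estimate'' is actually supplied: the calculation is not short, it requires several cancellations, and it also needs a two-sided bound $D_{cont}(p)\sim -s_0^3$, whereas Proposition \ref{prop_shock_explicit} gives only the upper bound and the comparison $|p-u_0|\lesssim C^{-1}s_0$ is exactly Proposition \ref{prop_comparison}, which the paper proves only after Proposition \ref{prop_max}. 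As written, then, the key inequality is asserted rather than proved, the logical dependence on Steps 2--3 needs to be rearranged, and the observation that makes the paper's proof short — that the eigenvalue gap $\lambda^j-\lambda^1>0$ fixes the sign of the inward directional derivative — is absent.
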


%We prove Proposition \ref{prop_max} through the forthcoming sequence of lemmas.
We begin by showing that a maxima cannot occur in the interior of $\Pi$.
\begin{lemma}\label{lem1}
Say $u$ is a critical point of $D_{cont}$. Then, $u\in \bd \Pi$.
\end{lemma}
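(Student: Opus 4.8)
\textbf{Proof plan for Lemma \ref{lem1}.}

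The plan is to compute the gradient of $D_{cont}$ and show it cannot vanish at an interior point of $\Pi$, provided $C$ is large and $s_0$ small. Recall $D_{cont}(u) = -\tilde q(u) + \lambda^1(u)\tilde\eta(u)$. Using $\nabla \tilde q = \nabla\tilde\eta\, f'$ from Lemma \ref{lem_eta} and the eigenvalue relation $l^1 f' = \lambda^1 l^1$, the idea is to test $\nabla D_{cont}(u)$ against a cleverly chosen direction. Specifically, if $u$ is an interior critical point, then $\nabla D_{cont}(u) = 0$, i.e.
\begin{equation}
-\nabla\tilde\eta(u) f'(u) + \tilde\eta(u)\nabla\lambda^1(u) + \lambda^1(u)\nabla\tilde\eta(u) = 0.
\end{equation}
First I would pair this identity with $r_1(u)$: since $f'(u) r_1(u) = \lambda^1(u) r_1(u)$, the first and third terms cancel, leaving $\tilde\eta(u)\,\nabla\lambda^1(u)\cdot r_1(u) = 0$. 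By the genuine nonlinearity Assumption \ref{assum}(b) (and Lemma \ref{lem_normalization}), $\nabla\lambda^1(u)\cdot r_1(u) \neq 0$, so this forces $\tilde\eta(u) = 0$, which means $u \in \bd\Pi$, contradicting $u$ being interior.

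The one subtlety is that $\nabla\lambda^1(u)\cdot r_1(u)$ must be bounded away from zero uniformly on a neighborhood containing $\Pi$; this follows because, for $C$ large and $s_0$ small, $\Pi = \Pi_{C,s_0}$ is compactly contained in $\Nu$ (Lemma \ref{lem_eta}), $\nabla\lambda^1\cdot r_1$ is continuous and nonvanishing there, hence has a positive infimum on a fixed compact neighborhood of $d$ containing all such $\Pi$. So the argument is genuinely just: cancel the $f'$ term against its eigenvector, invoke genuine nonlinearity, conclude $\tilde\eta(u)=0$.

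I do not expect a serious obstacle here — the main point is simply choosing the test direction $r_1(u)$ so that the $f'$-terms collapse. The only thing to be careful about is keeping track of which of $\lambda^1$, $l^1$, $r_1$ are $C^3$ on $\Pi$ (guaranteed since $\Pi \Subset \Nu$, as noted before Step 1), so that $\nabla\lambda^1$ makes sense, and making sure the conclusion $\tilde\eta(u) = 0$ is incompatible with $u$ lying in the \emph{open} set $\{u : \tilde\eta(u) < 0\} = \operatorname{int}\Pi$. Hence any critical point lies on $\bd\Pi$.
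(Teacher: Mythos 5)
Your proposal is correct and is essentially the same argument as the paper's: the paper also tests $\nabla D_{cont}$ against $r_1$ (phrased as a directional derivative along the integral curve of $r_1$), uses $\nabla\tilde q = \nabla\tilde\eta\,f'$ together with the eigenvector relation to cancel the two non-$\nabla\lambda^1$ terms, and invokes genuine nonlinearity to conclude $\tilde\eta(u)=0$. Your remark about $\nabla\lambda^1\cdot r_1$ being uniformly nonvanishing is fine but unnecessary here, since the proof only needs pointwise nonvanishing.
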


\begin{proof}
We compute the directional derivative of $D_{cont}$ along the integral curve emanating from $u$ in the $r_1$ direction.
Defining $u(t)$ as the solution to the ODE,
\begin{equation}
\begin{aligned}
\dot u(t) &= r_1(u(t))\\
u(0) &= u.
\end{aligned}
\end{equation}
Then, using $\nabla\tilde q = \nabla\tilde\eta f^\prime$ and $r_1(u(t))$ is a right eigenvector of $f^\prime(u(t))$ associated to the eigenvalue $\lambda^1(u(t))$, we compute
\begin{equation}\label{eqn_Dcont_r1}
\begin{aligned}
\frac{d}{dt} D_{cont}(u(t)) &= \left[-\nabla \tilde q(u(t)) +  \lambda^1(u(t))\nabla\tilde\eta(u(t)) + \tilde\eta(u(t))\nabla \lambda^1(u(t)) \right] \cdot r_1(u(t))\\
	&=\tilde \eta(u(t))\left[\nabla\lambda^1(u(t))\cdot r_1(u(t))\right].
\end{aligned}
\end{equation}
Finally, evaluating at $t = 0$ and recalling that $u$ is a critical point, we obtain
\begin{equation}
\tilde \eta(u)\left[\nabla\lambda^1(u)\cdot r_1(u)\right] = 0.
\end{equation}
Since the first characteristic field is genuinely nonlinear by Assumption \ref{assum} (b), we must have $\tilde \eta(u) = 0$.
\end{proof}

\begin{lemma}\label{lem2}
Say $u^*$ is a maxima of $D_{cont}(u)$ on $\Pi_{C,s_0}$. Then,
\begin{equation}
\nabla \tilde\eta(u^*) \parallel l^1(u^*) \qquad {and} \qquad r_1(u^*)\text{ points outwards.}
\end{equation}
\end{lemma}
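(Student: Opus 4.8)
The plan is to combine the computation of the tangential derivative of $D_{cont}$ along $\bd\Pi$ (which vanishes at a maximum) with the structural lemmas of Section~\ref{secAssum}, especially Lemma~\ref{lem_nu} (which guarantees $\nabla\tilde\eta$ is a nonvanishing outward normal on $\bd\Pi$) and Lemma~\ref{lem_normalization} (which gives $\nabla^2\eta(u)r_i(u)\parallel l^i(u)$). By Lemma~\ref{lem1} the maximum $u^*$ lies on $\bd\Pi$, so it is enough to characterize critical points of the restriction $D_{cont}|_{\bd\Pi}$.

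First I would compute the derivative of $D_{cont}$ along a tangent direction. Let $\tau$ be any vector tangent to $\bd\Pi$ at $u^*$, i.e. $\tau\cdot\nabla\tilde\eta(u^*)=0$. Differentiating $D_{cont}=-\tilde q+\lambda^1\tilde\eta$ and using $\nabla\tilde q=\nabla\tilde\eta\, f'$ exactly as in \eqref{eqn_Dcont_r1}, one gets
\begin{equation*}
\nabla D_{cont}(u^*)\cdot\tau = \left[-\nabla\tilde\eta(u^*)f'(u^*) + \lambda^1(u^*)\nabla\tilde\eta(u^*) + \tilde\eta(u^*)\nabla\lambda^1(u^*)\right]\cdot\tau.
\end{equation*}
Since $\tilde\eta(u^*)=0$ on $\bd\Pi$, the last term drops, leaving $\nabla D_{cont}(u^*)\cdot\tau = \nabla\tilde\eta(u^*)\cdot\left[\lambda^1(u^*)\,\mathrm{Id} - f'(u^*)\right]\tau$. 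For $u^*$ to be a maximum this must vanish for every tangent $\tau$, so the covector $w:=\left[\lambda^1(u^*)\,\mathrm{Id} - f'(u^*)\right]^{\mathsf T}\nabla\tilde\eta(u^*)^{\mathsf T}$ must be parallel to $\nabla\tilde\eta(u^*)$. Now observe that $\left[\lambda^1 \mathrm{Id} - f'\right]^{\mathsf T}(l^1)^{\mathsf T}=0$ since $l^1$ is a left eigenvector for $\lambda^1$; writing $\nabla\tilde\eta(u^*) = Cs_0\nabla^2\eta(u^*)(\text{something})$ is not quite what we want, so instead I would argue directly on the orthogonality condition.

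The cleaner route: the condition ``$\nabla D_{cont}(u^*)\cdot\tau=0$ for all $\tau\perp\nabla\tilde\eta(u^*)$'' says that $f'(u^*)^{\mathsf T}\nabla\tilde\eta(u^*)^{\mathsf T}$ lies in $\mathrm{span}\{\nabla\tilde\eta(u^*)^{\mathsf T}\}$, i.e. $\nabla\tilde\eta(u^*)^{\mathsf T}$ is an eigenvector of $f'(u^*)^{\mathsf T}$, equivalently $\nabla\tilde\eta(u^*)$ is a left eigenvector of $f'(u^*)$, hence parallel to some $l^j(u^*)$. To pin down $j=1$: by Lemma~\ref{lem_eta}, $\nabla\tilde\eta(u^*)=Cs_0(\nabla\eta(u^*)-\nabla\eta(u_L)) - (\nabla\eta(u_R)-\nabla\eta(u_L))$, and to leading order in $s_0$ (using $u_R-u_L = s_0 r_1(u_L)+\bigO(s_0^2)$ from Lemma~\ref{lem_hugoniot} and $|u^*-u_L|\lesssim C^{-1}$) one has $\nabla\tilde\eta(u^*) = -s_0\nabla^2\eta(u_L)r_1(u_L) + \bigO(Cs_0|u^*-u_L| + s_0^2)$. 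Since $\nabla^2\eta\, r_1\parallel l^1$ by Lemma~\ref{lem_normalization}, this is, up to small error, a nonzero multiple of $l^1(u^*)$, and since $l^1,\ldots,l^n$ are a basis with the $l^j$ for $j\ne 1$ staying a fixed positive distance from $\pm l^1$, the direction $\nu(u^*)\parallel\nabla\tilde\eta(u^*)$ must equal $\pm l^1(u^*)$ for $C$ large and $s_0$ small. Then since $\tilde\eta(u_L)<0$ with $u_L\in\Pi$ and $\nabla\tilde\eta(u^*)$ points out of $\Pi$, while $u^* = u_L + \bigO(C^{-1})$ and $u^*-u_L$ is roughly in the $r_1(u_L)$ direction (the diameter of $\Pi$ is realized along that line, cf.\ the end of the proof of Lemma~\ref{lem_eta}), one checks $r_1(u^*)\cdot\nu(u^*)>0$, i.e.\ $r_1(u^*)$ is outward-pointing. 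This gives Lemma~\ref{lem2}.

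For the uniqueness statement in Proposition~\ref{prop_max} (which I anticipate being the real obstacle, though it is presumably handled in a later lemma rather than here) one would use that the map $\bd\Pi\to S^{n-1}$, $u\mapsto \nu(u)$, is a diffeomorphism onto its image when $\Pi$ is strictly convex — Lemma~\ref{lem_pi} gives $|\nu(u)-\nu(w)|\sim C|u-w|$, so $\nu$ is injective on $\bd\Pi$ — hence there is at most one $u^*\in\bd\Pi$ with $\nu(u^*)\parallel l^1(u^*)$ and $r_1(u^*)$ outward; the existence of such a point follows from a degree/continuity argument (or directly: $D_{cont}$ is continuous on the compact set $\Pi$, so attains a max, and by Lemma~\ref{lem1} the max is on $\bd\Pi$, hence satisfies the eigenvector condition just derived, and the sign of the tangential second derivative forces the ``$r_1$ outward'' branch). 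The main technical point to watch is that all the ``$\parallel$'' conclusions are only approximate at finite $C,s_0$ and must be upgraded to exact identities at the maximizer using that the maximizer is an honest critical point of $D_{cont}|_{\bd\Pi}$, not merely an approximate one — so the error-term bookkeeping is used only to select which eigen-branch, not to establish the parallelism itself.
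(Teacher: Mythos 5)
Your Lagrange--multiplier step coincides with the paper's and is correct: by \Cref{lem1} the maximum lies on $\bd\Pi$ with $\tilde\eta(u^*)=0$, and tangential criticality forces $\nabla\tilde\eta(u^*)$ to be a left eigenvector of $f'(u^*)$, hence parallel to some $l^j(u^*)$. The gap is in the selection of $j=1$. You invoke the expansion $\nabla\tilde\eta(u^*) = -s_0\nabla^2\eta(u_L)r_1(u_L) + \bigO\bigl(Cs_0|u^*-u_L|+s_0^2\bigr)$ and treat the remainder as small, but the only a priori control available at this point is $|u^*-u_L|\lesssim C^{-1}$ from \Cref{lem_eta}, so $Cs_0|u^*-u_L|\lesssim s_0$ --- the putative error is of the \emph{same} order as the leading term, not smaller. (The sharper localization $|u^*-u_L|\sim s_0$, which would make your error $\bigO(Cs_0^2)$, follows from \Cref{prop_comparison} together with \Cref{lem5}, both strictly downstream of this lemma; invoking it here would be circular.) The issue is not mere bookkeeping: since $\bd\Pi$ is a strictly convex closed hypersurface its Gauss map is onto, so for every $j$ there exist boundary points $\bar u$ with $\nabla\tilde\eta(\bar u)\parallel l^j(\bar u)$, and no Taylor expansion of $\nabla\tilde\eta$ can by itself single out $j=1$. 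One must exploit the maximality of $u^*$ beyond the first-order Lagrange condition.

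The paper does this with a comparison argument that needs no asymptotics: supposing $\nabla\tilde\eta(u^*)\parallel l^j(u^*)$ with $j\ne1$, it follows the integral curve of $\pm l^j$ (whichever branch points into $\Pi$) and, using $\tilde\eta(u^*)=0$, computes the one-sided directional derivative $\pm\bigl[\lambda^1(u^*)-\lambda^j(u^*)\bigr]\bigl[\nabla\tilde\eta(u^*)\cdot l^j(u^*)\bigr]$, which is strictly positive because $\lambda^1<\lambda^j$ --- contradicting maximality. The ``$r_1$ outward'' claim has the same weakness in your write-up: the geometric aside (``one checks $r_1(u^*)\cdot\nu(u^*)>0$'') cannot distinguish $u^*$ from the second critical point $\tilde u$ of \Cref{lem18}, for which $\nu(\tilde u)=-l^1(\tilde u)$. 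The paper settles it via the second directional derivative of $D_{cont}$ along the $r_1$-integral curve from $u^*$, which equals $(\nabla\tilde\eta(u^*)\cdot r_1(u^*))(\nabla\lambda^1(u^*)\cdot r_1(u^*))$; since $\nabla\lambda^1\cdot r_1<0$ by the normalization of \Cref{lem_normalization}, this would be strictly positive if $r_1$ were inward, a contradiction. You gesture at a ``tangential second derivative'' argument in your closing aside, but that is precisely what is needed here, not in the later remark on Proposition~\ref{prop_max}.
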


\begin{proof}
By Lemma \ref{lem1}, $\tilde\eta(u^*) = 0$ and $u^*\in \bd\Pi$. Therefore, by Lagrange's theorem, and that $\nabla \tilde\eta$ is a non-vanishing (by Lemma \ref{lem_nu}) normal vector field on $\Pi$, we must have
\begin{equation}
\nabla D_{cont}(u^*) \parallel \nabla\tilde\eta(u^*).
\end{equation}
Thus, we compute 
\begin{equation}\label{eqn_Dcont}
\begin{aligned}
\nabla D_{cont}(u^*) &= -\nabla \tilde q(u^*) +  \lambda^1(u^*)\nabla\tilde\eta(u^*) + \tilde\eta(u^*)\nabla \lambda^1(u^*)\\
	&= \lambda^1(u^*)\nabla\tilde\eta(u^*) - \nabla\tilde\eta(u^*)f^\prime(u^*).
\end{aligned}
\end{equation}
Therefore, $\nabla\tilde\eta(u^*)$ must be a left eigenvector of $f^\prime(u^*)$ and $\nabla\tilde\eta(u^*)$ is parallel to $l^i(u^*)$ for some $1\le i \le n$.
Suppose for the sake of contradiction that $\nabla\tilde\eta(u^*)$ is parallel to $l^j(u^*)$ for some $j \neq 1$. 
\begin{description}
\item[Case 1]
Suppose $l^j(u^*)$ is inward-pointing, i.e.
$$\nabla\tilde\eta(u^*) \cdot l^j(u^*) < 0.$$
Then, we define $u(t)$ to be the integral curve in the $l^j$ direction emanating from $u^*$. Thus, because $l^j(u^*)$ is inward pointing, $u(t) \in \Pi$ for some small amount of time. Since $u(0) = u^*$ is a maximum of $D_{cont}$ on $\Pi$, we have
\begin{equation}
D_{cont}(u(t))\le  D_{cont}(u^*) \qquad \text{and} \qquad \frac{d}{dt}\bigg|_{t=0} D_{cont}(u(t)) \le 0.
\end{equation}
Now, using (\ref{eqn_Dcont}), we compute
\begin{equation}
\frac{d}{dt}\bigg|_{t=0} D_{cont}(u(t)) = \left[\nabla\tilde\eta(u(t))\left[\lambda^1(u(t))I - f^\prime(u(t))\right] + \tilde\eta(u(t))\nabla \lambda^1(u(t)) \right] \cdot l^j(u(t)).
\end{equation}
Evaluating at $t = 0$ and using $\tilde\eta(u^*) = 0$ and $\nabla\tilde\eta(u^*)$ is parallel to $l^j(u^*)$, we obtain
\begin{equation}
\frac{d}{dt}\bigg|_{t=0} D_{cont}(u(t)) = \left[\lambda^1(u^*) - \lambda^j(u^*)\right]\left[\nabla\tilde\eta(u^*) \cdot l^j(u^*)\right]
\end{equation}
However, since $\lambda^1(u^*) < \lambda^j(u^*)$ by Assumption \ref{assum} (a) and $l^j(u^*)$ is inward pointing, we obtain
\begin{equation}
\frac{d}{dt} D_{cont}(u(t)) > 0
\end{equation}
and we obtain a contradiction.

\item[Case 2]
Suppose $l^j(u^*)$ is outward-pointing, i.e.
$$\nabla\tilde\eta(u^*) \cdot l^j(u^*) > 0.$$
Then, reasoning as in Case 1, with $l^j$ replaced by $-l^j$, we take $u(t)$ the integral curve in the $-l^j$ direction. Since $-l^j(u^*)$ is inward pointing and $u^*$ is a maximum of $D_{cont}$ on $\Pi$,
\begin{equation}
\frac{d}{dt}\bigg|_{t=0} D_{cont}(u(t)) \le 0.
\end{equation}
As before, by explicit computation, we obtain,
\begin{equation}
\frac{d}{dt}\bigg|_{t=0} D_{cont}(u(t)) = -\left[\lambda^1(u^*) - \lambda^j(u^*)\right]\left[\nabla\tilde\eta(u^*) \cdot l^j(u^*)\right] > 0,
\end{equation}
again a contradiction.

\item[Case 3]
Suppose $l^j(u^*) \cdot \nabla\tilde\eta(u^*) = 0$. Then, $l^i(u^*) \cdot \nabla\tilde\eta(u^*) = 0$ for each $1\le i \le n$ and $\nabla\tilde\eta(u^*) = 0$, contradicting Lemma \ref{lem_nu}.
\end{description}
In all cases, we obtain a contradiction. Therefore, we conclude that $\nabla\tilde\eta(u^*)$ is parallel to $l^1(u^*)$. Finally, for the sake of contradiction, suppose that $r_1(u^*)$ is inward pointing. As in the proof of Lemma \ref{lem1}, taking $u(t)$ the integral curve in the direction of $r_1$ emanating from $u^*$,
\begin{equation}
\frac{d}{dt}D_{cont}(u(t)) = \tilde\eta(u(t))\left[\nabla\lambda^1(u(t))\cdot r_1(u(t))\right] 
\end{equation}
and $\frac{d}{dt} D_{cont}(u(0)) = 0$. Now, if $r_1(u^*)$ is inward-pointing, since $u^*$ is a maximum for $D_{cont}$, we must have
\begin{equation}
\frac{d^2}{dt^2}\bigg|_{t=0}D_{cont}(u(t)) \le 0.
\end{equation}
However, by explicit computation, and using $\tilde\eta(u^*) = 0$ once again,
\begin{equation}
\frac{d^2}{dt^2}\bigg|_{t=0} D_{cont}(u(t)) = \left[\nabla\tilde\eta(u^*)\cdot r_1(u^*)\right]\left[\nabla\lambda^1(u^*)\cdot r_1(u^*)\right].
\end{equation}
Since we have chosen $r_1$ to satisfy $\nabla\lambda^1(u) \cdot r_1(u) < 0$, (see Lemma \ref{lem_normalization}) it follows that if $r_1(u^*)$ is inward-pointing, $$\frac{d^2}{dt^2}\big|_{t=0} D_{cont}(u(t)) > 0,$$ a contradiction.
\end{proof}

\begin{lemma}\label{lem3}
Suppose $u^* \in \bd\Pi_{C,s_0}$ satisfies
\begin{equation}
\nabla \tilde\eta(u^*) \parallel l^1(u^*) \qquad {and} \qquad r_1(u^*)\text{ points outwards.}
\end{equation}
Then, for any $C$ sufficiently large and $s_0$ sufficiently small, $u^*$ is a maxima for $D_{cont}(u)$ on $\Pi_{C,s_0}$. Furthermore, for any $v\in \R^n$ with $v = \sum_{i = 1}^n v_i r_i(u^*)$
\begin{equation}\label{eqn_lem3_desired}
\nabla^2D_{cont}(u^*)v \cdot v \lesssim -s_0v_1^2 - Cs_0\left[\sum_{i=2}^n v_i^2\right]
\end{equation}
\end{lemma}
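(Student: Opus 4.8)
The plan is to prove \eqref{eqn_lem3_desired} by computing the Hessian of $D_{cont}$ at $u^*$ in the eigenbasis $\{r_i(u^*)\}_{i=1}^n$, and then to promote the resulting strict local maximality to global maximality on $\Pi_{C,s_0}$. First I would check that $u^*$ is a critical point. Since $u^*\in\bd\Pi$ we have $\tilde\eta(u^*)=0$, and the hypothesis $\nabla\tilde\eta(u^*)\parallel l^1(u^*)$ makes $\nabla\tilde\eta(u^*)$ a left eigenvector of $f'(u^*)$ for the eigenvalue $\lambda^1(u^*)$, i.e. $\nabla\tilde\eta(u^*)(\lambda^1(u^*)I-f'(u^*))=0$. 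Since $\tilde q$ is the entropy flux of $\tilde\eta$ (Lemma \ref{lem_eta}), the identity $\nabla D_{cont}(u)=\nabla\tilde\eta(u)(\lambda^1(u)I-f'(u))+\tilde\eta(u)\nabla\lambda^1(u)$ (cf. \eqref{eqn_Dcont}) then gives $\nabla D_{cont}(u^*)=0$.

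The core of the argument is the Hessian array $M_{ij}:=\nabla^2D_{cont}(u^*)r_i(u^*)\cdot r_j(u^*)$, which, because $\nabla D_{cont}(u^*)=0$, is obtained by differentiating $\nabla D_{cont}$ once in the direction $r_j(u^*)$ and pairing with $r_i(u^*)$. Carrying this out while invoking $\tilde\eta(u^*)=0$, $\nabla\tilde\eta(u^*)\parallel l^1(u^*)$ and the left-eigenvector identity above, I expect a leading term $(\lambda^1(u^*)-\lambda^i(u^*))\,\nabla^2\tilde\eta(u^*)r_j(u^*)\cdot r_i(u^*)$ plus remainder terms built from $f''$, $\nabla\lambda^1$ and $\nabla r_i$ contracted against $\nabla\tilde\eta(u^*)$; since the system is $C^4$ on the compactly-contained set $\Pi$ and $|\nabla\tilde\eta(u^*)|\sim s_0$ by Lemma \ref{lem_eta} and Lemma \ref{lem_nu}, all of these remainders are $\lesssim s_0$. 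The decisive point is that the leading term is \emph{diagonal} in the eigenbasis: by Lemma \ref{lem_eta} one has $\nabla^2\tilde\eta(u^*)=Cs_0\,\nabla^2\eta(u^*)$, and by $\nabla^2\eta(u^*)r_j(u^*)\parallel l^j(u^*)$ together with $l^j\cdot r_i=0$ for $i\neq j$ (Lemma \ref{lem_normalization}), it vanishes unless $i=j$. Hence $|M_{ij}|\lesssim s_0$ off the diagonal, whereas on the diagonal the leading term vanishes for $i=1$ (because $\lambda^1-\lambda^1=0$), leaving $M_{11}=(\nabla\tilde\eta(u^*)\cdot r_1(u^*))(\nabla\lambda^1(u^*)\cdot r_1(u^*))\le -Ks_0<0$: the first factor is $\gtrsim s_0>0$ since $r_1(u^*)$ points outward, $\nabla\tilde\eta(u^*)$ is the outward normal of size $\sim s_0$ (Lemma \ref{lem_nu}) and $l^1\cdot r_1>0$, and the second is strictly negative by the normalization in Lemma \ref{lem_normalization} and bounded away from $0$ by genuine nonlinearity (Assumption \ref{assum}(b)). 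For $i=p\ge 2$, $M_{pp}=(\lambda^1(u^*)-\lambda^p(u^*))\,\nabla^2\tilde\eta(u^*)r_p(u^*)\cdot r_p(u^*)+\bigO(s_0)\le -KCs_0$ once $C$ is large, using $\lambda^1(u^*)<\lambda^p(u^*)$ uniformly (Assumption \ref{assum}(a)) and the uniform positive definiteness of $\nabla^2\eta$.

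Assembling, for $v=\sum_iv_ir_i(u^*)$ one has
\begin{equation*}
\nabla^2D_{cont}(u^*)v\cdot v=M_{11}v_1^2+\sum_{p\ge2}M_{pp}v_p^2+\sum_{i\neq j}v_iv_jM_{ij}.
\end{equation*}
Using $|M_{ij}|\lesssim s_0$ off the diagonal and Young's inequality, the cross terms coupling $v_1$ contribute at most $\tfrac12|M_{11}|v_1^2+\bigO(s_0)\sum_{p\ge2}v_p^2$ (the Young constant being universal, so $\tfrac12|M_{11}|$ is attainable) and the remaining cross terms contribute $\bigO(s_0)\sum_{p\ge2}v_p^2$; both $\bigO(s_0)$ remainders are dominated by $\sum_{p\ge2}M_{pp}v_p^2\sim -Cs_0\sum_{p\ge2}v_p^2$ for $C$ large. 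This yields $\nabla^2D_{cont}(u^*)v\cdot v\lesssim -s_0v_1^2-Cs_0\sum_{i\ge2}v_i^2$, which is \eqref{eqn_lem3_desired}; in particular $\nabla^2D_{cont}(u^*)$ is negative definite, so $u^*$ is a strict local maximum of $D_{cont}$ on $\Pi$. To upgrade to a global maximum: by compactness $D_{cont}$ attains its maximum on $\Pi$ at some $\bar u$, which lies in $\bd\Pi$ (Lemma \ref{lem1}) and satisfies the same condition as $u^*$ (Lemma \ref{lem2}); but for $C$ large this condition has a unique solution, since it is equivalent to $\nu(u)=\ell(u)$ with $\nu$ the Gauss map of the strictly convex body $\Pi$ (a homeomorphism onto $S^{n-1}$) and $\ell:=l^1/|l^1|$ the fixed $C^3$ normalized left eigenfield, and $\nu^{-1}\circ\ell$ is a contraction of the compact set $\bd\Pi$ (by Lemma \ref{lem_pi}, $\nu^{-1}$ is $\lesssim C^{-1}$-Lipschitz, while $\ell$ has a system-dependent Lipschitz constant). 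Hence $\bar u=u^*$.

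I expect the Hessian computation of the second paragraph to be the main obstacle: one must track carefully which $\bigO(s_0)$ terms survive the substitutions $\tilde\eta(u^*)=0$, $\nabla\tilde\eta(u^*)\parallel l^1(u^*)$ and $\nabla\tilde\eta(u^*)(\lambda^1(u^*)I-f'(u^*))=0$, and in particular verify that the entropy-compatibility relation $\nabla^2\eta(u)r_j(u)\parallel l^j(u)$ genuinely confines the $\bigO(Cs_0)$ part of the Hessian to the diagonal in the eigenbasis --- this is exactly what separates the ``$s_0$'' rate in the $r_1$-direction from the ``$Cs_0$'' rate in the transverse directions. The sign bookkeeping for $M_{11}$ (via ``$r_1(u^*)$ points outward'') and the Young-inequality absorption are then routine.
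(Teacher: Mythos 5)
Your proposal is correct and follows essentially the same route as the paper: criticality from the left-eigenvector identity, the Hessian formula at $u^*$ with $\tilde\eta(u^*)=0$, diagonality of the $Cs_0$-block via $\nabla^2\eta\,r_j\parallel l^j$ and $l^j\cdot r_i=\delta_{ij}$, the sign of $M_{11}$ from genuine nonlinearity together with the outward orientation of $r_1(u^*)$, and a Young-inequality absorption of the off-diagonal entries into $-s_0v_1^2-Cs_0\sum_{i\ge2}v_i^2$; the global-maximality upgrade you append is exactly the content the paper defers to Lemma~\ref{lem4} and Proposition~\ref{prop_max}. The one spot you flag as delicate --- showing the $\bigO(s_0)$ remainders in $M_{11}$ collapse to precisely $\left(\nabla\tilde\eta(u^*)\cdot r_1(u^*)\right)\left(\nabla\lambda^1(u^*)\cdot r_1(u^*)\right)$ rather than some signless $\bigO(s_0)$ quantity --- the paper sidesteps by working along the $r_1$-integral curve, where $\frac{d}{dt}D_{cont}(u(t))=\tilde\eta(u(t))\,\nabla\lambda^1(u(t))\cdot r_1(u(t))$ holds identically, so that differentiating once more and setting $\tilde\eta(u^*)=0$ kills every term except the single product-rule term and the cancellation you would otherwise have to track by hand becomes automatic.
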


\begin{proof}
Since $\tilde\eta$ is an entropy for (\ref{cl}), we find that
\begin{equation}\label{eqn_lem3_eqn1}
\nabla D_{cont}(u) = -\nabla\tilde\eta(u)\left[f^\prime(u) - \lambda^1(u)I\right] + \tilde\eta(u)\nabla\lambda^1(u). 
\end{equation}
Since $\tilde\eta(u^*) = 0$ and $\nabla\tilde\eta(u^*)$ is a left eigenvector for $f^\prime(u^*)$ with eigenvalue $\lambda^1(u^*)$, it follows that $u^*$ is a critical point of $D_{cont}$.

Next, we note that for $u(t)$ the integral curve emanating from $u^*$ in the $r_1(u)$ direction, $\nabla D_{cont}(u^*) = 0$ implies
\begin{equation}\label{eqn_lem3_eqn2}
\frac{d^2}{dt^2}\bigg|_{t=0} D_{cont}(u(t)) = \frac{d}{dt}\bigg|_{t=0} \nabla D_{cont}(u(t))\cdot r_1(u(t)) = \nabla^2 D_{cont}(u^*)r_1(u^*) \cdot r_1(u^*).
\end{equation}
Therefore, we compute the left hand side of (\ref{eqn_lem3_eqn2}). Since $\frac{d}{dt} D_{cont}(u(t)) = \tilde\eta(u(t))\nabla\lambda^1(u(t)) \cdot r_1(u(t))$, taking a second derivative and using $\nabla\tilde\eta(u^*) \parallel l^1(u^*)$ and $\tilde\eta(u^*) = 0$ yields,
\begin{equation}\label{eqn_lem3_eqn3}
\frac{d^2}{dt^2}\bigg|_{t=0} D_{cont}(u(t)) = (\nabla\tilde\eta(u^*) \cdot r_1(u^*))(\nabla\lambda^1(u^*)\cdot r_1(u^*)) \sim -s_0
\end{equation}
because $r_1(u^*)$ is outward pointing, $\nabla\lambda^1(u)\cdot r_1(u) \sim -1$ by Assumption \ref{assum}(b), and $|\nabla\tilde\eta(u)|\sim s_0$ for $u\in \bd\Pi$ by Lemma \ref{lem_nu}. Together (\ref{eqn_lem3_eqn2}) and (\ref{eqn_lem3_eqn3}) imply $\nabla^2D_{cont}(u^*)r_1(u^*) \cdot r_1(u^*) \lesssim -s_0$.

On the other hand, differentiating (\ref{eqn_lem3_eqn1}) once more yields the formula
\begin{equation}\label{eqn_lem3_eqn4}
\begin{aligned}
\nabla^2 D_{cont}(u) &= -\nabla^2\tilde\eta(u)\left[f^\prime(u) - \lambda^1(u)I\right] - \nabla\tilde\eta(u)\left[f^{\prime\prime}(u) - I \tens \nabla\lambda^1(u)\right]\\
	&\quad + \tilde\eta(u)\nabla^2\lambda^1(u) + \nabla\lambda^1(u) \tens \nabla\tilde\eta(u).
\end{aligned}
\end{equation}
Therefore, evaluating (\ref{eqn_lem3_eqn4}) at $u^*$ using $\tilde\eta(u^*) = 0$ and $|\nabla\tilde\eta(u^*)| \lesssim s_0$ by Lemma \ref{lem_eta},
\begin{equation}\label{eqn_lem3_eqn5}
\nabla^2D_{cont}(u^*) = -Cs_0\nabla^2\eta(u^*)\left[f^\prime(u^*) - \lambda^1(u^*)I\right] + \bigO(s_0).
\end{equation}
Thus, right multiplying by $r_i(u^*)$ for $i \neq 1$ and left multiplying by $r_j(u^*)$, (\ref{eqn_lem3_eqn5}) implies
\begin{equation}\label{eqn_lem3_eqn6}
\nabla^2 D_{cont}(u^*)r_i(u^*) \cdot r_j(u^*) = -Cs_0(\lambda^i(u^*) - \lambda^1(u^*))\nabla^2\eta(u^*)r_i(u^*) \cdot r_j(u^*) + \bigO(s_0)
\end{equation}
By Assumption \ref{assum}(a), $\lambda^i(u^*) - \lambda^1(u^*) \gtrsim 1$. Also, since $\nabla^2\eta(u)r_i(u) \parallel l^i(u)$, by Lemma \ref{lem_normalization}, $\nabla^2\eta(u^*)r_i(u^*) \cdot r_j(u^*) \sim \delta_{ij}$, where $\delta_{ij}$ denotes the Kronecker delta. So, for $i\neq 1$, (\ref{eqn_lem3_eqn6}) yields
\begin{equation}\label{eqn_lem3_eqn7}
\nabla^2 D_{cont}(u^*)r_i(u^*) \cdot r_j(u^*) \lesssim -Cs_0\delta_{ij} + \bigO(s_0).
\end{equation}
Therefore, for $C$ sufficiently large and $s_0$ sufficiently small and any $1 \le i \le n$, $\nabla^2D_{cont}(u^*)r_i(u^*)\cdot r_i(u^*) < 0$. We conclude (\ref{eqn_lem3_desired}) holds for any $v$ when only one $v_i$ is nonzero. We now prove (\ref{eqn_lem3_desired}) in general, which implies $u^*$ is a maximum. Expanding the left hand side of (\ref{eqn_lem3_desired}) by linearity and using (\ref{eqn_lem3_eqn7}), there exist universal constants $K_1$, $K_2$, and $K_3$ such that
\begin{equation}\label{eqn_lem3_eqn8}
\nabla^2 D_{cont}(u^*)v \cdot v \le -K_1s_0v_1^2 - K_2Cs_0\left[\sum_{i=2}^n v_i^2\right] + K_3s_0\sum_{i\neq j} |v_i v_j|.
\end{equation}
Note, for $C$ sufficiently large, by Cauchy-Schwarz,
\begin{equation}\label{eqn_lem3_eqn9}
K_3s_0\sum_{i,j \neq 1, i\neq j} |v_i v_j| \le \frac{K_2}{4}Cs_0\left[\sum_{i=2}^n v_i^2\right].
\end{equation}
Also, again using Cauchy-Schwarz, for any $\epsilon > 0$,
\begin{equation}\label{eqn_lem3_eqn10}
K_3s_0\sum_{j\neq 1} |v_1 v_j| \le K_3s_0 \eps v_1^2 + \frac{K_3}{4\eps}s_0 \left[\sum_{j=2}^nv_j^2\right].
\end{equation}
Taking $\eps$ sufficiently small so that $K_3\eps < \frac{K_1}{2}$, (\ref{eqn_lem3_eqn8}), (\ref{eqn_lem3_eqn9}), and (\ref{eqn_lem3_eqn10}) together imply (\ref{eqn_lem3_desired}) for sufficiently large $C$ and sufficiently small $s_0$.
\end{proof}

\begin{lemma}\label{lem4}
For any $C$ sufficiently large and $s_0$ sufficiently small, there exists a unique point $u^*$ contained in $\bd\Pi_{C,s_0}$ satisfying the nonlinear system of constraints,
\begin{equation}\label{eqn_max_cond}
\nabla\tilde\eta(u^*)\parallel l^1(u^*) \qquad \text{and} \qquad r_1(u^*) \cdot \nabla\tilde\eta(u^*) > 0.
\end{equation}
\end{lemma}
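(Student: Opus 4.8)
The plan is to recast the nonlinear system (\ref{eqn_max_cond}) as a fixed-point equation for the Gauss map of $\Pi_{C,s_0}$ and to solve it by Brouwer's theorem, with the quantitative convexity of $\Pi$ supplied by Lemmas \ref{lem_eta}--\ref{lem_pi}. The first step is the observation that a point $u^*\in\bd\Pi$ satisfies (\ref{eqn_max_cond}) \emph{if and only if} $\nu(u^*)=l^1(u^*)$, where $\nu:=\nabla\tilde\eta/|\nabla\tilde\eta|$ is the outward unit normal to $\bd\Pi$. Indeed, $\bd\Pi=\{\tilde\eta=0\}$, and since $\nabla\tilde\eta$ is non-vanishing there (Lemma \ref{lem_nu}) and $\tilde\eta\in C^3$, the implicit function theorem makes $\bd\Pi$ a $C^3$ hypersurface with $C^2$ unit normal $\nu$; if $\nabla\tilde\eta(u^*)\parallel l^1(u^*)$ then $\nu(u^*)=\pm l^1(u^*)$ because $|l^1|=1$, and the sign condition $r_1(u^*)\cdot\nabla\tilde\eta(u^*)>0$ together with $l^1(u^*)\cdot r_1(u^*)>0$ (Lemma \ref{lem_normalization}) forces the $+$ sign.

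Next I would check that the Gauss map $\nu\colon\bd\Pi\to S^{n-1}$ is a homeomorphism: it is continuous, it is injective by the lower bound $|\nu(u)-\nu(w)|\gtrsim C|u-w|$ of Lemma \ref{lem_pi}, and it is surjective because for any $\omega\in S^{n-1}$ the linear functional $u\mapsto\omega\cdot u$ attains its maximum over the compact convex body $\overline\Pi$ at a boundary point $u_\omega$ whose outward normal — unique since $\bd\Pi$ is $C^1$ — equals $\omega$. Set $G:=l^1\circ\nu^{-1}\colon S^{n-1}\to S^{n-1}$; by the previous step the solutions of (\ref{eqn_max_cond}) are precisely the points $u^*=\nu^{-1}(\omega^*)$ with $\omega^*$ a fixed point of $G$.

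For existence I would use that $diam(\Pi)\lesssim C^{-1}$ with $u_L\in\Pi$ (Lemma \ref{lem_eta}), so $|\nu^{-1}(\omega)-u_L|\le diam(\Pi)\lesssim C^{-1}$ for every $\omega$; since for $C$ large and $s_0$ small $\Pi$ lies in the fixed compact set $B(d,\eps_d/2)\subset\Nu$ on which $\|\nabla l^1\|_{L^\infty}$ is a universal constant, this gives $|G(\omega)-l^1(u_L)|\lesssim C^{-1}$ for all $\omega$. Hence, for a universal constant $K'$, $G$ maps $S^{n-1}$ — in particular the closed spherical cap $\overline B:=\{\omega\in S^{n-1}:|\omega-l^1(u_L)|\le K'C^{-1}\}$ — into $\overline B$. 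For $C$ large $\overline B$ is a proper cap, hence homeomorphic to a closed Euclidean ball, so Brouwer's fixed-point theorem applied to $G|_{\overline B}$ yields $\omega^*\in\overline B$ with $G(\omega^*)=\omega^*$, and then $u^*:=\nu^{-1}(\omega^*)\in\bd\Pi$ solves (\ref{eqn_max_cond}).

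For uniqueness, if $u^*,u^{**}\in\bd\Pi$ both solve (\ref{eqn_max_cond}) then $\nu(u^*)=l^1(u^*)$ and $\nu(u^{**})=l^1(u^{**})$, so the two-sided estimate of Lemma \ref{lem_pi} gives, for a universal $\kappa>0$,
\[
\kappa C\,|u^*-u^{**}|\le|\nu(u^*)-\nu(u^{**})|=|l^1(u^*)-l^1(u^{**})|\le\|\nabla l^1\|_{L^\infty}\,|u^*-u^{**}|,
\]
which forces $u^*=u^{**}$ as soon as $C>\|\nabla l^1\|_{L^\infty}/\kappa$. The routine ingredients are the implicit-function-theorem regularity of $\bd\Pi$ and the standard facts about the Gauss map of a convex body; the whole argument rests on the two-sided bound $|\nu(u)-\nu(w)|\sim C|u-w|$ of Lemma \ref{lem_pi}, which simultaneously collapses the image of $G$ into a $C^{-1}$-cap (making Brouwer applicable) and makes $\nu$ a $C$-expanding map (making uniqueness immediate), so I do not anticipate any single step being a genuine obstacle given the structural lemmas already in hand.
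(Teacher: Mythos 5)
Your proof is correct, and it does more than the paper's proof of the same lemma. The paper's argument for Lemma \ref{lem4} addresses only uniqueness: given two solutions $u_1,u_2$ of (\ref{eqn_max_cond}), the two-sided bound of Lemma \ref{lem_pi} forces $C|u_1-u_2|\lesssim |l^1(u_1)-l^1(u_2)|\lesssim |u_1-u_2|$, which is impossible for $C$ large unless $u_1=u_2$. Existence is not argued there at all; in the surrounding logic it is inherited for free from the Weierstrass theorem (the continuous function $D_{cont}$ attains a maximum on the compact set $\overline{\Pi}_{C,s_0}$) together with Lemmas \ref{lem1} and \ref{lem2}, which show that any maximizer must satisfy (\ref{eqn_max_cond}). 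Your proof, by contrast, is self-contained: you reformulate the constraint as $\nu(u^*)=l^1(u^*)$, observe via the diameter bound of Lemma \ref{lem_eta} and the local Lipschitz continuity of $l^1$ that $G=l^1\circ\nu^{-1}$ maps $S^{n-1}$ into a cap of radius $\bigO(C^{-1})$, and invoke Brouwer on that cap, then run the same Lemma-\ref{lem_pi} uniqueness argument as the paper.

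The interesting comparison is not with Lemma \ref{lem4} itself but with Lemma \ref{lem18}, where the paper proves existence and uniqueness of the companion point $\tilde u$ (with $\nu(\tilde u)=-l^1(\tilde u)$) by exactly the Gauss-map idea you use: Lemma \ref{lem17} gives surjectivity of $\nu$, Lemma \ref{lem_pi} gives injectivity, and then the map $p\mapsto \nu^{-1}(-l^1(p))$ is shown to be a contraction on $\bd\Pi$ with constant $\bigO(C^{-1})$, so Banach's fixed point theorem delivers existence and uniqueness in one stroke. The same argument, with $l^1$ in place of $-l^1$, would give Lemma \ref{lem4} directly and is marginally cleaner than your route, since Brouwer buys only existence and you still need a separate uniqueness step; both approaches, however, ultimately rest on the same ingredients (Lemma \ref{lem_eta}'s diameter bound, Lemma \ref{lem_pi}'s two-sided normal estimate, and the surjectivity of the Gauss map), so the difference is one of packaging rather than substance.
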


\begin{proof}
Say $u_1$ and $u_2$ both satisfy (\ref{eqn_max_cond}) and define the outward unit normal $\nu(u) = \frac{\nabla\tilde\eta(u)}{|\nabla\tilde\eta(u)|}$. Then, because $r_1(u)\cdot l^1(u) > 0$ by our choice of normalization in Lemma \ref{lem_normalization}, $u_1$ and $u_2$ satisfy
\begin{equation}
\nu(u_1) = l^1(u_1) \qquad \text{and} \qquad \nu(u_2) = l^1(u_2).
\end{equation}
Therefore, by Lemma \ref{lem_pi},
\begin{equation}
|u_1 - u_2| \sim | l^1(u_1) - l^1(u_2)| = |\nu(u_1) - \nu(u_2)| \sim C|u_1 - u_2|.
\end{equation}
Thus, there is a universal constant $K$ such that for any $C$ sufficiently large,
\begin{equation}
C|u_1 - u_2| \le K|u_1 - u_2|.
\end{equation}
For $C$ larger than $K$, this can hold only if $u_1 = u_2$.
\end{proof}

\begin{flushleft}
\uline{\bf{Proof of Proposition \ref{prop_max}}}
\end{flushleft}
%\begin{proof}[Proof of Proposition \ref{prop_max}]
Combining Lemma \ref{lem1}, Lemma \ref{lem2}, and Lemma \ref{lem3}, we see that $u^*\in \Pi$ is a maxima of $D_{cont}$ if and only if $u^* \in \bd \Pi$ with $l^1(u^*) \parallel \nabla\tilde\eta(u^*)$ and $r_1(u^*)$ is outward-pointing. Therefore, Lemma \ref{lem4} guarantees that there is a unique maximum in $\Pi$ for sufficiently large $C$ and sufficiently small $s_0$.
%\end{proof}

%%%%%%%%%%%%%%%%%%%%%%%%%%%%%%%%%%%%%%%%%%%%%%%%%%%%%%%%%%%%%%%%%%%%%%%%%%%%%%%%%%%%%%%%%%%%%%%%%%%%%%%

\subsection{Step 2: Dissipation Rate Along the Shock Curve \texorpdfstring{$\S^1_{u_L}$}{S1uL}}

The main proposition of this step is the calculation of $D_{cont}$ along the shock curve connecting $u_L$ and $u_R$. The uniformly negative cubic entropy dissipation is sharp as it agrees with the known scalar dissipation rate (i.e. Burger's). The dissipation rate for Burger's follows immediately from \cite[p.~9-10]{serre_vasseur} and also the proof of Lemma 3.3 in \cite{scalar_move_entire_solution}.
\begin{proposition}\label{prop_shock_explicit}
Let $u_0$ denote the intersection point of the shock curve $\S^1_{u_L}$ and $\bd\Pi_{C,s_0}$. Then, for all $C$ sufficiently large and $s_0$ sufficiently small,
\begin{equation}
D_{cont}(u_0) \lesssim -s_0^3.
\end{equation}
\end{proposition}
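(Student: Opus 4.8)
The strategy is to reduce the computation of $D_{cont}(u_0)$ to two quantities we can control sharply: the location of $u_0$ along the shock curve (how large is the parameter $s$ with $\S^1_{u_L}(s) = u_0$?) and the value of $D_{cont}$ along $\S^1_{u_L}$ as a function of that parameter. First I would use Lemma \ref{lem_lax} with $i = 1$ and the two base points $v = u_L$ and $v = u_R$ to obtain closed-form expressions for $q(\S^1_{u_L}(s); u_L) - \sigma^1_{u_L}(s)\eta(\S^1_{u_L}(s)|u_L)$ and the analogous quantity with $u_R$. Indeed, since $u_L = \S^1_{u_L}(0)$, the term $q(u_L;u_L) - \sigma\eta(u_L|u_L) = 0$, so the first integral telescopes to $\int_0^s \dot\sigma^1_{u_L}(t)\,\eta(u_L|\S^1_{u_L}(t))\,dt$, which by Assumption \ref{assum}(j) ($\dot\sigma < 0$, $\eta(u_L|\S^1_{u_L}(t)) > 0$) is strictly negative. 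For the $u_R$ base point one instead writes $u_R = \S^1_{u_L}(s_0)$ and runs Lemma \ref{lem_lax} between $s$ and $s_0$. Combining with $D_{cont}(u_0) = -\tilde q(u_0) + \lambda^1(u_0)\tilde\eta(u_0)$ and the definitions \eqref{defn_eta}, and using $\tilde\eta(u_0) = 0$ since $u_0 \in \bd\Pi_{C,s_0}$, I would get $D_{cont}(u_0) = -(1+Cs_0)[q(u_0;u_L) - \lambda^1(u_0)\eta(u_0|u_L)] + [q(u_0;u_R) - \lambda^1(u_0)\eta(u_0|u_R)]$, and then replace each bracket using the Lax identity with $\lambda^1(u_0)$ playing the role that $\sigma^1_{u_L}(s)$ plays, up to correction terms $\sigma^1_{u_L}(s) - \lambda^1(u_0) = \bigO(s)$ by the asymptotic expansion in Lemma \ref{lem_hugoniot}.

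Second, I need the size of $s$ with $\S^1_{u_L}(s) = u_0$. Since $u_0 \in \bd\Pi_{C,s_0}$, we have $\tilde\eta(u_0) = 0$, i.e. $(1+Cs_0)\eta(u_0|u_L) = \eta(u_0|u_R)$. Using Lemma \ref{l2_rel_entropy_lemma}, $\eta(u_0|u_L) \sim s^2$ and, expanding $\eta(u_0|u_R)$ around $u_L$ as in \eqref{eqn_lemeta_eqn2} with $|u_R - u_L| \sim s_0$, one finds $Cs_0 s^2 \sim s_0(s + s_0)$ roughly, forcing $s \sim C^{-1}$ for $s \gtrsim s_0$ (this is just the diameter estimate of Lemma \ref{lem_eta} restricted to the curve). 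More precisely, expanding the relative entropies along the $C^3$ curve $\S^1_{u_L}$ and using genuine nonlinearity, the equation $\tilde\eta(\S^1_{u_L}(s)) = 0$ is a perturbation of $-s_0 + c\, s\, s_0 = 0$ with $c = \nabla^2\eta\, r_1\cdot r_1 > 0$ and lower-order corrections, giving $s = \frac{1}{c C} + \bigO(C^{-2} + s_0/C)$; the key point is just $s \sim C^{-1}$, with $s \ge s_0$ for $s_0$ small.

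Third, I would compute $D_{cont}(\S^1_{u_L}(s))$ as a function of $s$ and $s_0$. Plugging the Lax expansions from the first step into the displayed formula for $D_{cont}(u_0)$, the leading behavior should be of the form (universal constant)$\cdot s_0\, s\,(s - s_0) \cdot$(something positive) plus controlled errors, up to the $Cs_0$ prefactor on one term and the $\bigO(s)$ corrections from replacing shock speeds by $\lambda^1(u_0)$. Carrying this through — and this is the main obstacle, the bookkeeping of which error terms are genuinely of lower order than the claimed $s_0^3$ — one wants to see that the dominant term is $\sim -Cs_0^2 s^2 \cdot Cs_0 = $ no, rather: since along the curve $\tilde q$ and $\lambda^1\tilde\eta$ nearly cancel in the scalar fashion, $D_{cont}$ on $\S^1_{u_L}$ behaves like the Burgers dissipation $\sim -\text{(strength)}^3$ modulated by the weight, and at $s \sim C^{-1}$, $s_0 \lesssim C^{-1}$ this evaluates to $\lesssim -s_0^3$ once $C$ is large enough to beat the universal constants. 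The cleanest route is to show directly $\frac{d}{ds} D_{cont}(\S^1_{u_L}(s)) = \tilde\eta(\S^1_{u_L}(s)) \nabla\lambda^1 \cdot r_1 + \bigO(\text{curve correction})$ via the identity \eqref{eqn_Dcont_r1} (valid since $\S^1_{u_L}$ is tangent to $r_1$ at $u_L$ only to leading order, so there is an extra error term from the curvature of the shock curve versus the integral curve of $r_1$), integrate from $0$ to $s$ using $\tilde\eta(\S^1_{u_L}(0)) = -\eta(u_L|u_R) \sim -s_0^2$ and $\tilde\eta$ increasing to $0$ at $s$, together with $\nabla\lambda^1\cdot r_1 \sim -1$, to get $D_{cont}(u_0) - D_{cont}(u_L) = \int_0^s (\text{negative of size } \sim s_0^2)\,dt \cdot(-1) $ — wait, this gives a positive contribution, so one must also track $D_{cont}(u_L) = -\tilde q(u_L) + \lambda^1(u_L)\tilde\eta(u_L)$, which by the Lax identity equals $(1+Cs_0)\int_0^{s_0}\dot\sigma^1_{u_L}(t)\eta(u_L|\S^1_{u_L}(t))\,dt \sim -s_0^3$. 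The final accounting: $D_{cont}(u_0) = D_{cont}(u_L) + \int_0^s \tilde\eta(\S^1_{u_L}(t))\nabla\lambda^1\cdot r_1\,dt + (\text{errors})$, where the first term is $\sim -s_0^3$, the integral is the product of something $\lesssim 0$ of magnitude $\lesssim s_0^2$ over an interval of length $s\sim C^{-1}$ against $\nabla\lambda^1 \cdot r_1 < 0$, hence $\gtrsim 0$ but of size $\lesssim s_0^2/C$, which for $C$ large is dominated by $|D_{cont}(u_L)| \sim s_0^3$ only if $s_0^2/C \lesssim s_0^3$, i.e. $C \gtrsim s_0^{-1}$ — so in fact I expect the correct statement of the sharp estimate to require being more careful, showing the integrand $\tilde\eta(\S^1_{u_L}(t))$ is itself of size $\sim -s_0 t$ near $t = 0$ and $\sim -s_0(s - t)$ near $t = s$ rather than uniformly $s_0^2$, making the integral $\sim -s_0 s^2 \cdot(-1) \sim s_0/C^2$; combined with the explicit $\sim -s_0^3$ term and $s_0 \lesssim C^{-1}$, everything closes at order $-s_0^3$. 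The main obstacle, then, is precisely this delicate error analysis: showing that all correction terms — curvature of $\S^1_{u_L}$ versus the $r_1$-integral curve, the gap $\sigma^1_{u_L} - \lambda^1$, and the $Cs_0$ weight — contribute strictly below the order $s_0^3$ once $C$ is taken large (with $s_0$ then small depending on $C$).
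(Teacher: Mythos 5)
There is a genuine error in your argument, concentrated in your second paragraph: you claim that $u_0 = \S^1_{u_L}(s)$ for $s \sim C^{-1}$, but in fact $s \sim s_0$ (more precisely $s \approx s_0/2$ to leading order). Since $\tilde\eta(u_R) = (1+Cs_0)\eta(u_R\,|\,u_L) > 0$, the state $u_R$ lies \emph{outside} $\Pi_{C,s_0}$, so the shock segment from $u_L = \S^1_{u_L}(0)$ to $u_R = \S^1_{u_L}(s_0)$ crosses $\bd\Pi$ at some $t_0 \in (0,s_0)$; that crossing is $u_0$. The paper records exactly $|u_0 - u_L| \sim s_0$ in Lemma \ref{lem5}. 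Your confusion appears to come from the diameter computation in Lemma \ref{lem_eta}: there the line $L(t) = u_L + t r_1(u_L)$ exits $\Pi$ at $t^- \sim -C^{-1}$ on the side \emph{away} from $u_R$, but on the side \emph{toward} $u_R$ it exits already at $t^+ \sim s_0$ (both roots of the quadratic $-a s_0 + b t + cC t^2 = 0$ must be examined; only the negative one has magnitude $\sim C^{-1}$). Your own sanity check detects the problem: with $s \sim C^{-1}$, the positive contribution $\int_0^s \tilde\eta(\S^1_{u_L}(t))\,\nabla\lambda^1\cdot r_1\,dt$ is of size $\sim s_0/C^2$, and $s_0/C^2 \gg s_0^3$ for $s_0 \ll C^{-1}$, so the negative term $D_{cont}(u_L) \sim -s_0^3$ cannot dominate — the inequality $s_0 \lesssim C^{-1}$ you invoke does \emph{not} make $s_0/C^2 \lesssim s_0^3$ in the regime of this paper (fix $C$, then take $s_0 \to 0$).

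Once the scale is corrected, your Lax--formula route can be made to close, but it is more delicate than the paper's. Writing $\mu := -\nabla\lambda^1(u_L)\cdot r_1(u_L) > 0$ and $M := \nabla^2\eta(u_L)r_1(u_L)\cdot r_1(u_L) > 0$, one computes from Lemma \ref{lem_lax} and Lemma \ref{lem_hugoniot} that $D_{cont}(u_L) = -\tfrac{1}{6}\mu M s_0^3 + \bigO(s_0^4)$, while $\int_0^{s} \tilde\eta(\S^1_{u_L}(t))\nabla\lambda^1\cdot r_1\,dt = +\tfrac{1}{8}\mu M s_0^3 + \bigO(Cs_0^4)$ (using $s \approx s_0/2$ and $\tilde\eta(\S^1_{u_L}(t)) = \tfrac{Ms_0}{2}\bigl[2t - s_0 + Ct^2\bigr] + \bigO(s_0^3 + Cs_0 t^3)$). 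The net is $-\tfrac{1}{24}\mu M s_0^3 + \bigO(Cs_0^4) \lesssim -s_0^3$ --- but this hinges on verifying $\tfrac{1}{8} < \tfrac{1}{6}$, i.e.\ on cancellation between two explicitly computed same-order terms of opposite signs. The paper sidesteps this entirely: it splits $D_{cont} = D_R + (1+Cs_0)D_L$ with $D_L(u) = -q(u;u_L) + \lambda^1(u)\eta(u\,|\,u_L)$ and $D_R(u) = q(u;u_R) - \lambda^1(u)\eta(u\,|\,u_R)$, introduces $u_l, u_r$ where the $r_1$-integral curve from $u_L$ and the $-r_1$-integral curve from $u_R$ meet $\bd\Pi$, shows that along each such curve $\frac{d}{dt}D_L$ (resp.\ $\frac{d}{dt}D_R$) equals $\eta(u(t)\,|\,u_L)\,\nabla\lambda^1\cdot r_1 \lesssim -t^2$ — a one-signed quantity, no cancellation — giving $D_L(u_l), D_R(u_r) \lesssim -s_0^3$ directly (Lemma \ref{lem7}); and then transports these to $u_0$ using $|u_0 - u_l| + |u_0 - u_r| \lesssim Cs_0^2$ (Lemma \ref{lem6}) together with $|\nabla D_L(u_l)|, |\nabla D_R(u_r)| \lesssim s_0^2$ and $|\nabla^2 D_{L,R}| \lesssim 1$ (Lemma \ref{lem8}). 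That decomposition buys you sign-definiteness of the integrand at every step, at the modest cost of the localization Lemmas \ref{lem5}--\ref{lem8}.
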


To compute $D_{cont}(u_0)$, we introduce some auxiliary points and functions. First, we define $u_r$ to be the first intersection of $\Pi$ with the integral curve starting from $u_R$ along the $-r_1$ direction. Second, we define $u_l$ to be the first intersection of $\Pi$ with the integral curve starting from $u_L$ in the $r_1$ direction. Finally, we define $D_R(u)$ and $D_L(u)$ via
\begin{equation}
D_R(u) =  q(u ; u_R) - \lambda^1(u)\eta(u | u_R) \qquad \text{and} \qquad D_L(u) = -q(u ; u_L) + \lambda^1(u)\eta(u | u_L).
\end{equation}
Thus, each of $D_R$ and $D_L$ are roughly half of the entropy dissipation, in the sense that, by the definition of $D_{cont}$ in (\ref{eqn_Dcont}), 
\begin{equation}
D_{cont}(u) = D_R(u) + (1+Cs_0)D_L(u).
\end{equation}

We now prove four lemmas: The first says that $u_l$, $u_r$, and $u_0$ are close to $u_L$ so that we actually localize our picture by looking at these points. The second says that $u_l$ and $u_r$ are close to $u_0$ in a stronger quantitative sense. The third says that $D_L(u_l)$ and $D_R(u_r)$ are negative of order $s_0^3$. The fourth says that the derivative of the entropy dissipations at $u_l$ and $u_r$ are small in a quantitative sense.
\begin{lemma}\label{lem5}
For all $C$ sufficiently large and $s_0$ sufficiently small,
\begin{equation}\label{eqn_lem5_desired}
|u_l - u_L| + |u_0 - u_L| + |u_r - u_L| \sim s_0.
\end{equation}
\end{lemma}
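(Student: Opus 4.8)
The plan is to establish the two bounds in $|u_l - u_L| + |u_0 - u_L| + |u_r - u_L| \sim s_0$ separately, using throughout that $u_l,u_0,u_r$ all lie on $\bd\Pi$ and that, by the structure of $\tilde\eta$ near $u_L$, the ``near'' side of $\bd\Pi$ sits at distance $\sim s_0$ from $u_L$ (and not $\sim C^{-1}$, which is the order of $d(u_L,\bd\Pi)$ on the ``far'' side, cf.\ the proof of Lemma \ref{lem_eta}). For the lower bound I would first show $B(u_L,K_0 s_0)\subseteq \Pi$ for a small universal constant $K_0$ and $s_0$ small. Indeed, for $|u-u_L|\le K_0 s_0$, Taylor's theorem together with $\nabla^2\tilde\eta = Cs_0\nabla^2\eta$ (Lemma \ref{lem_eta}) and $\nabla\tilde\eta(u_L) = \nabla\eta(u_R)-\nabla\eta(u_L)$ (differentiate \eqref{defn_eta}) gives
\[
\tilde\eta(u) = -\eta(u_L|u_R) + \big(\nabla\eta(u_R)-\nabla\eta(u_L)\big)\cdot(u-u_L) + \bigO\!\big(Cs_0|u-u_L|^2\big).
\]
Since $\eta(u_L|u_R)\ge c^* s_0^2$ by Lemma \ref{l2_rel_entropy_lemma} while $|\nabla\eta(u_R)-\nabla\eta(u_L)|\lesssim s_0$ by the $C^3$-regularity of $\eta$, the middle term is $\lesssim K_0 s_0^2$ and the error $\lesssim CK_0^2 s_0^3$, so $\tilde\eta(u)\le -\tfrac{c^*}{2}s_0^2<0$ once $K_0$ is small (universal) and $s_0$ is small. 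As $\Pi$ is open, $\bd\Pi\cap B(u_L,K_0 s_0)=\emptyset$, hence $|u_l-u_L|,\,|u_0-u_L|,\,|u_r-u_L|\ge K_0 s_0$.

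For the upper bound on $|u_0-u_L|$: along the shock curve one has $\tilde\eta(\S^1_{u_L}(0)) = \tilde\eta(u_L) = -\eta(u_L|u_R)<0$ and $\tilde\eta(\S^1_{u_L}(s_0)) = \tilde\eta(u_R) = (1+Cs_0)\eta(u_R|u_L)>0$, so $s\mapsto \tilde\eta(\S^1_{u_L}(s))$ vanishes at some $s_*\in(0,s_0)$. Differentiating and using the expansions of Lemma \ref{lem_hugoniot} with $\nabla^2\tilde\eta = Cs_0\nabla^2\eta$ shows $\frac{d}{ds}\tilde\eta(\S^1_{u_L}(s)) = s_0\,\nabla^2\eta(u_L)r_1(u_L)\cdot r_1(u_L) + \bigO(Cs_0 s + s_0^2)\gtrsim s_0$ on $[0,s_0]$, so $s_*$ is the unique such zero and $u_0 = \S^1_{u_L}(s_*)$; then $\S^1_{u_L}(s_*) = u_L + s_* r_1(u_L) + \bigO(s_*^2)$ gives $|u_0-u_L|\le s_* + \bigO(s_*^2)\lesssim s_0$, so $|u_0-u_L|\sim s_0$.

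For $u_l$ and $u_r$: let $\gamma$ solve $\dot\gamma = r_1(\gamma)$, $\gamma(0)=u_L$, and $\beta$ solve $\dot\beta = -r_1(\beta)$, $\beta(0)=u_R$; these are well-defined (since $r_1\in C^3(\Nu)$), have unit speed, and stay in $\Nu$ for $t\lesssim s_0$. Writing $\phi = \tilde\eta\circ\gamma$ and using $\nabla\tilde\eta(u) = Cs_0(\nabla\eta(u)-\nabla\eta(u_L)) + (\nabla\eta(u_R)-\nabla\eta(u_L))$ together with $u_R-u_L = s_0 r_1(u_L)+\bigO(s_0^2)$ (Lemma \ref{lem_hugoniot}), one computes $\phi'(t) = s_0\,\nabla^2\eta(u_L)r_1(u_L)\cdot r_1(u_L) + \bigO(Cs_0 t + s_0^2)$, so for a large universal constant $M$ and $s_0$ small (depending on $C_1$ and $M$), $\phi'(t)\ge \tfrac12 s_0\,\nabla^2\eta(u_L)r_1(u_L)\cdot r_1(u_L)\gtrsim s_0$ on $[0,Ms_0]$ by strict convexity of $\eta$. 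Since $\phi(0) = -\eta(u_L|u_R)\ge -c^{**}s_0^2$, integration gives $\phi(Ms_0)\ge \big(\tfrac{M}{2}\nabla^2\eta(u_L)r_1(u_L)\cdot r_1(u_L) - c^{**}\big)s_0^2 > 0$ for $M$ large (universal), so $\gamma$ exits $\Pi$ before time $Ms_0$ and $|u_l-u_L| = |\gamma(t_l)-u_L|\le t_l < Ms_0\lesssim s_0$. The identical argument recentered at $u_R$, using $\nabla\tilde\eta(u_R) = (1+Cs_0)(\nabla\eta(u_R)-\nabla\eta(u_L))$ and $u_R-u_L = s_0 r_1(u_R)+\bigO(s_0^2)$, gives $(\tilde\eta\circ\beta)'(t)\lesssim -s_0$ on $[0,Ms_0]$ while $(\tilde\eta\circ\beta)(0) = (1+Cs_0)\eta(u_R|u_L)\le 2c^{**}s_0^2$, so $\beta$ enters $\Pi$ before time $Ms_0$ and $|u_r-u_L|\le |u_r-u_R|+|u_R-u_L| < (M+1)s_0\lesssim s_0$. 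Combined with the lower bounds, this yields \eqref{eqn_lem5_desired}.

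The only substantive work is the derivative computations along the three curves and the bookkeeping of which error terms are genuinely subleading. The conceptual point -- and the reason $s_0$ rather than $C^{-1}$ appears -- is that the linear part of $\tilde\eta$ at $u_L$ (resp.\ $u_R$) in the $r_1$-direction already has size $\sim s_0$ with a definite sign (increasing toward $u_R$, by strict convexity of $\eta$), so the curves cross $\bd\Pi$ after parameter-length $\sim s_0$, well before the $\bigO(Cs_0 t^2)$ curvature contribution of $\tilde\eta$ matters; since $C\sim C_1$ is fixed while $s_0\to 0$, every product of the form $Cs_0^2$ is lower order than $s_0$, which is what makes the estimates close.
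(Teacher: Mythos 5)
Your proof is correct, and it reaches the conclusion by a slightly different route than the paper in two places. For the lower bound, the paper combines the identity $-\tilde\eta(u_L)=\eta(u_L\,|\,u_R)\gtrsim s_0^2$ with the estimate $-\tilde\eta(u_L)\lesssim s_0\, d(u_L,\bd\Pi)$ from Lemma \ref{lem_dist} to get $d(u_L,\bd\Pi)\gtrsim s_0$; you re-derive the needed fact by Taylor expanding $\tilde\eta$ directly on a ball $B(u_L,K_0 s_0)$ and showing it stays negative there. Both are equivalent in substance, and both ultimately rest on the same Taylor bookkeeping. For the upper bounds on $|u_l-u_L|$ and $|u_r-u_L|$, the paper is more static: it writes $0=\tilde\eta(u_l)$, applies Taylor's theorem with remainder, and uses strict convexity of $\tilde\eta$ to discard the quadratic term and obtain in one stroke $-\tilde\eta(u_L)\ge\nabla\tilde\eta(u_L)\cdot(u_l-u_L)$, from which the bound follows algebraically. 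You instead take a dynamical view, integrating $\phi'(t)=\nabla\tilde\eta(\gamma(t))\cdot r_1(\gamma(t))\gtrsim s_0$ along the $r_1$ integral curve and showing $\phi$ becomes positive by time $Ms_0$ so the curve must exit $\Pi$ earlier. Both approaches require the same precision on $\nabla\tilde\eta(u_L)$ and the error scales, but the convexity trick in the paper spares you from having to verify the sign of $\phi'$ uniformly on an interval. Your identification of $u_0$ along the shock curve via the intermediate value theorem plus monotonicity is a bit more explicit than the paper's terse parenthetical $u_0=\S^1_{u_L}(t_0)$ with $0<t_0<s_0$, which is fine.
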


\begin{proof}
We first prove $|u_L - u_l| \lesssim s_0$.
Indeed, say $u(t)$ is the integral curve emanating from $u_L$ in the $r_1$ direction and $u_l = u(t_l)$. Then,
\begin{equation}\label{eqn_lem5_eqn1}
u_l - u_L = \int_0^{t_l} r_1(u(t)) \ dt = t_l r_1(u_L) + \int_0^{t_l} r_1(u(t)) - r_1(u_L) \ dt = t_l r_1(u_L) + \bigO(t_l^2),
\end{equation}
where we have used that $|u(t) - u_L| \le t$ by the fundamental theorem of calculus. Since the defining property of $u_l$ and $t_l$ is $\tilde\eta(u_l) = 0$, using Taylor's theorem, we obtain
\begin{equation}
0 = \tilde\eta(u_L) + \nabla\tilde\eta(u_L) \cdot (u_l - u_L) + \frac{1}{2}\nabla^2\tilde\eta(u)(u - u_L) \cdot (u-u_L)
\end{equation}
for some $u$ with $|u - u_L| < |u_l - u_L|$. However, because $\tilde\eta$ is strictly convex, we obtain the inequality
\begin{equation}\label{eqn_lem5_eqn2}
-\tilde\eta(u_L) \ge \nabla\tilde\eta(u_L) \cdot (u_l - u_L).
\end{equation}
Using Lemma \ref{l2_rel_entropy_lemma}, the relative entropy is quadratic and we compute
\begin{equation}\label{eqn_lem5_eqn3}
-\tilde\eta(u_L) = \eta(u_L | u_R) \lesssim |u_L - u_R|^2 \sim s_0^2.
\end{equation}
Moreover, expanding the gradient at $u_L$ we obtain
\begin{equation}\label{eqn_lem5_eqn4}
\nabla\tilde\eta(u_L) = \nabla\eta(u_R) - \nabla\eta(u_L) = s_0\nabla^2\eta(u_L)r_1(u_L) + \bigO(s_0^2).
\end{equation}
Therefore, combining the expansions (\ref{eqn_lem5_eqn1}), (\ref{eqn_lem5_eqn3}), and (\ref{eqn_lem5_eqn4}) with the inequality (\ref{eqn_lem5_eqn2}) yields
\begin{equation}
s_0t_l\nabla^2\eta(u_L)r_1(u_L) \cdot r_1(u_L) + \bigO(t_ls_0^2 + s_0t_l^2) \lesssim s_0^2.
\end{equation}
Dividing by $s_0$ and taking $C$ sufficiently large (to guarantee $t_l$ is sufficiently small) and $s_0$ sufficiently small, we obtain $ |u_l - u_L| \lesssim t_l \lesssim s_0$. Second, we note that a completely analogous argument proves that $|u_r - u_R| \lesssim s_0$. Since $|u_R - u_L| \lesssim s_0$, we also obtain $|u_r - u_L| \lesssim s_0$.
Finally, since $u_0$ is along the $1$-shock curve connecting $u_L$ and $u_R$, $u_0 = \S^1_{u_L}(t_0)$ for some $0 < t_0 < s_0$, where the shock curve $\S^1_{u_L}$ is parametrized by arc length. It immediately follows that
$|u_0 - u_L| \sim t_0 < s_0$.
It remains to prove the reverse inequalities.
However, all of the reverse inequalities follow from applying Lemma \ref{lem_dist} at $u_L$, $-\tilde\eta(u_L) \lesssim s_0 d(u_L,\bd\Pi)$,
where $d(u_L,\bd\Pi)$ denotes the distance from $u_L$ to $\bd\Pi$. Using (\ref{eqn_lem5_eqn3}), we obtain the lower bound, $s_0 \lesssim d(u_L,\bd\Pi)$.
Since each of $u_0$, $u_l$, and $u_r$ are contained in $\bd\Pi$, we have the desired lower bound and (\ref{eqn_lem5_desired}) follows.
\end{proof}

\begin{lemma}\label{lem6}
For all $C$ sufficiently large and $s_0$ sufficiently small, 
\begin{equation}\label{eqn_lem6_desired}
|u_r - u_l| + |u_0 - u_l| + |u_r - u_0| \lesssim Cs_0^2.
\end{equation}
\end{lemma}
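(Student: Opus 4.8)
The plan is to localize everything near $u_L$ and exploit that the three curves threading this region---the $r_1$-integral curve out of $u_L$ (which meets $\bd\Pi$ at $u_l$), the shock curve $\S^1_{u_L}$ (which meets $\bd\Pi$ at $u_0$), and the backward $r_1$-integral curve out of $u_R$ (which meets $\bd\Pi$ at $u_r$)---are all first-order tangent to $r_1(u_L)$. Since $\bd\Pi$ is quantitatively convex with curvature $\sim C$ and $u_L$ lies at distance $\sim s_0$ from it (Lemma \ref{lem5}), three nearly-parallel curves should cross $\bd\Pi$ at nearly the same point, and ``nearly'' will come out to $\bigO(Cs_0^2)$.

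\textbf{First,} I would record that for each $p\in\{u_l,u_0,u_r\}$,
\begin{equation*}
p - u_L = \gamma_p\, r_1(u_L) + \bigO(s_0^2), \qquad |\gamma_p| \lesssim s_0 .
\end{equation*}
For $u_l = u(t_l)$ on the $r_1$-integral curve from $u_L$, this follows from $u(t_l) - u_L = t_l r_1(u_L) + \int_0^{t_l}(r_1(u(s)) - r_1(u_L))\,ds$ together with $|u(s)-u_L|\le s$, using $t_l\lesssim s_0$ (established in the proof of Lemma \ref{lem5}); for $u_0 = \S^1_{u_L}(t_0)$ with $t_0<s_0$ it is the first-order part of the Hugoniot expansion of Lemma \ref{lem_hugoniot}; and for $u_r$ on the backward $r_1$-integral curve from $u_R$ I would combine the analogous expansion (again $t_r\lesssim s_0$ by the proof of Lemma \ref{lem5}) with $u_R = u_L + s_0 r_1(u_L) + \bigO(s_0^2)$ and $r_1(u_R) = r_1(u_L) + \bigO(s_0)$, which gives $u_r - u_L = (s_0 - t_r)r_1(u_L) + \bigO(s_0^2)$. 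The bounds $|\gamma_p|\lesssim s_0$ then follow from $|p-u_L|\lesssim s_0$ (Lemma \ref{lem5}).

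\textbf{Next,} the key step: each $p$ lies on $\bd\Pi$, so $\tilde\eta(p)=0$. Taylor-expanding $\tilde\eta$ about $u_L$ and inserting $\tilde\eta(u_L) = -\eta(u_L|u_R)\sim -s_0^2$ (Lemma \ref{l2_rel_entropy_lemma}), $\nabla\tilde\eta(u_L) = s_0\nabla^2\eta(u_L)r_1(u_L) + \bigO(s_0^2)$ (equation \eqref{eqn_lem5_eqn4}), the Hessian estimate $\nabla^2\tilde\eta(\xi)v\cdot v \sim Cs_0|v|^2$ from Lemma \ref{lem_eta}, and $|p-u_L|\sim s_0$ from Lemma \ref{lem5}, the previous display lets one rewrite $\nabla\tilde\eta(u_L)\cdot(p-u_L) = \gamma_p s_0\mu + \bigO(s_0^3)$ with $\mu := \nabla^2\eta(u_L)r_1(u_L)\cdot r_1(u_L) \sim 1$, so that
\begin{equation*}
0 = -\eta(u_L | u_R) + \gamma_p\, s_0\, \mu + \bigO(Cs_0^3) ,
\end{equation*}
where $|\gamma_p|\lesssim s_0$ was used to absorb the cross terms. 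Solving, $\gamma_p = \eta(u_L|u_R)/(s_0\mu) + \bigO(Cs_0^2)$, and the leading term depends only on $u_L,u_R$, not on which of the three curves $p$ lies on. Hence $|\gamma_p - \gamma_q|\lesssim Cs_0^2$ for any two of $u_l,u_0,u_r$, and combining with the first display gives $|p-q| \le |\gamma_p-\gamma_q| + \bigO(s_0^2) \lesssim Cs_0^2$, which is \eqref{eqn_lem6_desired}.

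I expect the only real obstacle to be the error accounting in the last step: one must check that the quadratic Taylor remainder of $\tilde\eta$ is genuinely $\bigO(Cs_0\cdot|p-u_L|^2)=\bigO(Cs_0^3)$ and no larger, and notice that it is precisely the division by the $\bigO(s_0)$-small quantity $\nabla\tilde\eta(u_L)\cdot r_1(u_L)\sim s_0\mu$ that converts this into the stated $\bigO(Cs_0^2)$ bound (so the factor $C$ in the conclusion is unavoidable in this approach). Everything else---the tangency of all three curves to $r_1(u_L)$ and the size estimates on the various points---is the elementary first-order geometry already set up for Lemma \ref{lem5}.
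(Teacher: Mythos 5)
Your proof is correct and uses essentially the same approach as the paper: Taylor-expand $\tilde\eta$ at a reference point, impose $\tilde\eta(p)=0$ for each $p\in\{u_l,u_0,u_r\}$, and use $\nabla\tilde\eta(u_L)\cdot r_1(u_L)\sim s_0$ together with $\|\nabla^2\tilde\eta\|\lesssim Cs_0$ to pin down the $r_1$-coordinate of each $p$ to within $\bigO(Cs_0^2)$. The only (immaterial) difference is that the paper treats the pair $u_0,u_r$ by expanding about $u_R$, whereas you first push $u_r$ back to an expansion about $u_L$ via the Hugoniot asymptotics and handle all three points in one shot.
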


\begin{proof}
First, we prove (\ref{eqn_lem6_desired}) for $|u_0 - u_l|$. As in Lemma \ref{lem5}, we expand $u_0 = \S^1_{u_L}(t_0)$ and $u_l = u(t_l)$, where $u(t)$ is the integral curve emanating from $u_L$ in the $r_1$ direction. By the asymptotic expansion of  $\S^1_{u_L}(t)$ in Lemma \ref{lem_hugoniot}, we have
\begin{equation}
u_0 = u_L + t_0 r_1(u_L) + \bigO(t_0^2) \qquad \text{and}\qquad u_l = u_L + t_l r_1(u_L) + \bigO(t_l^2),
\end{equation}
where $t_0 \sim |u_0 - u_L|$ and $t_l \sim |u_l - u_L|$. The condition $u_l,u_0 \in \bd \Pi$ yields $\tilde\eta(u_0) = 0 = \tilde\eta(u_l)$.
Therefore, expanding about $u_L$, by Taylor's theorem and the computations of derivatives in Lemma \ref{lem_eta}, we obtain for $u_0$,
\begin{equation}
\left|\tilde\eta(u_0) - \left[\tilde\eta(u_L) + |u_0 - u_L|\nabla\tilde\eta(u_L) \cdot r_1(u_L)\right]\right| \lesssim Cs_0|u_0 - u_L|^2.
\end{equation}
Now, since $|u_0 - u_L| = t_0 + \bigO(t_0^2)$, we obtain
\begin{equation}\label{eqn_lem6_eqn1}
\left|\left[\tilde\eta(u_L) + t_0\nabla\tilde\eta(u_L) \cdot r_1(u_L)\right]\right| \lesssim Cs_0t_0^2.
\end{equation}
Using the same computation for $u_l$ and subtracting from (\ref{eqn_lem6_eqn1}), we obtain
\begin{equation}
\left|(t_0 -t_l)\left[r_1(u_L) \cdot \nabla\tilde\eta(u_L)\right]\right| \lesssim Cs_0(t_0^2 + t_l^2)
\end{equation}
Using the computation of $\nabla\tilde\eta(u_L)$ in (\ref{eqn_lem5_eqn4}) and the localization result of Lemma \ref{lem5},
\begin{equation}
|t_0 - t_l| \lesssim C(t_0^2 + t_l^2) \lesssim Cs_0^2.
\end{equation}
Thus, we conclude that 
\begin{equation}
|u_0 - u_l| \lesssim |(t_0 - t_l)r_1(u_L)| + \bigO(t_0^2 + t_l^2) \lesssim Cs_0^2.
\end{equation}
Second, expanding $u_0$ and $u_r$ in terms of $u_R$ gives a completely analogous proof that $|u_r - u_0| \lesssim Cs_0^2$.

%As before, say
%\begin{equation}
%u_0 = u_L + t_0 r_1(u_L) + \bigO(t_0^2) \qquad \text{and}\qquad u_r = u_R - t_r r_1(u_R) + \bigO(t_r^2).
%\end{equation}
%However, since $u_R$ is also on the shock curve from $u_L$,
%\begin{equation}
%u_R = u_L + s_0r_1(u_L) + \bigO(s_0^2) \qquad \text{and} \qquad r_1(u_R) = r_1(u_L) + \bigO(s_0)
%\end{equation}
%and we combine the expansions to obtain
%\begin{equation}
%\begin{aligned}
%u_0 &= u_R + (u_L - u_R) + t_0 r_1(u_L) + \bigO(t_0^2)\\
%	&= u_R - (s_0 - t_0)r_1(u_L) + \bigO(t_0^2 + s_0^2)\\
%	&= u_R - (s_0 - t_0)r_1(u_R) + \bigO(s_0^2 + t_0^2).
%\end{aligned}
%\end{equation}
%The argument now proceeds as in the case of $u_l$ after expanding $\nabla\tilde\eta$ about $u_R$ and noting that\\ $s_0 \lesssim t_0 \le s_0$.
\end{proof}

\begin{lemma}\label{lem7}
For $C$ sufficiently large, and $s_0$ sufficiently small, we have estimates,
\begin{equation}
D_R(u_r) \lesssim -s_0^3 \qquad \text{and} \qquad D_L(u_l) \lesssim -s_0^3.
\end{equation}
\end{lemma}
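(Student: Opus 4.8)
The plan is to reduce both estimates to a single exact identity — the rarefaction-curve analogue of Lax's formula (Lemma \ref{lem_lax}) — together with the localization already recorded in Lemma \ref{lem5}. Introduce the two integral curves defining $u_l$ and $u_r$: let $u(t)$ solve $\dot u = r_1(u)$, $u(0) = u_L$, so that $u_l = u(t_l)$ for some $t_l > 0$; and let $v(t)$ solve $\dot v = -r_1(v)$, $v(0) = u_R$, so that $u_r = v(t_r)$ for some $t_r > 0$. Since $q(a;a) = 0$ and $\eta(a|a) = 0$, we have $D_L(u_L) = 0$ and $D_R(u_R) = 0$.

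The key step is that $D_L$ and $D_R$ have an explicit derivative along these curves: using $\nabla q = \nabla\eta\, f'$, the eigenvector relation $f'(u)r_1(u) = \lambda^1(u)r_1(u)$, and the identities $\nabla_a q(a;b) = (\nabla\eta(a) - \nabla\eta(b))f'(a)$ and $\nabla_a\eta(a|b) = \nabla\eta(a) - \nabla\eta(b)$, the two $\lambda^1$-terms in each derivative cancel, leaving
\[
\frac{d}{dt}D_L(u(t)) = \big(\nabla\lambda^1(u(t))\cdot r_1(u(t))\big)\,\eta(u(t)|u_L), \qquad \frac{d}{dt}D_R(v(t)) = \big(\nabla\lambda^1(v(t))\cdot r_1(v(t))\big)\,\eta(v(t)|u_R).
\]
This is precisely the specialization of (\ref{eqn_Dcont_r1}) to the two summands of $D_{cont} = D_R + (1+Cs_0)D_L$ ($D_L$ computed along $+r_1$, $D_R$ along $-r_1$). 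By Assumption \ref{assum}(b) and the normalization $\nabla\lambda^1\cdot r_1 < 0$ of Lemma \ref{lem_normalization}, there is a universal $c > 0$ with $\nabla\lambda^1(w)\cdot r_1(w) \le -c$ on a fixed compact subset of $\Nu$ containing all the curves in play; the integrands are nonnegative; and for $t$ below a fixed threshold $\delta_0$ one has $|u(t) - u_L| \ge t/2$ and $|v(t) - u_R| \ge t/2$ (by the fundamental theorem of calculus and $|r_1| = 1$), hence $\eta(u(t)|u_L) \gtrsim t^2$ and $\eta(v(t)|u_R) \gtrsim t^2$ by Lemma \ref{l2_rel_entropy_lemma}. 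Integrating from $0$ therefore gives
\[
D_L(u_l) = \int_0^{t_l}\big(\nabla\lambda^1\cdot r_1\big)\,\eta(u(t)|u_L)\,dt \lesssim -\min(t_l,\delta_0)^3, \qquad D_R(u_r) \lesssim -\min(t_r,\delta_0)^3.
\]

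It remains only to bound $t_l$ and $t_r$ below by a multiple of $s_0$. For $t_l$ this is immediate: $t_l \ge |u_l - u_L| \gtrsim s_0$ by Lemma \ref{lem5}. For $t_r$ a short separate argument is needed, and this is the one delicate point, since $u_R \notin \Pi_{C,s_0}$ so the distance-to-$\bd\Pi$ estimates of Lemma \ref{lem5} are anchored at $u_L$ rather than $u_R$. I would invoke strict convexity of $\tilde\eta$ (Lemma \ref{lem_eta}): since $\tilde\eta(u_r) = 0$,
\[
0 = \tilde\eta(u_r) \ge \tilde\eta(u_R) + \nabla\tilde\eta(u_R)\cdot(u_r - u_R) \ge \tilde\eta(u_R) - |\nabla\tilde\eta(u_R)|\,|u_r - u_R|,
\]
whence $t_r \ge |u_r - u_R| \ge \tilde\eta(u_R)/|\nabla\tilde\eta(u_R)|$. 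By Lemma \ref{l2_rel_entropy_lemma}, $\tilde\eta(u_R) = (1+Cs_0)\eta(u_R|u_L) \gtrsim s_0^2$, while the formula for $\nabla\tilde\eta$ in Lemma \ref{lem_eta} gives $|\nabla\tilde\eta(u_R)| = (1+Cs_0)|\nabla\eta(u_R) - \nabla\eta(u_L)| \lesssim s_0$ (using $|u_R - u_L| \sim s_0$ and $Cs_0 \lesssim 1$). Hence $t_r \gtrsim s_0$. Feeding $t_l, t_r \gtrsim s_0$ and $\delta_0 \gtrsim s_0$ (valid for $s_0$ small) into the previous display yields $D_L(u_l) \lesssim -s_0^3$ and $D_R(u_r) \lesssim -s_0^3$.

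The only genuine content is the cancellation identity; everything else is the localization of Lemma \ref{lem5}, the uniform genuine nonlinearity $\nabla\lambda^1\cdot r_1 \le -c$, and the one-line convexity bound controlling $t_r$ from below. I do not expect a serious obstacle: the sharp cubic rate falls out of the identity automatically and matches the scalar (Burgers) computation, and the mild subtlety — that the available distance bounds are anchored at $u_L$ — is exactly what the short convexity step handles.
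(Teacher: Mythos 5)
Your proof is correct and follows essentially the same route as the paper's: differentiate $D_L$ (resp.\ $D_R$) along the integral curve of $\pm r_1$ emanating from $u_L$ (resp.\ $u_R$), observe that the $f'-\lambda^1 I$ term is annihilated by $r_1$ so that only $(\nabla\lambda^1\cdot r_1)\,\eta(\cdot\,|\,\cdot)$ survives, bound this by $-t^2$ using genuine nonlinearity and Lemma~\ref{l2_rel_entropy_lemma}, and integrate from $0$ to $t_l$ (resp.\ $t_r$) using the lower bounds on the arc-lengths. The one genuine value you add is the explicit convexity argument producing $t_r\ge |u_r-u_R|\ge \tilde\eta(u_R)/|\nabla\tilde\eta(u_R)|\gtrsim s_0$. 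The paper's proof dispatches the $D_R$ case with ``the same argument applied to $D_R$ and $u_r$'' and cites Lemma~\ref{lem5} for the lower bound on $t_l$; but Lemma~\ref{lem5} as stated only gives $|u_r-u_L|\gtrsim s_0$, and the $D_R$ argument actually needs $|u_r-u_R|\gtrsim s_0$ (the curve defining $u_r$ starts at $u_R$, not $u_L$, and Lemma~\ref{lem_dist} is anchored at $u_L\in\Pi$ while $u_R\notin\Pi$). Your one-line supporting-hyperplane inequality $0=\tilde\eta(u_r)\ge\tilde\eta(u_R)+\nabla\tilde\eta(u_R)\cdot(u_r-u_R)$, combined with $\tilde\eta(u_R)=(1+Cs_0)\eta(u_R|u_L)\gtrsim(1+Cs_0)s_0^2$ and $|\nabla\tilde\eta(u_R)|=(1+Cs_0)|\nabla\eta(u_R)-\nabla\eta(u_L)|\lesssim(1+Cs_0)s_0$ (the $(1+Cs_0)$ factors even cancel, so you do not need $Cs_0\lesssim1$), cleanly supplies what the paper leaves implicit. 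So: same strategy, with a small omission in the paper's presentation made explicit.
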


\begin{proof}
Let $u(t)$ be the integral curve emanating from $u_L$ in the direction of $r_1$. Then, we note $\nabla \lambda^1(u) \cdot r_1(u) \lesssim -1$ by Assumption \ref{assum} (b) and the choice of eigenvectors in Lemma \ref{lem_normalization}. Therefore, we estimate $D_L$ along $u(t)$ using Lemma \ref{l2_rel_entropy_lemma},
\begin{equation}
\begin{aligned}
\frac{d}{dt} D_L(u(t)) &= -\left[\nabla\eta(u(t)) - \nabla\eta(u_L)\right]\left[f^\prime(u(t)) - \lambda^1(u(t))I\right] \cdot r_1(u(t)) + \eta(u(t) | u_L)\nabla\lambda^1(u(t)) \cdot r_1(u(t))\\
	&= \eta(u(t) | u_L)\nabla\lambda^1(u(t)) \cdot r_1(u(t)) \lesssim - |u(t) - u_L|^2 \lesssim -t^2.\\
\end{aligned}
\end{equation}
Now, by the fundamental theorem of calculus and $D_L(u_L) = 0$, we estimate $D_L$ at $u_l = u(t_l)$,
\begin{equation}
D_L(u_l) = \int_0^{t_l} \frac{d}{d\tau} D_L(u(\tau)) \ d\tau \lesssim -(t_l)^3 \lesssim -s_0^3,
\end{equation}
since $t_l$ is comparable to $|u_l - u_L|$, which is lower bounded by Lemma \ref{lem5}. The same argument applied to $D_R$ and $u_r$ completes the proof.
\end{proof}

\begin{lemma}\label{lem8}
For all $C$ sufficiently large and $s_0$ sufficiently small, we have 
\begin{equation}\label{eqn_lem8_desired1}
|\nabla D_R(u_r)| \lesssim s_0^2 \qquad \text{and} \qquad |\nabla D_L(u_l)| \lesssim s_0^2.
\end{equation}
Moreover,
\begin{equation}\label{eqn_lem8_desired2}
|\nabla^2 D_R(u)| \lesssim 1 \qquad \text{and} \qquad |\nabla^2 D_L(u)| \lesssim 1.
\end{equation}
\end{lemma}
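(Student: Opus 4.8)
The plan is to write $\nabla D_L$ and $\nabla D_R$ in closed form using the entropy relation $\nabla q = \nabla\eta\, f^\prime$, observe that at $u_l$ (resp. $u_r$) the leading-order term cancels thanks to the compatibility $\nabla^2\eta\, r_1 \parallel l^1$ from Lemma \ref{lem_normalization}, and then bound the remaining error terms at scale $s_0$. Differentiating $D_L(u) = -q(u ; u_L) + \lambda^1(u)\eta(u | u_L)$ and using $\nabla_u q(u ; u_L) = (\nabla\eta(u) - \nabla\eta(u_L))f^\prime(u)$ together with $\nabla_u\eta(u | u_L) = \nabla\eta(u) - \nabla\eta(u_L)$ gives
\begin{equation}
\nabla D_L(u) = -(\nabla\eta(u) - \nabla\eta(u_L))(f^\prime(u) - \lambda^1(u)I) + \eta(u | u_L)\nabla\lambda^1(u),
\end{equation}
and symmetrically $\nabla D_R(u) = (\nabla\eta(u) - \nabla\eta(u_R))(f^\prime(u) - \lambda^1(u)I) - \eta(u | u_R)\nabla\lambda^1(u)$.

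To prove \eqref{eqn_lem8_desired1} I would evaluate the first formula at $u_l$. By Lemma \ref{lem5}, $|u_l - u_L| \sim s_0$, so Lemma \ref{l2_rel_entropy_lemma} yields $\eta(u_l | u_L)\nabla\lambda^1(u_l) = \bigO(s_0^2)$. For the remaining term I would Taylor expand about $u_L$: using the expansion $u_l - u_L = t_l r_1(u_L) + \bigO(t_l^2)$ from (\ref{eqn_lem5_eqn1}) with $t_l \sim s_0$, and $\nabla\eta(u_l) - \nabla\eta(u_L) = \nabla^2\eta(u_L)(u_l - u_L) + \bigO(s_0^2)$, one gets $\nabla\eta(u_l) - \nabla\eta(u_L) = t_l\nabla^2\eta(u_L)r_1(u_L) + \bigO(s_0^2)$, while $f^\prime(u_l) - \lambda^1(u_l)I = (f^\prime(u_L) - \lambda^1(u_L)I) + \bigO(s_0)$. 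The leading product $t_l\,[\nabla^2\eta(u_L)r_1(u_L)]\,(f^\prime(u_L) - \lambda^1(u_L)I)$ vanishes: by Lemma \ref{lem_normalization}, $\nabla^2\eta(u_L)r_1(u_L) \parallel l^1(u_L)$, and $l^1(u_L)(f^\prime(u_L) - \lambda^1(u_L)I) = 0$ since $l^1(u_L)$ is the left eigenvector of $f^\prime(u_L)$ with eigenvalue $\lambda^1(u_L)$. The remaining cross terms are each $\bigO(s_0^2)$ — for instance $|t_l\nabla^2\eta(u_L)r_1(u_L)| \lesssim s_0$ multiplied by the $\bigO(s_0)$ error. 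Hence $|\nabla D_L(u_l)| \lesssim s_0^2$; the bound on $\nabla D_R(u_r)$ follows identically, expanding around $u_R$ and using $u_r - u_R = -t_r r_1(u_R) + \bigO(t_r^2)$ with $t_r \sim s_0$ (the analogous expansion established in Lemma \ref{lem5}).

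For \eqref{eqn_lem8_desired2} I would differentiate the gradient formulas once more, obtaining
\begin{equation}
\begin{aligned}
\nabla^2 D_L(u) &= -\nabla^2\eta(u)(f^\prime(u) - \lambda^1(u)I) - (\nabla\eta(u) - \nabla\eta(u_L))(f^{\prime\prime}(u) - I\tens\nabla\lambda^1(u))\\
&\quad + (\nabla\eta(u) - \nabla\eta(u_L))\tens\nabla\lambda^1(u) + \eta(u | u_L)\nabla^2\lambda^1(u),
\end{aligned}
\end{equation}
and the analogous formula for $D_R$. Since $\Pi_{C,s_0}$ is compactly contained in the fixed ball $B(d,\eps_d/2)\subset \Nu$ once $C$ is large (Lemma \ref{lem_eta} and Step 1 of Section \ref{secSam}), and on that fixed compact set $\eta,q,\lambda^1$ are $C^3$ and $f$ is $C^4$ with all relevant norms bounded by constants depending only on the system and $d$ (while $u_L,u_R$ range over a compact set), every factor in the displayed formula is $\bigO(1)$. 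Therefore $|\nabla^2 D_L(u)| \lesssim 1$ and $|\nabla^2 D_R(u)| \lesssim 1$, uniformly in $C$, $s_0$, and $u$.

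The only non-routine point is the cancellation of the leading-order term in $\nabla D_L(u_l)$: it requires simultaneously (i) knowing that $u_l - u_L$ is aligned with $r_1(u_L)$ up to $\bigO(s_0^2)$, which is exactly the content of the construction of $u_l$ along the $r_1$-integral curve together with the expansion in Lemma \ref{lem5}, and (ii) invoking the entropy compatibility $\nabla^2\eta\, r_1 \parallel l^1$, so that this direction lies in the kernel of $f^\prime - \lambda^1 I$. Without this structural cancellation one would only obtain $|\nabla D_L(u_l)| \lesssim s_0$, which would be insufficient for the sharp cubic dissipation bound in Proposition \ref{prop_shock_explicit}.
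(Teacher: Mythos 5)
Your proof follows the paper's argument essentially verbatim: you compute the same closed-form expression for $\nabla D_L$, isolate the relative-entropy term as $\bigO(s_0^2)$ via Lemma \ref{l2_rel_entropy_lemma}, Taylor-expand $\nabla\eta(u_l)-\nabla\eta(u_L)$ and $u_l-u_L$ about $u_L$, and invoke the structural cancellation $\nabla^2\eta(u_L)r_1(u_L)\parallel l^1(u_L)$ (so that this direction is annihilated by $f'(u_L)-\lambda^1(u_L)I$) to kill the leading $\bigO(s_0)$ term; the second-derivative bound is the same termwise $\bigO(1)$ estimate on the compact set $\Pi_{C,s_0}$. The approach and the key cancellation are identical to the paper's, and you correctly evaluate the gradient at $u_l$ and $u_r$ where the paper's displayed computation writes $u_0$ (evidently a slip, as $u_0$ and $u_l$ both satisfy the needed $r_1$-alignment up to $\bigO(s_0^2)$); your explicit remark at the end about why the cancellation is necessary for the cubic dissipation rate is a useful annotation of the same idea.
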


\begin{proof}
We begin by computing $\nabla D_L$ as
\begin{equation}
\nabla D_L(u) = -\left(\nabla \eta(u) - \nabla \eta(u_L)\right)\left[f^\prime(u) - \lambda^1(u)I\right] + \nabla\lambda^1(u)\eta(u | u_L).
\end{equation}
Now, we note that the second term is sufficiently small as $\eta(u | u_L) \lesssim |u-u_L|^2$ by Lemma \ref{l2_rel_entropy_lemma}. However, for the first term $|\nabla\eta(u) - \nabla\eta(u_L)| \sim |u - u_L|$. Thus, we need to expand this term more carefully and use cancellation from $f^\prime(u) - \lambda^1(u)$ applied to the $r_1$ terms. In particular, using that $\nabla^2\eta(u_L)r_1(u_L) \parallel l^1(u_L)$,
\begin{equation}
\begin{aligned}
|\nabla D_L(u_0)| &\lesssim \left(|u_0 - u_L|\nabla^2\eta(u_L)r_1(u_L) + \bigO(|u_0 - u_L|^2)\right)\left[f^\prime(u_0) - \lambda^1(u_0)I\right] - \nabla\lambda^1(u_0)\eta(u_0 | u_L)\\
	&\lesssim |u_0 - u_L|^2 + |u_0 - u_L| \nabla^2\eta(u_L)r_1(u_L)\left[f^\prime(u_0) - \lambda^1(u_0)I\right]\\
	&\lesssim |u_0 - u_L|^2 \lesssim s_0^2.
\end{aligned}
\end{equation}
Once again, a completely analogous computation proves (\ref{eqn_lem8_desired1}) for $D_R$. It remains only to verify the second derivative estimates (\ref{eqn_lem8_desired2}) at an arbitrary state $u\in \Pi$. For $D_L$, we compute
\begin{equation}
\begin{aligned}
\nabla^2D_L(u) &= -\nabla^2\eta(u)\left[f^\prime(u) - \lambda^1(u)I\right] -\left(\nabla \eta(u) - \nabla \eta(u_L)\right)\left[f^{\prime\prime}(u) - I \tens \nabla\lambda^1(u)\right]\\
	&\qquad+\nabla^2\lambda^1(u)\eta(u | u_L) + \nabla\lambda^1(u) \tens \left[\nabla\eta(u) - \nabla\eta(u_L)\right],
\end{aligned}
\end{equation}
where we are vague about the meaning of some tensor multiplications, as we do not need precise structure for our estimate. Indeed, since $|u - u_L| \lesssim C^{-1}$ for $u\in \Pi$ and $\eta(u|u_L) \lesssim |u-u_L|$ by Lemma \ref{l2_rel_entropy_lemma}, each term is bounded by either $C^{-1}$ or $1$ up to a constant depending only on the system. So, for $C$ sufficiently large (\ref{eqn_lem8_desired2}) follows for $D_L$ and an analogous argument yields (\ref{eqn_lem8_desired2}) for $D_R$.
\end{proof}

\begin{flushleft}
\uline{\bf{Proof of Proposition \ref{prop_shock_explicit}}}
\end{flushleft}
%\begin{proof}[Proof of Proposition \ref{prop_shock_explicit}]
Pick $C$ sufficiently large and $s_0$ sufficiently small so that
$$D_R(u_r) \lesssim -s_0^3 \qquad\text{and}\qquad D_L(u_l) \lesssim -s_0^3$$
and moreover
$$|u_r - u_0| + |u_l - u_0| \lesssim Cs_0^2 \qquad \text{and} \qquad |\nabla D_R(u_r)|\lesssim s_0^2 \qquad\text{and}\qquad |\nabla D_L(u_l)|\lesssim s_0^2.$$
Thus, at $u_0$, we have
\begin{equation}
\begin{aligned}
D_L(u_0) &= D_L(u_l) + \nabla D_L(u_l)(u_l - u_0) + \bigO(\|\nabla^2D_L\|_{L^\infty(\Pi)}|u_l - u_0|^2)\\
	&\lesssim -s_0^3 + Cs_0^4 + C^2s_0^4 \lesssim -s_0^3,
\end{aligned}
\end{equation}
for $C$ sufficiently large and $s_0$ sufficiently small and similarly for $D_R(u_0)$. We conclude, using $D_{cont} = D_L + (1+Cs_0)D_R$,
\begin{equation}
D_{cont}(u_0) = D_R(u_0) + (1+Cs_0)D_L(u_0) \lesssim -s_0^3 - Cs_0^4 \le -s_0^3.
\end{equation}
%\end{proof}

%%%%%%%%%%%%%%%%%%%%%%%%%%%%%%%%%%%%%%%%%%%%%%%%%%%%%%%%%%%%%%%%%%%%%%%%%%%%%%%%%%%%%%%%%%%%%%%%%%%%%%%%

\subsection{Step 3: Comparison of Maxima to Shock Curve}

\begin{proposition}\label{prop_comparison}
Let $C$ sufficiently big that $D_{cont}(u)$ has a unique maximum, $u^*$, in $\Pi_{C,s_0}$ and $u_0$ as in Proposition \ref{prop_shock_explicit}. Then, for all $C$ sufficiently large and $s_0$ sufficiently small (depending on $C$), $|u_0 - u^*| \lesssim \frac{s_0}{C}$ and for any $v\in \R^n$ with $|v| = 1$,
\begin{equation}\label{eqn_propcomparison_desired}
|\nabla^2D_{cont}(u^*)v \cdot v| \lesssim Cs_0.
\end{equation}
\end{proposition}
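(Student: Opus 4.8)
The plan is to prove the two assertions separately, dispatching the Hessian bound \eqref{eqn_propcomparison_desired} first, since it is essentially already in hand. Evaluating the formula \eqref{eqn_lem3_eqn4} for $\nabla^2 D_{cont}$ at $u^*$ and using $\tilde\eta(u^*)=0$ and $|\nabla\tilde\eta(u^*)|\lesssim s_0$ (both valid since $u^*\in\bd\Pi$), Lemma~\ref{lem3} gives $\nabla^2 D_{cont}(u^*)=-Cs_0\,\nabla^2\eta(u^*)\bigl[f'(u^*)-\lambda^1(u^*)I\bigr]+\bigO(s_0)$; see \eqref{eqn_lem3_eqn5}. Since $\Pi$ is compactly contained in $\Nu$ by Lemma~\ref{lem_eta}, the quantities $\nabla^2\eta$, $f'$, $\lambda^1$ are bounded on $\Pi$ by constants depending only on the system, so $|\nabla^2 D_{cont}(u^*)v\cdot v|\le|\nabla^2 D_{cont}(u^*)|\lesssim Cs_0$ for every unit $v$, which is \eqref{eqn_propcomparison_desired}.

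For the distance estimate $|u_0-u^*|\lesssim s_0/C$, the idea is that $u_0$ and $u^*$ both lie on $\bd\Pi$, so Lemma~\ref{lem_pi} controls their separation by the difference of the outward unit normals: $C|u_0-u^*|\lesssim|\nu(u_0)-\nu(u^*)|$. By Proposition~\ref{prop_max}, $\nu(u^*)\parallel l^1(u^*)$ with $r_1(u^*)$ outward-pointing, and since $l^1\cdot r_1>0$ by the normalization in Lemma~\ref{lem_normalization}, this forces $\nu(u^*)=l^1(u^*)$. Hence it suffices to prove $\nu(u_0)=l^1(u_0)+\bigO(s_0)$; then $|\nu(u_0)-\nu(u^*)|\le|\nu(u_0)-l^1(u_0)|+|l^1(u_0)-l^1(u^*)|\lesssim s_0+|u_0-u^*|$, and for $C$ large the term $|u_0-u^*|$ is absorbed into the left side of $C|u_0-u^*|\lesssim s_0+|u_0-u^*|$, giving the claim.

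It remains to compute $\nu(u_0)$. Write $u_0=\S^1_{u_L}(t_0)$; by Lemma~\ref{lem5}, $t_0\sim|u_0-u_L|\sim s_0$, and the shock-curve expansion of Lemma~\ref{lem_hugoniot} gives $u_0-u_L=t_0 r_1(u_L)+\bigO(s_0^2)$ and $u_R-u_L=s_0 r_1(u_L)+\bigO(s_0^2)$. Substituting these into the identity $\nabla\tilde\eta(u)=Cs_0\bigl(\nabla\eta(u)-\nabla\eta(u_L)\bigr)+\bigl(\nabla\eta(u_R)-\nabla\eta(u_L)\bigr)$ from Lemma~\ref{lem_eta}, together with a first-order Taylor expansion of $\nabla\eta$ about $u_L$ and $\nabla^2\eta(u_L)r_1(u_L)=\mu_L\,l^1(u_L)$ with $\mu_L\sim 1$ positive (from $\nabla^2\eta(u_L)r_1(u_L)\parallel l^1(u_L)$, Lemma~\ref{lem_normalization}, and positive-definiteness of $\nabla^2\eta$), yields
\begin{equation*}
\nabla\tilde\eta(u_0)=(1+Ct_0)\,s_0\,\mu_L\,l^1(u_L)+\bigO(Cs_0^3+s_0^2).
\end{equation*}
For $C$ fixed and $s_0$ small, $1+Ct_0\in[1,2]$, so the leading term is a positive multiple of $l^1(u_L)$ of magnitude $\sim s_0$; normalizing and absorbing the error $\bigO(Cs_0^3+s_0^2)=\bigO(s_0)$ gives $\nu(u_0)=l^1(u_L)+\bigO(s_0)$, and then $\nu(u_0)=l^1(u_0)+\bigO(s_0)$ since $|l^1(u_0)-l^1(u_L)|\lesssim|u_0-u_L|\lesssim s_0$.

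The genuinely routine part is the Taylor bookkeeping in the last display. The one point needing care is that the error there carries a factor of $C$ (through $Cs_0^3$), so one must use that $s_0$ is small \emph{relative to} $C$ to get $Cs_0^3+s_0^2\lesssim s_0$, and one must check — via Lemma~\ref{lem_nu}, or directly from the explicit leading coefficient above — that $|\nabla\tilde\eta(u_0)|\sim s_0$, so that dividing by it to form $\nu(u_0)$ does not amplify the error. Everything else follows by assembling the cited lemmas.
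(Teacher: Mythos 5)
Your argument follows the paper's proof essentially step for step: the Hessian bound comes from (\ref{eqn_lem3_eqn5}) together with $\tilde\eta(u^*)=0$ and $|\nabla\tilde\eta(u^*)|\lesssim s_0$, and the distance estimate comes from comparing the outward unit normals at $u_0$ and $u^*$ via Lemma~\ref{lem_pi}, computing $\nu(u_0)$ by Taylor expanding $\nabla\tilde\eta(u_0)$ about $u_L$ along $r_1(u_L)$, identifying $\nu(u^*)=l^1(u^*)$ from Proposition~\ref{prop_max} and the normalization $l^1\cdot r_1>0$, and then absorbing $|l^1(u_0)-l^1(u^*)|\lesssim|u_0-u^*|$ for $C$ large. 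This is exactly the paper's route.

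One bookkeeping slip deserves a flag, since taken literally it would break the normalization step. You write that the error in the expansion of $\nabla\tilde\eta(u_0)$ is $\bigO(Cs_0^3+s_0^2)=\bigO(s_0)$ and then conclude $\nu(u_0)=l^1(u_L)+\bigO(s_0)$ after dividing by $|\nabla\tilde\eta(u_0)|\sim s_0$. But an error of genuine size $\bigO(s_0)$ against a leading coefficient of size $\sim s_0$ would give only $|\nu(u_0)-l^1(u_L)|\lesssim 1$ after normalization, which is useless. What you actually need, and what your Taylor computation actually delivers, is $Cs_0^3+s_0^2\lesssim s_0^2$ (valid once $Cs_0\lesssim 1$, i.e.\ $s_0$ small depending on $C$), so that the error is one order of $s_0$ below the leading term; then and only then does dividing by $|\nabla\tilde\eta(u_0)|\gtrsim s_0$ from Lemma~\ref{lem_nu} yield $|\nu(u_0)-l^1(u_L)|\lesssim s_0$. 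This is precisely how the paper records it, keeping the $\bigO(s_0^2)$ remainder explicit. The rest of your argument is fine once this is corrected.
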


\begin{proof}
We have shown a lower bound on $|\nabla^2 D_{cont}|$ in Lemma \ref{lem3}. Here we prove the corresponding upper bound (\ref{eqn_propcomparison_desired}). Indeed, recalling the formula for $\nabla^2 D_{cont}$ in (\ref{eqn_lem3_eqn4}) and evaluating at $u^*$,
\begin{equation}
\begin{aligned}
\nabla^2 D_{cont}(u^*) &= -\nabla^2\tilde\eta(u^*)\left[f^\prime(u^*) - \lambda^1(u^*)I\right] - \nabla\tilde\eta(u^*)\left[f^{\prime\prime}(u^*) - I \tens \nabla\lambda^1(u^*)\right] + \nabla\lambda^1(u^*) \tens \nabla\tilde\eta(u^*).
\end{aligned}
\end{equation}
Therefore, taking norms and using $|\nabla\tilde\eta(u)| \lesssim s_0$ and $|\nabla^2\tilde\eta(u)| \lesssim Cs_0$ by Lemma \ref{lem_eta} gives (\ref{eqn_propcomparison_desired}) as desired.

It remains to prove $|u_0 - u^*| \lesssim C^{-1}s_0$. We recall that $\nu(u)$ denotes the outward unit normal on $\bd\Pi$. Since we a priori only know that $u^*\in \bd\Pi$ with $\nu(u^*) = l^1(u^*)$, we use $\nu$ at $u_0$ and $u^*$ as a proxy to compare $u_0$ and $u^*$, which is justified by Lemma \ref{lem_pi}.
In particular, we first compute $\nu = \frac{\nabla\tilde\eta}{|\nabla\tilde\eta|}$ at $u_0$.
As computed in Lemma \ref{lem_eta},
\begin{equation}
\nabla\tilde\eta(u_0) = Cs_0\left(\nabla\eta(u_0) - \nabla\eta(u_L)\right) + \left(\nabla\eta(u_R) - \nabla\eta(u_L)\right).
\end{equation}
We expand carefully about $u_L$ using $u_0 = u_L + |u_0 - u_L| r_1(u_L) + \bigO(s_0^2)$ and $\nabla^2\eta(u_L)r_1(u_L) = Kl^1(u_L)$ for some $K\sim 1$ to obtain
\begin{equation}
\begin{aligned}
\nabla\tilde\eta(u_0) &= Cs_0\left(c^*|u_0 - u_L| l^1(u_L) + \bigO(s_0^2)\right) + c^*s_0 l^1(u_L) + \bigO(s_0^2)\\
	&= K(s_0 + Cs_0|u_0 - u_L|)l^1(u_L) + \bigO(s_0^2 + Cs_0^3).
\end{aligned}
\end{equation}
Thus, for $b_1 := K(s_0 + Cs_0|u - u_L|)$, we have $b_1 \sim s_0$ and $\nabla\tilde\eta(u_0) = b_1l^1(u_L) + \bigO(s_0^2)$.
Therefore, we bound $\nu(u_0) - l^1(u_0)$ using the preceding computations, $|u_0 - u_L|\lesssim s_0$ by Lemma \ref{lem5}, and $|\nabla\tilde\eta(u_0)|\gtrsim s_0$ by Lemma \ref{lem_nu}.
\begin{equation}\label{eqn_prop_eqn1}
\begin{aligned}
\left|\nu(u_0) - l^1(u_0)\right| &= \left|\frac{\nabla\tilde\eta(u_0) - \left|\nabla\tilde\eta(u_0)\right| l^1(u_0)}{\left|\nabla\tilde\eta(u_0)\right|}\right|\\
	&\le \left|\frac{\left[\nabla\tilde\eta(u_0) - b_1l^1(u_L)\right] + b_1\left[l^1(u_L) - l^1(u_0)\right] +\left[b_1 - \left|\nabla\tilde\eta(u_0)\right|\right]l^1(u_0)}{\left|\nabla\tilde\eta(u_0)\right|}\right|\\
	&\lesssim\frac{s_0^2}{\left|\nabla\tilde\eta(u_0)\right|} \lesssim s_0.
\end{aligned}
\end{equation}
Next, we note that since $u^*$ is a maximum, by Proposition \ref{prop_max}, $\nu(u^*) \parallel l^1(u^*)$ and $r_1(u^*)$ is outward pointing. Since $r_1(u^*) \cdot \nu(u^*) > 0$, $\nu(u^*) \parallel l^1(u^*)$, and $l^1(u^*) \cdot r_1(u^*) > 0$, it follows $l^1(u^*) = \nu(u^*)$. Also, by Lemma \ref{lem_pi},
\begin{equation}\label{eqn_prop_eqn2}
|\nu(u^*) - \nu(u_0)| \sim C|u^* - u_0|,
\end{equation}
for $C$ sufficiently large.
Finally, we conclude by combining (\ref{eqn_prop_eqn1}) and (\ref{eqn_prop_eqn2}):
\begin{equation}
\begin{aligned}
|u^* - u_0| &\lesssim C^{-1}|\nu(u^*) - \nu(u_0)|\\
	&\lesssim C^{-1}\left(|\nu(u^*) - l^1(u^*)| + | l^1(u^*) - l^1(u_0)| + | l^1(u_0) - \nu(u_0)|\right)\\
	&\lesssim C^{-1}s_0,
\end{aligned}
\end{equation}
for sufficiently large $C$ and sufficiently small $s_0$.
\end{proof}

%%%%%%%%%%%%%%%%%%%%%%%%%%%%%%%%%%%%%%%%%%%%%%%%%%%%%%%%%%%%%%%%%%%%%%%%%%%%%%%%%%%%%%%%%%%%%%%%%%%%%%%%%

\subsection{Proof of Proposition \ref{prop_cont}}

\begin{lemma}\label{lem9}
For any $u \in \Pi$, $C$ sufficiently large, and $s_0$ sufficiently small,
\begin{equation}\label{eqn_lem9_desired}
|\nabla^3 D_{cont}(u)| \lesssim Cs_0
\end{equation}
\end{lemma}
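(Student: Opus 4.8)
The plan is a direct computation based on the exact formula $D_{cont}(u) = -\tilde q(u) + \lambda^1(u)\tilde\eta(u)$ together with the explicit derivative identities for $\tilde\eta$ and $\tilde q$ from Lemma \ref{lem_eta}. The key bookkeeping observation is that, on $\Pi_{C,s_0}$, among the quantities $\tilde\eta,\nabla\tilde\eta,\nabla^2\tilde\eta,\nabla^3\tilde\eta$ the first two are $\bigO(s_0)$ while the last two are $\bigO(Cs_0)$; meanwhile $f$ is $C^4$ and $\lambda^1$ is $C^3$ on the compact set $\Pi$, so every derivative of $f$ (up to order $3$) and of $\lambda^1$ (up to order $3$) is bounded by a constant depending only on the system. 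Concretely, I would first record: from $\nabla^2\tilde\eta = Cs_0\nabla^2\eta$ (Lemma \ref{lem_eta}) one gets $\nabla^3\tilde\eta = Cs_0\nabla^3\eta$, hence $|\nabla^3\tilde\eta|\lesssim Cs_0$ since $\eta\in C^3(\Pi)$; also $|\nabla\tilde\eta|\lesssim s_0$ and $|\nabla^2\tilde\eta|\lesssim Cs_0$ on $\Pi$ by Lemma \ref{lem_eta}; and finally $|\tilde\eta(u)| = -\tilde\eta(u)\lesssim s_0\, d(u,\bd\Pi)\le s_0\,\mathrm{diam}(\Pi)\lesssim s_0$ by Lemma \ref{lem_dist} together with $\mathrm{diam}(\Pi)\sim C^{-1}$.

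Next I would bound the two pieces separately. For $\nabla^3\tilde q$: using $\nabla\tilde q = \nabla\tilde\eta\, f^\prime$ (established in the proof of Lemma \ref{lem_eta}) and the Leibniz rule, $\nabla^3\tilde q$ is a sum of terms of the schematic form $\nabla^3\tilde\eta\tens f^\prime$, $\nabla^2\tilde\eta\tens f^{\prime\prime}$, and $\nabla\tilde\eta\tens f^{\prime\prime\prime}$ (with harmless combinatorial constants), each of which is $\lesssim Cs_0$ by the bounds just recorded and the boundedness of $f^\prime,f^{\prime\prime},f^{\prime\prime\prime}$ on $\Pi$. For $\nabla^3(\lambda^1\tilde\eta)$: again by the Leibniz rule it is a sum of terms $\nabla^k\lambda^1\tens\nabla^{3-k}\tilde\eta$ for $0\le k\le 3$, and since $|\lambda^1|,|\nabla\lambda^1|,|\nabla^2\lambda^1|,|\nabla^3\lambda^1|\lesssim 1$ on $\Pi$ these are bounded respectively by $|\nabla^3\tilde\eta|\lesssim Cs_0$, $|\nabla^2\tilde\eta|\lesssim Cs_0$, $|\nabla\tilde\eta|\lesssim s_0$, and $|\tilde\eta|\lesssim s_0$. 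Adding the two contributions gives $|\nabla^3 D_{cont}(u)| = |{-\nabla^3\tilde q(u)} + \nabla^3(\lambda^1\tilde\eta)(u)|\lesssim Cs_0$, which is (\ref{eqn_lem9_desired}).

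This argument is entirely routine differential calculus; there is no genuine obstacle. The only point requiring any care is keeping track of where the factor $C$ appears — ensuring it only ever shows up multiplied by $s_0$, which is exactly the content of $\nabla^2\tilde\eta = Cs_0\nabla^2\eta$ and its derivative — and noticing that the lowest-order Leibniz term $\nabla^3\lambda^1\tens\tilde\eta$ must be controlled via Lemma \ref{lem_dist} (giving $|\tilde\eta|\lesssim s_0$ on $\Pi$) rather than any crude pointwise bound that would not be small in $s_0$.
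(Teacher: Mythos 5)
Your proposal is correct and is essentially the same argument the paper gives: both expand $\nabla^3 D_{cont}$ by the Leibniz rule starting from $D_{cont} = -\tilde q + \lambda^1\tilde\eta$ and $\nabla\tilde q = \nabla\tilde\eta\,f'$, bound every term containing a derivative of $\tilde\eta$ via $|\nabla\tilde\eta|\lesssim s_0$ and $|\nabla^2\tilde\eta|,|\nabla^3\tilde\eta|\lesssim Cs_0$ from Lemma \ref{lem_eta}, and handle the single remaining term $\tilde\eta\,\nabla^3\lambda^1$ with Lemma \ref{lem_dist} (the paper records the slightly sharper $|\tilde\eta|\lesssim s_0\,d(u,\bd\Pi)\lesssim C^{-1}s_0$, while you used the weaker but still sufficient $|\tilde\eta|\lesssim s_0$).
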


\begin{proof}
Using $\nabla \tilde q = \nabla \tilde \eta \nabla f$ and the product rule, we differentiate $D_{cont}$ repeatedly to obtain
\begin{equation}
\begin{aligned}
\nabla^3 D_{cont} &= -Cs_0 \nabla \left[\nabla^2\eta\left(f^\prime - \lambda^1I\right)\right] -\nabla\tilde\eta\nabla\left[f^{\prime\prime} + \nabla\lambda^1 \tens I\right] -\nabla^2\tilde\eta\left[f^{\prime\prime} + \nabla\lambda^1 \tens I\right] +\nabla\lambda^1 \tens \nabla^2\tilde\eta\\
	&\quad+2\nabla^2\lambda^1 \tens \nabla \tilde\eta +\tilde\eta\nabla^3\lambda^1,
\end{aligned}
\end{equation}
where because we will not need to exploit any precise tensor structure, we have been rather vague on the meaning of the various products. Since $|\nabla^3\tilde\eta(u)| + |\nabla^2\tilde\eta(u)| \lesssim Cs_0$ and $|\nabla\tilde\eta(u)|\lesssim s_0$ by Lemma \ref{lem_eta}, each term containing a derivative of $\tilde\eta$ satisfies (\ref{eqn_lem9_desired}). Finally, the only term not containing a derivative of $\tilde\eta$ is bounded via Lemma \ref{lem_dist} and Lemma \ref{lem_eta} as
\begin{equation}
\left|\tilde\eta(u)\nabla^3\lambda^1(u)\right| \lesssim s_0 d(u,\bd \Pi) \lesssim C^{-1}s_0.
\end{equation}
\end{proof}

\begin{proof}[Proof of Proposition \ref{prop_cont}]
First, pick $C$ sufficiently large and $s_0$ sufficiently so that $D_{cont}(u)$ has a unique maximum $u^*$ in $\Pi_{C,s_0}$ and Proposition \ref{prop_max}, Proposition \ref{prop_shock_explicit}, and Proposition \ref{prop_comparison} hold. Then, we note that for $u\in \Pi$,
\begin{equation}\label{eqn_propcont_eqn1}
D_{cont}(u_0) \lesssim -s_0^3 \quad \text{and}\quad |u_0 - u^*| \lesssim \frac{s_0}{C} \quad \text{and} \quad |\nabla^2D_{cont}(u^*)|\lesssim Cs_0 \quad \text{and} \quad  |\nabla^3D_{cont}(u)| \lesssim Cs_0,
\end{equation}
where all implicit constants depend only on the system.
Therefore, since $u^*$ is a local maximum of $D_{cont}$ for $u \in \Pi_{C,s_0}$, the first order term term in the expansion about $u^*$ disappears and by Taylor's theorem and (\ref{eqn_propcont_eqn1}),
\begin{equation}\label{eqn_prop_1}
\begin{aligned}
D_{cont}(u^*) &= D_{cont}(u_0) - \frac{1}{2}\nabla^2 D_{cont}(u^*)(u^* - u_0) \cdot (u^* - u_0) + \bigO(|\nabla^3 D_{cont}||u_0 - u^*|^3)\\
	&\lesssim -s_0^3 + \frac{s_0^3}{C} + \frac{s_0^4}{C^2},
\end{aligned}
\end{equation}
where all of the implicit constants depend only on the system. Taking $C$ sufficiently large to dominate the implicit constants in (\ref{eqn_prop_1}), we obtain the existence of $K$, depending only on the system, such that for $C$ sufficiently large and $s_0$ sufficiently small, for any $u \in \Pi$,
\begin{equation}
D_{cont}(u) \le D_{cont}(u^*) \le -Ks_0^3.
\end{equation}
\end{proof}

%%%%%%%%%%%%%%%%%%%%%%%%%%%%%%%%%%%%%%%%%%%%%%%%%%%%%%%%%%%%%%%%%%%%%%%%%%%%%%%%%%%%%%%%%%%%%%%%%%
%%%%%%%%%%%%%%%%%%%%%%%%%%%%%%%%%%%%%%%%%%%%%%%%%%%%%%%%%%%%%%%%%%%%%%%%%%%%%%%%%%%%%%%%%%%%%%%%%%
%%%%%%%%%%%%%%%%%%%%%%%%%%%%%%%%%%%%%%%%%%%%%%%%%%%%%%%%%%%%%%%%%%%%%%%%%%%%%%%%%%%%%%%%%%%%%%%%%%
%%%%%%%%%%%%%%%%%%%%%%%%%%%%%%%%%%%%%%%%%%%%%%%%%%%%%%%%%%%%%%%%%%%%%%%%%%%%%%%%%%%%%%%%%%%%%%%%%%
%%%%%%%%%%%%%%%%%%%%%%%%%%%%%%%%%%%%%%%%%%%%%%%%%%%%%%%%%%%%%%%%%%%%%%%%%%%%%%%%%%%%%%%%%%%%%%%%%%

\section{Proof of Proposition \ref{prop_shock}}\label{secWill2}

\subsection{Proof Sketch}
%We take the following conventions for eigenvalues of $f^\prime$. Namely, since our system is strictly hyperbolic, we obtain $f^\prime(u)$ has right eigenvectors $r_i(u)$ and left eigenvectors $l_i(u)$ corresponding to the eigenvalues $\lambda^i(u)$. Moreover, $r_i$ and $l_i$ can be chosen to vary smoothly in $u$ satisfying $|r_i(u)| = |l_j| = 1$, and  $r_i(u) \cdot l_j(u) = \delta_{ij}$, where $\delta_{ij}$ is the Kroenecker delta. Finally, since we imagine that $r_1(u_L)$ points towards $u_R$, i.e. $r_1$ points in the direction of the shock curve, we have $\lambda^1(u)$ is decreasing along $r_1$ and
%$$r_1 \cdot \nabla\lambda^1 < 0.$$
In this section, we prove Proposition \ref{prop_shock}. We find that the full entropy dissipation function, $D_{RH}$, defined in (\ref{defn_DRH}), is unwieldy and hides important structural information by comparing the fixed shock $(u_L,u_R,\sigma_{LR})$ to an unbounded family of entropic shocks $(u,\S^1_u(s),\sigma_u(s))$. We instead use Assumption \ref{assum} (j) to introduce a single shock, $(u, u^+, \sigma_\pm)$, referred to below as the maximal shock, which we will use to control the others. More precisely, for each state $u\in \Pi_{C,s_0}$, we define $u^+$ as the unique state such that $(u,u^+,\sigma_\pm)$ is an entropic $1$-shock and 
\begin{equation}\label{eqn_Dmax_defn}
D_{max}(u) := \max_{s \ge 0} D_{RH}(u, \S^1_u(s), \sigma^1_u(s)) = D_{RH}(u, u^+, \sigma_\pm). 
\end{equation}
This definition should look suspicious for a number of reasons, not the least of which, we are taking a maximum over an unbounded set. Moreover, we are attempting to compare $(u_L,u_R,\sigma_\pm)$ to a global family of shocks which need not stay near $\Pi$ nor even in $\Nu$. Nevertheless, this will be justified below (see Lemma \ref{lem15}). Proposition \ref{prop_shock} essentially follows by decomposing $\Pi$ into several sets and showing that $D_{max}$ is negative on each set. To better structure the proof, we divide the proof into two parts based on the characteristic length scale of sets in each part:
\begin{itemize}
\item[\uline{Part 1:}] Localization from the global length scale, $C^{-1}$, on $\Pi$ to a set $R_{C,s_0}$ containing $u_L$ of length scale $s_0$.
\item[\uline{Part 2:}] Analysis (inside $R_{C,s_0}$) at the critical length scale $s_0$.
\end{itemize}
Within Part 1, we further decompose our proof into two steps. For approximate or pictorial definitions of $Q_{C,s_0}$ and $R_{C,s_0}$, see Figure \ref{fig:large_scale}. For rigorous definitions, see variously (\ref{eqn_Qdefn}) and (\ref{eqn_defn_R}).
\begin{itemize}
\item \uline{Step 1.1:} First, we show that $D_{max}(u)\le 0$ for states $u$ outside of the set $Q_{C,s_0}$ with maximal shock $(u, u^+,\sigma_\pm)$ and $u^+$ outside of $\Pi$. We use that in this regime $u$ is sufficiently far from $u^*$, the unique maximum of the continuous entropy dissipation, $D_{cont}$, to gain an improved estimate of $D_{cont}(u)$. The assumption that $u^+\notin \Pi$ should be interpreted as $u$ is close (to order $s_0$) to the boundary of $\Pi_{C,s_0}$ (see Lemma \ref{lem21} below for a precise statement).
\item \uline{Step 1.2:} Second, we show that $D_{max}(u) \le 0$ for states $u$ inside $Q_{C,s_0}$ but outside $R_{C,s_0}$ and remove the assumption that $u^+$ must lie outside of $\Pi$. This step should be interpreted as showing $D_{max}(u) \le 0$ for $u$ farther than order $s_0$ from the boundary of $\Pi_{C,s_0}$.
\end{itemize}
\begin{figure}[t]
    \centering
    \includegraphics[scale = 1.2,]{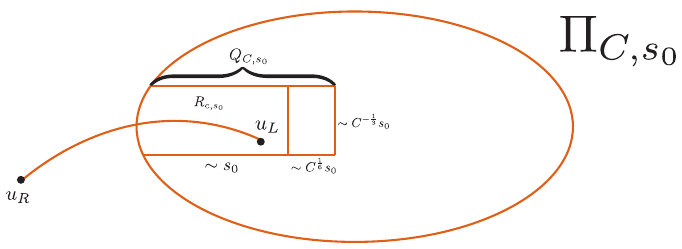}
    \caption{This is an idealized portrait of the two dimensional situation, where we imagine $r_1$ is the horizontal direction, $r_2$ is the vertical direction, and $r_1$ and $r_2$ are almost orthogonal. For Part 1, we divide $\Pi_{C,s_0}$ into three regimes: states inside $R_{C,s_0}$, which are very close to $u_L$ and are handled in Part 2; states inside $Q_{C,s_0}$ but outside $R_{C,s_0}$, which are close to $u_L$, but sufficiently far in the interior of $\Pi$; and states in the remainder of $\Pi$.}
    \label{fig:large_scale}
\end{figure}
Similarly, within Part 2, we decompose our proof into four steps. For approximate or pictorial definitions of the referenced sets, see Figure \ref{fig:small_scale}.
For rigorous definitions, see variously (\ref{defn_R0}), (\ref{defn_Rbd}), and (\ref{defn_R+R-}).
\begin{itemize}
\item \uline{Step 2.1:} First, we show that $D_{max}(u) \le 0$ for states $u$ in the set $R_{C,s_0}^{bd}$. In this regime, the proof follows from a comparison to $u^*$, the global maximum of $D_{cont}$ on $\Pi$.
\item \uline{Step 2.2:} Second, we show $D_{max}(u) \le 0$ for states $u$ in the set $R_{C,s_0}^0$. We prove that in this regime, $u_L$ is a local maximum for $D_{max}$ with $D_{max}(u_L) = 0$.
\item \uline{Step 2.3:} Third, we show that $D_{max}(u) \le 0$ for states $u$ in the sets $R_{C,s_0}^+$ and $R_{C,s_0}^-$, by ruling out critical points of $D_{max}$. In essence, we take $C$ sufficiently large to make the set $R_{C,s_0}$ narrow enough the problem becomes sufficiently one dimensional and analogous to the scalar case. More precisely, we show that any critical point of $D_{max}$ in the interior of $R_{C,s_0}$ must satisfy $u_R - u^+ \approx u - u_L$. However, we show, via an ODE argument, that for states $u\in R_{C,s_0}^+$, $u_R - u^+$ and $u - u_L$ point in essentially opposite directions. The proof in $R_{C,s_0}^-$ replaces the ordering argument in $R_{C,s_0}^+$ with a magnitude based one. In particular, using an ODE argument, we show $|u-u_L| \gg |u_R - u^+|$ for $u\in R_{C,s_0}^-$, which again precludes the existence of critical points.
%\item \uline{Step 2.4:} Fourth, we show that $D_{max}(u) \le 0$ for states $u$ in the set $R_{C,s_0}^-$, by ruling out critical points of $D_{max}$. Again, this is proved by ruling out the possibility that $u_R - u^+ \approx u - u_L$ for $u\in R_{C,s_0}^-$. However, we replace the sign argument in Step 3, with a magnitude argument: We use an ODE argument that shows as the state $u$ moves right from $u_L$, $|u - u_L|$ grows much faster than $|u_R - u^+|$.
\end{itemize}
\begin{figure}[b]
    \centering
    \includegraphics[scale= .5]{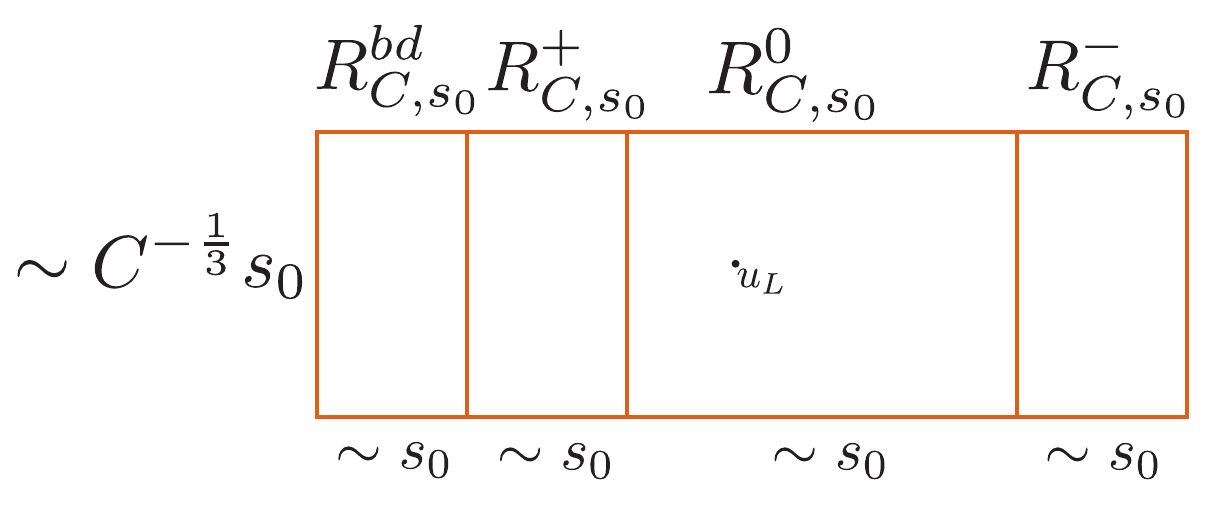}\hspace{1cm}
    \caption{Again, this is an idealization of the two dimensional picture, in which the horizontal direction is $r_1$ and the vertical direction is $r_2$. In Part 2, we subdivide the region $R_{C,s_0}$ into four regions, $R_{C,s_0}^{bd}$, $R_{C,s_0}^0$, $R_{C,s_0}^+$, and $R_{C,s_0}^-$, each of width $\sim s_0$. Each region is then treated individually.}
    \label{fig:small_scale}
\end{figure}

%%%%%%%%%%%%%%%%%%%%%%%%%%%%%%%%%%%%%%%%%%%%%%%%%%%%%%%%%%%%%%%%%%%%%%%%%%%%%%%%%%%%%%%%%%%%

%\red{subsubsection{Maximal Entropy Dissipation}}

Before beginning Part 1, we justify the definitions of $D_{max}$ and the maximal shock $(u,u^+(u),\sigma_\pm(u))$ stated above in (\ref{eqn_Dmax_defn}) and also that for appropriate $C$ and $s_0$, the set of possible maximal shocks, $\mathcal{M} = \set{u^+ \ | \ u\in \Pi_{C,s_0}}$, is compactly contained in $\Nu$.  To this end, in the following lemma, we show that the function
\begin{equation}
s \mapsto \F(s,u) := D_{RH}(u, \S^1_u(s),\sigma_u^1(s))
\end{equation}
has a unique maximum $s^*(u)$ with $s^*(u) \lesssim C^{-1/2}s_0^{1/2}$. The existence justifies the maximum over the unbounded set in (\ref{eqn_Dmax_defn}) while the uniqueness justifies the definition of $u^+$. The estimate justifies that $\mathcal{M}$ is compactly contained in $\Nu$ where $f,\ \eta,\ q,\ \lambda^i,\ r_i,\ \text{and }l^i$ are more regular. The additional regularity allows us to apply the classical techniques of calculus as in the proof of Proposition \ref{prop_cont}.

\begin{lemma}\label{lem15}
The function $s\mapsto \F(s,u)$ has a unique maximum, say $s^*$, for $s \in [0,\infty)$. Furthermore, $s^*$ is uniquely defined by the equation
\begin{equation}
\eta(u | \S^1_u(s^*)) = - \tilde\eta(u).
\end{equation}
Moreover, for $u^+ = \S^1_u(s^*)$, $|u - u^+| \lesssim C^{-1/2}s_0^{1/2}$ so that for $C$ sufficiently large and $s_0$ sufficiently small,
$\set{u^+ \ | \ u \in \Pi_{C,s_0}}$ is compactly contained in $\Nu$.
\end{lemma}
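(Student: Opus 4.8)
The plan is to use Lax's entropy-dissipation identity (Lemma~\ref{lem_lax}) to put $\F(s,u)$ in a form from which the $s$-monotonicity can be read off directly from Assumption~\ref{assum}(j). Setting $u_- = u$, $u_+ = \S^1_u(s)$, $\sigma_\pm = \sigma^1_u(s)$ in~(\ref{defn_DRH}), applying Lemma~\ref{lem_lax} with $i=1$ and $v = u_R$ to the term $q(\S^1_u(s);u_R) - \sigma^1_u(s)\eta(\S^1_u(s)\,|\,u_R)$, and recognizing $\tilde q$ and $\tilde\eta$ from~(\ref{defn_eta}), one obtains
\[
\F(s,u) = -\tilde q(u) + \sigma^1_u(s)\,\tilde\eta(u) + \int_0^s \dot\sigma^1_u(t)\,\eta(u\,|\,\S^1_u(t))\,dt,
\]
which by Assumption~\ref{assum}(d) is valid for all $s\ge 0$. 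Differentiating in $s$ --- or, for large $s$, comparing values directly from the integral form, so as to avoid pointwise differentiability of $\sigma^1_u$ --- this gives
\[
\partial_s\F(s,u) = \dot\sigma^1_u(s)\,g(s), \qquad g(s) := \tilde\eta(u) + \eta(u\,|\,\S^1_u(s)).
\]

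Next, I would analyze the scalar function $g$ on $[0,\infty)$. Since $u\in\Pi_{C,s_0}$ we have $g(0) = \tilde\eta(u) \le 0$, and by Assumption~\ref{assum}(j) the map $s\mapsto \eta(u\,|\,\S^1_u(s))$ is strictly increasing, so $g$ is strictly increasing. To locate its zero I would use that $-\tilde\eta(u) \lesssim s_0\,d(u,\bd\Pi) \lesssim C^{-1}s_0$ by Lemma~\ref{lem_dist} and the diameter estimate of Lemma~\ref{lem_eta}, while the asymptotics $\S^1_u(s) = u + s\,r_1(u) + \bigO(s^2)$ of Lemma~\ref{lem_hugoniot}, together with the local quadratic equivalence of the relative entropy (Lemma~\ref{l2_rel_entropy_lemma} on a fixed compact neighborhood of $\Pi$), give $\eta(u\,|\,\S^1_u(s)) \sim s^2$ for $s$ in a fixed small interval. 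Hence $g$ has a unique zero $s^* = s^*(u)$, characterized by $\eta(u\,|\,\S^1_u(s^*)) = -\tilde\eta(u)$, with $(s^*)^2 \sim -\tilde\eta(u) \lesssim C^{-1}s_0$, that is, $s^* \lesssim C^{-1/2}s_0^{1/2}$; for $C$ large and $s_0$ small this zero lies inside the interval where the quadratic estimate holds, so the computation is self-consistent. Since $\dot\sigma^1_u(s) < 0$ for $s>0$ by Assumption~\ref{assum}(j), it follows that $\partial_s\F > 0$ on $(0,s^*)$ and $\partial_s\F < 0$ on $(s^*,\infty)$ --- the latter persisting for all large $s$ because $g$ remains positive once it becomes positive --- so $s^*$ is the unique maximizer of $s\mapsto\F(s,u)$. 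This gives the first two assertions with $u^+ := \S^1_u(s^*)$.

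Finally, $|u - u^+| = |u - \S^1_u(s^*)| \sim s^* \lesssim C^{-1/2}s_0^{1/2}$ by Lemma~\ref{lem_hugoniot}, and since $\Pi_{C,s_0}$ lies in a fixed compact subset of $\Nu$ for $C$ large (Lemma~\ref{lem_eta}), taking $C$ large and $s_0$ small keeps every $u^+$ in a slightly larger, still compact, subset of $\Nu$, so $\mathcal{M} = \{\,u^+ : u\in\Pi_{C,s_0}\,\}$ is compactly contained in $\Nu$. I expect the only genuine subtlety to be in the second step: the comparison $\eta(u\,|\,\S^1_u(s))\sim s^2$ must be carried out on a neighborhood whose size is fixed --- independent of $C$, $s_0$ and $u\in\Pi$ --- so that the zero $s^*$ is genuinely of order $C^{-1/2}s_0^{1/2}$ and, a posteriori, falls inside that neighborhood; this is exactly the place where one uses that $\Pi_{C,s_0}$ collapses onto $u_L$ as $C\to\infty$. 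A secondary point is that the monotonicity of $\F$ at large $s$ should be deduced from the integral form of the identity above together with Assumptions~\ref{assum}(d),(j), rather than from pointwise derivatives of $\sigma^1_u$, which Lemma~\ref{lem_hugoniot} guarantees only locally.
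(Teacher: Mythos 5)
Your proposal is correct and follows essentially the same route as the paper's proof: apply Lemma \ref{lem_lax} with $v = u_R$ to write $\F(s,u)$ as $-\tilde q(u) + \sigma^1_u(s)\tilde\eta(u) + \int_0^s\dot\sigma^1_u(t)\,\eta(u\,|\,\S^1_u(t))\,dt$, differentiate to get $\partial_s\F = \dot\sigma^1_u(s)\,g(s)$ with $g(s) = \tilde\eta(u) + \eta(u\,|\,\S^1_u(s))$, and use Assumption \ref{assum}(j) (sign of $\dot\sigma^1$ and monotonicity of $s\mapsto\eta(u\,|\,\S^1_u(s))$) together with $g(0)=\tilde\eta(u)\le0$ to locate the unique zero $s^*$; then chain Lemma \ref{l2_rel_entropy_lemma}, Lemma \ref{lem_dist}, and the diameter estimate of Lemma \ref{lem_eta} to get $|u-u^+|^2 \lesssim -\tilde\eta(u) \lesssim s_0 d(u,\bd\Pi) \lesssim C^{-1}s_0$. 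The one place you add useful care is in noting that the quadratic comparison $\eta(u\,|\,\S^1_u(s)) \sim s^2$ is a priori local; the clean way to close the loop, which you gesture at, is to fix a small $\bar s$ independent of $C,s_0$ on which the quadratic bound holds, observe that $g(\bar s) \gtrsim \bar s^2 - C^{-1}s_0 > 0$ for $C$ large and $s_0$ small, and conclude by monotonicity that $s^* < \bar s$ before invoking the quadratic estimate at $s^*$. Your worry about pointwise differentiability of $\sigma^1_u$ at large $s$ is already resolved by the paper's hypotheses: Assumption \ref{assum}(j) asserts $\frac{d}{ds}\sigma^1_{u_L}(s) < 0$ for all $s>0$, which presupposes global differentiability of the shock speed along the curve (Lemma \ref{lem_hugoniot} only gives the local $C^3$ statement, but the global assumption is explicit), so no detour through the integrated form is needed.
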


\begin{proof}
We expand $\F(s,u) = D_{RH}(u, \S^1_u(s), \sigma^1_u(s))$ as
\begin{equation}
\begin{aligned}
\F(s,u) = \left[q(u ; u_R) - \sigma^1_u(s) \eta(u | u_R) + \int_0^s \dot{\sigma}^1_u(t)\eta(u | \S^1_u(t)) \ dt \right]
    -(1+Cs_0)\left[q(u ; u_L) - \sigma^1_u(s) \eta(u | u_L)\right],
\end{aligned}
\end{equation}
where we have used Lemma \ref{lem_lax} with $v = u_R = \S^1_{u_L}(s_0)$.
Taking a derivative in $s$, we obtain
\begin{equation}
\begin{aligned}
\frac{d\F}{ds} &= -\dot{\sigma}^1_u(s)\eta(u | u_R) + \dot{\sigma}^1_u(s) \eta(u | \S^1_u(s)) + (1+Cs_0)\dot{\sigma}^1_u(s)\eta(u | u_L).
\end{aligned}
\end{equation}
Since $\dot{\sigma}^1(s) < 0$ for all $s > 0$ by Assumption \ref{assum} (j), we obtain $\F(s)$ is decreasing in $s$ if and only if
\begin{equation}
(1+Cs_0)\eta(u | u_L) - \eta(u | u_R) + \eta(u | \S^1_u(s)) = \tilde\eta(u) + \eta(u | \S^1_u(s)) \ge 0. 
\end{equation}
Define $g(s)$ as
\begin{equation}
g(s) = \tilde\eta(u) + \eta(u | \S^1_u(s)).
\end{equation}
Certainly, $g(0)  = \tilde\eta(u) \le 0$. Moreover, using Assumption \ref{assum} (j) again, for any $s > 0$,
\begin{equation}
g^\prime(s) = \frac{d}{ds}\eta(u | \S^1_u(s)) > 0.
\end{equation}
Therefore, we conclude there is a unique point $s^*$ such that $g(s^*) = 0$. By the above discussion, we note that $s^*$ is the unique global maximum for $s\mapsto \F(s,u)$ and is completely characterized by $g(s^*) = 0$. Finally, we let $u^+ = \S^1_u(s^*)$ and use $g(s^*) = 0$, Lemma \ref{l2_rel_entropy_lemma}, Lemma \ref{lem_dist}, and Lemma \ref{lem_eta} to estimate $|u-u^+|$ via
\begin{equation}
|u - u^+|^2 \lesssim \eta(u|u^+) =  -\tilde\eta(u) \lesssim s_0 d(u,\bd\Pi) \lesssim C^{-1}s_0
\end{equation}
Since $\Pi$ is compactly contained in $\Nu$ for appropriate $C$ and $s_0$, so too is the set of possible states $u^+$.
\end{proof}

%%%%%%%%%%%%%%%%%%%%%%%%%%%%%%%%%%%%%%%%%%%%%%%%%%%%%%%%%%%%%%%%%%%%%%%%%%%%%%%%%%%%%%%%%%%%%%%%%%%%%%%%

\subsection{Step 1.1: Shocks outside \texorpdfstring{$Q_{C,s_0}$}{QCs0}}
In this step, we show the existence of a set $Q = Q_{C,s_0}$ of ``width'' in the $r_i$ for $i\neq 1$ directions of order $C^{-1/3}s_0$ and ``height'' in the $r_1$ direction of order $C^{1/6}s_0$ such that outside of $Q$ but inside of $\Pi$, the continuous entropy dissipation $D_{max}(u)$ satisfies an improved uniform estimate. 
In particular, this allows us to handle the maximal entropy dissipation, $D_{max}(u)$ outside $Q_{C,s_0}$.

More precisely, take $C$ sufficiently large and $s_0$ sufficiently small such that $D_{cont}$ has a unique maximum $u^*$ on $\Pi_{C,s_0}$. Define the family of closed cylinders based at $u^*$, $Q_{C,s_0}$, via
\begin{equation}\label{eqn_Qdefn}
Q_{C,s_0} := \set{p = u^* - \sum_{i = 1}^n b_i r_i(u^*) \ \biggr| \ p\in \Pi_{C,s_0},\ |b_1| \le C^{1/6}s_0, \text{ and }\left(\sum_{i = 2}^n |b_i|^2\right)^{1/2} \le C^{-1/3}s_0}. 
\end{equation}
The main proposition of this step is then:
\begin{proposition}\label{prop_Qs0}
For any $C$ sufficiently large, $s_0$ sufficiently small (depending on $C$), and for any $u$ such that
$$u \in \Pi_{C,s_0} \backslash Q_{C,s_0},$$
also satisfies $u^+ = \S^1_u(s^*(u)) \notin \Pi_{C,s_0}$, the maximal entropy dissipation satisfies,
\begin{equation}
D_{max}(u) \lesssim -s_0^3.
\end{equation}
\end{proposition}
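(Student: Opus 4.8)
The plan is to reduce the bound on $D_{max}(u)$ to an improved estimate on $D_{cont}(u)$ valid away from the maximum $u^*$, and then to obtain that estimate from the Hessian structure of $D_{cont}$ at $u^*$ (\Cref{lem3}). First I would record the identity obtained by applying \Cref{lem_lax} with $v = u_R = \S^1_{u_L}(s_0)$, exactly as in the proof of \Cref{lem15}, and using that $(\tilde\eta,\tilde q)$ is an entropy pair:
\[
D_{max}(u) = \F(s^*,u) = D_{cont}(u) + \bigl(\sigma^1_u(s^*) - \lambda^1(u)\bigr)\tilde\eta(u) + \int_0^{s^*}\dot\sigma^1_u(t)\,\eta(u|\S^1_u(t))\,dt .
\]
By \Cref{lem15}, $\eta(u|u^+) = -\tilde\eta(u)$, so $(s^*)^2\sim\eta(u|u^+) = -\tilde\eta(u)$; since $u\in\Pi_{C,s_0}$ and $u^+\notin\Pi_{C,s_0}$ force $d(u,\bd\Pi)\le|u-u^+|\sim s^*$, combining with \Cref{lem_dist} ($-\tilde\eta(u)\lesssim s_0\,d(u,\bd\Pi)$) gives $(s^*)^2\lesssim s_0 s^*$, hence $s^*\lesssim s_0$, $d(u,\bd\Pi)\lesssim s_0$, and $|\tilde\eta(u)|\lesssim s_0^2$. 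The integral term is $\le 0$ since $\dot\sigma^1_u<0$ and $\eta(u|\cdot)\ge 0$, and the middle term is $\lesssim s^*|\tilde\eta(u)|\lesssim(s^*)^3\lesssim s_0^3$ because $\lambda^1(u)-\sigma^1_u(s^*)\lesssim s^*$ by Liu admissibility (\Cref{lem_hugoniot}). Hence it suffices to prove $D_{cont}(u)\lesssim -C^{1/3}s_0^3$ for $u\in\Pi_{C,s_0}\setminus Q_{C,s_0}$ with $d(u,\bd\Pi)\lesssim s_0$; then $D_{max}(u)\lesssim -C^{1/3}s_0^3 + \bigO(s_0^3)\lesssim -s_0^3$ for $C$ large.

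For the improved estimate, write $u-u^* = -\sum_{i=1}^n b_i r_i(u^*)$. The radii of $Q_{C,s_0}$ are chosen precisely so that ``$u\notin Q_{C,s_0}$'' balances the two directional scales of \Cref{lem3}: since either $|b_1| > C^{1/6}s_0$ or $\bigl(\sum_{i\ge2}b_i^2\bigr)^{1/2} > C^{-1/3}s_0$, and $s_0(C^{1/6}s_0)^2 = C^{1/3}s_0^3 = Cs_0(C^{-1/3}s_0)^2$, inequality \eqref{eqn_lem3_desired} gives in either case
\[
\tfrac12\nabla^2 D_{cont}(u^*)(u-u^*)\cdot(u-u^*) \lesssim -s_0 b_1^2 - Cs_0\sum_{i\ge2}b_i^2 \lesssim -C^{1/3}s_0^3 .
\]
When $u$ is close to $u^*$, say $|u-u^*|\le M s_0$ for a fixed $M$, this is conclusive: Taylor's theorem around $u^*$ with $\nabla D_{cont}(u^*)=0$ (\Cref{prop_max}) and $|\nabla^3 D_{cont}|\lesssim Cs_0$ (\Cref{lem9}) gives $D_{cont}(u) = D_{cont}(u^*) + \tfrac12\nabla^2 D_{cont}(u^*)(u-u^*)^{\tens2} + \bigO(Cs_0|u-u^*|^3) \lesssim -C^{1/3}s_0^3 + \bigO(Cs_0^4)$, and $Cs_0^4\ll s_0^3\ll C^{1/3}s_0^3$.

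The remaining case, $|u-u^*| > M s_0$, is the main obstacle, since the crude Taylor remainder $\bigO(Cs_0|u-u^*|^3)$ is then too large. Here $d(u,\bd\Pi)\lesssim s_0$ forces $|u-u^*|$ to come from the $r_{\ge2}$ components (so $|u-u^*|\sim\bigl(\sum_{i\ge2}b_i^2\bigr)^{1/2}\gg s_0$ is ``tangential''), and I would first connect $u$ to $\bd\Pi$ along the integral curve of $r_1$: because $u^+\notin\Pi_{C,s_0}$, the shock curve $\S^1_u$, hence its tangent $r_1$-curve, exits $\Pi_{C,s_0}$ within arc length $\lesssim s_0$ at a point $\hat u\in\bd\Pi$ with $|u-\hat u|\lesssim s_0$; along this curve $\frac{d}{dt}D_{cont} = \tilde\eta\,(\nabla\lambda^1\cdot r_1)$ (the computation in the proof of \Cref{lem1}), and $|\tilde\eta|\lesssim s_0^2$ on the relevant $\bigO(s_0)$-neighborhood of $\bd\Pi$ (\Cref{lem_dist}), so $|D_{cont}(u)-D_{cont}(\hat u)|\lesssim s_0\cdot s_0^2 = s_0^3$. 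It then remains to show $D_{cont}(\hat u)\lesssim -C^{1/3}s_0^3$ for $\hat u\in\bd\Pi$ at distance $\gtrsim|u-u^*|\gg s_0$ from $u^*$. This is where the global geometry of $\Pi$ (diameter $\sim C^{-1}$ and curvature $\sim C$, from \Cref{lem_eta} and \Cref{lem_pi}) and the critical-point structure of $D_{cont}|_{\bd\Pi}$ (whose only critical points are the states with $\nu\parallel l^i$, with $u^*$ the unique maximum by \Cref{prop_max} and $\mathrm{Hess}_{\bd\Pi}D_{cont}(u^*)\lesssim -Cs_0$ by \Cref{lem3}) must be used, together with the fact that $u^+\notin\Pi_{C,s_0}$ confines $\hat u$ to the portion of $\bd\Pi$ on which $r_1$ points outward. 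A continuation argument on $\bd\Pi$ then yields $D_{cont}(\hat u)\le D_{cont}(u^*) - (\text{positive quantity}\gtrsim C^{-1}s_0)\lesssim -C^{-1}s_0$, far stronger than needed. Combining the two cases with the first paragraph gives the claim.
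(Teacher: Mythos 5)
Your first paragraph is the same reduction the paper uses in the proof of \Cref{prop_Qs0} (compare the estimates following \eqref{eqn_propQ_eqn0}): express $D_{max}(u)$ via \Cref{lem_lax} in terms of $D_{cont}(u)$ plus controlled error terms, use $u^+\notin\Pi$ with \Cref{lem_dist} to force $s^*\lesssim s_0$ and $d(u,\bd\Pi)\lesssim s_0$, and reduce to the bound $D_{cont}(u)\lesssim -C^{1/3}s_0^3$. Your ``near'' case is likewise the Taylor expansion around $u^*$ already used by the paper on $\bd Q\setminus B$ in \Cref{lem16}.

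Your ``far'' case takes a genuinely different route from \Cref{prop_improved}. The paper proves the improved $D_{cont}$ bound on \emph{all} of $\Pi\setminus Q$ without invoking $u^+\notin\Pi$: it uses \Cref{lem1} to push to $\bd\Pi\cup\bd Q\setminus B$, handles $\bd Q\setminus B$ in \Cref{lem16}, and treats $\bd\Pi\setminus B$ via the Gauss-map/unique-critical-point apparatus of \Cref{lem17}, \Cref{lem18} and the explicit estimate $D_{cont}(\tilde u)\lesssim -s_0/C^2$ of \Cref{lem19}. You instead exploit the hypothesis $u^+\notin\Pi$ to link $u$ to a boundary point $\hat u$ at distance $\lesssim s_0$ along the $r_1$-integral curve, transfer the bound along that curve using $\frac{d}{dt}D_{cont}=\tilde\eta\,\nabla\lambda^1\cdot r_1$ together with $|\tilde\eta|\lesssim s_0^2$ near $\bd\Pi$, and then estimate at $\hat u\in\bd\Pi$. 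This bypasses \Cref{lem17}--\Cref{lem19} entirely, at the price of proving a strictly weaker statement than \Cref{prop_improved}; that is fine here, since only the weaker version is needed for \Cref{prop_Qs0}.

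The one genuine problem is the asserted output of the ``continuation argument'': $D_{cont}(\hat u)\le D_{cont}(u^*) - (\text{positive quantity}\gtrsim C^{-1}s_0)$. This is false in the worst case. You only know $|\hat u - u^*|\gtrsim (M-K)s_0\sim s_0$; a deficit $\gtrsim C^{-1}s_0$ would require $|\hat u - u^*|\sim C^{-1}$, i.e.\ $\hat u$ near the antipodal point $\tilde u$. But you do not actually need any continuation argument or critical-point bookkeeping on $\bd\Pi$: the same Taylor expansion at $u^*$ used in the near case also works at $\hat u$. Since $\tilde\eta(\hat u)=\tilde\eta(u^*)=0$, the strict convexity of $\tilde\eta$ (\Cref{lem_eta}) and $\nabla\tilde\eta(u^*)\parallel l^1(u^*)$ with $|\nabla\tilde\eta(u^*)|\sim s_0$ (\Cref{lem_nu}) force the $r_1(u^*)$ coefficient of $\hat u - u^*$ to be $\bigO(C|\hat u - u^*|^2)$, so the tangential part dominates and \Cref{lem3} with \Cref{lem9} give
\begin{equation*}
D_{cont}(\hat u)\lesssim D_{cont}(u^*) - Cs_0|\hat u - u^*|^2 + Cs_0|\hat u - u^*|^3\lesssim -Cs_0\,|\hat u - u^*|^2 \lesssim -Cs_0^3,
\end{equation*}
which (since $Cs_0^3\ge C^{1/3}s_0^3$) is more than sufficient. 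Similarly, the parenthetical claim that $d(u,\bd\Pi)\lesssim s_0$ makes $u-u^*$ ``purely tangential'' is not quite right (the $r_1$ coefficient need not be small relative to the others for a state merely close to a banana-shaped boundary) but it is only motivation and not load-bearing. Replacing the continuation argument with the above Taylor estimate closes your proof.
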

The main ingredient in the proof of Proposition \ref{prop_Qs0} is the following improvement of Proposition \ref{prop_cont} outside of $Q_{C,s_0}$:
\begin{proposition}\label{prop_improved}
For any $C$ sufficiently large, $s_0$ sufficiently small (depending on $C$), and for any $u$ such that
$$u \in \Pi_{C,s_0} \backslash Q_{C,s_0},$$
the continuous entropy dissipation satisfies,
\begin{equation}
D_{cont}(u) \lesssim -C^{1/3}s_0^3.
\end{equation}
\end{proposition}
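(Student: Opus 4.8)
The plan is to prove the stronger bound $D_{cont}(u)\lesssim -C^{1/3}s_0^3$ by a case split on which of the two defining inequalities of $Q_{C,s_0}$ fails. Write $u-u^*=-\sum_{i=1}^n b_i r_i(u^*)$ and $\rho^2=\sum_{i=2}^n b_i^2$, so $u\notin Q_{C,s_0}$ means $|b_1|>C^{1/6}s_0$ or $\rho>C^{-1/3}s_0$; the two exponents are tuned so that each mechanism below yields exactly the target size, namely $s_0(C^{1/6}s_0)^2=Cs_0(C^{-1/3}s_0)^2=C^{1/3}s_0^3$.

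\textbf{Transverse case ($\rho>C^{-1/3}s_0$ and $|b_1|\le C^{1/6}s_0$).} I would Taylor expand $D_{cont}$ about its unique maximum $u^*$ (Proposition~\ref{prop_max}): since $\nabla D_{cont}(u^*)=0$,
\[
D_{cont}(u)=D_{cont}(u^*)+\tfrac12\nabla^2D_{cont}(u^*)(u-u^*)\cdot(u-u^*)+\bigO\!\big(\|\nabla^3D_{cont}\|_{L^\infty(\Pi)}|u-u^*|^3\big).
\]
By Lemma~\ref{lem3} the quadratic term is $\lesssim -s_0 b_1^2-Cs_0\rho^2$, by Lemma~\ref{lem9} $\|\nabla^3D_{cont}\|_{L^\infty(\Pi)}\lesssim Cs_0$, and $|u-u^*|\le\operatorname{diam}\Pi\lesssim C^{-1}$ by Lemma~\ref{lem_eta}. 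Using $|b_1|\le C^{1/6}s_0$ and $\rho\le C^{-1}$, the cubic remainder is $\lesssim C^{3/2}s_0^4+Cs_0\rho^3$, which for $C$ large and $s_0$ small (depending on $C$) is absorbed into half of $-Cs_0\rho^2$; combined with $D_{cont}(u^*)\le -Ks_0^3$ from Proposition~\ref{prop_cont} this gives $D_{cont}(u)\lesssim -Cs_0\rho^2\le -C^{1/3}s_0^3$. The hypothesis $|b_1|\le C^{1/6}s_0$ is genuinely needed, since the $b_1^3$ part of the cubic remainder is otherwise of the same order as the whole quadratic term when $|b_1|\sim C^{-1}$.

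\textbf{Deep case ($|b_1|>C^{1/6}s_0$).} Here the expansion at $u^*$ is useless and I would instead use the monotonicity of $D_{cont}$ along the $r_1$–integral curve: if $\dot u=r_1(u)$, $u(0)=u$, then the computation in the proof of Lemma~\ref{lem1} gives $\frac{d}{dt}D_{cont}(u(t))=\tilde\eta(u(t))\,[\nabla\lambda^1(u(t))\cdot r_1(u(t))]=c(t)|\tilde\eta(u(t))|$ with $c(t)\sim 1$ (using $\tilde\eta\le 0$ on $\Pi$ and $\nabla\lambda^1\cdot r_1<0$ from Lemma~\ref{lem_normalization}). Flowing forward to the first exit time $t_+$ from $\overline\Pi$, with $\hat u=u(t_+)\in\bd\Pi$, and using $D_{cont}(\hat u)\le D_{cont}(u^*)\le -Ks_0^3$,
\[
D_{cont}(u)=D_{cont}(\hat u)-\int_0^{t_+}c(t)|\tilde\eta(u(t))|\,dt\ \le\ -Ks_0^3-c'\int_0^{t_+}|\tilde\eta(u(t))|\,dt,
\]
so it suffices to show $\int_0^{t_+}|\tilde\eta(u(t))|\,dt\gtrsim C^{1/3}s_0^3$. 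The map $t\mapsto\tilde\eta(u(t))$ is strictly convex ($\tfrac{d^2}{dt^2}=\nabla^2\tilde\eta(r_1,r_1)+\bigO(s_0)\gtrsim Cs_0$ by Lemma~\ref{lem_eta}) and vanishes at the endpoints of the $r_1$–chord, so convexity yields $|\tilde\eta(u(t))|\ge\tfrac{t_+-t}{t_+}|\tilde\eta(u)|$ and hence $\int_0^{t_+}|\tilde\eta(u(t))|\,dt\ge\tfrac{t_+}{2}|\tilde\eta(u)|$. Lemma~\ref{lem_nu} together with Lemma~\ref{lem_dist} give $|\tilde\eta(u)|\sim s_0\,d(u,\bd\Pi)$, and a geometric lemma — using strict convexity of $\Pi$ with $\bd\Pi$ tangent at $u^*$ to the hyperplane $\{v:(v-u^*)\cdot l^1(u^*)=0\}$ and the curvature estimate $\operatorname{curv}(\bd\Pi)\sim C$ of Lemma~\ref{lem_pi} — shows that in the ``bulk'' regime $t_+\gtrsim b_1$ and $d(u,\bd\Pi)\gtrsim b_1$, whence $\int_0^{t_+}|\tilde\eta(u(t))|\,dt\gtrsim s_0 b_1^2\gtrsim C^{1/3}s_0^3$. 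In the remaining regime $u$ lies within $\sim\sqrt{b_1/C}$ of $\bd\Pi$, at a boundary point with $|\hat u-u^*|\sim\sqrt{b_1/C}\gtrsim C^{-5/12}s_0^{1/2}$; there the trivial bound $D_{cont}(u)\le D_{cont}(\hat u)$ combined with the quadratic decay of $D_{cont}|_{\bd\Pi}$ away from $u^*$ already gives $D_{cont}(u)\lesssim -Cs_0|\hat u-u^*|^2\lesssim -s_0 b_1\lesssim -C^{1/6}s_0^2$, far stronger than the required $-C^{1/3}s_0^3$.

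\textbf{Main obstacle.} The only delicate point is the deep case. Its difficulty is that the second–order expansion at $u^*$ is unusable there: the cubic remainder $Cs_0|u-u^*|^3$ is comparable to the quadratic gain $Cs_0|u-u^*|^2$ once $|u-u^*|\sim\operatorname{diam}\Pi\sim C^{-1}$, so one must substitute the monotonicity/integral argument above and, separately, control $D_{cont}$ on the part of $\bd\Pi$ that sits at distance $\sim C^{-1}$ from $u^*$. Making the geometric ``chord-length'' estimate precise — quantifying how the $r_1$–flow from $u$ traverses $\Pi$ relative to the paraboloidal piece of $\bd\Pi$ near $u^*$, including the small exceptional set where the chord on the $u^*$–side is short — is where the real work lies; everything else reduces to the structural lemmas of Section~\ref{secAssum} and the calculus estimates already proved in this section.
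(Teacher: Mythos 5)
Your transverse case (Taylor expansion about $u^*$ when $\rho>C^{-1/3}s_0$, $|b_1|\le C^{1/6}s_0$) is correct and is essentially the paper's Lemma~\ref{lem16}. The interesting divergence is in the deep case, where you flow from $u$ along $r_1$ and try to collect the dissipation integral directly, rather than (as the paper does) reducing first to $\bd(\Pi\setminus Q)$ and isolating the unique antipodal critical point $\tilde u$ on $\bd\Pi\setminus B$ via a contraction-mapping argument (Lemmas~\ref{lem17}--\ref{lem18}) and then bounding $D_{cont}(\tilde u)$ by a one-dimensional flow (Lemma~\ref{lem19}). The right idea is there, but your implementation leaves a genuine gap.

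The gap is in the ``remaining regime'' of the deep case. You assert that outside the bulk regime, $u$ lies near a boundary point $\hat u$ with $|\hat u-u^*|\sim\sqrt{b_1/C}$, and then invoke the quadratic decay of $D_{cont}$ from Taylor at $u^*$. This fails for states $u$ close to the antipodal point $\tilde u$ of $\bd\Pi$ in the $-r_1$ direction, i.e. when $b_1\sim C^{-1}$ and $\rho$ is small. Since $u\in\Pi$ forces (to leading order) $C\rho^2\lesssim b_1-Cb_1^2$, for $b_1$ near its maximum $\sim C^{-1}$ the admissible $\rho$ can be much smaller than $\sqrt{b_1/C}$; moreover the forward exit $\hat u$ of the $r_1$-flow lands on the \emph{near} hemisphere with $\hat\rho\approx\rho+\bigO(b_1^2)$, which can be as small as $\bigO(C^{-2})$. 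In that case $|\hat u-u^*|$ is small, the quadratic Taylor bound at $\hat u$ gives only $D_{cont}(\hat u)\lesssim -s_0^3$, and your chord-length bound $\int_0^{t_+}|\tilde\eta(u(t))|\,dt\gtrsim \tfrac{t_+}{2}|\tilde\eta(u)|$ is worthless because $|\tilde\eta(u)|$ is small when $u$ sits near the far boundary. (Taylor-expanding $D_{cont}$ directly at $u^*$ also fails in this regime: at $|u-u^*|\sim C^{-1}$ the cubic remainder $Cs_0|u-u^*|^3\sim C^{-2}s_0$ is comparable to the quadratic gain $s_0 b_1^2\sim C^{-2}s_0$, which is precisely why the paper introduces $\tilde u$ and Lemma~\ref{lem19} rather than expanding about $u^*$.) The missing dissipation is in fact carried by the full chord integral $\int_{t_-}^{t_+}|\tilde\eta|\,dt\gtrsim Cs_0(t_+-t_-)^3\gtrsim C^{-2}s_0$, which is what the paper's Lemma~\ref{lem19} extracts by flowing from $\tilde u$ inward and proving $\tilde\eta(u(t))\lesssim -s_0 t$ for $t\lesssim C^{-1}$; your weaker lower bound, proportional to $|\tilde\eta(u)|$ at the starting point, does not see it. To close the gap you would have to replace the $|\tilde\eta(u)|$-based bound with a uniform-curvature bound $|\tilde\eta(u(t))|\gtrsim Cs_0(t-t_-)(t_+-t)$ and handle the far-side configuration explicitly, at which point you essentially reconstruct Lemmas~\ref{lem17}--\ref{lem19}.
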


Before beginning the proof of Proposition \ref{prop_improved}, we introduce two auxiliary sets related to $Q$. We say the top of $Q$ is the set
\begin{equation}
T_{C,s_0} := \set{p = u^* - \sum_{i = 1}^n b_i r_i(u^*) \ \biggr| \ p \in Q_{C,s_0},\ |b_1| = C^{1/6}s_0}.
\end{equation}
Similarly, the bottom of $Q$ is the set
\begin{equation}
B_{C,s_0} := \bd \Pi_{C,s_0} \cap Q_{C,s_0}.
\end{equation}

\begin{flushleft}
\uline{\bf{Proof of Proposition \ref{prop_improved}}}
\end{flushleft}
We note that by Lemma \ref{lem1}, $D_{cont}$ does not have any critical points in the interior of $\Pi$. Therefore, it suffices to prove the improved estimate on the boundary of the region $\Pi\backslash Q$, that is, on
\begin{equation}
\bd \Pi \backslash B_{C,s_0} \cup \bd Q_{C,s_0} \backslash B_{C,s_0}.
\end{equation}
We first estimate $D_{cont}$ on $\bd Q_{C,s_0} \backslash B_{C,s_0}$ using the precise estimates on $D_{cont}$ near $u^*$ proved in Proposition \ref{prop_cont}:

\begin{lemma}\label{lem16}
For $Q = Q_{C,s_0}$, any $C$ sufficiently large, and any $s_0$ sufficiently small, the continuous entropy dissipation, $D_{cont}$, satisfies the improved estimate
\begin{equation}
D_{cont}(u) \lesssim -C^{1/3}s_0^3,
\end{equation}
for any state $u$ in the set $\bd Q \backslash B$
\end{lemma}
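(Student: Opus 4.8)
The plan is to obtain the improved bound by a careful second-order Taylor expansion of $D_{cont}$ about $u^*$, the unique maximum of $D_{cont}$ on $\Pi_{C,s_0}$ (which exists for $C$ large and $s_0$ small by Proposition \ref{prop_max}), exploiting the precise second-order behaviour of $D_{cont}$ at $u^*$ recorded in Lemma \ref{lem3}. First I would recall that $\nabla D_{cont}(u^*) = 0$ since $u^*$ is a maximum, that $D_{cont}(u^*) < 0$ by Proposition \ref{prop_cont}, and that $\|\nabla^3 D_{cont}\|_{L^\infty(\Pi)} \lesssim Cs_0$ by Lemma \ref{lem9}. Then, for $u \in \bd Q \backslash B$, writing $u - u^* = -\sum_{i=1}^n b_i r_i(u^*)$ and $w = \sum_{i=1}^n b_i r_i(u^*)$ in the basis used in the definition (\ref{eqn_Qdefn}) of $Q$, and noting that such $u$ lie in $\Pi$ so that the cited lemmas apply, Taylor's theorem with Lagrange remainder gives
\begin{equation}
D_{cont}(u) = D_{cont}(u^*) + \tfrac12 \nabla^2 D_{cont}(u^*) w\cdot w + \bigO(Cs_0|w|^3).
\end{equation}

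Next I would split $\bd Q\backslash B$ into the two relevant pieces: the caps of the cylinder, where $|b_1| = C^{1/6}s_0$, and the lateral surface, where $\left(\sum_{i=2}^n b_i^2\right)^{1/2} = C^{-1/3}s_0$; on each piece the complementary coordinates are no larger, so in particular $|w|^2 \lesssim C^{1/3}s_0^2$. Feeding the two cases into the Lemma \ref{lem3} bound $\nabla^2 D_{cont}(u^*)w\cdot w \lesssim -s_0 b_1^2 - Cs_0\sum_{i=2}^n b_i^2$, the dominant negative contribution is $\lesssim -C^{1/3}s_0^3$ in both cases: it comes from the $-s_0 b_1^2$ term on the caps and from the $-Cs_0\sum_{i=2}^n b_i^2$ term on the lateral surface. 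Hence the quadratic term alone is $\lesssim -C^{1/3}s_0^3$, with implicit constant depending only on the system.

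The main (mild) obstacle is controlling the cubic remainder against this improvement. From $|w| \lesssim C^{1/6}s_0$ the remainder is $\bigO(Cs_0|w|^3) = \bigO(C^{3/2}s_0^4)$, and since $C^{3/2}s_0^4 = (C^{7/6}s_0)\,C^{1/3}s_0^3$, this is negligible compared with $C^{1/3}s_0^3$ once $s_0$ is taken sufficiently small depending on $C$ — precisely the regime allowed in the statement. Combining this with $D_{cont}(u^*) < 0$ then yields $D_{cont}(u) \lesssim -C^{1/3}s_0^3$ uniformly over $u \in \bd Q \backslash B$, which is the claim. The only bookkeeping to be careful about is that the $b_1$-coordinate is unconstrained up to $C^{1/6}s_0$ on the lateral surface, so that $|w|$ there is as large as on the caps; this is what forces the cubic term to be as large as $C^{3/2}s_0^4$ and dictates how small $s_0$ must be chosen relative to $C$.
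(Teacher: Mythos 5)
Your proposal is correct and follows essentially the same route as the paper: a second-order Taylor expansion of $D_{cont}$ about $u^*$, using $\nabla D_{cont}(u^*)=0$, the anisotropic Hessian bound of Lemma~\ref{lem3}, and the cubic bound of Lemma~\ref{lem9}, followed by the same split of $\bd Q\backslash B$ into the caps $|b_1|=C^{1/6}s_0$ and the lateral surface $(\sum_{i\ge 2}b_i^2)^{1/2}=C^{-1/3}s_0$. The arithmetic matches the paper's ($-C^{1/3}s_0^3$ from the quadratic term on each piece, $C^{3/2}s_0^4$ for the remainder, absorbed by taking $s_0$ small relative to $C$), so no substantive difference.
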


\begin{proof}
By Proposition \ref{prop_cont}, Lemma \ref{lem3}, and Lemma \ref{lem9} we have for sufficiently large $C$ and sufficiently small $s_0$, and for any $v = \sum v_i r_i(u^*)$,
\begin{equation}\label{eqn_lem16_eqn1}
\begin{aligned}
D_{cont}&(u^*) \lesssim -s_0^3 \quad\text{and}\quad \nabla D_{cont}(u^*) = 0 \quad\text{and}\quad \|\nabla^3 D_{cont}\|_{L^\infty(\Pi_{C,s_0})}\lesssim Cs_0\\
&\quad\text{and}\quad \nabla^2 D_{cont}(u^*)v \cdot v \lesssim -s_0|v_1|^2 -Cs_0\sum_{i=2}^n |v_i|^2
\end{aligned}
\end{equation}
Then, using the definition of the cylinder $Q$, we split $\bd Q\backslash B$ into the top, $T$, and the sides, $\bd Q\backslash (T \cup B)$. First, for $\bd Q \backslash (T\cup B)$, we have an expansion of $u$ as
\begin{equation}\label{eqn_lem16_eqn2}
u = u^* + b_1r_1(u^*) + \sum_{i = 2}^n b_i r_1(u^*) \quad \text{where}\ \quad \left(\sum_{i=2}^n |b_i|^2\right)^{1/2} = C^{-1/3}s_0
\end{equation}
Next, we expand $D_{cont}(u)$ about $u^*$ and use Taylor's theorem, (\ref{eqn_lem16_eqn1}), and (\ref{eqn_lem16_eqn2}) to conclude for any $u\in \bd Q\backslash(T \cup B)$,
\begin{equation}
D_{cont}(u) \lesssim -s_0^3 - (Cs_0)(C^{-1/3}s_0)^2 + (Cs_0)(C^{1/6}s_0)^3\lesssim -C^{1/3}s_0^3.
\end{equation}
Now, for $u\in T$, we have an expansion of $u$ as
\begin{equation}\label{eqn_lem16_eqn3}
u = u^* + b_1r_1(u^*) + \sum_{i = 2}^n b_i r_1(u^*) \quad \text{where}\ \quad |b_1| = C^{1/6}s_0.
\end{equation}
Again, we expand $D_{cont}(u)$ about $u^*$ and use Taylor's theorem, (\ref{eqn_lem16_eqn1}), and (\ref{eqn_lem16_eqn3}) to conclude for any $u\in T$,
\begin{equation}
D_{cont}(u) \lesssim -s_0^3 - (s_0)(s_0C^{1/6})^2 + (Cs_0)(C^{1/6}s_0)^3 \lesssim -C^{1/3}s_0^3.
\end{equation}
\end{proof}

We now begin to prove the improved estimate on $\bd \Pi \backslash B_{C,s_0}$ by showing that there is a unique local maximum $\tilde u$ for $D_{cont}$ on $\bd \Pi \backslash B$. We will need an auxiliary lemma from Riemannian geometry on the surjectivity of the Gauss map. In our case, the following lemma has an elementary proof:
\begin{lemma}\label{lem17}
Let $M$ be a bounded, open, connected subset of $\R^n$ with $C^1$ boundary $\bd M$.
Then, for any fixed unit vector $v\in \R^n$, there is a point $p\in \bd M$ such that
$\nu(p) = v$
for $\nu$ the outward unit normal.
\end{lemma}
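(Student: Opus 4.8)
The plan is to use the linear height function in the direction $v$ and locate a point where it is maximized on $\overline{M}$; at such a point the outward normal must equal $v$. First, since $M$ is bounded, $\overline{M}$ is compact, so the continuous function $x \mapsto v\cdot x$ attains its maximum over $\overline{M}$ at some $p \in \overline{M}$. This maximum cannot occur at an interior point of $M$: if $p \in M$, then $p + tv \in M$ for all sufficiently small $t > 0$, and $v\cdot(p+tv) = v\cdot p + t > v\cdot p$, contradicting maximality. Hence $p \in \bd M$. (Connectedness of $M$ is not needed.)

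Next I would show $v$ is orthogonal to the tangent space $T_p(\bd M)$. Since $\bd M$ is $C^1$, in a neighborhood of $p$ it is the graph of a $C^1$ function and $M$ lies locally on one side of it. In particular, for any tangent vector $\tau \in T_p(\bd M)$ there is a $C^1$ curve $\gamma\colon(-\delta,\delta) \to \bd M$ with $\gamma(0) = p$ and $\gamma'(0) = \tau$. The scalar function $t \mapsto v\cdot\gamma(t)$ has a maximum at $t = 0$ (as $\gamma(t) \in \bd M \subset \overline{M}$), so its derivative vanishes there: $v\cdot\tau = 0$. Since $\tau \in T_p(\bd M)$ was arbitrary, $v \perp T_p(\bd M)$, and as both $v$ and $\nu(p)$ are unit vectors normal to $\bd M$ at $p$, we get $v = \pm\,\nu(p)$.

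Finally I would fix the sign. By the $C^1$ boundary regularity and local one-sidedness of $M$, there is $t_0 > 0$ such that $p - t\nu(p) \in M$ for all $0 < t < t_0$. Maximality of $v\cdot x$ at $p$ gives $v\cdot(p - t\nu(p)) \le v\cdot p$, i.e. $-t\,(v\cdot\nu(p)) \le 0$, so $v\cdot\nu(p) \ge 0$. Combined with $v = \pm\,\nu(p)$ this forces $v = \nu(p)$, completing the proof.

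There is no genuine obstacle here; the only point requiring a little care is the local one-sidedness of $M$ near its $C^1$ boundary, which is used twice — once to produce tangent curves $\gamma$ lying in $\bd M$, and once to know that $p - t\nu(p)$ enters $M$ — and both facts are standard consequences of the $C^1$ boundary hypothesis (locally writing $\bd M$ as a graph).
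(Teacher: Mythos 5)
Your proof is correct and follows essentially the same route as the paper's: maximize the height function $x\mapsto v\cdot x$ over $\overline{M}$, rule out interior maxima since $\nabla(v\cdot x)=v\neq 0$, apply a Lagrange-multiplier (tangent-curve) argument to get $v=\pm\nu(p)$, and fix the sign by moving inward from $p$. You simply spell out the tangent-curve and one-sidedness steps more explicitly than the paper does, and you correctly maximize over the compact set $\overline{M}$ rather than the open set $M$ as the paper loosely writes.
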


\begin{proof}
Fix $v\in \R^n$ a unit vector and define the map $f: M \rightarrow \R$ via $f(u) = u \cdot v$. Then, certainly $f$ is continuous and obtains a maximum on $M$, say $p^*$. However, since $\nabla f(p^*) = v \neq 0$, it follows that $p^*\in \bd M$. Now, by the Lagrange multiplier method, it follows that $\nabla f(p^*) \parallel \nu(p^*)$. Hence, by normalization considerations, $v = \pm \nu(p^*)$. Since $p^*$ is a maximum, differentiating $f$ along a curve $\gamma$ approaching $v$ from the interior of $M$, we see that $v$ must be outward pointing at $p^*$. Therefore, $v = \nu(p^*)$.

%Second, suppose the curvature condition holds and $\nu(p_1) = \nu(p_2) = v$ for $v\in \R^n$ a unit vector and $p_1,p_2 \in \bd M$. Thus, the intermediate value theorem (say along a geodesic) guarantees the existence of a point $p_3 \in \bd M$ such that $(D_{p_3}\nu)u = 0$ for some $u\in T_{p_3}\bd\Pi$, which contradicts non-zero principal curvatures.
\end{proof}

\begin{lemma}\label{lem18}
For any $C$ sufficiently large, and any $s_0$ sufficiently small, there is a unique point $\tilde u$ on $\bd \Pi$ such that
\begin{equation}
l^1(\tilde u) \parallel \nabla\tilde\eta(\tilde u) \qquad \text{and} \qquad r_1(\tilde u) \text{ is inward pointing.}
\end{equation}
Moreover, $\tilde u$ is the unique local maximum for $D_{cont}$ on $\bd \Pi \backslash B_{C,s_0}$.
\end{lemma}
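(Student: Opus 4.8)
The plan is to classify the critical points of $D_{cont}$ restricted to the hypersurface $\bd\Pi$ and then single out $\tilde u$ among them, following the template of Lemmas \ref{lem2} and \ref{lem4} but now for ``inward-pointing'' critical points. First I would note that on $\bd\Pi$ we have $\tilde\eta\equiv 0$, so by (\ref{eqn_lem3_eqn1}), $\nabla D_{cont}(u)=-\nabla\tilde\eta(u)\bigl[f'(u)-\lambda^1(u)I\bigr]$ there. Since $\nabla\tilde\eta$ is a nowhere-vanishing normal field on $\bd\Pi$ (Lemma \ref{lem_nu}), the Lagrange multiplier rule gives: $\bar u\in\bd\Pi$ is a critical point of $D_{cont}|_{\bd\Pi}$ if and only if $\nabla\tilde\eta(\bar u)$ is a left eigenvector of $f'(\bar u)$, i.e. $\nabla\tilde\eta(\bar u)\parallel l^j(\bar u)$ for some $1\le j\le n$. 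At such a point the tangent space to $\bd\Pi$ is $\mathrm{span}\{r_i(\bar u):i\ne j\}$, and since $l^1\cdot r_1>0$, whenever $j=1$ the field $r_1$ is either strictly inward- or strictly outward-pointing.

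For existence of $\tilde u$ I would invoke Brouwer's theorem. Because $\Pi$ is strictly convex (Lemma \ref{lem_eta}) with $C^3$ boundary on which $\nabla\tilde\eta$ does not vanish (Lemma \ref{lem_nu}), the Gauss map $\nu=\nabla\tilde\eta/|\nabla\tilde\eta|\colon\bd\Pi\to S^{n-1}$ is a homeomorphism: surjectivity is Lemma \ref{lem17}, injectivity is strict convexity. As $l^1$ is Lipschitz on the compact set $\Pi$ with a constant depending only on the system, and $\mathrm{diam}(\Pi)\lesssim C^{-1}$, for $C$ large the map $u\mapsto -l^1(u)$ carries $\bd\Pi$ into a geodesic ball $K\subsetneq S^{n-1}$; hence $P:=\nu^{-1}(K)$ is a topological closed $(n-1)$-cell and $\Phi:=\nu^{-1}\circ(-l^1)$ maps $\bd\Pi$, in particular $P$, into $P$. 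A fixed point $\tilde u\in P$ of $\Phi$ satisfies $\nu(\tilde u)=-l^1(\tilde u)$, equivalently $\nabla\tilde\eta(\tilde u)\parallel l^1(\tilde u)$ with $\nu(\tilde u)\cdot r_1(\tilde u)=-l^1(\tilde u)\cdot r_1(\tilde u)<0$, so $r_1(\tilde u)$ is inward-pointing. Uniqueness is the argument of Lemma \ref{lem4}: if $u_1,u_2$ both satisfy the conditions then $\nu(u_i)=-l^1(u_i)$, so Lemma \ref{lem_pi} gives $C|u_1-u_2|\sim|\nu(u_1)-\nu(u_2)|=|l^1(u_1)-l^1(u_2)|\lesssim|u_1-u_2|$, forcing $u_1=u_2$ for $C$ large.

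Next I would check that $\tilde u$ is a strict local maximum of $D_{cont}|_{\bd\Pi}$ and that $\tilde u\notin B_{C,s_0}$. From (\ref{eqn_lem3_eqn1}) and $\nabla\tilde\eta(\tilde u)\parallel l^1(\tilde u)$ we get $\nabla D_{cont}(\tilde u)=0$, so the intrinsic Hessian of $D_{cont}|_{\bd\Pi}$ at $\tilde u$ is just $\nabla^2 D_{cont}(\tilde u)$ restricted to $T_{\tilde u}\bd\Pi=\mathrm{span}\{r_2(\tilde u),\dots,r_n(\tilde u)\}$; the computation (\ref{eqn_lem3_eqn5})--(\ref{eqn_lem3_eqn7}), which used only $\tilde\eta=0$ and $|\nabla\tilde\eta|\lesssim s_0$ at the base point, then yields $\nabla^2 D_{cont}(\tilde u)r_i\cdot r_j\lesssim -Cs_0\delta_{ij}+\bigO(s_0)$ for $i,j\ge2$, negative definite for $C$ large. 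Moreover $\nu(\tilde u)=-l^1(\tilde u)$ and $\nu(u^*)=l^1(u^*)$ force $|\nu(\tilde u)-\nu(u^*)|\ge 2-|l^1(\tilde u)-l^1(u^*)|\gtrsim1$, so $|\tilde u-u^*|\gtrsim C^{-1}$ by Lemma \ref{lem_pi}; since $u^*\in B_{C,s_0}$ and, by its definition, $B_{C,s_0}$ is contained in a ball of radius $\lesssim C^{1/6}s_0$ about $u^*$, we conclude $\tilde u\notin B_{C,s_0}$ once $s_0$ is small depending on $C$.

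Finally I would rule out every other local maximum. Since $\bd\Pi\setminus B_{C,s_0}$ is relatively open, any local maximum there is a critical point of $D_{cont}|_{\bd\Pi}$, hence an $l^j$-point. The case $j=1$ with $r_1$ outward is $u^*$ (unique by Lemma \ref{lem4}), excluded since $u^*\in B_{C,s_0}$; the case $j=1$ with $r_1$ inward is $\tilde u$. For $j\ne1$, $r_1(\bar u)$ is tangent to $\bd\Pi$, and I would expand $D_{cont}$ along the curve $\gamma(t)=\bar u+tr_1(\bar u)-\tfrac{t^2}{2}\tfrac{\nabla^2\tilde\eta(\bar u)r_1\cdot r_1}{|\nabla\tilde\eta(\bar u)|}\nu(\bar u)+o(t^2)$, which (after an $o(t^2)$ correction) lies on $\bd\Pi$, using $\nabla\tilde\eta\cdot r_1=0$ and $\nabla\tilde\eta\cdot\nu=|\nabla\tilde\eta|$. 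The linear term of $D_{cont}(\gamma(t))$ vanishes, since $\nabla D_{cont}(\bar u)\cdot r_1=(\lambda^1-\lambda^j)\nabla\tilde\eta(\bar u)\cdot r_1(\bar u)=0$; in the quadratic term, $\nabla^2 D_{cont}(\bar u)r_1\cdot r_1=\bigO(s_0)$ because $(f'(\bar u)-\lambda^1(\bar u)I)r_1(\bar u)=0$ annihilates the leading part of (\ref{eqn_lem3_eqn5}), while the curvature correction contributes $\tfrac12(\lambda^j-\lambda^1)\nabla^2\tilde\eta(\bar u)r_1\cdot r_1\sim Cs_0>0$ by Lemma \ref{lem_eta} and $\lambda^j>\lambda^1$. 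Hence $D_{cont}(\gamma(t))>D_{cont}(\bar u)$ for $0<|t|$ small and $C$ large, so $\bar u$ is not a local maximum, and $\tilde u$ is the unique local maximum of $D_{cont}$ on $\bd\Pi\setminus B_{C,s_0}$. I expect the $j\ne1$ case to be the main obstacle: one must be careful that $(f'-\lambda^1 I)r_1=0$ kills the (otherwise dominant) ambient Hessian in the $r_1$ direction, leaving the curvature correction with the correct sign to make such points non-maximal.
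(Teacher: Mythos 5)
Your proof is correct and follows the same skeleton as the paper's (reduce to the fixed-point problem $\nu(\tilde u)=-l^1(\tilde u)$, get existence and uniqueness from Lemma \ref{lem_pi}), but it diverges in two worthwhile places. For existence, the paper simply observes that $f(p):=\nu^{-1}(-l^1(p))$ is well-defined by Lemma \ref{lem17} plus injectivity of $\nu$ and is a contraction by Lemma \ref{lem_pi}, so Banach gives both existence and uniqueness at once; your Brouwer-on-a-cell argument is a valid but slightly more elaborate route to the same existence statement, and you then still need the contraction-type estimate for uniqueness, so you end up doing a bit more work for the same conclusion.

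The more substantive difference is your treatment of the ``only if'' direction — ruling out local maxima with $\nabla\tilde\eta(\bar u)\parallel l^j(\bar u)$, $j\neq1$. The paper disposes of these by citing ``the proof of Proposition \ref{prop_max}'', but the $j\ne1$ casework in Lemma \ref{lem2} proceeds by perturbing in the $\pm l^j$ direction, which is normal to $\bd\Pi$ and immediately leaves the surface; that argument shows such a $\bar u$ is not a maximum of $D_{cont}$ on the \emph{solid} region $\Pi$, not that it fails to be a local maximum of the \emph{restriction} $D_{cont}|_{\bd\Pi}$. Your tangential argument — expanding $D_{cont}$ along a curve $\gamma(t)\subset\bd\Pi$ with $\dot\gamma(0)=r_1(\bar u)$ (which is tangent since $l^j\cdot r_1=0$), observing that $(f'-\lambda^1I)r_1=0$ kills the $\bigO(Cs_0)$ piece of the ambient Hessian in (\ref{eqn_lem3_eqn5}) so that $\nabla^2D_{cont}(\bar u)r_1\cdot r_1=\bigO(s_0)$, and that the second-fundamental-form correction contributes $\tfrac12(\lambda^j-\lambda^1)\nabla^2\tilde\eta(\bar u)r_1\cdot r_1\sim Cs_0>0$ — is exactly what is needed to make the statement, read as a claim about local extrema of the restriction $D_{cont}|_{\bd\Pi\backslash B_{C,s_0}}$, airtight. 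You also supply the (omitted in the paper) verification that $\tilde u\notin B_{C,s_0}$ and that $\tilde u$ is a genuine local max via the restricted Hessian; both are sound. The only caveat worth noting is that in the paper's actual \emph{use} of this lemma (the proof of Proposition \ref{prop_improved}), the candidate maximizer lies off $\overline{Q_{C,s_0}}$ and one may perturb into the interior of $\Pi\backslash Q$, so the paper's interior-perturbation citation does suffice there; your version is nevertheless the correct proof of the lemma as literally stated.
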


\begin{proof}
We note that these conditions are equivalent to saying that there is a unique point $\tilde u \in \bd \Pi$ satisfying the nonlinear system of equations
\begin{equation}
\nu(\tilde u) = -l^1(\tilde u).
\end{equation}
Applying Lemma \ref{lem17} to $\Pi$ and using that Lemma \ref{lem_pi} implies $p \mapsto \nu(p)$ is injective for sufficiently large $C$, for any $p\in \bd\Pi$, there is a unique point $f(p)\in \bd\Pi$ such that
\begin{equation}
\nu(f(p)) = -l^1(p).
\end{equation}
Thus, we seek a fixed point of the map $p\mapsto f(p)$. However, for $C$ sufficiently large, again using Lemma \ref{lem_pi}, for all $p_1,p_2 \in \bd\Pi$:
\begin{equation}
|f(p_1) - f(p_2)| \lesssim C^{-1}|\nu(f(p_1)) - \nu(f(p_2))| = C^{-1}|l^1(p_1) - l^1(p_2)| \lesssim C^{-1}|p_1 - p_2|.
\end{equation}
So, for $C$ sufficiently large to dominate the implicit constants, $p\mapsto f(p)$ is a contraction on $\bd\Pi$ and the contraction mapping principle yields a unique fixed point to the map $p \mapsto f(p)$.

By the proof of Proposition \ref{prop_max}, a point is a local maximum for $D_{cont}$ on $\bd\Pi$ if and only if $l^1(\tilde u) \parallel \nabla\tilde\eta(\tilde u)$. However, there are only two such points for sufficiently large $C$, namely $u^*$ and $\tilde u$. Since $u^* \in B_{C,s_0}$ for any $C,s_0$, it follows that $\tilde u$ is the unique local maximum for $D_{cont}$ on $\bd \Pi \backslash B$.
\end{proof}

To finish the proof of Proposition \ref{prop_improved} on $\bd\Pi\backslash B_{C,s_0}$, it suffices to show the improved estimate of $D_{cont}$ holds at $\tilde u$.
\begin{lemma}\label{lem19}
The point $\tilde u$ satisfies the estimate
\begin{equation} 
D_{cont}(\tilde u) \lesssim -\frac{s_0}{C^2},
\end{equation}
for all $C$ sufficiently big and $s_0$ sufficiently small.
\end{lemma}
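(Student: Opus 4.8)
The plan is to compare $D_{cont}(\tilde u)$ with $D_{cont}(u^*)$ by flowing from $\tilde u$ along the integral curve of $r_1$: this curve crosses $\Pi_{C,s_0}$ and (as I will show) exits essentially at $u^*$, while picking up a definite negative contribution of size $s_0/C^2$ along the way.

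First I would set up the curve. Let $u(t)$ solve $\dot u = r_1(u)$ with $u(0)=\tilde u$. Since $r_1(\tilde u)$ is inward-pointing (the defining property of $\tilde u$ in Lemma \ref{lem18}), $u(t)$ enters $\Pi=\Pi_{C,s_0}$; since $\Pi$ is bounded, strictly convex, and $r_1$ varies by only $\bigO(C^{-1})$ over $\Pi$, the curve stays in $\bar\Pi\subset\Nu$ and exits at a first time $T\lesssim C^{-1}$ at a point $p':=u(T)\in\bd\Pi$ where $r_1(p')$ is outward-pointing. The first key step is the claim $|p'-u^*|\lesssim C^{-2}$ and $T\sim C^{-1}$. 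For this I would propagate the near-parallelism of $\nabla\tilde\eta$ with $l^1$: since $\nabla^2\tilde\eta=Cs_0\nabla^2\eta$ (Lemma \ref{lem_eta}) and $\nabla^2\eta(u)r_1(u)=\kappa(u)l^1(u)$ with $\kappa\sim 1$ (Lemma \ref{lem_normalization}), we get $\tfrac{d}{dt}\nabla\tilde\eta(u(t))=Cs_0\kappa(u(t))l^1(u(t))$; integrating from $\nabla\tilde\eta(\tilde u)=-\mu l^1(\tilde u)$ with $\mu=|\nabla\tilde\eta(\tilde u)|\sim s_0$ (Lemma \ref{lem_nu}), and using that $l^1$ varies by $\bigO(C^{-1})$ along the curve, yields $\nabla\tilde\eta(u(t))=\big(-\mu+Cs_0\int_0^t\kappa(u(s))\,ds\big)l^1(u(t))+\bigO(s_0C^{-1})$. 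Since $|\nabla\tilde\eta(p')|\sim s_0$ (Lemma \ref{lem_nu}) and $r_1(p')$ is outward, the scalar coefficient at $t=T$ is $\sim s_0$ and positive, so $\nu(p')=l^1(p')+\bigO(C^{-1})$. As $u^*$ is the unique point of $\bd\Pi$ with $\nu=l^1$ (Proposition \ref{prop_max}), Lemma \ref{lem_pi} (the Gauss map is bi-Lipschitz with constant $\sim C$) gives $|p'-u^*|\lesssim C^{-2}$; and $T\sim C^{-1}$ follows from $|p'-\tilde u|\le T$ together with $|p'-\tilde u|\ge|u^*-\tilde u|-|p'-u^*|\gtrsim C^{-1}$, the estimate $|u^*-\tilde u|\sim C^{-1}$ being a consequence of $|\nu(u^*)-\nu(\tilde u)|=|l^1(u^*)+l^1(\tilde u)|\gtrsim 1$ and Lemma \ref{lem_pi}.

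Next I would track $D_{cont}$ along the curve. Using $\nabla D_{cont}(u)=-\nabla\tilde\eta(u)(f'(u)-\lambda^1(u)I)+\tilde\eta(u)\nabla\lambda^1(u)$ and $(f'(u)-\lambda^1(u)I)r_1(u)=0$, one gets $\tfrac{d}{dt}D_{cont}(u(t))=-\beta(u(t))\tilde\eta(u(t))$ with $\beta(u):=-\nabla\lambda^1(u)\cdot r_1(u)\sim 1>0$ (Assumption \ref{assum}(b), Lemma \ref{lem_normalization}). On $(0,T)$ the curve lies in the interior of $\Pi$, so $\tilde\eta(u(t))<0$ and $D_{cont}$ is nondecreasing; hence
\begin{equation*}
D_{cont}(p')-D_{cont}(\tilde u)=\int_0^T\beta(u(t))\,\big(-\tilde\eta(u(t))\big)\,dt .
\end{equation*}
The quantitative heart is a lower bound on this integral. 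Setting $\psi(t):=\tilde\eta(u(t))$, a computation gives $\psi''(t)=Cs_0\,\nabla^2\eta(u(t))(r_1(u(t)),r_1(u(t)))+\bigO(s_0)\ge K_1Cs_0$ for $C$ large (Lemma \ref{lem_eta}); since $\psi(0)=\psi(T)=0$, convexity forces $-\psi(t)\ge\tfrac{K_1Cs_0}{2}t(T-t)$, whence $\int_0^T\beta(-\psi)\,dt\gtrsim Cs_0\,T^3\gtrsim Cs_0\,C^{-3}=s_0/C^2$.

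Finally I conclude: since $\nabla D_{cont}(u^*)=0$ and $\|\nabla^2D_{cont}\|_{L^\infty(\Pi)}\lesssim Cs_0$ (from (\ref{eqn_lem3_eqn4}), Lemma \ref{lem_eta}, and $|\tilde\eta|\lesssim s_0C^{-1}$ inside $\Pi$), we have $D_{cont}(p')=D_{cont}(u^*)+\bigO(Cs_0|p'-u^*|^2)=D_{cont}(u^*)+\bigO(s_0C^{-3})\lesssim s_0C^{-3}$, using $D_{cont}(u^*)\le 0$ (Proposition \ref{prop_cont}). Therefore $D_{cont}(\tilde u)=D_{cont}(p')-\int_0^T\beta(-\tilde\eta)\,dt\le K''s_0C^{-3}-K_3s_0C^{-2}\le-\tfrac{K_3}{2}\,s_0/C^2$ for $C$ sufficiently large. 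The step I expect to be the main obstacle is the claim $|p'-u^*|\lesssim C^{-2}$ — that the $r_1$-integral curve issuing from $\tilde u$ reexits $\bd\Pi$ essentially at the maximum $u^*$; this requires carefully controlling the accumulated error from the variation of $l^1$ along the curve and correctly invoking the quantitative injectivity of the Gauss map from Lemma \ref{lem_pi}, whereas the remaining ingredients are routine given the structural lemmas.
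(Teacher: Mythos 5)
Your proposal is correct, and it takes essentially the same route as the paper: both arguments flow along the $r_1$-integral curve issuing from $\tilde u$, use the identity $\frac{d}{dt}D_{cont}(u(t)) = \tilde\eta(u(t))\,\nabla\lambda^1(u(t))\cdot r_1(u(t))$, and extract a contribution of size $s_0/C^2$ from the integral of $-\tilde\eta$ over a segment of length $\sim C^{-1}$. The differences are organizational: the paper stops the flow at a fixed time $t = K_3 C^{-1}$, bounds $\tilde\eta(u(t))\lesssim -s_0 t$ by a direct Taylor-type estimate on $\nabla\tilde\eta\cdot r_1$ near $\tilde u$, and simply invokes Proposition \ref{prop_cont} at the endpoint; you instead run the flow to the exit point $p'\in\bd\Pi$, exploit the convexity $\psi''\gtrsim Cs_0$ of $\psi(t):=\tilde\eta(u(t))$ to get the parabola bound $-\psi(t)\gtrsim Cs_0\,t(T-t)$, and then estimate $D_{cont}(p')$. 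The latter strategy works, but your side-claim $|p'-u^*|\lesssim C^{-2}$ — which you flag as the main obstacle — is in fact unnecessary overhead. For the final step you only need $D_{cont}(p')\le 0$, which is immediate from Proposition \ref{prop_max} (or Proposition \ref{prop_cont}) since $u^*$ is the global maximum of $D_{cont}$ on $\overline{\Pi}$; and $T\gtrsim C^{-1}$ follows directly from $\psi(0)=\psi(T)=0$, $\psi'(0)\sim -s_0$, and $\psi''\lesssim Cs_0$ by integrating $\psi''$ once and using $\psi'(T)\ge 0$, with no reference to $u^*$. Dropping that detour brings your argument very close to the one in the paper, with the convexity bound on $\psi$ replacing the paper's linear bound; both deliver the same $Cs_0\cdot (C^{-1})^2 = s_0/C^2$ scaling.
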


\begin{proof}
Let $u(t)$ be the integral curve in the $r_1$ direction emanating from $\tilde u$ and let $\tilde t > 0$ the maximal time such that $u(t)\in \Pi$ for all $0 \le t \le \tilde t$. Then, for any $0 \le t\le \tilde t$, using the explicit computation of $\nabla D_{cont}(u(t)) \cdot r_1(u(t))$ (see Lemma \ref{lem1}), Proposition \ref{prop_cont}, and Assumption \ref{assum} (b)
\begin{equation}\label{eqn_lem19_eqn1}
\begin{aligned}
D_{cont}(\tilde u) &= D_{cont}(u(t)) - \int_0^{t} \nabla D_{cont}(u(s)) \cdot r_1(u(s)) \ ds\\
	&\lesssim -s_0^3 - \int_0^{t} \tilde \eta(u(s)) \left[\nabla\lambda^1(u(s)) \cdot r_1(u(s))\right] \ ds\\
	&\lesssim -s_0^3 + \int_0^{t}\tilde\eta(u(s)) \ ds,
\end{aligned}
\end{equation}
where all implicit constants are independent of $t$.
Next, since $|\nabla\tilde\eta(\tilde u)|\sim s_0$ by Lemma \ref{lem_nu} and $\nu(\tilde u) = -l^1(\tilde u)$ by Lemma \ref{lem18}, by the choice of normalization of eigenvectors in Lemma \ref{lem_normalization}, 
\begin{equation}
\nabla\tilde\eta(\tilde u) \cdot r_1(\tilde u) \sim -s_0.
\end{equation}
Moreover, expanding $\nabla\tilde\eta(u)\cdot r_1(u)$ about $\tilde u$ by Taylor's theorem, there is a point $v$ with $|v - \tilde u| \le |u - \tilde u|$ such that
\begin{equation}
\nabla\tilde\eta(u)\cdot r_1(u) = \nabla\tilde\eta(\tilde u) \cdot r_1(\tilde u) + \left[\nabla^2\tilde\eta(v)r_1(v) + \nabla r_1(v)\nabla\tilde\eta(v)\right] \cdot (v - \tilde u).
\end{equation}
Therefore, we estimate
\begin{equation}
\nabla\tilde \eta(u) \cdot r_1(u) \lesssim -s_0 + Cs_0|u - \tilde u|.
\end{equation}
So, picking $u$ so that $|u-\tilde u| \le K_1C^{-1}$, there is a universal constant $K_2 > 0$ such that,
\begin{equation}
\nabla\tilde \eta(u)\cdot r_1(u)\le (K_1K_2 - K_2)s_0 \le -\frac{K_2}{2}s_0,
\end{equation}
for sufficiently small $K_1$, where $K_1$ is chosen small independent of $C$ and $s_0$. Since $|u(t) - \tilde u| \sim t$, there is a universal constant $K_3$ such that for $t \le K_3C^{-1}$, $|u(t) - \tilde u| \le K_1C^{-1}$ and for such $t$,
\begin{equation}\label{eqn_lem19_eqn2}
\tilde\eta(u(t)) = \int_0^t \nabla\tilde\eta(u(s)) \cdot r_1(u(s)) \ ds \lesssim -s_0t.
\end{equation}
Finally, combining (\ref{eqn_lem19_eqn1}) and (\ref{eqn_lem19_eqn2}), with $t = K_3C^{-1}$, we see that 
\begin{equation}
D_{cont}(\tilde u) \lesssim -s_0^3 + \int_0^{t}\tilde\eta(u(s)) \ ds \lesssim -s_0^3 - C^{-2}s_0 \lesssim -C^{-2}s_0,
\end{equation}
for sufficiently large $C$ and sufficiently small $s_0$.
\end{proof}

\begin{comment}
\begin{flushleft}
\uline{\bf{Proof of Proposition \ref{prop_improved}}}
\end{flushleft}
%\begin{proof}[Proof of Proposition \ref{prop_improved}]
Let $\overline{u} \in \bd\Pi \cup \bd Q_{C,s_0} \backslash B_{C,s_0}$. Then, the preceding sequence of lemmas imply the estimate
\begin{equation}
D_{cont}(\overline{u}) \lesssim -C^{1/3}s_0^3.
\end{equation}
Now, since $\nabla D_{cont}(u) \neq 0$ for $u\in \mathrm{int}(\Pi)$, we conclude
\begin{equation}
\max_{u\in \Pi_{C,s_0} \backslash Q_{C,s_0}} D_{cont}(u) = \max_{u\in \bd\Pi_{C,s_0} \cup \bd Q_{C,s_0} \backslash B_{C,s_0}} D_{cont}(u) \lesssim -C^{1/3}s_0^3.
\end{equation}
%\end{proof}
\end{comment}

%%%%%%%%%%%%%%%%%%%%%%%%%%%%%%%%%%%%%%%%%%%%%%%%%%%%%%%%%%%%%%%%%%%%%%%%%%%%%%%%%%%%%%%%%%%

\begin{flushleft}
\uline{\bf{Proof of Proposition \ref{prop_Qs0}}}
\end{flushleft}
%In this subsection, we use the improved estimate on the continuous entropy dissipation contained in Proposition \ref{prop_improved} to show a uniformly negative estimate on the maximal entropy dissipation outside of $Q_{C,s_0}$.
%\begin{proof}[Proof of Proposition \ref{prop_Qs0}]
Let $Q_{C,s_0}$ be defined as in (\ref{eqn_Qdefn}). Recall the notation, $\F(s,u) = D_{RH}(u,\S^1_u(s),\sigma^1_u(s))$, used in the proof of Lemma \ref{lem15}. By the fundamental theorem of calculus (in $s$), $\F$ satisfies the relation
\begin{equation}\label{eqn_propQ_eqn0}
\F(s,u) = D_{cont}(u) + \int_0^s \dot\sigma^1_u(t)\left[\tilde\eta(u) + \eta(u | \S^1_u(t)) \right] \ dt.
\end{equation}
By Lemma \ref{lem15}, it suffices to estimate, $\F(s^*,u)$, where $s^*(u)$ is the unique maximum of $s\mapsto \F(s,u)$ and $u^+ = \S^1_u(s^*)$. Using the improved estimate of $D_{cont}$ in Proposition \ref{prop_improved}, $\dot{\sigma} \sim -1$ by Lemma \ref{lem_hugoniot}, Lemma \ref{lem_dist}, and Lemma \ref{l2_rel_entropy_lemma}, we obtain
\begin{equation}\label{eqn_propQ_eqn1}
\begin{aligned}
\F(s^*,u) &\lesssim -C^{1/3}s_0^3 -  \int_0^{s^*} \left[\tilde\eta(u) + \eta(u | \S^1_u(t)) \right] \ dt\\
	&\lesssim - C^{1/3}s_0^3 -  \int_0^{s^*} |u - \S^1_u(t)|^2 - s_0 d(u,\bd \Pi) \ dt\\
	&\lesssim - C^{1/3}s_0^3 - (s^*)^3 + s_0 s^* d(u,\bd\Pi).
\end{aligned}
\end{equation}
Next, since $u^+ \notin \Pi$ by assumption, $d(u,\bd\Pi) \lesssim |u - u^+| \lesssim s^*$. Also, as $s^*$ is completely characterized by the equation $-\tilde\eta(u) = \eta(u | u^+)$, we have
\begin{equation}\label{eqn_propQ_eqn2}
(s^*)^2 \lesssim |u - u^+|^2 \lesssim \eta(u | u^+) = -\tilde\eta(u) \lesssim s_0 d(u,\bd\Pi) \lesssim s_0s^*,
\end{equation}
which yields $d(u,\bd \Pi) \lesssim s^* \lesssim s_0$. Combined with (\ref{eqn_propQ_eqn1}), we obtain
\begin{equation}
D_{max}(u) = \F(s^*,u) \lesssim -s_0^3,
\end{equation}
provided $C$ is sufficiently large and $s_0$ is sufficiently small.
%\end{proof}

%%%%%%%%%%%%%%%%%%%%%%%%%%%%%%%%%%%%%%%%%%%%%%%%%%%%%%%%%%%%%%%%%%%%%%%%%%%%%%%%%%%%%%%%%%%%%%%

\subsection{Step 1.2: Shocks inside \texorpdfstring{$Q_{C,s_0}$}{QCs0} but outside \texorpdfstring{$R_{C,s_0}$}{RCs0}}

In this step, we show $D_{max}(u) \le 0$ except on possibly a region localized about $u_L$ to order $s_0$. In particular, we improve Proposition $\ref{prop_Qs0}$ in two ways: First, we remove the assumption that $u^+\notin \Pi_{C,s_0}$. Second, we show that there is a universal constant $K_h$ such that for $u \in Q_{C,s_0}$ with $|u^* - u| > K_hs_0$, $D_{max}(u) \le 0$. Although these improvements look rather disconnected, we will show $u^+ \in \Pi$ for $d(u,\bd\Pi) \gtrsim s_0$, which implies this step can be rephrased as showing $D_{max}(u) \le 0$ for $d(u,\bd\Pi) \gtrsim s_0$. In other words, in this step, we handle states far in the interior of $\Pi_{C,s_0}$. More precisely, we define the family of cylinders, $R_{C,s_0}(K_h)$, as
\begin{equation}\label{eqn_defn_R}
R_{C,s_0}(K_h) := \left\{u = u^* - \sum_{i=1}^n b_i r_i(u^*) \ | \ u\in \Pi_{C,s_0}, \ |b_1| \le K_hs_0, \ \left(\sum_{i=2}^n |b_i|^2\right)^{1/2} \le C^{-1/3}s_0\right\}.
\end{equation}
With this language, the main proposition of this step becomes: 
\begin{proposition}\label{prop_R}
There is a $K_h > 0$ depending only on the system such that for any $C$ sufficiently large and $s_0$ sufficiently small and any $u \notin R_{C,s_0}(K_h)$,
\begin{equation}
D_{max}(u) \lesssim -s_0^3.
\end{equation}
\end{proposition}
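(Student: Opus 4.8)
I will write $\Pi_{C,s_0}\setminus R_{C,s_0}(K_h) = \big(\Pi_{C,s_0}\setminus Q_{C,s_0}\big)\cup\big(Q_{C,s_0}\setminus R_{C,s_0}(K_h)\big)$ and treat the two pieces separately, fixing the universal constant $K_h$ only at the end. Let $u^+ = \S^1_u(s^*(u))$ be the maximal--shock endpoint: by Lemma \ref{lem15} it is well defined, $\eta(u\,|\,u^+) = -\tilde\eta(u)$, and $u^+$ lies in a fixed compact subset of $\Nu$. On $\Pi_{C,s_0}\setminus Q_{C,s_0}$ I split according to the sign of $\tilde\eta(u^+)$: if $\tilde\eta(u^+) > 0$ (i.e.\ $u^+\notin\Pi_{C,s_0}$), Proposition \ref{prop_Qs0} applies verbatim and yields $D_{max}(u)\lesssim -s_0^3$; if $\tilde\eta(u^+)\le 0$, I reduce $D_{max}(u)$ to the continuous dissipation at the single point $u^+$. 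On $Q_{C,s_0}\setminus R_{C,s_0}(K_h)$ I Taylor--expand $D_{cont}$ about $u^*$, bound the ``shock correction'' in $\F(s^*,u)$ crudely, and choose $K_h$ large enough to win.

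\textbf{The case $\tilde\eta(u^+)\le 0$.} Here the point is to invoke Lemma \ref{lem_lax}. Applied to the $1$--shock $(u,u^+,\sigma_\pm) = (u,\S^1_u(s^*),\sigma^1_u(s^*))$ with reference state $u_L$, it gives
\[
q(u^+;u_L) - \sigma_\pm\,\eta(u^+|u_L) = q(u;u_L) - \sigma_\pm\,\eta(u|u_L) + \int_0^{s^*}\dot\sigma^1_u(t)\,\eta(u|\S^1_u(t))\,dt .
\]
Substituting into the definition (\ref{defn_DRH}) of $D_{RH}(u,u^+,\sigma_\pm)$ and reorganizing the terms against the definition of $D_{cont}(u^+)$, everything collapses to the identity
\[
D_{max}(u) = D_{RH}(u,u^+,\sigma_\pm) = D_{cont}(u^+) + \big(\lambda^1(u^+) - \sigma_\pm\big)\big({-\tilde\eta(u^+)}\big) + (1+Cs_0)\int_0^{s^*}\dot\sigma^1_u(t)\,\eta(u|\S^1_u(t))\,dt .
\]
Now $\lambda^1(u^+) < \sigma_\pm$ by Liu admissibility (Lemma \ref{lem_hugoniot}), $-\tilde\eta(u^+)\ge 0$ by assumption, $\dot\sigma^1_u < 0$ by Assumption \ref{assum}~(j), and $\eta(u|\S^1_u(t))\ge 0$. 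Hence both correction terms are $\le 0$, so $D_{max}(u)\le D_{cont}(u^+)\le\max_{\Pi_{C,s_0}}D_{cont}\lesssim -s_0^3$ by Proposition \ref{prop_cont}. (This bound in fact holds for \emph{every} $u$ with $\tilde\eta(u^+)\le 0$, regardless of whether $u\in Q_{C,s_0}$.)

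\textbf{The case $u\in Q_{C,s_0}\setminus R_{C,s_0}(K_h)$.} Write $u = u^* - \sum_i b_i r_i(u^*)$, so $|b_1| = \beta s_0$ with $K_h < \beta\le C^{1/6}$ and $\big(\sum_{i\ge 2}b_i^2\big)^{1/2}\le C^{-1/3}s_0$; since $\beta\ge K_h\ge 1\ge C^{-1/3}$ this gives $|u-u^*|\lesssim\beta s_0$. Taylor expanding $D_{cont}$ about $u^*$ and using that $u^*$ is a critical point with $D_{cont}(u^*)\lesssim -s_0^3$ (Proposition \ref{prop_cont}), the Hessian bound (\ref{eqn_lem3_desired}), and $\|\nabla^3 D_{cont}\|_{L^\infty(\Pi_{C,s_0})}\lesssim Cs_0$ (Lemma \ref{lem9}), I get, for $s_0$ small depending on $C$,
\[
D_{cont}(u) \le -K_c s_0^3 - K_1\beta^2 s_0^3,
\]
with universal $K_c,K_1>0$. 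For the correction, start from $\F(s^*,u) = D_{cont}(u) + \int_0^{s^*}\dot\sigma^1_u(t)\big(\tilde\eta(u) + \eta(u|\S^1_u(t))\big)\,dt$; the integrand is nonnegative on $[0,s^*]$ (as $\dot\sigma^1_u < 0$ and $\tilde\eta(u) + \eta(u|\S^1_u(t)) < 0$ there), and using $\tilde\eta(u) + \eta(u|\S^1_u(t)) \ge \tilde\eta(u)$ together with $-\tilde\eta(u)\lesssim s_0\,d(u,\bd\Pi)\le s_0|u-u^*|\lesssim\beta s_0^2$ (Lemma \ref{lem_dist}) and $s^*\lesssim(-\tilde\eta(u))^{1/2}\lesssim\beta^{1/2}s_0$ (from $\eta(u|\S^1_u(s^*)) = -\tilde\eta(u)$ and Lemma \ref{l2_rel_entropy_lemma}),
\[
0 \le \int_0^{s^*}\dot\sigma^1_u(t)\big(\tilde\eta(u) + \eta(u|\S^1_u(t))\big)\,dt \;\lesssim\; s^*\,(-\tilde\eta(u)) \;\le\; K_e\,\beta^{3/2}s_0^3 .
\]
Therefore $D_{max}(u)\le -K_c s_0^3 - K_1\beta^2 s_0^3 + K_e\beta^{3/2}s_0^3$, and choosing $K_h := \max\{1,(K_e/K_1)^2\}$ forces $K_1\beta^2\ge K_e\beta^{3/2}$ for all $\beta\ge K_h$, so $D_{max}(u)\le -K_c s_0^3\lesssim -s_0^3$.

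\textbf{Main obstacle.} The genuinely new content is the case $\tilde\eta(u^+)\le 0$: there the shock correction in $\F(s^*,u)$ is of order $(-\tilde\eta(u))^{3/2}$, which can be much larger than $s_0^3$ deep inside $\Pi_{C,s_0}$ (since $-\tilde\eta$ is only $\lesssim s_0\,\mathrm{diam}(\Pi)\sim s_0C^{-1}$ there), so no sharpening of the localization of $D_{cont}$ alone can close the estimate. The resolution — recognizing via Lemma \ref{lem_lax} that $D_{RH}(u,u^+,\sigma_\pm)$ is $D_{cont}(u^+)$ up to two manifestly nonpositive terms — is exactly the ``removal of the hypothesis $u^+\notin\Pi_{C,s_0}$'' promised in the section introduction. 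Everything else (keeping $u^+$ in a region where $D_{cont}$ and the shock asymptotics are controlled, and tuning $K_h$ against the implicit constants) is routine given Lemmas \ref{lem_eta}--\ref{lem_pi}, Lemma \ref{lem15}, and Propositions \ref{prop_cont}--\ref{prop_Qs0}.
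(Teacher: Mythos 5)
Your proof is correct, and I want to flag where it tracks the paper and where it genuinely departs.

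Both arguments use the same decomposition $\Pi_{C,s_0}\setminus R_{C,s_0}(K_h) = (\Pi_{C,s_0}\setminus Q_{C,s_0})\cup(Q_{C,s_0}\setminus R_{C,s_0}(K_h))$ and both delegate the regime $u\notin Q$, $u^+\notin\Pi$ to Proposition \ref{prop_Qs0}. Your treatment of the regime $\tilde\eta(u^+)\le 0$ is essentially the paper's Lemma \ref{lem20}, re-derived in expanded form: the paper obtains $D_{RH}(u,u^+,\sigma_\pm)\le D_{cont}(u^+)$ in two strokes by invoking the entropy inequality at the shock and then Liu admissibility together with $u^+\in\Pi$; you instead apply Lemma \ref{lem_lax} to get the exact identity
\begin{equation*}
D_{max}(u) = D_{cont}(u^+) + \bigl(\lambda^1(u^+) - \sigma_\pm\bigr)\bigl(-\tilde\eta(u^+)\bigr) + (1+Cs_0)\int_0^{s^*}\dot\sigma^1_u(t)\,\eta(u|\S^1_u(t))\,dt,
\end{equation*}
and read off the sign of each correction. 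These are the same inequality dressed differently, and yours has the minor advantage of visibly not needing strict inequality $\tilde\eta(u^+)<0$.

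Where you genuinely diverge is the regime $u\in Q_{C,s_0}\setminus R_{C,s_0}(K_h)$. The paper's route is geometric: it computes $-\tilde\eta(u)$ via the $r_1(u^*)$--expansion of $u$ and shows $-\tilde\eta(u)\ge K^*s_0^2$ once $b_1 > Ks_0$, then cites Lemma \ref{lem21} to conclude $u^+\in\Pi$ and closes via Lemma \ref{lem20}. Your route bypasses the question of whether $u^+\in\Pi$ entirely: you Taylor-expand $D_{cont}$ about $u^*$ to extract the quadratic gain $-K_1\beta^2 s_0^3$ from the Hessian estimate (\ref{eqn_lem3_desired}), then bound the shock correction in $\F(s^*,u)$ crudely by $s^*(-\tilde\eta(u))\lesssim\beta^{3/2}s_0^3$, and tune $K_h$ so that $\beta^2$ dominates $\beta^{3/2}$. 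This is a clean exponent-balancing argument and it does not require the contradiction argument inside Lemma \ref{lem21}. The trade-off: the paper's approach is more modular (it factors through Lemma \ref{lem20} uniformly and isolates the geometric fact ``$u$ deep in $\Pi$ forces $u^+\in\Pi$'' as a reusable statement), while yours is more self-contained at this step and shows explicitly why the width $\sim s_0$ of $R_{C,s_0}$ is the critical scale. Both proofs are correct and both give the stated $D_{max}(u)\lesssim -s_0^3$; you have found a legitimately different and somewhat more direct path through the middle regime.
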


We first prove the following lemma, which bounds the entropy dissipation along the maximal shock $(u,u^+,\sigma_{\pm})$ for $u,u^+\in\Pi$ by $D_{cont}(u^+)$. 
\begin{lemma}\label{lem20}
Suppose that the maximal shock $(u,u^+,\sigma_\pm)$ satisfies $u,u^+ \in \Pi$. Then,
\begin{equation}
D_{RH}(u,u^+,\sigma_\pm) \le D_{cont}(u^+).
\end{equation}
Moreover, for $C$ sufficiently large and $s_0$ sufficiently small,
\begin{equation}
D_{RH}(u,u^+,\sigma_\pm) \le D_{cont}(u^+) \lesssim -s_0^3.
\end{equation}
\end{lemma}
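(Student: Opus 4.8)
The plan is to prove the two inequalities in turn, the first carrying essentially all the content. Once $D_{RH}(u,u^+,\sigma_\pm)\le D_{cont}(u^+)$ is established, the second inequality is immediate: by hypothesis $u^+\in\Pi_{C,s_0}$, which is exactly the set on which Proposition \ref{prop_cont} provides $D_{cont}(u^+)\le -Ks_0^3\lesssim -s_0^3$ for $C$ large and $s_0$ small. So the heart of the matter is comparing the entropy dissipation $D_{RH}$ along the maximal shock $(u,u^+,\sigma_\pm)$ with the continuous dissipation $D_{cont}$ evaluated at the \emph{right} state $u^+=\S^1_u(s^*)$.

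For that comparison I would compute $D_{RH}(u,u^+,\sigma_\pm)-D_{cont}(u^+)$ directly from the definitions (\ref{defn_DRH}) and (\ref{defn_Dcont}). The two functionals differ in two ways: they weight the shock speed differently ($\sigma_\pm$ versus $\lambda^1(u^+)$), which merely contributes terms of the form $(\lambda^1(u^+)-\sigma_\pm)\eta(u^+\,|\,\cdot)$; and the ``$u_L$-piece'' of $D_{RH}$ is anchored at the left state $u$ whereas that of $D_{cont}(u^+)$ is anchored at $u^+$. The second discrepancy is absorbed by a single application of Lemma \ref{lem_lax} to the $1$-shock curve $\S^1_u$ with reference state $v=u_L$, evaluated at $s=s^*$, namely
\[
q(u^+;u_L)-\sigma_\pm\eta(u^+|u_L)=q(u;u_L)-\sigma_\pm\eta(u|u_L)+\int_0^{s^*}\dot\sigma^1_u(t)\,\eta(u\,|\,\S^1_u(t))\,dt.
\]
Substituting, the relative-flux contributions cancel, and using $\eta(u^+|u_R)-(1+Cs_0)\eta(u^+|u_L)=-\tilde\eta(u^+)$ from (\ref{defn_eta}), the whole expression collapses to
\[
D_{RH}(u,u^+,\sigma_\pm)-D_{cont}(u^+)=\big(\sigma_\pm-\lambda^1(u^+)\big)\,\tilde\eta(u^+)+(1+Cs_0)\int_0^{s^*}\dot\sigma^1_u(t)\,\eta(u\,|\,\S^1_u(t))\,dt.
\]

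It then remains to inspect signs. By Lemma \ref{lem15}, $|u-u^+|\lesssim C^{-1/2}s_0^{1/2}$, so $s^*$ is small and the maximal shock lies in the regime of Lemma \ref{lem_hugoniot}; hence it is Liu-admissible, $\lambda^1(u^+)\le\sigma_\pm$. Since $u^+\in\Pi_{C,s_0}$ we have $\tilde\eta(u^+)\le 0$, so the first term is $\le 0$ (in the degenerate case $s^*=0$ one has $u^+=u\in\bd\Pi$ and both terms vanish). For the integral, $\dot\sigma^1_u(t)<0$ for $t>0$ by Assumption \ref{assum} (j) (as already used in the proof of Lemma \ref{lem15}) and $\eta(u\,|\,\S^1_u(t))\ge 0$, so the integral is $\le 0$ as well. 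Hence $D_{RH}(u,u^+,\sigma_\pm)\le D_{cont}(u^+)$, and combining this with $D_{cont}(u^+)\le -Ks_0^3$ from Proposition \ref{prop_cont} completes the proof.

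The only real difficulty is the bookkeeping in the displayed identity: one must track precisely which state ($u$ or $u^+$) enters each relative-entropy and relative-entropy-flux term of $D_{RH}$ versus $D_{cont}$, and apply Lemma \ref{lem_lax} with exactly the right base point and reference state so that the $\int\dot\sigma^1_u\,\eta$ remainder emerges with the sign that makes it favorable. Everything after that is a sign check built from ingredients already in hand — Liu admissibility (Lemma \ref{lem_hugoniot}), the defining property $\tilde\eta\le 0$ on $\Pi_{C,s_0}$, the negativity of $\dot\sigma^1_u$ (Assumption \ref{assum} (j)), and the uniform cubic bound of Proposition \ref{prop_cont}.
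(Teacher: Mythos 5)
Your proof is correct and reaches the same conclusion by essentially the same mechanism as the paper: both recognize that $D_{RH}(u,u^+,\sigma_\pm)$ and $D_{cont}(u^+)$ differ in two ways (the speed weight $\sigma_\pm$ vs.\ $\lambda^1(u^+)$, and the $u_L$-anchored terms evaluated at $u$ vs.\ $u^+$), and both dispose of the speed mismatch by pairing Liu admissibility with $\tilde\eta(u^+)\le 0$ on $\Pi$. The only difference is in how the anchor mismatch is handled. The paper invokes the entropy inequality for the maximal shock to assert
\begin{equation*}
q(u^+;u_L)-\sigma_\pm\eta(u^+|u_L)\le q(u;u_L)-\sigma_\pm\eta(u|u_L),
\end{equation*}
and then adds and subtracts $\lambda^1(u^+)\eta(u^+|\cdot)$. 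You instead apply Lemma \ref{lem_lax} at reference state $v=u_L$ to turn that inequality into an exact identity, exhibiting the gap as $\int_0^{s^*}\dot\sigma^1_u\,\eta(u|\S^1_u(t))\,dt$, and then read off its sign from Assumption \ref{assum}~(j). Since the Lax computation plus Assumption \ref{assum}~(j) is precisely the mechanism that makes the paper's invoked entropy inequality hold for the maximal shock, the two routes are logically equivalent; yours is slightly more explicit (it yields an exact formula for $D_{RH}-D_{cont}(u^+)$, which could be used to quantify the gap), while the paper's is slightly more economical. Your signposting of the degenerate case $s^*=0$, where $u=u^+\in\bd\Pi$ and both terms vanish, is a correct and welcome remark.
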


\begin{proof}
First, by the entropy inequality (\ref{entropy}),
\begin{equation}
q(u^+ ; u_L) - \sigma_\pm\eta(u^+ | u_L) \le q(u ; u_L) - \sigma_\pm\eta(u | u_L).
\end{equation}
Second, we bound the entropy dissipation as
\begin{equation}
D_{RH}(u,u^+,\sigma_\pm) \le \left[q(u^+ ; u_R) - \sigma_\pm \eta(u^+ | u_R)\right] - (1+ Cs_0)\left[q(u^+ ; u_L) - \sigma_\pm\eta(u^+ | u_L)\right].
\end{equation}
Next, adding and subtracting $\lambda^1(u_+)\eta(u_+ | u_{R,L})$,
\begin{equation}
D_{RH}(u,u^+,\sigma_\pm) \le D_{cont}(u^+) + (\sigma_\pm - \lambda^1(u^+))\left[-\eta(u^+ | u_R) + (1 + Cs_0) \eta(u^+ | u_L)\right] \le D_{cont}(u^+),
\end{equation}
where in the last inequality we have used $u^+ \in \Pi$ and that $(u,u^+,\sigma_\pm)$ is Lax admissible.
\end{proof}

We continue with the following lemma, which proves that for $-\tilde\eta(u) \gtrsim s_0^2$, $u^+ \in \Pi$ and $D_{max}(u) \le 0$. Combined with Lemma $\ref{lem_dist}$, this provides a formal justification that for $d(u,\bd\Pi) \gtrsim s_0$, $u^+ \in \Pi$ and, therefore, for $u$ sufficiently far from the boundary of $\Pi$, $D_{max}(u)$ is uniformly negative.
\begin{lemma}\label{lem21}
There is a universal constant $K^*$ such that for all $C$ sufficiently big and $s_0$ sufficiently small, for $u \in \Pi_{C,s_0}$ satisfying
$$-\tilde\eta(u) \ge K^* s_0^2,$$
we have $u^+\in \Pi$ and, consequently, for such $u$,
$$D_{max}(u) \lesssim -s_0^3.$$
\end{lemma}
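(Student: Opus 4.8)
The plan is to produce a clean algebraic identity for $\tilde\eta(u^+)$ in terms of $\tilde\eta(u)$ that exploits the fact that $u^+ = \S^1_u(s^*)$ lies on the shock curve through $u$, deduce from it that $u^+\in\Pi$ as soon as $-\tilde\eta(u)$ is a large multiple of $s_0^2$, and then quote Lemma \ref{lem20}. For the identity, I would expand $\tilde\eta(u^+) - \tilde\eta(u) = (1+Cs_0)\bigl[\eta(u^+|u_L) - \eta(u|u_L)\bigr] - \bigl[\eta(u^+|u_R) - \eta(u|u_R)\bigr]$, use the elementary relations $\eta(u^+|b) - \eta(u|b) = \bigl[\eta(u^+)-\eta(u)\bigr] - \nabla\eta(b)\cdot(u^+-u)$ and $\eta(u^+)-\eta(u) = \eta(u^+|u) + \nabla\eta(u)\cdot(u^+-u)$, and recognize the resulting coefficient of $(u^+-u)$ as $\nabla\tilde\eta(u)$ via the formula (\ref{eqn_lemeta_eqn1}). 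This gives
\begin{equation}\label{eqn_lem21_keyid}
\tilde\eta(u^+) = \tilde\eta(u) + Cs_0\,\eta(u^+\,|\,u) + \nabla\tilde\eta(u)\cdot(u^+-u).
\end{equation}

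Next I would control the two error terms in (\ref{eqn_lem21_keyid}) by $-\tilde\eta(u)$. By Lemma \ref{lem15}, $s^*$ is characterized by $\eta(u\,|\,u^+) = -\tilde\eta(u)$, and for $C$ sufficiently large and $s_0$ sufficiently small both $u$ and $u^+$ lie in a fixed compact neighborhood of $d$ inside $\Nu$, so Lemma \ref{l2_rel_entropy_lemma} gives $|u^+-u|^2 \sim \eta(u\,|\,u^+) = -\tilde\eta(u)$ and $\eta(u^+\,|\,u) \sim |u^+-u|^2 \lesssim -\tilde\eta(u)$, with universal implied constants. Combined with $|\nabla\tilde\eta(u)|\lesssim s_0$ from Lemma \ref{lem_eta}, (\ref{eqn_lem21_keyid}) yields, for universal constants $K_1,K_2$,
\begin{equation}
\tilde\eta(u^+) \le \tilde\eta(u)\bigl(1 - K_1 Cs_0\bigr) + K_2\, s_0\sqrt{-\tilde\eta(u)}.
\end{equation}
Since $C$ is now fixed (proportional to $C_1$) while $s_0\to 0$, I would take $s_0$ small enough that $K_1 Cs_0 \le \tfrac12$; then $\tilde\eta(u^+) \le \sqrt{-\tilde\eta(u)}\bigl(-\tfrac12\sqrt{-\tilde\eta(u)} + K_2 s_0\bigr)$, which is $\le 0$ precisely when $-\tilde\eta(u) \ge K^* s_0^2$ with $K^* := 4K_2^2$. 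Hence $u^+\in\Pi$. (Lemma \ref{lem20} tolerates $u^+\in\bd\Pi$: its surplus term is $(\sigma_\pm - \lambda^1(u^+))\tilde\eta(u^+)$, which is nonpositive since Liu admissibility forces $\sigma_\pm > \lambda^1(u^+)$ and $\tilde\eta(u^+)\le 0$.)

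Finally, with $u^+\in\Pi$ in hand, Lemma \ref{lem20} and the definition (\ref{eqn_Dmax_defn}) give $D_{max}(u) = D_{RH}(u,u^+,\sigma_\pm) \le D_{cont}(u^+) \lesssim -s_0^3$ for $C$ sufficiently large and $s_0$ sufficiently small, the last bound being Proposition \ref{prop_cont}. The main thing to be careful about is the bookkeeping of powers in the middle paragraph: the term $Cs_0\,\eta(u^+|u)$ carries a factor of $C$ but is multiplied by $\eta(u^+|u)\lesssim -\tilde\eta(u)\lesssim C^{-1}s_0$ (using Lemmas \ref{lem_dist} and \ref{lem_eta}), so it is in fact harmless once $s_0$ is small relative to $C^{-1}$; the genuinely borderline term is the linear one $\nabla\tilde\eta(u)\cdot(u^+-u)$, of size $\sim s_0\sqrt{-\tilde\eta(u)}$, and it is precisely this term that dictates the threshold $-\tilde\eta(u)\gtrsim s_0^2$ — equivalently, via Lemma \ref{lem_dist}, the $\sim s_0$ width of the excluded region $R_{C,s_0}$.
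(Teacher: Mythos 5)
Your proof is correct and takes a genuinely different route from the paper. The paper argues by contradiction: if $u^+\notin\Pi$, then the segment from $u$ to $u^+$ crosses $\bd\Pi$, so $d(u,\bd\Pi)\le|u-u^+|$; combining this with Lemma \ref{lem_dist} ($-\tilde\eta(u)\lesssim s_0\,d(u,\bd\Pi)$) and the characterization $\eta(u|u^+)=-\tilde\eta(u)\sim|u-u^+|^2$ from Lemmas \ref{lem15} and \ref{l2_rel_entropy_lemma} forces $-\tilde\eta(u)\lesssim s_0^2$, contradicting the hypothesis once $K^*$ is chosen large. You instead compute $\tilde\eta(u^+)$ exactly via the identity
\begin{equation*}
\tilde\eta(u^+) = \tilde\eta(u) + Cs_0\,\eta(u^+|u) + \nabla\tilde\eta(u)\cdot(u^+-u),
\end{equation*}
which is readily verified from (\ref{eqn_lemeta_eqn1}), and show directly that the right-hand side is nonpositive once $-\tilde\eta(u)\gtrsim s_0^2$. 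Both routes lean on the same two ingredients ($|u-u^+|^2\sim -\tilde\eta(u)$ and $|\nabla\tilde\eta|\lesssim s_0$), but your identity is more informative: it isolates the genuinely borderline term $\nabla\tilde\eta(u)\cdot(u^+-u)\sim s_0\sqrt{-\tilde\eta(u)}$ that fixes the $\sim s_0$ threshold, and makes transparent why the $Cs_0\,\eta(u^+|u)$ term, despite its factor of $C$, is harmless (it is absorbed into the $\tilde\eta(u)(1-K_1Cs_0)$ coefficient, or equivalently bounded by $s_0^2$ via Lemmas \ref{lem_dist} and \ref{lem_eta}). You also correctly flag that the direct estimate only yields $\tilde\eta(u^+)\le 0$ at the threshold rather than strict membership in the open set $\Pi$, and that this is sufficient because the surplus term in Lemma \ref{lem20} is $(\sigma_\pm-\lambda^1(u^+))\tilde\eta(u^+)$, which remains nonpositive when $u^+\in\bd\Pi$.
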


\begin{proof}
Suppose $-\tilde\eta(u) \ge K^*s_0^2$ and $u^+ = \S^1_u(s^*)$ is the maximal shock and assume $u^+ \notin \Pi$. Then, for $K_1$ the universal constant from Lemma \ref{lem_dist},
\begin{equation}
K^*s_0^2 \le -\tilde\eta(u) \le K_1s_0 d(u,\bd \Pi) \le K_1 s_0 |u - u^+|.
\end{equation}
Thus, $K^*s_0 \le K_1|u - u^+|$. Now, for $K_2$ the universal constant from Lemma \ref{l2_rel_entropy_lemma}, the characterization of $s^*$ in Lemma \ref{lem16} implies
\begin{equation}
K_2|u - u^+|^2 \le \eta(u | u^+) = -\tilde\eta(u) \le K_1 s_0|u - u^+|.
\end{equation}
Thus, $|u- u^+| \le \frac{K_1}{K_2}s_0$. We conclude that
\begin{equation}
K^*s_0 \le \frac{K_1^2}{K_2}s_0.
\end{equation}
Therefore, choosing $K^*$ strictly larger than $K_1^2K_2^{-1}$, we note that $-\tilde\eta(u) \ge K^*s_0^2$ implies $u^+\in\Pi_{C,s_0}$.
%On the other hand,
%\begin{equation}
%K^*s_0^2 \le -\tilde\eta(u) = \eta(u | u^+) \le K_2|u - u^+|^2.
%\end{equation}
%Taking $K^*$ sufficiently large so that 
%\begin{equation}
%\frac{K_1}{K_2} < \sqrt{\frac{K^*}{K_2}},
%\end{equation}
%it follows that
%\begin{equation}
%|u - u^+| \le \frac{K_1}{K_2} s_0 < s_0\sqrt{\frac{K^*}{K_2}} \le |u - u^+|,
%\end{equation}
%which is certainly a contradiction. Therefore, for $K^*$ sufficiently large (independent of $C$, $s_0$), we conclude that $u^+\in \Pi$. 
Finally, applying Lemma \ref{lem20}, we have
\begin{equation}
D_{max}(u) \le D_{cont}(u^+) \lesssim -s_0^3.
\end{equation}
\end{proof}

\begin{flushleft}
\uline{\bf{Proof of Proposition \ref{prop_R}}}
\end{flushleft}
%\begin{proof}[Proof of Proposition \ref{prop_R}]
We now take $u \in Q_{C,s_0}$ so that $u$ is of the form
\begin{equation}
u = u^* - \sum_{i=1}^n b_i r_i(u^*) \qquad\text{where}\qquad \left(\sum_{i=2}^n |b_i|^2\right)^{1/2} \le C^{-1/3}s_0 \quad\text{and}\quad |b_1|\le C^{1/6}s_0.
\end{equation}
We will show that for $u\in Q_{C,s_0}$, there exists a universal constant $K > 0$ such that for $b_1 > Ks_0$ and $C$ sufficiently large, $s_0$ sufficiently small, $-\tilde\eta(u) \ge K^*s_0^2$, where $K^*$ is the universal constant from Lemma \ref{lem21}, which combined with Lemma \ref{lem20} and Proposition \ref{prop_Qs0} completes the proof of the existence of $R_{C,s_0}(K_h)$ with $K_h = K$.
%By the definition of $Q_{C,s_0}$ and Proposition \ref{prop_Qs0}, we conclude that it suffices to show our estimate for $s_0 \lesssim b_1 \lesssim C^{1/6}s_0$.

Therefore, we suppose $u\in Q_{C,s_0}$ with $b_1 > Ks_0$ and, using the fundamental theorem of calculus, rewrite $-\tilde\eta(u)$ as
\begin{equation}\label{eqn_propR_eqn1}
-\tilde\eta(u) = -\tilde\eta(u^*) + \int_{u^*}^u \nabla\tilde\eta(v) \ dv.
\end{equation}
Next, noting $\tilde\eta(u^*) = 0$ and $|u-u_L| \lesssim C^{1/6}s_0$, we compute
\begin{equation}\label{eqn_propR_eqn2}
\nabla\tilde\eta(u) = Cs_0\left[\nabla \eta(u) - \nabla\eta(u_L)\right] + \left[\nabla\eta(u_R) - \nabla\eta(u_L)\right] = s_0\nabla^2\eta(u_L)r_1(u_L) + \bigO(C^{7/6}s_0^2),
\end{equation}
Parametrizing $v$ as $v(t) = tu^* + (1-t)u$, we have $\dot{v}(t) = u^* - u = \sum_{i=1}^n b_i r_i(u^*)$. Combined with (\ref{eqn_propR_eqn1}) and (\ref{eqn_propR_eqn2}), we obtain
\begin{equation}\label{eqn_propR_eqn3}
\begin{aligned}
-\tilde\eta(u) &= \int_0^1 \left[s_0\nabla^2\eta(u_L)r_1(u_L) + \bigO(C^{7/6}s_0^2)\right]\cdot\left[\sum_{i=1}^n b_ir_i(u^*)\right]\\
	&= \int_0^1b_1s_0 \nabla^2\eta(u_L)r_1(u_L) \cdot r_1(u^*) + \bigO(C^{4/3}s_0^3 + C^{-1/3}s_0^2)\\
	&= b_1s_0 \nabla^2\eta(u_L)r_1(u_L)\cdot r_1(u_L) + \bigO(C^{-1/3}s_0^2 + |b_1|s_0^2).
\end{aligned}
\end{equation}
Since $\nabla^2\eta(u_L)r_1(u_L) \cdot r_1(u_L) \ge K_1 > 0$, a universal constant, and $b_1 \ge Ks_0$, (\ref{eqn_propR_eqn3}) can be reformulated as the estimate
\begin{equation}
-\tilde\eta(u) \ge KK_1s_0^2 + \bigO(C^{-1/3}s_0^2 + |b_1|s_0^2).
\end{equation}
Now, as $|b_1| \le C^{1/6}s_0$, we may take $C$ sufficiently large and $s_0$ sufficiently small to make the error smaller than $s_0^2$ and obtain
\begin{equation}
-\tilde\eta(u) \ge (KK_1-1)s_0^2.
\end{equation}
Therefore, choosing $K \ge \frac{K^* + 1}{K_1}$ yields $-\tilde\eta(u) \ge K^*s_0^2$ for all $u \in Q_{C,s_0}$ with $b_1 > Ks_0$. By the discussion at the beginning of the proof, taking $K_h = K$ completes the construction of $R_{C,s_0}(K_h)$.
%\end{proof}

%%%%%%%%%%%%%%%%%%%%%%%%%%%%%%%%%%%%%%%%%%%%%%%%%%%%%%%%%%%%%%%%%%%%%%%%%%%%%%%%%%%%%%%%%%%%%%%%%%%%%%%%%

\subsection{Step 2.1: Shocks close to \texorpdfstring{$\bd \Pi$}{bd(Pi)}}
In this step, we show that states $u$ near to $u^*$ (of order $s_0$) satisfy $D_{max}(u) \le 0$. Many estimates used in the following steps crucially rely on a lower bound on the strength of the maximal shock $(u,u^+,\sigma_\pm)$. However, for $u\in \bd \Pi$, $u^+ = u$ and the necessary estimates degenerate near $\bd\Pi$. Therefore, we handle this regime more in the flavor of Proposition \ref{prop_Qs0}. More precisely, we prove the following proposition:
\begin{proposition}\label{prop_Rbd}
There is a universal constant $K_{bd} > 0$ such that for any $C$ sufficiently large and $s_0$ sufficiently small (depending on $C$), defining 
\begin{equation}\label{defn_Rbd}
R_{C,s_0}^{bd} := \set{u\in R_{C,s_0} \ \biggr| \ |u-u^*| \le K_{bd}s_0}
\end{equation}
for any $u\in R_{C,s_0}^{bd}$, $D_{max}(u) \lesssim -s_0^3$.
\end{proposition}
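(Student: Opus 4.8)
The plan is to estimate $D_{max}(u)=\F(s^*(u),u)$ directly from the exact identity \eqref{eqn_propQ_eqn0},
\[
\F(s,u)=D_{cont}(u)+\int_0^{s}\dot\sigma^1_u(t)\bigl[\tilde\eta(u)+\eta(u|\S^1_u(t))\bigr]\,dt,
\]
rather than by Taylor expanding $D_{cont}$ about $u^*$ — the latter is useless in this regime, since $|\nabla^2D_{cont}(u^*)|\lesssim Cs_0$ would force a $C$-dependent loss in the remainder, incompatible with a \emph{universal} choice of $K_{bd}$. By Lemma \ref{lem15}, the bracket $g(t):=\tilde\eta(u)+\eta(u|\S^1_u(t))$ satisfies $g(0)=\tilde\eta(u)\le 0$, is strictly increasing by Assumption \ref{assum}(j), and vanishes at $s^*$, so $g\le 0$ on $[0,s^*]$; moreover $\dot\sigma^1_u(t)<0$ there with $|\dot\sigma^1_u(t)|\sim 1$ (by the asymptotic expansion of $\sigma^1_u$ in Lemma \ref{lem_hugoniot}, Assumption \ref{assum}(b), and $s^*\lesssim C^{-1/2}s_0^{1/2}$). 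Hence $\int_0^{s^*}\dot\sigma^1_u(t)g(t)\,dt\lesssim -\int_0^{s^*}g(t)\,dt$, and since $\eta(u|\S^1_u(t))\ge 0$, $-\int_0^{s^*}g(t)\,dt\le -s^*\tilde\eta(u)\lesssim s^*s_0\,d(u,\bd\Pi)$ by Lemma \ref{lem_dist}. Therefore
\[
D_{max}(u)\le D_{cont}(u)+K\,s^*\,s_0\,d(u,\bd\Pi)
\]
for a universal constant $K$.

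Next I would use the localization $u\in R_{C,s_0}^{bd}$. Since $u^*\in\bd\Pi$, we get $d(u,\bd\Pi)\le|u-u^*|\le K_{bd}s_0$. For the strength of the maximal shock, Lemma \ref{lem15} gives $\eta(u|u^+)=-\tilde\eta(u)$, so combining Lemma \ref{l2_rel_entropy_lemma} with Lemma \ref{lem_dist}, $(s^*)^2=|u-u^+|^2\lesssim\eta(u|u^+)=-\tilde\eta(u)\lesssim s_0\,d(u,\bd\Pi)\lesssim K_{bd}s_0^2$, whence $s^*\lesssim K_{bd}^{1/2}s_0$. Thus $s^*s_0\,d(u,\bd\Pi)\lesssim K_{bd}^{3/2}s_0^3$. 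Since Proposition \ref{prop_cont} provides a universal $K_0>0$ with $D_{cont}(u)\le -K_0s_0^3$ on all of $\Pi_{C,s_0}$, this yields
\[
D_{max}(u)\le -K_0s_0^3+K'K_{bd}^{3/2}s_0^3
\]
for universal constants $K_0,K'$ depending only on the system; choosing $K_{bd}$ (universally) so small that $K'K_{bd}^{3/2}\le K_0/2$ gives $D_{max}(u)\le-\tfrac{K_0}{2}s_0^3\lesssim-s_0^3$, which is the claim.

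The main point requiring care — and the only real obstacle — is tracking that every implicit constant above ($K_0$ from Proposition \ref{prop_cont}, those from Lemmas \ref{lem_dist}, \ref{l2_rel_entropy_lemma}, and \ref{lem15}, and the bound $|\dot\sigma^1_u|\sim 1$) is a system-only universal constant, \emph{not} $C$-dependent, so that the positive term $s^*s_0\,d(u,\bd\Pi)$ is genuinely absorbable by a universal choice of $K_{bd}$. It is also essential to estimate $s^*$ through the maximal-shock relation $\eta(u|u^+)=-\tilde\eta(u)$ (Lemma \ref{lem15}), which gives $s^*\lesssim\sqrt{s_0\,d(u,\bd\Pi)}$: the cruder bound $s^*\lesssim C^{-1/2}s_0^{1/2}$ would leave an error of order $C^{-1/2}s_0^{5/2}$, which is \emph{larger} than $s_0^3$ as $s_0\to0$ and would not close the estimate.
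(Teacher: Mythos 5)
Your proof is correct and essentially identical to the paper's: both start from the Lax-type identity \eqref{eqn_propQ_eqn0}, bound the shock contribution by $s^*s_0\,d(u,\partial\Pi)$ using the sign of $\dot\sigma^1_u$, $g\le 0$ on $[0,s^*]$, and Lemma~\ref{lem_dist}, deduce $s^*\lesssim s_0$ from the maximality relation $\eta(u|u^+)=-\tilde\eta(u)$ combined with $d(u,\partial\Pi)\le K_{bd}s_0$, and absorb the resulting positive term by the uniform bound $D_{cont}\lesssim -s_0^3$ of Proposition~\ref{prop_cont} through a universally small choice of $K_{bd}$. One peripheral remark: your opening claim that Taylor expansion of $D_{cont}$ about $u^*$ is ``useless'' is mistaken --- since $u^*$ is a maximum, the Hessian's negative definiteness only improves the bound and is exactly how Proposition~\ref{prop_cont} (which you invoke) is proved --- but nothing in your argument depends on that remark.
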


\begin{proof}
This proof is a minor variant of the proof of Proposition \ref{prop_Qs0}. We recall from (\ref{eqn_propQ_eqn0}),
\begin{equation}
D_{max}(u) = D_{cont}(u) + \int_0^{s^*(u)} \dot\sigma_u^1(t) \left[\tilde\eta(u) - \eta(u | \S^1_u(t))\right] \ dt.
\end{equation}
Following (\ref{eqn_propQ_eqn1}) and (\ref{eqn_propQ_eqn2}), in the proof of Proposition \ref{prop_Qs0}, we estimate
\begin{equation}
D_{max}(u) \lesssim -s_0^3 + s_0s^* d(u,\bd\Pi) \lesssim -s_0^3 + s_0^2d(u,\bd\Pi).
\end{equation}
Therefore, there are universal constants $K_{bd},K > 0$ such that if $d(u,\bd\Pi) \le \frac{1}{2}K_{bd}s_0$, then $D_{max}(u) \le -Ks_0^3$. Finally, we note that if $d(u,\bd\Pi) \le \frac{1}{2}K_{bd}s_0$ for $u\in R_{C,s_0}$, then $|u-u^*| \le K_{bd}s_0$ for all sufficiently large $C$ and sufficiently small $s_0$.
\end{proof}

%%%%%%%%%%%%%%%%%%%%%%%%%%%%%%%%%%%%%%%%%%%%%%%%%%%%%%%%%%%%%%%%%%%%%%%%%%%%%%%%%%%%%%%%%%%%%%%%%%%%%%%%%%%%%%

\subsection{Step 2.2: Shocks close to \texorpdfstring{$u_L$}{uL}}

In this step, we prove that states $u$ near to $u_L$ (of order $s_0$) satisfy $D_{max}(u) \le 0$. Since $D_{max}(u_L) = 0$, we lose the uniform negativity of the estimates until this point. However, we show the existence of a region $R_{C,s_0}^0$ of size of order $s_0$ in the $r_1$ direction on which $u_L$ is a strict local maximum and which satisfies $D_{max}(u) \lesssim -s_0^3$ on the boundary of $R_{C,s_0}^0$. Our proof of Proposition \ref{prop_shock} then implies $(u_L,u_R,\sigma_{LR})$ is the unique entropic $1$-shock with left state in $\Pi_{C,s_0}$ for which $D_{RH}$ vanishes. More precisely, the main result of this step is:
\begin{proposition}\label{prop_R0}
There is a universal constant $K^0$ such that for any $C$ sufficiently large and $s_0$ sufficiently small (depending on $C$), defining 
\begin{equation}\label{defn_R0}
R_{C,s_0}^0 := \set{u\in R_{C,s_0}\ \biggr| \ |u-u_L|\le K_0s_0} 
\end{equation}
for any $u\in R_{C,s_0}^0$, $D_{max}(u) \le 0$.
\end{proposition}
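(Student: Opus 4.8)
\emph{Strategy.} The plan is to show that $u_L$ is a strict local maximum of $D_{max}$ with $D_{max}(u_L)=0$, that it is the \emph{unique} critical point of $D_{max}$ inside $R^0_{C,s_0}$ once $K_0$ is taken to be a small universal constant, and that $D_{max}\lesssim -s_0^3$ on $\partial R^0_{C,s_0}$. Since $D_{max}(u_L)=0$ strictly exceeds these boundary values, the maximum of $D_{max}$ over the compact set $\overline{R^0_{C,s_0}}$ must be attained at an interior critical point, hence at $u_L$, which gives $D_{max}(u)\le D_{max}(u_L)=0$ throughout $R^0_{C,s_0}$.

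\emph{Step 1: the value and criticality of $u_L$.} By Lemma \ref{lem15}, the maximizing parameter $s^*(u_L)$ is characterized by $\eta(u_L | \S^1_{u_L}(s^*))=-\tilde\eta(u_L)=\eta(u_L | u_R)$; since $s\mapsto\eta(u_L | \S^1_{u_L}(s))$ is strictly increasing (Assumption \ref{assum}(j)) and equals $\eta(u_L | u_R)$ at $s=s_0$, we get $s^*(u_L)=s_0$, so the maximal shock at $u_L$ is $(u_L,u_R,\sigma_{LR})$ and $D_{max}(u_L)=D_{RH}(u_L,u_R,\sigma_{LR})=0$ (every relative entropy and relative-entropy flux appearing in $D_{RH}$ vanishes). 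Writing $\F(s,u)=D_{RH}(u,\S^1_u(s),\sigma^1_u(s))$, uniqueness of the maximizer in Lemma \ref{lem15} lets me invoke the envelope theorem: $\nabla D_{max}(u_L)=\partial_u\F(s_0,u_L)$. A short computation using $\nabla q=\nabla\eta f'$, the identity $\nabla_w q(w;v)-\sigma\nabla_w\eta(w | v)=(\nabla\eta(w)-\nabla\eta(v))(f'(w)-\sigma I)$, and $\S^1_{u_L}(s_0)=u_R$ shows that both brackets defining $D_{RH}$ contribute zero derivative at $u=u_L$, so $\nabla D_{max}(u_L)=0$.

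\emph{Step 2: $u_L$ is a strict local maximum and the unique critical point of $D_{max}$ in $R^0_{C,s_0}$.} I would compute $\nabla^2 D_{max}(u_L)$ via the second-order envelope formula $\nabla^2 D_{max}(u_L)=\partial_{uu}\F(s_0,u_L)+|\partial_{ss}\F(s_0,u_L)|^{-1}\,\partial_{us}\F\otimes\partial_{us}\F$ (all evaluated at $(s_0,u_L)$), where $\partial_{ss}\F(s_0,u_L)=\dot\sigma^1_{u_L}(s_0)\,\tfrac{d}{ds}\eta(u_L | \S^1_{u_L}(s))|_{s_0}\sim -s_0<0$. Expanding about $u_L$ with $u_R=u_L+s_0r_1(u_L)+\bigO(s_0^2)$, $\nabla\tilde\eta(u_L)=s_0\nabla^2\eta(u_L)r_1(u_L)+\bigO(s_0^2)$, $\nabla^2\eta(u_L)r_1(u_L)\parallel l^1(u_L)$, the asymptotics of Lemma \ref{lem_hugoniot}, and the genuine nonlinearity $\nabla\lambda^1\cdot r_1\sim -1$ (Assumption \ref{assum}(b)), one finds that $\partial_{us}\F(s_0,u_L)$ is essentially a multiple of $l^1(u_L)$, so the positive correction term acts only in the $r_1$-direction, and that $\partial_{uu}\F(s_0,u_L)=\nabla^2D_{cont}(u_L)+\bigO(Cs_0^2)$; combined with the structure of $\nabla^2 D_{cont}$ from the proof of Proposition \ref{prop_cont} ($\sim -s_0$ along $r_1$, $\sim -Cs_0$ transversally), the correction only partially cancels the $r_1$-term, yielding $\nabla^2 D_{max}(u_L)v\cdot v\lesssim -s_0v_1^2-Cs_0\sum_{i\ge2}v_i^2$ for $v=\sum v_ir_i(u_L)$, for $C$ large and $s_0$ small. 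Together with a bound $|\nabla^3 D_{max}|\lesssim 1$ on a ball of radius $\sim s_0$ about $u_L$ — obtained from $D_{max}(u)=D_{cont}(u)+\int_0^{s^*(u)}\dot\sigma^1_u(t)[\tilde\eta(u)+\eta(u | \S^1_u(t))]\,dt$, the derivative bounds of Lemma \ref{lem_eta} and Lemma \ref{lem9}, and the $C^3$-smoothness of $u\mapsto s^*(u)$ away from $\partial\Pi$ — the inverse function theorem shows $\nabla D_{max}$ is injective on $B_{K_0s_0}(u_L)\supseteq R^0_{C,s_0}$ for a suitably small universal $K_0$; hence $u_L$ is a strict local maximum and the only critical point of $D_{max}$ in $R^0_{C,s_0}$.

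\emph{Step 3: boundary estimate and conclusion.} The set $\partial R^0_{C,s_0}$ is contained in $\partial\Pi_{C,s_0}$ (where $s^*=0$, so $D_{max}=D_{cont}\le D_{cont}(u^*)\lesssim -s_0^3$ by Proposition \ref{prop_cont}), together with $\partial R_{C,s_0}$ and the regions $R^{bd}_{C,s_0}$, $R^{\pm}_{C,s_0}$ treated in the neighboring steps, on all of which $D_{max}\lesssim -s_0^3$ (Propositions \ref{prop_R}, \ref{prop_Rbd}, and Step 2.3); thus $D_{max}\lesssim -s_0^3$ on $\partial R^0_{C,s_0}$ and the strategy above applies. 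The main obstacle is the Hessian computation of Step 2: since $r_1$ is the critical scale $s_0$, the leading $\sim -s_0$ term of $\partial_{uu}\F$ along $r_1$ has the same order as the positive envelope-correction term, so one must track the constants precisely — through the $\bigO(s^2)$ asymptotics of $\sigma^1_{u_L}$ and the sign of $\nabla\lambda^1\cdot r_1$ — to verify that the net $r_1$-Hessian is still negative; the transverse directions, of scale $\le C^{-1/3}s_0\ll s_0$ inside $R^0_{C,s_0}$, never cause trouble.
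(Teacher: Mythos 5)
Your overall framework---establish $D_{max}(u_L)=0$, $\nabla D_{max}(u_L)=0$, a negative-definite Hessian at $u_L$, a bound on $\nabla^3 D_{max}$, and then conclude---matches the paper's plan. Your use of the second-order envelope formula $\nabla^2 D_{max}(u_L)=\partial_{uu}\F+\left|\partial_{ss}\F\right|^{-1}\partial_{us}\F\tens\partial_{us}\F$ is a genuinely different (and conceptually cleaner) way to get the Hessian than the paper's route, which differentiates the implicit Rankine--Hugoniot and maximality relations (Lemma~\ref{lem22}) and then evaluates at $u_L$. That said, there are two concrete problems with what you have written.

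\textbf{The $r_1$-Hessian estimate is asserted but not verified, and the cited ingredient is not the one you need.} You write that ``the structure of $\nabla^2 D_{cont}$ from the proof of Proposition~\ref{prop_cont} ($\sim -s_0$ along $r_1$, $\sim -Cs_0$ transversally)'' gives what you want for $\partial_{uu}\F(s_0,u_L)$. But the bound $\nabla^2 D_{cont}\, r_1\cdot r_1\lesssim -s_0$ in Lemma~\ref{lem3} is proved at $u^*\in\bd\Pi$ using $\tilde\eta(u^*)=0$ and $\nu(u^*)\parallel l^1(u^*)$; neither condition holds at $u_L$ (indeed $u_L$ is not a critical point of $D_{cont}$ at all). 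To use the envelope formula you must separately compute $\nabla^2 D_{cont}(u_L)r_1\cdot r_1$. That calculation works out to $\approx -c\,s_0\,|\nabla\lambda^1\cdot r_1|$ using $\nabla\tilde\eta(u_L)=\nabla\eta(u_R)-\nabla\eta(u_L)$, the relation $\nabla^2\eta\,r_1\parallel l^1$, and cancellation of the $(\lambda^1 I-f')\nabla r_1 r_1$ piece of $f''(r_1,r_1)$ against $l^1$, but it is a distinct computation, and the sign is sensitive (get $\nabla\tilde\eta(u_L)$ wrong by a sign and the whole term flips). Moreover, since the envelope correction term is positive semidefinite and \emph{of the same order} ($\approx \tfrac12 c\,s_0\,|\nabla\lambda^1\cdot r_1|$ along $r_1$), the net sign of $\nabla^2 D_{max}(u_L)r_1\cdot r_1$ depends on a precise comparison of constants which you explicitly defer (``one must track the constants precisely\ldots''). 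That deferred comparison is the entire technical content of this step; without it you have not shown negativity of the Hessian. (For comparison, the paper achieves the cancellation by evaluating the differentiated maximality identity (\ref{eqn_lem22_maximality}) at $u_L$, which directly forces $r_1^t\nabla^2\eta(u_R)\nabla u^+(u_L)r_1=\bigO(s_0)$.)

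\textbf{The Steps 2--3 route is circular as written, and also unnecessary.} To make your ``maximum-principle'' argument work you must show $D_{max}\lesssim -s_0^3$ on $\partial R^0_{C,s_0}$. The spherical part $\{|u-u_L|=K_0 s_0\}$ of $\partial R^0_{C,s_0}$ lies in $R^{bd}_{C,s_0}$ (fine, Proposition~\ref{prop_Rbd}) or in $R^\pm_{C,s_0}$. But Step~2.3 (Proposition~\ref{prop_R+}) establishes only that $\nabla D_{max}\neq 0$ on $R^\pm$; the bound $D_{max}\lesssim -s_0^3$ on $R^\pm$ is obtained \emph{afterwards} in the proof of Proposition~\ref{prop_shock} by combining the absence of critical points with the boundary values on $\bd R^\pm$, which include $\bd R^0$. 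So appealing to negativity on $R^\pm$ to get negativity on $\partial R^0$ is circular. The remedy is that you do not need Steps~2--3 at all: once you have $D_{max}(u_L)=0$, $\nabla D_{max}(u_L)=0$, $\nabla^2 D_{max}(u_L)v\cdot v\lesssim -s_0|v|^2$, and $|\nabla^3 D_{max}|\lesssim 1$ near $u_L$, Taylor's theorem gives directly $D_{max}(u)\le -\tfrac{K_1}{2}s_0|u-u_L|^2+\tfrac{K_3}{6}|u-u_L|^3\le 0$ for $|u-u_L|\le K_0 s_0$ with $K_0=3K_1/K_3$, which is exactly the paper's final step. This is both shorter and immune to the circularity.
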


We begin by studying $D_{max}$ and computing derivatives. We recall that $D_{max}(u) = D_{RH}(u,u^+,\sigma_\pm)$ where $u^+$ and $\sigma_{\pm}$ are uniquely defined via the conditions
\begin{equation}\label{eqn_u+_defn}
\begin{aligned}
f(u^+) - f(u) &= \sigma_{\pm}(u^+ - u)\\
-\tilde\eta(u) &= \eta(u | u^+)\\
\sigma_{\pm} &\le \lambda^1(u).
\end{aligned}
\end{equation} 
Now, we have the following algebraic identities:
\begin{lemma}\label{lem22}
Let $u^+ = \S^1_u(s^*)$ be such that $D_{max}(u) = D(u,u^+,\sigma_{\pm})$. Then, the following identities hold:
\begin{align}
&\left[f^\prime(u^+) - \sigma_{\pm}I\right]\nabla u^+ = \left[f^\prime(u) -\sigma_\pm I\right] + (u^+ - u)\tens\sigma_{\pm}^\prime\label{eqn_lem22_RK},\\
&\left[\nabla\eta(u_R) - \nabla\eta(u^+)\right] + (1 + Cs_0)\left[\nabla\eta(u) - \nabla\eta(u_L)\right] = (u - u^+)^t\left[\nabla^2\eta(u^+)\nabla u^+\right],\label{eqn_lem22_maximality}
\end{align}
and
\begin{equation}\label{eqn_lem22_grad}
\nabla D_{max}(u) = \left[\nabla\eta(u^+) - \nabla\eta(u_R) - (1+Cs_0)\left[\nabla\eta(u) - \nabla\eta(u_L)\right]\right]\left(f^\prime(u) - \sigma_{\pm}I\right).
\end{equation}
\end{lemma}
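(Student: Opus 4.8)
The plan is to differentiate the three defining relations \eqref{eqn_u+_defn} for $u^+$ and $\sigma_\pm$ with respect to $u$, and then assemble the results. First I would establish \eqref{eqn_lem22_RK} by differentiating the Rankine–Hugoniot identity $f(u^+) - f(u) = \sigma_\pm(u^+ - u)$ in $u$. Applying the chain rule gives $f'(u^+)\nabla u^+ - f'(u) = \nabla\sigma_\pm \tens (u^+ - u) + \sigma_\pm(\nabla u^+ - I)$ (being careful with the tensor placement of the $\nabla\sigma_\pm$ term), and rearranging to collect the $\nabla u^+$ terms on the left yields $\left[f'(u^+) - \sigma_\pm I\right]\nabla u^+ = \left[f'(u) - \sigma_\pm I\right] + (u^+ - u)\tens \sigma_\pm'$, exactly \eqref{eqn_lem22_RK}. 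Here one should note that $u^+$ stays strictly in $\Nu$ and the shock is strictly Liu-admissible ($\sigma_\pm < \lambda^1(u)$ and $\sigma_\pm > \lambda^1(u^+)$ by Lemma \ref{lem_hugoniot}), so $f'(u^+) - \sigma_\pm I$ is invertible and all the objects are $C^1$ in $u$ by the implicit function theorem applied to the system \eqref{eqn_u+_defn} — this is the regularity input that makes the differentiation legitimate.

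Next, for \eqref{eqn_lem22_maximality}, I would differentiate the maximality/characterization relation $-\tilde\eta(u) = \eta(u\,|\,u^+)$ from Lemma \ref{lem15}. The left side has gradient $-\nabla\tilde\eta(u) = -Cs_0(\nabla\eta(u) - \nabla\eta(u_L)) + (\nabla\eta(u_R) - \nabla\eta(u_L))$ by \eqref{eqn_lemeta_eqn1}. For the right side, recall $\eta(u\,|\,v) = \eta(u) - \eta(v) - \nabla\eta(v)\cdot(u - v)$, so differentiating in $u$ (which appears both as the first slot and, through $v = u^+(u)$, in the second slot) gives $\nabla_u\big[\eta(u\,|\,u^+)\big] = \big(\nabla\eta(u) - \nabla\eta(u^+)\big) + (u^+ - u)^t\nabla^2\eta(u^+)\nabla u^+$, using $\partial_v\eta(u\,|\,v) = -\nabla^2\eta(v)(u - v)$ and the chain rule for the $u^+$-dependence. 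Equating the two gradients and rearranging (moving the $\nabla\eta(u) - \nabla\eta(u^+)$ term across, regrouping the $Cs_0$ and non-$Cs_0$ pieces into the form $(1+Cs_0)(\nabla\eta(u) - \nabla\eta(u_L))$) produces \eqref{eqn_lem22_maximality}.

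Finally, \eqref{eqn_lem22_grad} follows by differentiating $D_{max}(u) = D_{RH}(u, u^+, \sigma_\pm) = \big[q(u^+;u_R) - \sigma_\pm\eta(u^+|u_R)\big] - (1+Cs_0)\big[q(u;u_L) - \sigma_\pm\eta(u;u_L)\big]$, expanding via the product/chain rule in the $u^+$ and $\sigma_\pm$ dependence, and then using \eqref{eqn_lem22_RK} and \eqref{eqn_lem22_maximality} to kill the terms involving $\nabla u^+$ and $\nabla\sigma_\pm$. Concretely, using $q' = \eta' f'$ one gets contributions $\big(\nabla\eta(u^+) - \nabla\eta(u_R)\big)\big(f'(u^+) - \sigma_\pm I\big)\nabla u^+$ from the $u^+$-slot, which by \eqref{eqn_lem22_RK} becomes $\big(\nabla\eta(u^+) - \nabla\eta(u_R)\big)\big[(f'(u) - \sigma_\pm I) + (u^+ - u)\tens\sigma_\pm'\big]$; the $\sigma_\pm'$ piece there combines with the explicit $\sigma_\pm'$-terms (those multiplying $-\eta(u^+|u_R) + (1+Cs_0)\eta(u|u_L)$) and is annihilated precisely by the defining relation $-\tilde\eta(u) = \eta(u|u^+)$ together with \eqref{eqn_lem22_maximality} — this is the bookkeeping-heavy part. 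What survives is $\big[\nabla\eta(u^+) - \nabla\eta(u_R) - (1+Cs_0)(\nabla\eta(u) - \nabla\eta(u_L))\big](f'(u) - \sigma_\pm I)$, which is \eqref{eqn_lem22_grad}. The main obstacle is not conceptual but organizational: correctly tracking the tensor slots (left- versus right-multiplication, the placement of $\nabla u^+$ and $\sigma_\pm'$) so that the cancellations from \eqref{eqn_lem22_RK} and \eqref{eqn_lem22_maximality} land cleanly; everything else is routine differentiation plus the Liu-admissibility/invertibility remark.
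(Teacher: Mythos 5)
Your plan matches the paper's proof: differentiate the Rankine--Hugoniot relation and the characterization $\eta(u\,|\,u^+)=-\tilde\eta(u)$ to get \eqref{eqn_lem22_RK} and \eqref{eqn_lem22_maximality}, then differentiate $D_{RH}$, substitute \eqref{eqn_lem22_RK}, and cancel the $\sigma'_\pm$-terms using the characterization. One small correction to your bookkeeping: the cancellation of the $\sigma'_\pm$-terms in \eqref{eqn_lem22_grad} does not actually invoke \eqref{eqn_lem22_maximality}, but rather the elementary three-point identity $\eta(u\,|\,u^+)-\eta(u\,|\,u_R)=-\eta(u^+\,|\,u_R)+\left[\nabla\eta(u^+)-\nabla\eta(u_R)\right]\cdot(u^+-u)$ together with $\eta(u\,|\,u^+)+\tilde\eta(u)=0$, which collapse the bracket multiplying $\sigma'_\pm$ to zero directly.
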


\begin{proof}
First, we differentiate the two implicit equations for $u^+$ in (\ref{eqn_u+_defn}). Differentiating in $u$ and viewing $u^+$ and $\sigma_{\pm}$ as functions of $u$, we obtain
\begin{equation}
f^\prime(u^+)\nabla u^+ - f^\prime(u) = (u^+ - u)\tens\sigma_{\pm}^\prime + \sigma_{\pm}(\nabla u^+ - I)
\end{equation}
and
\begin{equation}
\begin{aligned}
\nabla\eta(u) - \nabla\eta(u^+)\nabla u^+ - (u - u^+)^t\nabla^2\eta(u^+)\nabla u^+ - \nabla\eta(u^+)(Id - \nabla u^+) = - &Cs_0\left[\nabla \eta(u) - \nabla\eta(u_L)\right]\\
	&+ \left[\nabla \eta(u_L) - \nabla\eta(u_R)\right].
\end{aligned}
\end{equation}
Rearranging terms yields the desired identities (\ref{eqn_lem22_RK}) and (\ref{eqn_lem22_maximality}), respectively.

Second, we differentiate the relation $D_{max}(u) = D_{RH}(u,u^+,\sigma_{\pm})$ using the chain rule:
\begin{equation}\label{eqn_lem22_eqn0}
\begin{aligned}
\nabla D_{max}(u) = &\left[\nabla\eta(u^+) - \nabla\eta(u_R)\right]\left[f^\prime(u^+) - \sigma_{\pm}I\right]\nabla u^+\\
	& - (1+Cs_0)\left[\nabla\eta(u) - \nabla\eta(u_L)\right]\left[f^\prime(u) - \sigma_{\pm}I\right]\\
	&-\sigma_{\pm}^\prime\left[\eta(u^+ | u_R) - (1+Cs_0)\eta(u | u_L)\right].
\end{aligned}
\end{equation} 
Next, we apply (\ref{eqn_lem22_RK}) to simplify the first line of (\ref{eqn_lem22_eqn0}) as
\begin{equation}\label{eqn_lem22_eqn3}
\begin{aligned}
\nabla D_{max}(u) = &\left[\nabla\eta(u^+) - \nabla\eta(u_R) - (1+Cs_0)\left(\nabla\eta(u) - \nabla\eta(u_L)\right)\right]\left[f^\prime(u) - \sigma_{\pm}I\right]\\
	&+\left[\nabla\eta(u^+) -\nabla\eta(u_R)\right](u^+ - u)\tens\sigma_{\pm}^\prime -\sigma_{\pm}^\prime\left[\eta(u^+ | u_R) - (1+Cs_0)\eta(u | u_L)\right]
\end{aligned}
\end{equation}
and use the definition of the tensor product to simplify the second line of (\ref{eqn_lem22_eqn3}) as
\begin{equation}
\begin{aligned}\label{eqn_lem22_eqn1}
\nabla D_{max}(u) = &\left[\nabla\eta(u^+) - \nabla\eta(u_R) - (1+Cs_0)\left(\nabla\eta(u) - \nabla\eta(u_L)\right)\right]\left[f^\prime(u) - \sigma_{\pm}I\right]\\
	&+\sigma_{\pm}^\prime\left[\left(\nabla\eta(u^+) -\nabla\eta(u_R)\right) \cdot (u^+ - u) - \eta(u^+ | u_R) + (1+Cs_0)\eta(u | u_L)\right].
\end{aligned}
\end{equation}
Then, rearranging terms in the definition of relative entropy, we have the identity:
\begin{equation}\label{eqn_lem22_eqn2}
\begin{aligned}
\eta(u | u^+) - \eta(u | u_R) &= \left[\eta(u) - \eta(u^+) - \nabla\eta(u^+)\cdot (u - u^+)\right] - \left[\eta(u) - \eta(u_R) - \nabla\eta(u_R)\cdot(u - u_R)\right]\\
	&= -\left[\eta(u^+) - \eta(u_R) - \nabla\eta(u_R)\cdot(u^+ - u_R)\right] + \left[\nabla\eta(u^+) - \nabla\eta(u_R)\right] \cdot (u^+ - u)\\
	&= -\eta(u^+ | u_R) + \left[\nabla\eta(u^+) - \nabla\eta(u_R)\right] \cdot (u^+ - u).
\end{aligned}
\end{equation}
Finally, combining (\ref{eqn_lem22_eqn1}) and (\ref{eqn_lem22_eqn2}) yields
\begin{equation}
\begin{aligned}
&\sigma_{\pm}^\prime\left[\left(\nabla\eta(u^+) -\nabla\eta(u_R)\right) \cdot (u^+ - u) - \eta(u^+ | u_R) + (1+Cs_0)\eta(u | u_L)\right]\\
	&\qquad= \sigma^\prime_\pm\left[\eta(u | u^+) - \eta(u | u_R) + (1 + Cs_0)\eta(u | u_L)\right]\\
	&\qquad=\sigma^\prime_\pm\left[\eta(u | u^+) + \tilde\eta(u)\right] = 0,
\end{aligned}
\end{equation}
which completes the proof of (\ref{eqn_lem22_grad}).
\end{proof}

%\begin{lemma}\label{lem23}
%Let $u^+ = \S^1_u(s^*)$ be such that $D_{max}(u) = D_{RH}(u,u^+,\sigma_{\pm})$. Then, the following representation of $\nabla D_{max}$ holds:
%\begin{equation}\label{eqn_lem23_desired1}
%\nabla D_{max}(u) = \left[\nabla\eta(u^+) - \nabla\eta(u_R) - (1+Cs_0)\left[\nabla\eta(u) - \nabla\eta(u_L)\right]\right]\left(f^\prime(u) - \sigma_{\pm}I\right).
%\end{equation}
%Moreover, we also have
%\begin{equation}\label{eqn_lem23_desired2}
%\nabla^2 D_{max}(u) = \left[\nabla^2\eta(u^+)\nabla u^+ - (1+ Cs_0)\nabla^2\eta(u)\right](f^\prime(u) - \sigma_\pm I) + (u^+ - u)^t(\nabla^2\eta(u^+)\nabla u^+)(f^{\prime\prime} - I\tens \sigma_{\pm}^\prime).
%\end{equation}
%\end{lemma}

Next, we prove several rough estimates on $u^+$ and $\sigma_\pm$, using only that states $u\in R_{C,s_0}\backslash R_{C,s_0}^{bd}$ are separated from $\bd \Pi$.
\begin{lemma}\label{lem23}
For all $C$ sufficiently large and $s_0$ sufficiently small, for any $u\in R_{C,s_0}\backslash R_{C,s_0}^{bd}$, we have
\begin{equation}\label{eqn_lem23_desired1}
|u - u^+| \sim s_0 \quad \text{and} \quad \lambda^1(u) - \sigma_{\pm} \sim s_0 \quad \text{and} \quad \sigma_{\pm} - \lambda^1(u^+) \sim s_0.
\end{equation}
Furthermore, we have the derivative estimates
\begin{equation}\label{eqn_lem23_desired2}
|\nabla u^+| + |\sigma_\pm^\prime| \lesssim 1 \quad \text{and} \quad |\nabla^2 u^+| + |\sigma_\pm^{\prime\prime}| + \lesssim s_0^{-1}.
\end{equation}
\end{lemma}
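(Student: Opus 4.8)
The plan is to first pin down the length scale of $u$ relative to $u^*$ and $\bd\Pi$, then read off \eqref{eqn_lem23_desired1} from the characterization of the maximal shock in Lemma \ref{lem15}, and finally obtain \eqref{eqn_lem23_desired2} by an implicit function argument built on the identities of Lemma \ref{lem22}. For the first step, write $u = u^* - \sum_{i=1}^n b_i r_i(u^*)$ with $|b_1|\le K_h s_0$ and $\big(\sum_{i\ge 2}|b_i|^2\big)^{1/2}\le C^{-1/3}s_0$. Since $\nabla\tilde\eta(u^*)\parallel l^1(u^*)$ (Proposition \ref{prop_max}) and $l^1(u^*)\cdot r_i(u^*)=0$ for $i\neq 1$ (Lemma \ref{lem_normalization}), a Taylor expansion of $\tilde\eta$ about $u^*$ using $\tilde\eta(u^*)=0$, $|\nabla\tilde\eta(u^*)|\sim s_0$ (Lemma \ref{lem_nu}), and $|\nabla^2\tilde\eta|\lesssim Cs_0$ (Lemma \ref{lem_eta}) gives $-\tilde\eta(u)=b_1|\nabla\tilde\eta(u^*)|\big(l^1(u^*)\cdot r_1(u^*)\big)+\bigO(Cs_0|u-u^*|^2)$. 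A short argument using the convexity of $\Pi$ and $u\notin R_{C,s_0}^{bd}$ shows $b_1\gtrsim s_0$, so that for $s_0$ small relative to $C$ one gets $-\tilde\eta(u)\sim s_0^2$; combined with Lemma \ref{lem_dist} and $d(u,\bd\Pi)\le|u-u^*|\lesssim s_0$ this yields $|u-u^*|\sim d(u,\bd\Pi)\sim s_0$.

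For \eqref{eqn_lem23_desired1}: Lemma \ref{lem15} gives $\eta(u\,|\,u^+)=-\tilde\eta(u)$, and since $u^+$ lies in a fixed compact subset of $\Nu$ (again Lemma \ref{lem15}), Lemma \ref{l2_rel_entropy_lemma} yields $|u-u^+|^2\sim-\tilde\eta(u)\sim s_0^2$, hence $|u-u^+|\sim s_0$; as $|\S^1_u(s)-u|=s+\bigO(s^2)$ (Lemma \ref{lem_hugoniot}), the strength $s^*$ with $u^+=\S^1_u(s^*)$ satisfies $s^*\sim s_0$. The midpoint asymptotic $\sigma^1_u(s^*)=\tfrac12\big(\lambda^1(u)+\lambda^1(u^+)\big)+\bigO((s^*)^2)$ (Lemma \ref{lem_hugoniot}) gives $\lambda^1(u)-\sigma_\pm=\tfrac12\big(\lambda^1(u)-\lambda^1(u^+)\big)+\bigO(s_0^2)$ and likewise for $\sigma_\pm-\lambda^1(u^+)$, while $\lambda^1(u^+)-\lambda^1(u)=s^*\nabla\lambda^1(u)\cdot r_1(u)+\bigO((s^*)^2)\sim-s^*\sim-s_0$ by genuine nonlinearity (Assumption \ref{assum}(b), Lemma \ref{lem_normalization}). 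With the signs supplied by Liu admissibility this gives $\lambda^1(u)-\sigma_\pm\sim s_0$ and $\sigma_\pm-\lambda^1(u^+)\sim s_0$.

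For \eqref{eqn_lem23_desired2}: regard $(u^+,\sigma_\pm)$ as the locally unique solution (on the branch $\sigma\le\lambda^1(u)$) of $F(u,u^+,\sigma)=0$ with $F=\big(f(u^+)-f(u)-\sigma(u^+-u),\ \eta(u\,|\,u^+)+\tilde\eta(u)\big)$; differentiating $F(u,u^+(u),\sigma(u))\equiv 0$ reproduces the identities of Lemma \ref{lem22}, and the Jacobian in $(u^+,\sigma)$ is the saddle-point matrix $J=\left(\begin{smallmatrix}A&-(u^+-u)\\(u^+-u)^t\nabla^2\eta(u^+)&0\end{smallmatrix}\right)$ with $A:=f'(u^+)-\sigma_\pm I$. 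By the previous paragraph and strict hyperbolicity (Assumption \ref{assum}(a)) on the compact set containing $\Pi$ and the maximal shocks, $\sigma_\pm$ lies between the simple eigenvalue $\lambda^1(u^+)$ of $f'(u^+)$ and its next eigenvalue, at distance $\sim s_0$ from the former and $\gtrsim 1$ from the rest; hence $A$ is invertible with $\|A^{-1}\|\sim s_0^{-1}$, the blow-up confined to the $r_1(u^+)$ and $l^1(u^+)$ directions. By Lemma \ref{lem_hugoniot}, $u^+-u=s^* r_1(u^+)+\bigO(s_0^2)$ is, to leading order, in the near-kernel of $A$, and $\nabla^2\eta(u^+)(u^+-u)=s^*\nabla^2\eta(u^+)r_1(u^+)+\bigO(s_0^2)$ is, since $\nabla^2\eta(u^+)r_1(u^+)\parallel l^1(u^+)$ (Lemma \ref{lem_normalization}), to leading order along $l^1(u^+)$; therefore $|A^{-1}(u^+-u)|\sim 1$ and the scalar Schur complement $S:=(u^+-u)^t\nabla^2\eta(u^+)A^{-1}(u^+-u)\sim s_0$, so $J$ is invertible with $\|J^{-1}\|\lesssim s_0^{-1}$. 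Differentiating $F(u,u^+(u),\sigma(u))\equiv 0$ once more, the only terms carrying $\nabla^2 u^+$ and $\sigma_\pm''$ are $J\,(\nabla^2 u^+,\sigma_\pm'')$, and everything else is $\bigO(1)$ once $|\nabla u^+|+|\sigma_\pm'|\lesssim 1$ is known (using $f\in C^4$, $\eta\in C^3$ on compacts and $|u-u^+|\sim s_0$); hence $|\nabla^2 u^+|+|\sigma_\pm''|\lesssim\|J^{-1}\|\lesssim s_0^{-1}$.

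The remaining and most delicate point — indeed the main obstacle — is the first-derivative bound $|\nabla u^+|+|\sigma_\pm'|\lesssim 1$: from $(\nabla u^+,\sigma_\pm')=-J^{-1}\partial_u F$ the naive estimate gives only $\lesssim\|J^{-1}\|\lesssim s_0^{-1}$, and solving the two rows of Lemma \ref{lem22} separately produces a circular inequality. The resolution is that the blow-up of $J^{-1}$ is cancelled term by term by smallness of $\partial_u F$ in precisely the directions in which $J^{-1}$ is large: $-\partial_u F_1=f'(u)-\sigma_\pm I$ differs from $A$ by $\bigO(s_0)$ (so $A^{-1}(-\partial_u F_1)=I+\bigO(1)$) and is $\bigO(s_0)$ when contracted on the left with $l^1(u)$, namely $(\lambda^1(u)-\sigma_\pm)l^1(u)$; and $-\partial_u F_2=\nabla\eta(u^+)-\nabla\eta(u)-\nabla\tilde\eta(u)=\bigO(|u-u^+|)+\bigO(s_0)=\bigO(s_0)$ by Lemma \ref{lem_eta}. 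Inserting these into the block-inverse formula for $J^{-1}$, together with $|A^{-1}(u^+-u)|\sim 1$, $|(u^+-u)^t\nabla^2\eta(u^+)A^{-1}|\sim 1$, and $|S^{-1}|\sim s_0^{-1}$, every contribution is $\bigO(1)$, giving $|\nabla u^+|+|\sigma_\pm'|\lesssim 1$. This alignment-driven cancellation is the crux; the rest is bookkeeping at the fixed scale $s_0$.
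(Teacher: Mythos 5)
Your argument is correct, and the two halves divide naturally: the scale estimates \eqref{eqn_lem23_desired1} you obtain in essentially the same way as the paper (via Lemma \ref{lem15}, Lemma \ref{l2_rel_entropy_lemma}, and Lemma \ref{lem_dist} to pin down $-\tilde\eta(u)\sim s_0^2$, then the midpoint asymptotic from Lemma \ref{lem_hugoniot}); but for the derivative bounds \eqref{eqn_lem23_desired2} you take a genuinely different route. The paper exploits the scalar parametrization $u^+ = \S^1_u(s^*(u))$, $\sigma_\pm=\sigma^1_u(s^*(u))$: it differentiates the defining equation $\eta(u\,|\,\S^1_u(s^*))=-\tilde\eta(u)$, and the crucial cancellation appears automatically because $\frac{d}{ds}\eta(u\,|\,\S^1_u(s))\big|_{s=s^*}\sim s_0$ divides an error that is also $\sim s_0$, giving $|\nabla s^*|\lesssim 1$ and then the bounds on $\nabla u^+,\sigma_\pm'$ by the chain rule applied to the (smooth in $u$, uniformly in $s\sim s_0$) functions $\S^1_u$ and $\sigma^1_u$ of Lemma \ref{lem_hugoniot}. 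You instead treat $(u^+,\sigma_\pm)$ as the solution of the $(n{+}1)$-dimensional implicit system $F(u,u^+,\sigma)=0$, analyze the saddle-point Jacobian $J$ directly, and locate the near-singular direction of $A^{-1}$ together with the compensating smallness in $\partial_u F$ and in the Schur complement $S$. Your account of where the cancellation lives — $u^+-u$ in the near-kernel of $A$, $(u^+-u)^t\nabla^2\eta(u^+)$ along $l^1(u^+)$, $|S|\sim s_0$, $|\partial_u F_2|\lesssim s_0$, and the $l^1$-projection of $\partial_u F_1$ of size $\lambda^1(u)-\sigma_\pm\sim s_0$ — is accurate and suffices to close the bound $|\nabla u^+|+|\sigma_\pm'|\lesssim 1$ (and subsequently the $s_0^{-1}$ bound on second derivatives once the first-order bounds feed into $\bigO(1)$ estimates of the inhomogeneous terms). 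The trade-off is clear: the paper's choice of the scalar parameter $s^*$ folds all of this block-matrix cancellation into the single observation $\frac{d}{ds}\eta(u\,|\,\S^1_u(s^*))\gtrsim s_0$, so there is less to check; your full IFT formulation is more explicit about the degenerate directions and makes the ``alignment-driven cancellation'' you identify as the crux visible as a structural feature of the saddle-point system, which is arguably more robust if one ever wants to perturb the construction. Both routes rely on the same inputs (Lemmas \ref{lem_eta}, \ref{lem_nu}, \ref{lem_hugoniot}, \ref{lem22}, and the scale information just established), so either is acceptable.

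One small expository caveat: in your proof that $-\tilde\eta(u)\gtrsim s_0^2$ you should be explicit, as you partly are, that the nearest boundary point $\overline{u}$ satisfies $u-\overline{u}$ pointing predominantly along $\pm r_1(u^*)$ (up to a $C^{-1/3}s_0$ correction), since the transverse width of $R_{C,s_0}$ is $C^{-1/3}s_0$ while $\nu(u^*)=l^1(u^*)$; this is the same point the paper passes over quickly when asserting $c_1\sim s_0$.
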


\begin{proof}
We begin with (\ref{eqn_lem23_desired1}). We estimate $|u - u^+|$ using Lemma \ref{l2_rel_entropy_lemma}, Lemma \ref{lem16}, and Lemma \ref{lem_nu} via
\begin{equation}
|u - u^+|^2 \sim \eta(u | u^+) = - \tilde\eta(u) \lesssim s_0d(u,\bd\Pi) \lesssim s_0^2,
\end{equation}
which proves $|u - u^+| \lesssim s_0$. For the reverse direction, we show $-\tilde\eta(u) \gtrsim s_0d(u,\bd\Pi)$. Indeed, take $\overline{u}\in\bd\Pi$ so that $|u - \overline{u}| = d(u,\bd\Pi)$. Then, by Taylor's theorem and Lemma \ref{lem_eta},
\begin{equation}
|\tilde\eta(u) - \nabla\tilde\eta(\overline{u})\cdot (u - \overline{u})| \lesssim Cs_0^3.
\end{equation} 
Next, $\nabla\tilde\eta(\overline{u}) = b_1 l^1(u^*) + \bigO(s_0^2)$ and $u-\overline{u} = c_1r_1(u^*) + \bigO(s_0^2)$, where $b_1 \sim s_0$ by Lemma \ref{lem_nu} and $c_1 \sim s_0$ because $u \notin R_{C,s_0}^{bd}$.
Thus, $-\tilde\eta(u) \gtrsim s_0^2$, which completes the proof of the first estimate of (\ref{eqn_lem23_desired1}). Note since $|u - u^+| \sim s^*$, we have shown $s^* \sim s_0$ on $R_{C,s_0} \backslash R_{C,s_0}^{bd}$.
Now, by Lemma \ref{lem_hugoniot},
\begin{equation}\label{eqn_lem23_eqn0}
\sigma_\pm = \frac{1}{2}\left(\lambda^1(u) + \lambda^1(u^+)\right) + \bigO(|u-u^+|^2).
\end{equation}
Using $(u,u^+,\sigma_\pm)$ is Lax admissible, (\ref{eqn_lem23_eqn0}) implies (\ref{eqn_lem23_desired1}).

To obtain the derivative estimates of $u^+$ and $\sigma_\pm$ in (\ref{eqn_lem23_desired2}), we estimate $\nabla s^*(u)$ and $\nabla^2 s^*(u)$. In particular, since $\eta(u | \S^1_u(s^*(u)) = -\tilde\eta(u)$,
\begin{equation}\label{eqn_lem23_eqn1}
\frac{d}{ds}\biggr|_{s = s^*} \eta(u | \S^1_u(s)) \nabla s^*(u) = -\nabla\tilde\eta(u) - \left[\nabla\eta(u) - \nabla\eta(u^+) - \nabla^2\eta(u^+)\nabla_u\S^1_u(s^*)(u - u^+)\right] := \mathcal{E}
\end{equation}
Thus, using $u\mapsto \S^1_u(s)$ is $C^1$ by Lemma \ref{lem_hugoniot}, $s^* \sim |u - u^+| \sim s_0$ on $R\backslash R^{bd}$, and Lemma \ref{lem_eta}, we have $|\mathcal{E}| \lesssim s_0$. On the other hand, using the precise asymptotics of $s\mapsto \S^1_u(s)$ in Lemma \ref{lem_hugoniot} and $\eta$ is strictly convex, we compute
\begin{equation}\label{eqn_lem23_eqn2}
\begin{aligned}
\frac{d}{ds}\biggr|_{s = s^*} \eta(u | \S^1_u(s)) &= -\nabla^2 \eta(\S^1_u(s^*)) \frac{d}{ds}\S^1_u(s^*) \cdot (u - u^+)\\
	&= s^*\nabla^2\eta(u)r_1(u) \cdot r_1(u) + \bigO(|u - u^+|^2) \gtrsim s_0.
\end{aligned}
\end{equation}
Combining (\ref{eqn_lem23_eqn1}) and (\ref{eqn_lem23_eqn2}) yields the bound $|\nabla s^*| \lesssim 1$. Differentiating the relations
\begin{equation}\label{eqn_lem23_eqn6}
u^+ = \S^1_u(s^*(u)) \qquad \text{and} \qquad \sigma_\pm = \sigma_u^1(s^*(u))
\end{equation}
using the chain rule and again appealing to Lemma \ref{lem_hugoniot}, we obtain the first part of (\ref{eqn_lem23_desired2}), namely, the bounds $|\nabla u^+| \lesssim 1$ and $|\sigma_\pm^\prime| \lesssim 1$. Next, differentiating (\ref{eqn_lem23_eqn1}) yields
\begin{equation}\label{eqn_lem23_eqn3}
\frac{d^2}{ds^2}\biggr|_{s = s^*} \eta(u | \S^1_u(s)) \nabla s^*(u) \tens \nabla s^*(u) + \frac{d}{ds}\biggr|_{s = s^*} \eta(u | \S^1_u(s)) \nabla^2 s^*(u) = \nabla \mathcal{E}.
\end{equation}
Using $u\mapsto \S^1_u(s)$ is $C^2$ by Lemma \ref{lem_hugoniot}, $s^*\sim s_0$ on $R\backslash R^{bd}$, Lemma \ref{lem_eta}, and $|\nabla u^+|\lesssim 1$, we compute
\begin{equation}\label{eqn_lem23_eqn4}
|\nabla\mathcal{E}| = \left|-\nabla^2\tilde\eta(u) - \nabla^2\eta(u) + \nabla^2\eta(u^+)\nabla u^+ - \nabla\left[\nabla^2\eta(u^+)[\nabla_u\S^1_u](s^*(u))(u - u^+)\right]\right|\lesssim 1.
\end{equation}
Again, using Lemma \ref{lem_hugoniot} so that $s \mapsto \S^1_u(s)$ is $C^2$ and $s^*\sim s_0$ on $R\backslash R^{bd}$, we compute
\begin{equation}\label{eqn_lem23_eqn5}
\begin{aligned}
\frac{d^2}{ds^2}\biggr|_{s = s^*} \eta(u | \S^1_u(s)) &= \nabla^3 \eta(u^+) \frac{d}{ds}\S^1_u(s^*)\frac{d}{ds}\S^1_u(s^*) \cdot (u^+ - u) + \nabla^2\eta(u^+)\frac{d^2}{ds^2}\S^1_u(s^*) \cdot (u^+ - u)\\
	&\qquad+ \nabla^2\eta(u^+)\frac{d}{ds}\S^1_u(s^*)\cdot \frac{d}{ds}\S^1_u(s^*)\\
	&= \nabla\eta^2(u)r_1(u) \cdot r_1(u) + \bigO(|u - u^+|) \lesssim 1.
\end{aligned}
\end{equation}
Combining (\ref{eqn_lem23_eqn3}), (\ref{eqn_lem23_eqn4}), and (\ref{eqn_lem23_eqn5}) with the estimates $|\nabla s^*| \lesssim 1$ and $\frac{d}{ds}\S^1_u(s^*) \gtrsim s_0$, we obtain
\begin{equation}
|\nabla^2 s^*(u)| \lesssim \frac{|\nabla\mathcal{E}| + \left|\frac{d^2}{ds^2} \eta(u | \S^1_u(s^*))\right| |\nabla s^*(u)|^2}{\left|\frac{d}{ds}\S^1_u(s^*)\right|} \lesssim s_0^{-1}.
\end{equation}
Finally, the second derivative estimates in (\ref{eqn_lem23_desired2}) follow by differentiating (\ref{eqn_lem23_eqn0}) twice using the chain rule and appealing to the estimates on $s^*$ and Lemma \ref{lem_hugoniot}.
\end{proof}

%Second, we use the identity (\ref{eqn_lem22_RK}) to write
%\begin{equation}
%\left[f^\prime(u^+) - \sigma_{\pm}Id\right]\nabla u^+ = \left[f^\prime(u^+) - \sigma_{\pm}Id\right] + (u^+ - u)\tens \sigma_{\pm}^\prime + \left[f^\prime(u) - f^\prime(u^+)\right].
%\end{equation}
%Therefore, left multiplying by $l^i(u^+)$, 
%\begin{equation}\label{eqn_lem_eqn1}
%l^i(u^+)\nabla u^+ = l^i(u^+) + (\lambda^i(u^+) - \sigma_{\pm})^{-1}\bigO(|u^+ - u|).
%\end{equation}

Before proving Proposition \ref{prop_R0}, we prove a last lemma which uses more precise information about the maximal shock $(u,u^+,\sigma_\pm)$ by combining the algebraic identities in Lemma \ref{lem22} with the estimates in Lemma \ref{lem23}. The role of the following lemma is to guarantee that the region $R^0_{C,s_0}$, on which $u_L$ is a local maximum, is sufficiently large.
\begin{lemma}\label{lem24}
Suppose $u\in R_{C,s_0}\backslash R_{C,s_0}^{bd}$. Then, for all sufficiently large $C$ and sufficiently small $s_0$, we have the third derivative bound $|\nabla^3D_{max}(u)|\lesssim 1$.
\end{lemma}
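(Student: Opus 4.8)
The plan is to start from the representation of $\nabla D_{max}$ furnished by Lemma \ref{lem22},
\begin{equation}
\nabla D_{max}(u) = G(u)\left(f'(u) - \sigma_\pm I\right), \qquad G(u) := \nabla\eta(u^+) - \nabla\eta(u_R) - (1+Cs_0)\left[\nabla\eta(u) - \nabla\eta(u_L)\right],
\end{equation}
and to differentiate it twice. Write $M(u) := f'(u) - \sigma_\pm(u)I$. First, the identity \eqref{eqn_lem22_maximality} gives $G(u) = -(u-u^+)^t\left[\nabla^2\eta(u^+)\nabla u^+\right]$, so $|G(u)| \lesssim |u-u^+| \lesssim s_0$ by Lemma \ref{lem23}. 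This representation also shows that $\nabla^3 D_{max}$ involves only derivatives of $G$ and $M$ of order at most two, hence only $u^+$, $\sigma_\pm$ and their first two derivatives, all controlled by Lemma \ref{lem23} via $|\nabla u^+| + |\sigma_\pm'| \lesssim 1$ and $|\nabla^2 u^+| + |\sigma_\pm''| \lesssim s_0^{-1}$; combined with the $C^3$ regularity of $\S^1$ from Lemma \ref{lem_hugoniot} and the implicit characterization $\eta(u \mid u^+) = -\tilde\eta(u)$, this also yields the requisite $C^3$ regularity of $D_{max}$ on $\mathrm{int}\,R_{C,s_0}$. Throughout one uses $|\nabla^3\eta| + |f'''| \lesssim 1$ on the compact sets $\Pi_{C,s_0}$ and $\mathcal M := \{u^+ : u \in \Pi_{C,s_0}\}$.

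Expanding, $\nabla^3 D_{max} = (\nabla^2 G)\,M + 2(\nabla G)(\nabla M) + G\,(\nabla^2 M)$, with every contraction on the covector index inherited from $G$. I would first dispatch the two harmless groups: since $\nabla G = \nabla^2\eta(u^+)\nabla u^+ - (1+Cs_0)\nabla^2\eta(u)$ and $\nabla M = f''(u) - \sigma_\pm'\tens I$ are each $\lesssim 1$, one has $(\nabla G)(\nabla M) \lesssim 1$; and since $\nabla^2 M = f'''(u) - \sigma_\pm''\tens I$ is $\lesssim s_0^{-1}$, pairing it with $|G| \lesssim s_0$ gives $G\,(\nabla^2 M) \lesssim 1$.

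The main obstacle is the term $(\nabla^2 G)\,M$. Splitting $\nabla^2 G = \nabla^2\eta(u^+)\nabla^2 u^+ + \big[\nabla^3\eta(u^+)(\nabla u^+,\nabla u^+) - (1+Cs_0)\nabla^3\eta(u)\big]$, the bracket is $\lesssim 1$ and contributes $\lesssim 1$ against $M$. The dangerous piece is $\big[\nabla^2\eta(u^+)\nabla^2 u^+\big]M$, which the crude bound $|\nabla^2 u^+| \lesssim s_0^{-1}$ only controls by $s_0^{-1}$; the key point is that an extra factor of $s_0$ is hidden in the contraction with $M$. Concretely, differentiating $u^+ = \S^1_u(s^*(u))$ twice and using Lemma \ref{lem_hugoniot}, $|\nabla s^*| \lesssim 1$, $|\nabla^2 s^*| \lesssim s_0^{-1}$, and $s^* \sim s_0$ (from the proof of Lemma \ref{lem23}), the only term of size $s_0^{-1}$ in $\nabla^2 u^+$ is $\big[\partial_s\S^1_u(s^*)\big]\tens\nabla^2 s^*$, all others being $\lesssim 1$; and $\partial_s\S^1_u(s^*) = r_1(u) + \bigO(s_0)$ by the asymptotic expansion of $\S^1_u$ in Lemma \ref{lem_hugoniot}. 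Since $|u^+ - u| \lesssim s_0$ and $\nabla^2\eta(u)r_1(u) = \kappa(u)l^1(u)$ with $\kappa(u) \sim 1$ (Lemma \ref{lem_normalization} and strict convexity of $\eta$), one gets $\nabla^2\eta(u^+)r_1(u) = \kappa(u)l^1(u) + \bigO(s_0)$, hence $\nabla^2\eta(u^+)\nabla^2 u^+ = \kappa(u)\,l^1(u)\tens\nabla^2 s^* + \bigO(1)$. Contracting with $M$ and using $l^1(u)\big(f'(u) - \sigma_\pm I\big) = (\lambda^1(u) - \sigma_\pm)l^1(u)$ with $|\lambda^1(u) - \sigma_\pm| \lesssim s_0$ (Lemma \ref{lem23}),
\begin{equation}
\big[\nabla^2\eta(u^+)\nabla^2 u^+\big]M = \kappa(u)(\lambda^1(u) - \sigma_\pm)\,l^1(u)\tens\nabla^2 s^* + \bigO(1) = \bigO(1).
\end{equation}
Summing the three groups yields $|\nabla^3 D_{max}(u)| \lesssim 1$ on $R_{C,s_0}\backslash R_{C,s_0}^{bd}$. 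The whole difficulty is this single cancellation: the blow-up direction $\nabla^2 s^*$ enters only paired with $\nabla^2\eta(u^+)r_1(u) \approx l^1(u)$, and the Rankine--Hugoniot/Liu structure forces $l^1(u)(f'(u) - \sigma_\pm I)$ to have magnitude $|\lambda^1(u) - \sigma_\pm| \sim s_0$, precisely absorbing the singularity; locating and organizing this cancellation (rather than the routine bookkeeping of the other terms) is the crux.
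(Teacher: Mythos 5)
Your proof is correct and takes a genuinely different route to the key cancellation than the paper. The paper differentiates the gradient formula twice, collects the dangerous term into $\mathcal{E}_2 = \left[\nabla^2\eta(u^+)\nabla^2 u^+\right]\left(f'(u) - \sigma_\pm I\right)$, and then tests the trilinear form $\mathcal{E}_2(\cdot,\cdot,\cdot)$ against the eigenbasis $\{r_i(u)\}$: for the $r_1$ slot it uses $\lambda^1(u) - \sigma_\pm \lesssim s_0$ directly, for the $r_c$ slots with $c \ge 2$ it left-multiplies a differentiated Rankine--Hugoniot identity (from differentiating (\ref{eqn_lem22_RK})) by $l^c(u^+)$ and divides by $\lambda^c(u^+) - \sigma_\pm \gtrsim 1$ to get $l^c(u^+)\nabla^2 u^+ = \bigO(1)$, and finally patches these directional bounds together by invoking the symmetry of $\nabla^3 D_{max}$ (which $\mathcal{E}_2$ alone does not have). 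You instead go to the source of the singularity: differentiating $u^+ = \S^1_u(s^*(u))$ twice and using the estimates $|\nabla s^*| \lesssim 1$, $|\nabla^2 s^*| \lesssim s_0^{-1}$, $s^* \sim s_0$ from Lemma \ref{lem23} together with the $C^3$ regularity and asymptotics of Lemma \ref{lem_hugoniot}, you extract the rank-one structure $\nabla^2 u^+ = r_1(u)\tens\nabla^2 s^* + \bigO(1)$. The cancellation is then visible in one line: $\nabla^2\eta(u^+)r_1(u) = \kappa(u)l^1(u) + \bigO(s_0)$, and $l^1(u)\bigl(f'(u) - \sigma_\pm I\bigr) = (\lambda^1(u) - \sigma_\pm)l^1(u)$ supplies the compensating $s_0$. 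Your route avoids both the auxiliary differentiated RH identity and the symmetry argument, and makes clearer that the blow-up direction of $\nabla^2 u^+$ is aligned with $r_1(u)$ on the output index; the paper's version, while more laborious, does not require isolating that rank-one structure explicitly. Both the characterization of the dominant term in $\nabla^2 u^+$ and the bounds on the remaining groups $(\nabla G)(\nabla M)$, $G(\nabla^2 M)$, and the bracket in $\nabla^2 G$ are verified correctly against Lemma \ref{lem23} and the regularity hypotheses, so the argument closes.
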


\begin{proof}
Differentiating (\ref{eqn_lem22_grad}) once yields the second derivative formula
\begin{equation}\label{eqn_lem24_eqn1}
\begin{aligned}
\nabla^2 D_{max}(u) &= \left[\nabla^2\eta(u^+)\nabla u^+ - (1+Cs_0)\nabla^2\eta(u)\right]\left(f^\prime(u) - \sigma_\pm I\right)\\
	&+ \left[\nabla\eta(u^+)-\nabla\eta(u_R) - (1+Cs_0)\left(\nabla\eta(u) - \nabla\eta(u_L)\right)\right]\left(f^{\prime\prime}(u) - I \tens \sigma_\pm^\prime\right).
\end{aligned}
\end{equation}
Differentiating (\ref{eqn_lem24_eqn1}) once more yields the formula
\begin{equation}\label{eqn_lem24_eqn2}
\begin{aligned}
\nabla^3 D_{max}(u) &= \left[\nabla^3\eta(u^+)\nabla u^+ \nabla u^+ + \nabla^2\eta(u^+)\nabla^2 u^+ - (1+Cs_0)\nabla^3\eta(u)\right]\left(f^\prime(u) - \sigma_\pm I\right)\\
	&\qquad+2\left[\nabla^2\eta(u^+)\nabla u^+ - (1+Cs_0)\nabla^2\eta(u)\right]\left(f^{\prime\prime}(u) - I \tens \nabla\sigma_\pm\right)\\
	&\qquad +\left[\nabla \eta(u^+) - \nabla\eta(u_R) - (1+Cs_0)\left(\nabla\eta(u) - \nabla\eta(u_L)\right)\right]\left(f^{\prime\prime\prime}(u) - I \tens \nabla^2\sigma_\pm\right)\\
	&:= \left[\nabla \eta(u^+) - \nabla\eta(u_R) - (1+Cs_0)\left(\nabla\eta(u) - \nabla\eta(u_L)\right)\right] \left(I \tens \nabla^2\sigma_\pm\right)\\
	&+\left[\nabla^2\eta(u^+)\nabla^2 u^+ \left(f^\prime(u) - \sigma_\pm I\right)\right] + \mathcal{M}\\
	&:= \mathcal{E}_1 + \mathcal{E}_2 + \mathcal{M}
\end{aligned}
\end{equation}
We have been rather vague with the exact meaning of the various products of tensors above. While we will not need any precise structure of the terms in $\mathcal{M}$, we will need to estimate $\mathcal{E}_1$ and $\mathcal{E}_2$ very carefully and will expand on their structure shortly. Indeed, each term collected in $\mathcal{M}$ depends on only $u^+$, $\nabla u^+$, $\sigma_\pm$, and $\sigma_\pm^\prime$ and can be bounded using only the estimates in Lemma \ref{lem23}. So, we have $|\mathcal{M}| \lesssim 1$. 

Next, let us bound $\mathcal{E}_1$. Because $|u^+ - u| \lesssim s_0$, $|u_R - u_L| \lesssim s_0$, and $|u - u_L| \lesssim s_0$,
\begin{equation}\label{eqn_lem24_eqn3}
\left|\nabla \eta(u^+) - \nabla\eta(u_R) - (1+Cs_0)\left(\nabla\eta(u) - \nabla\eta(u_L)\right)\right| \lesssim s_0 + Cs_0^2 \lesssim s_0.
\end{equation}
Combining (\ref{eqn_lem24_eqn3}) with the estimate $|\sigma_\pm^{\prime\prime}| \lesssim s_0^{-1}$ from Lemma \ref{lem23} yields $|\mathcal{E}_1| \lesssim 1$.

It remains only to bound $\mathcal{E}_2$, which is written more precisely in index notation as
\begin{equation}\label{eqn_lem24_eqn4}
\mathcal{E}_2 = \partial_i\partial_j \eta(u^+)\partial_l\partial_k u^+_j (\partial_k f_m(u) - \sigma_\pm \delta_{k,m}).
\end{equation}
Let us now define the trilinear forms $\nabla^3 D_{max}(u)(w^1,w^2,w^3)$ and $\mathcal{E}_2(w^1,w^2,w^3)$ via
\begin{equation}
\begin{aligned}
\nabla^3D_{max}(u)(w^1,w^2,w^3) &:= \partial_i\partial_j\partial_k D_{max}(u) w_i^1w_j^2 w_k^3\\
%\quad \text{and} \quad 
\mathcal{E}_2(u)(w^1,w^2,w^3) &:= w^1_iw^2_l\partial_i\partial_j \eta(u^+)\partial_l\partial_k u^+_j (\partial_k f_m(u) - \sigma_\pm \delta_{k,m})w^3_m
\end{aligned}
\end{equation}
We note that while it is clear $\nabla^3 D_{max}$ is symmetric in all indices, this is not true for $\mathcal{E}_2$. Thus, using $|\nabla^2 u^+| \lesssim s_0^{-1}$ and $\lambda^1(u) - \sigma_\pm(u) \lesssim s_0$ by Lemma \ref{lem23}, $|\mathcal{E}_2(\cdot,\cdot, r_1(u))| \lesssim 1$. Therefore, exploiting the symmetry of $\nabla^3 D_{max}$ and using the decomposition (\ref{eqn_lem24_eqn2}),
\begin{equation}\label{eqn_lem24_eqn5}
|\nabla^3 D_{max}(u)(r_1(u),\cdot,\cdot)| = |\nabla^3 D_{max}(\cdot, r_1(u),\cdot) = |\nabla^3 D_{max}(u)(\cdot,\cdot,r_1(u))| \lesssim 1.
\end{equation}
Next, we bound $\mathcal{E}_2(r_c(u),\cdot,\cdot)$ for $2 \le c \le n$. 
First, differentiating (\ref{eqn_lem22_RK}) yields
\begin{equation}\label{eqn_lem24_eqn6}
\begin{aligned}
\left[f^\prime(u^+) - \sigma_\pm I\right]\nabla^2 u^+ &= \left[f^{\prime\prime}(u) - I \tens \sigma_\pm^\prime\right] +\left[\nabla u^+ \tens \sigma_\pm^+ + (u^+ - u) \tens \sigma_\pm^{\prime\prime}\right]\\
	&\quad-\left[f^{\prime\prime}(u^+)\nabla u^+ - I \tens \sigma_\pm^\prime\right]\nabla u^+.
\end{aligned}
\end{equation}
Note, by Lemma \ref{lem23} each term on the right hand side of (\ref{eqn_lem24_eqn6}) is bounded uniformly in $s_0$. Therefore, in index notation, we have
\begin{equation}\label{eqn_lem24_eqn7}
\left[\partial_j f^i(u^+) - \sigma_\pm \delta_{ij}\right]\partial_l\partial_k u^+_j = \bigO(1).
\end{equation}
Left multiplying (\ref{eqn_lem24_eqn7}) by $l^c(u^+)$ and using $\lambda^c(u^+) -\sigma_\pm \gtrsim 1$ for $1 < c \le n$, $|l^c(u^+) - l^c(u)|\lesssim s_0$ and $|\nabla^2u^+|\lesssim s_0^{-1}$, we obtain
\begin{equation}\label{eqn_lem24_eqn8}
l^c_j(u) \partial_l\partial_k u^+_j = l^c_j(u^+) \partial_l\partial_k u^+_j + \bigO(1) =  \bigO(1),
\end{equation}
for each $2 \le c \le n$. Using $\eta(u)r_c(u) \parallel l^c(u)$, (\ref{eqn_lem24_eqn8}) yields the bound on the trilinear form of 
\begin{equation}
|\mathcal{E}_2(u)(r_c(u),\cdot,\cdot)| = |r^c_i(u)\partial_i\partial_j \eta(u^+)\partial_l\partial_k u^+_j (\partial_k f_m(u) - \sigma_\pm \delta_{k,m})| \lesssim 1,
\end{equation}
for $1 < c \le n$, which by the decomposition (\ref{eqn_lem24_eqn2}) yields the corresponding bound $|\nabla^3D_{max}(u)(r_i(u),\cdot,\cdot)|\lesssim 1$ for $1 < i \le n$. By the linearity of $\nabla^3 D_{max}(u)(w^1,w^2,w^3)$ in $w^1$, it follows that for $w^1 = \sum w^1_i r_i(u)$,
\begin{equation}
|\nabla^3 D_{max}(u)(w^1,w^2,w^3)| \le \sum_{i = 1}^n |w^1_i\nabla^3 D_{max}(u)(r_i(u),w^2,w^3)| \lesssim |w^2||w^3|\sum_{i=1}^n |w^1_i| \lesssim |w^1||w^2||w^3|,
\end{equation}
which completes the proof.

\end{proof}

%%%%%%%%%%%%%%%%%%%%%%%%%%%%%%%%%%%%%%%%%%%%%%%%%%%%%%%%%%%%%%%%%%%%%%%%%%%%
\begin{flushleft}
\uline{\bf{Proof of Proposition \ref{prop_R0}}}
\end{flushleft}
%\begin{proof}[Proof of Proposition \ref{prop_R0}]
We show that $u_L$ is a local maximum for $D_{max}$. We first note that since $-\tilde\eta(u_L) = \eta(u_L | u_R)$
and $(u_R, u_L, \sigma_{LR})$ is an entropic $1$-shock, $u^+(u_L) = u_R$. In other words, $D_{max}(u_L) = D_{RH}(u_L,u_R,\sigma_{LR}) = 0$.
Similarly, evaluating the identity (\ref{eqn_lem22_RK}) at $u_L$,
\begin{equation}
\nabla D_{max}(u_L) = \left[\nabla\eta(u_R) - \nabla\eta(u_R) - (1+Cs_0)\left[\nabla\eta(u_L) - \nabla\eta(u_L)\right]\right]\left(f^\prime(u_L) - \sigma_{LR}I\right) = 0,
\end{equation}
yields that $u_L$ is a critical point of $D_{max}$.
Next, we evaluate $\nabla^2D_{max}$, computed in equation (\ref{eqn_lem24_eqn2}), at $u_L$ to obtain
\begin{equation}
\nabla^2 D_{max}(u_L) = \left[\nabla^2\eta(u_R)\nabla u^+ - (1+ Cs_0)\nabla^2\eta(u_L)\right](f^\prime(u_L) - \sigma_{LR} I)
\end{equation}
It remains to show that the matrix $\nabla^2 D_{max}(u_L)$ is (quantitatively) negative definite.
First, we show that $D_{max}$ is concave in the $r_1(u_L)$ direction. Indeed, we compute
\begin{equation}
\begin{aligned}\label{eqn_propR0_eqn1}
\nabla^2D_{max}(u_L)r_1(u_L) &= \left(\lambda^1(u_L) - \sigma_{LR}\right)\left[\nabla^2\eta(u_R)\nabla u^+ - (1+Cs_0)\nabla^2\eta(u_L)\right]r_1(u_L).
\end{aligned}
\end{equation}
Now, $\lambda^1(u_L) - \sigma_{LR} \sim s_0$ by Lemma \ref{lem23}. Also, evaluating (\ref{eqn_lem22_maximality}) at $u_L$ yields
\begin{equation}\label{eqn_propR0_eqn2}
s_0^{-1}(u_R - u_L)^t\nabla^2\eta(u_R)\nabla u^+(u_L) = 0.
\end{equation}
Therefore, substituting the Taylor expansion $u_R = u_L + s_0r_1(u_L) + \bigO(s_0^2)$ into (\ref{eqn_propR0_eqn2}) and right multiplying by $r_1(u_L)$,
\begin{equation}\label{eqn_propR0_eqn3}
\nabla^2 \eta(u_R)\nabla u^+(u_L)r_1(u_L) \cdot r_1(u_L) \lesssim s_0 |\nabla u^+(u_L)| \lesssim s_0.
\end{equation}
Combining (\ref{eqn_propR0_eqn1}) and (\ref{eqn_propR0_eqn3}) yields
\begin{equation}\label{eqn_propR0_eqn4}
\begin{aligned}
r_1(u_L)\cdot \nabla^2D_{max}(u_L)r_1(u_L) &= -s_0(1 + Cs_0) r_1(u_L)\cdot \left[\nabla^2\eta(u_L)\right]r_1(u_L) + \bigO(s_0^2) \lesssim -s_0,
\end{aligned}
\end{equation}
provided $s_0$ is sufficiently small.
Second, we show that $D_{max}$ is concave in the $r_i(u_L)$ directions for $1 < i \le n$. Indeed, we note that by left multiplying the identity (\ref{eqn_lem22_RK}) by $l^i(u_L)$,
\begin{equation}
(\lambda^i(u_L) - \sigma_{LR})l^i(u_L)\nabla u^+(u_L) =  (\lambda^i(u_L) - \sigma_{LR})l^i(u_L) + \left[(u_R - u_L) \cdot l^i(u_L)\right]\sigma_\pm^\prime(u_L) + \bigO(s_0)
\end{equation}
Since $|\nabla u^+|\lesssim 1$ and $|\sigma_\pm^\prime| \lesssim 1$ by Lemma \ref{lem23} and $\lambda^i(u) -\sigma_\pm \sim 1$ for $i \neq 1$ by Assumption \ref{assum} (a), we obtain
\begin{equation}
|l^i(u_L)\nabla u^+(u_L) - l^i(u_L)| \lesssim s_0 \qquad \text{for}\quad 1 < i \le n.
\end{equation}
Using $r_i(u_L)\nabla^2\eta(u_L) \parallel l^i(u_L)$, for any $1 \le i \le n$ and $1< j \le n$, we obtain the bound
\begin{equation}\label{eqn_propR0_eqn5}
\begin{aligned}
r_i(u_L)\cdot \nabla^2D_{max}(u_L)r_j(u_L) &= r_i(u_L)\cdot \left(\lambda^j(u_L) - \sigma_{LR}\right)\left[\nabla^2\eta(u_R)\nabla u^+ - (1+Cs_0)\nabla^2\eta(u_L)\right]r_j(u_L)\\
	&\lesssim -Cs_0r_i(u_L)\cdot \nabla^2\eta(u_L)r_j(u_L) + r_i(u_L) \cdot \left[\nabla^2\eta(u_R)\nabla u^+ - \nabla^2\eta(u_L)\right]r_j(u_L)\\
	&\lesssim -Cs_0\delta_{ij} + \bigO(s_0),
\end{aligned}
\end{equation}
for $C$ sufficiently large and $s_0$ sufficiently small.
To summarize, we have shown in (\ref{eqn_propR0_eqn1}), (\ref{eqn_propR0_eqn4}), and (\ref{eqn_propR0_eqn5}) that for any $2\le i \le n$ and $1 \le j,k \le n$ with $j\neq k$,
\begin{align}
\nabla^2 D_{max}(u_L)r_1(u_L) \cdot r_1(u_L) \lesssim -s_0\\
\nabla^2 D_{max}(u_L)r_i(u_L) \cdot r_i(u_L) \lesssim -Cs_0\\
\nabla^2 D_{max}(u_L)r_j(u_L) \cdot r_k(u_L) \lesssim s_0.
\end{align}
The same application of Cauchy-Schwarz inequality as in the proof of Lemma \ref{lem3} implies there exist universal constants $K_1,K_2 > 0$ depending only on the system such that for $C$ sufficiently large and $s_0$ sufficiently small and any $v = \sum_{i=1}^n v_i r_i(u_L)$,
\begin{equation}\label{eqn_propR0_eqn6}
\nabla^2 D_{max}(u_L)v \cdot v \le -K_1s_0v_1^2 - K_2Cs_0 \left[\sum_{i=2}^n v_i^2\right] \le -K_1s_0|v|^2,
\end{equation}
which implies $u_L$ is a local maximum of $D_{max}$. To construct $R_{C,s_0}^0$, we take $u\in R_{C,s_0}\backslash R_{C,s_0}^{bd}$ and suppose moreover that $K_3 > 0$ is the constant from Lemma \ref{lem24} so that $|\nabla^3D_{max}(u)| \le K_3$.
We estimate $D_{max}(u)$ using Taylor's theorem. In particular, there exists a $v$ between $u$ and $u_L$ such that
\begin{equation}
D_{max}(u) = \frac{1}{2}\nabla^2D_{max}(u_L)(u-u_L)\cdot (u-u_L) + \frac{1}{6}\nabla^3D_{max}(v)(v-u_L,v-u_L,v-u_L).
\end{equation}
Now, using (\ref{eqn_propR0_eqn6}),
\begin{equation}\label{eqn_propR0_eqn7}
D_{max}(u) \le -\frac{K_1}{2}s_0|u-u_L|^2 + \frac{K_3}{6}|u - u_L|^3,
\end{equation}
which directly implies there is a constant $K_0>0$ depending only on the system such that for all $C$ sufficiently large and $s_0$ sufficiently small, $D_{max}(u) \le 0$ provided $|u-u_L| \le K_0s_0$.
%\end{proof}

%%%%%%%%%%%%%%%%%%%%%%%%%%%%%%%%%%%%%%%%%%%%%%%%%%%%%%%%%%%%%%%%%%%%%%%%%%%%%%%%%%%%%%%%%%%%%%%%%%%%%%%%

\subsection{Step 2.3: Shocks in \texorpdfstring{$R_{C,s_0}^+$}{} and \texorpdfstring{$R_{C,s_0}^-$}{}}

In this step, we show that $D_{max}(u) \le 0$ for $u$ strictly between $R_{C,s_0}^{bd}$ and $R_{C,s_0}^0$ and for $u \in R_{C,s_0}$ but far enough in the interior of $\Pi$ such that $u\notin R_{C,s_0}^0$. This step is necessary because in Propositions \ref{prop_R}, \ref{prop_Rbd}, and \ref{prop_R0}, we must choose a specific universal constants $K_h$, $K_{bd}$, and $K_0$ to define the height in the $r_1(u^*)$ direction of the sets $R_{C,s_0}$, $R_{C,s_0}^0$, and $R_{C,s_0}^{bd}$, respectively. In particular, because we must choose $K_h$ sufficiently large and $K_{bd}$ and $K_0$ sufficiently small, it remains to check that if $R_{C,s_0} \neq R_{C,s_0}^{bd}\cup R_{C,s_0}^0$, the desired inequality, $D_{max} \le 0$, holds on the remainder of $R_{C,s_0}$. Since we know the inequality $D_{max}(u)\le 0$ holds on $\bd R_{C,s_0}$, $\bd R_{C,s_0}^{bd}$, and $\bd R_{C,s_0}^0$, it suffices to show there are no critical points for $D_{max}$ in $R_{C,s_0}$ except $u_L$, $u^*$, or possibly states $u$ very close to $u_L$ or $u^*$. More precisely, we have:
\begin{proposition}\label{prop_R+}
Let $R_{C,s_0}$ and $R_{C,s_0}^{bd}$ be as in the preceding sections. Then, for $C$ sufficiently large and $s_0$ sufficiently small, and for any $u \in R_{C,s_0} \backslash \left[R_{C,s_0}^{bd} \cup R_{C,s_0}^0\right]$, $\nabla D_{max}(u) \neq 0$.
\end{proposition}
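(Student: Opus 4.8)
The plan is to reduce the non-vanishing of $\nabla D_{max}$ on $R_{C,s_0}\setminus(R_{C,s_0}^0\cup R_{C,s_0}^{bd})$ to an incompatibility between the displacements $u-u_L$ and $u^+-u_R$, and then to verify that incompatibility along the $r_1$-axis through $u_L$. First, the gradient identity \eqref{eqn_lem22_grad} reads
\begin{equation}\label{eqn_propRp_grad}
\nabla D_{max}(u) = \Big[\nabla\eta(u^+) - \nabla\eta(u_R) - (1+Cs_0)\big(\nabla\eta(u) - \nabla\eta(u_L)\big)\Big]\big(f^\prime(u) - \sigma_\pm I\big),
\end{equation}
and since $(u,u^+,\sigma_\pm)$ is Liu admissible, Lemma \ref{lem23} gives $\lambda^1(u) - \sigma_\pm \sim s_0 > 0$, so $\sigma_\pm < \lambda^1(u) \le \lambda^i(u)$ for every $i$ and $f^\prime(u) - \sigma_\pm I$ is invertible; hence $u$ is a critical point of $D_{max}$ if and only if $\nabla\eta(u^+) - \nabla\eta(u_R) = (1+Cs_0)\big(\nabla\eta(u) - \nabla\eta(u_L)\big)$. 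As all four states lie within $O(s_0)$ of $u_L$, linearising this identity with $\nabla^2\eta$ and inverting the uniformly bounded matrix $\nabla^2\eta(u_L)$ shows that a critical point must satisfy, writing $(\cdot)_1$ for the $r_1(u_L)$-component,
\begin{equation}\label{eqn_propRp_crit}
(u^+ - u_R)_1 = (1+Cs_0)(u-u_L)_1 + O\big(Cs_0\,|u-u_L| + s_0^2\big).
\end{equation}
Since $u\notin R_{C,s_0}^0$ and $R_{C,s_0}$ is transversally thin ($\lesssim C^{-1/3}s_0$), $|(u-u_L)_1|\gtrsim s_0$ and $|(u-u_L)_1|\sim|u-u_L|$.

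Next I would compute $(u^+-u_R)_1$ directly, reducing the cylinder $R_{C,s_0}$ to the integral curve $\tau\mapsto u(\tau)$ of $r_1$ through $u_L=u(0)$, with $\tau$ signed arc length so that $(u(\tau)-u_L)_1=\tau+O(\tau^2)$ and $\tau>0$ points toward $u^*$; the transverse part of $u-u_L$ contributes only $O(C^{-1/3}s_0)$ to $(u^+-u_R)_1$, because $\lambda^i(u)-\sigma_\pm\sim1$ for $i\neq1$ makes the relevant inverses $O(1)$ off the first mode. Using the characterisation $\eta\big(u\,|\,\S^1_u(s^*)\big)=-\tilde\eta(u)$ of Lemma \ref{lem15}, the quadratic expansions of the relative entropies, and the asymptotics $u^+=u+s^*r_1(u)+O(s_0^2)$, $u_R=u_L+s_0r_1(u_L)+O(s_0^2)$ of Lemma \ref{lem_hugoniot}, one obtains $s^*=s_0\sqrt{1-2\tau/s_0}+O(s_0^2)$ and therefore
\begin{equation}\label{eqn_propRp_key}
(u^+-u_R)_1 = s_0\,\Phi(\tau/s_0) + O\big(s_0^2 + C^{-1/3}s_0\big), \qquad |\tau|\lesssim s_0,
\end{equation}
where $\Phi(\zeta) := \zeta - 1 + \sqrt{1-2\zeta}$ (for $\zeta<\tfrac12$). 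The universal function $\Phi$ satisfies $\Phi(0)=0$, $\Phi(\zeta)<0$ for $0<\zeta<\tfrac12$, and $\zeta<\Phi(\zeta)<0$ for $\zeta<0$; combining \eqref{eqn_propRp_crit} and \eqref{eqn_propRp_key}, a critical point would force $\Phi(\zeta)=(1+Cs_0)\zeta+O(s_0+C^{-1/3})$ with $\zeta=\tau/s_0$. But for $|\zeta|$ bounded below (away from $R_{C,s_0}^0$) and $1-2\zeta$ bounded below (away from $R_{C,s_0}^{bd}$) one has $|\Phi(\zeta)-(1+Cs_0)\zeta|$ bounded below by a positive constant depending only on the system — on $R_{C,s_0}^+$ since $\Phi(\zeta)<0<(1+Cs_0)\zeta$, and on $R_{C,s_0}^-$ since $\Phi(\zeta)-(1+Cs_0)\zeta\ge\sqrt{1-2\zeta}-1>0$. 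This contradicts the $o(1)$ error, so $\nabla D_{max}(u)\neq0$ on $R_{C,s_0}\setminus(R_{C,s_0}^0\cup R_{C,s_0}^{bd})$, which is the assertion.

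The step I expect to be the main obstacle is \eqref{eqn_propRp_key}, i.e.\ computing $s^*(u)$ and $u^+-u_R$ with adequate precision along the axis. This requires combining the implicit equations for $(u^+,\sigma_\pm)$ and the algebraic identities of Lemma \ref{lem22} with the estimates of Lemma \ref{lem23}: although $f^\prime(u^+)-\sigma_\pm I$ has an eigenvalue of size $\sim s_0$, the derivatives of $u^+$ and $\sigma_\pm$ remain $O(1)$ thanks to a cancellation between $\lambda^1(u)-\sigma_\pm$ and $s^*\,\sigma_\pm^\prime\cdot r_1(u)$; one must also track the $C$-dependence of all error terms at the scale $s_0^2$ and make rigorous the reduction of $R_{C,s_0}$ to its $r_1(u_L)$-axis (the transverse displacement being $\le C^{-1/3}s_0$). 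The remaining estimates are routine bookkeeping with Lemmas \ref{lem22}, \ref{lem23}, and \ref{lem24}.
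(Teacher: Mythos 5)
Your proposal is correct in outline but takes a genuinely different route from the paper. After the shared characterization of critical points (the paper's Lemma \ref{lem25}), the paper proceeds via Lemmas \ref{lem26}--\ref{lem28}: it defines $F(t)$, $G(t)$, the $r_1(u_L)$-components of the two sides of (\ref{eqn_lem25_desired}) restricted to the segment $u(t)=tu+(1-t)u_L$, derives from (\ref{eqn_lem22_RK}) the first-order linear ODE $-e(t)G'(t)+G(t)=F(t)+\delta(t)$, $G(0)=0$, and then uses an explicit integrating-factor solution. The sign of $e(t)$ (positive on $R^+$, negative on $R^-$) determines whether the integral representation forces $G(1)<0<F(1)$ (the $R^+$ case) or $G(1)\le \alpha F(1)$ with $\alpha<1$ (the $R^-$ case); either way $F(1)\neq G(1)$, ruling out critical points without ever producing a closed-form for $s^*$. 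You instead compute $s^*(u)$ explicitly (to leading order) from the quadratic relative-entropy expansion, obtaining the scalar function $\Phi(\zeta)=\zeta-1+\sqrt{1-2\zeta}$ that encodes the Burgers-like geometry of the problem, and then show the identity $\Phi(\zeta)=(1+Cs_0)\zeta+o(1)$ has no solutions for $\zeta$ bounded away from $0$ and $\tfrac12$. I have checked the derivation of $\Phi$ and the elementary sign analysis; they are sound, and the two sign cases you isolate ($\Phi<0<(1+Cs_0)\zeta$ on $R^+$; $\Phi-(1+Cs_0)\zeta\ge\sqrt{1-2\zeta}-1>0$ on $R^-$) correspond precisely to the paper's case split. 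Your approach has the advantage of making the scalar model completely explicit; the paper's ODE argument has the advantage of absorbing all transverse corrections into a single error term $\delta(t)$ whose size is controlled once and for all in Lemma \ref{lem26}, rather than re-proving the reduction at each stage.

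The one place your sketch is not yet a proof is exactly where you flag it: rigorously establishing (\ref{eqn_propRp_key}) with the asserted $O(s_0^2 + C^{-1/3}s_0)$ error. The estimates you need are essentially those of the paper's Lemma \ref{lem23} ($|\nabla s^*|\lesssim 1$, $|u-u^+|\sim s_0$ on $R\setminus R^{bd}$, etc.), plus the observation that transverse displacements $w$ of size $C^{-1/3}s_0$ shift $\tilde\eta(u)$ by only $O(C^{1/3}s_0^3)$ because $\nabla^2\eta(u_L)r_1(u_L)\parallel l^1(u_L)$ kills the leading pairing with $w$; chasing this through the implicit equation for $s^*$ gives the $O(C^{-1/3}s_0)$ shift in $(u^+-u_R)_1$. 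You should also account for the $-Cs_0\tau^2$ correction inside $(s^*)^2$ (it is $O(Cs_0)$ relative, harmless since $C$ is ultimately a fixed constant and $s_0\to 0$ second), and for the quadratic approximation error in $\eta(\cdot|\cdot)$, which is $O(s_0^3)$ at the level of $(s^*)^2$ and hence $O(s_0^2)$ in $s^*$ once $s^*\gtrsim s_0$. None of these presents a real obstruction, but filling them in is where the real labor lies, and it is comparable in volume to the paper's Lemmas \ref{lem26}--\ref{lem28}.
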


In the following, we expand states $u$ about $u_L$ as
\begin{equation}\label{eqn_expansion}
u = u_L + \sum_{i=1}^n b_i(u) r_i(u_L).
\end{equation}
When we refer to $b_i$ below, we always intend $b_i$ to be interpreted as the $i$-th coefficient in this expansion.
Now, since $R$ has width $C^{-1/3}s_0$ in the $r_i(u^*) \approx r_i(u_L)$ directions for $1 < i \le n$ and $R^0$ is a ball of radius $\sim s_0$ about $u_L$ intersected with $R$, $R^0$ splits $R\backslash R^0$ into two connected components. Furthermore, the function $u\mapsto sgn(b_1(u))$ determines the component in which a state $u$ lies. Therefore, we define
\begin{equation}\label{defn_R+R-}
R_{C,s_0}^+ := \set{u \in R\backslash (R^0\cup R^{bd}) \ \big| \ b_1(u) > 0}\quad \text{and} \quad R_{C,s_0}^- := \set{u \in R\backslash (R^0\cup R^{bd}) \ \big| \ b_1(u) < 0}.
\end{equation}
Note, by the definition of $b_i$ in (\ref{eqn_expansion}), the sets $R_{C,s_0}^+$, $R_{C,s_0}^-$, $R_{C,s_0}^{bd}$, and $R_{C,s_0}^0$ are ordered as in Figure \ref{fig:small_scale}. We begin with the following characterization of critical points which is crucial to our analysis:
\begin{lemma}\label{lem25}
A state $u$ in the interior of $\Pi$ is a critical point of $D_{max}$ if and only if
\begin{equation}\label{eqn_lem25_desired}
\left[\nabla\eta(u^+) - \nabla\eta(u_R)\right] - (1+Cs_0)\left[\nabla\eta(u) - \nabla\eta(u_L)\right] = 0.
\end{equation}
\end{lemma}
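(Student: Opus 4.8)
The plan is to read the statement off directly from the gradient formula (\ref{eqn_lem22_grad}) of Lemma \ref{lem22}, which gives, for $u$ in the interior of $\Pi$,
\[
\nabla D_{max}(u) = \left[\nabla\eta(u^+) - \nabla\eta(u_R) - (1+Cs_0)\left(\nabla\eta(u) - \nabla\eta(u_L)\right)\right]\left(f^\prime(u) - \sigma_\pm I\right).
\]
Write $w(u)$ for the row vector in brackets, so that $\nabla D_{max}(u) = w(u)\left(f^\prime(u) - \sigma_\pm I\right)$. The ``if'' direction is then immediate: if $w(u) = 0$ then $\nabla D_{max}(u) = 0$ and $u$ is a critical point of $D_{max}$.

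For the ``only if'' direction, the key point is that $f^\prime(u) - \sigma_\pm I$ is invertible when $u$ lies in the interior of $\Pi$, so that $w(u)\left(f^\prime(u)-\sigma_\pm I\right)=0$ forces $w(u)=0$. To see the invertibility, first note that for $u$ in the interior of $\Pi_{C,s_0}$ we have $\tilde\eta(u) < 0$; since the function $g(s) = \tilde\eta(u) + \eta(u \mid \S^1_u(s))$ from the proof of Lemma \ref{lem15} satisfies $g(0) = \tilde\eta(u) < 0$ and $g^\prime(s) > 0$ for $s>0$ by Assumption \ref{assum} (j), the maximizer $s^*(u)$ determined by $g(s^*) = 0$ is strictly positive. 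Hence the maximal shock $(u, u^+, \sigma_\pm)$ with $u^+ = \S^1_u(s^*)$ is a genuine, nontrivial $1$-shock, and the Liu admissibility criterion of Lemma \ref{lem_hugoniot} yields the strict inequality $\sigma_\pm = \sigma^1_u(s^*) < \lambda^1(u)$. By the ordering convention of Assumption \ref{assum} (a), $\lambda^1(u) \le \lambda^i(u)$ for every $1 \le i \le n$, so $\sigma_\pm < \lambda^i(u)$ for all $i$; in particular $\sigma_\pm$ is not an eigenvalue of $f^\prime(u)$, and $f^\prime(u) - \sigma_\pm I$ is invertible.

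Combining the two observations: if $u$ is a critical point, then $\nabla D_{max}(u) = 0$, and right-multiplication by $\left(f^\prime(u) - \sigma_\pm I\right)^{-1}$ gives $w(u) = 0$, which is exactly (\ref{eqn_lem25_desired}). I do not expect a serious obstacle in this lemma; the only thing to be careful about is that both the gradient formula (\ref{eqn_lem22_grad}) and the strict Liu admissibility genuinely use $s^*(u) > 0$, which is why the statement is restricted to the interior of $\Pi$: on $\bd\Pi$ one has $s^* = 0$, $u^+ = u$, $\sigma_\pm = \lambda^1(u)$, and $f^\prime(u) - \sigma_\pm I$ is singular, so the argument does not apply there.
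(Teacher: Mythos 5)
Your proof is correct and follows the same route as the paper's: read off (\ref{eqn_lem25_desired}) from the gradient formula (\ref{eqn_lem22_grad}) and then establish invertibility of $f'(u) - \sigma_\pm I$ in the interior. The paper's proof is terser — it simply records $\lambda^1(u^+) < \sigma_\pm < \lambda^1(u)$ and concludes invertibility — whereas you usefully spell out the two implicit steps: that $\tilde\eta(u) < 0$ forces $s^*(u) > 0$ via the function $g$ from Lemma \ref{lem15}, and that $\sigma_\pm < \lambda^1(u) \le \lambda^i(u)$ for all $i$ means $\sigma_\pm$ misses every eigenvalue, not just the first. Your closing remark about why the argument degenerates on $\bd\Pi$ (where $s^* = 0$ and $\sigma_\pm = \lambda^1(u)$) is also a helpful gloss on the paper's implicit use of the interior hypothesis.
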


\begin{proof}
By Lemma \ref{lem22}, for any state $u \in \Pi$,
\begin{equation}
\nabla D_{max}(u) = \left[\nabla\eta(u^+) - \nabla\eta(u_R)\right] - (1+Cs_0)\left[\nabla\eta(u) - \nabla\eta(u_L)\right]\left(f^\prime(u) - \sigma_{\pm}I\right).
\end{equation}
Clearly, if (\ref{eqn_lem25_desired}) holds, then $\nabla D_{max}(u) = 0$.
Conversely, if $\nabla D_{max}(u) = 0$, then since $u$ is in the interior of $\Pi$, $u^+ \neq u$ and $\lambda^1(u^+) < \sigma_\pm < \lambda^1(u)$ as $(u,u^+,\sigma_\pm)$ is Lax admissible. Therefore, $f^\prime(u) - \sigma_\pm I$ is invertible and we must have (\ref{eqn_lem25_desired}).
\end{proof}
The remainder of the step will be devoted to showing that for $u\in R_{C,s_0}$, (\ref{eqn_lem25_desired}) holds only if $u$ is quite close to $u_L$ or $\bd \Pi$. The next lemma expresses that by looking at $R_{C,s_0}$ for very large $C$, we have localized to a sufficiently narrow strip so that $u^+$, $u_R$, $u$, and $u_L$ are effectively colinear. Therefore, tracking how far a state $u$ is from satisfying (\ref{eqn_lem25_desired}) can be quantitatively reduced to studying a scalar ODE.
\begin{lemma}\label{lem26}
Let $u \in R_{C,s_0}\backslash R_{C,s_0}^{bd}$ with $b_1 = b_1(u)$ and define $u(t) = tu + (1-t)u_L$. Define the functions $F,G,e:[0,1]\rightarrow \R$ as
\begin{equation}\label{eqn_FandG_defn}
\begin{aligned}
F(t) &:= (1+Cs_0)\left[\nabla\eta(u(t)) - \nabla\eta(u_L)\right] \cdot sgn(b_1)r_1(u_L),\\
G(t) &:= \left[\nabla\eta(u^+(t)) - \nabla\eta(u_R)\right] \cdot sgn(b_1)r_1(u_L),\\
e(t) &:= \frac{|u^+(t) - u(t)|}{b_1},
\end{aligned}
\end{equation}
where $u^+(t)$ abbreviates $u^+(u(t))$. Then, $F$ and $G$ satisfy the ODE
\begin{equation}\label{eqn_FandGode}
\begin{aligned}
-e(t)G^\prime(t) + G(t)&= F(t) + \delta(t)\\
G(0) &= 0,
\end{aligned}
\end{equation}
where $\delta(t)$ is an error term satisfying the uniform bound
\begin{equation}\label{eqn_errorbound}
|\delta(t)| \lesssim \frac{C^{-1/3}s_0^2}{b_1} + s_0^2.
\end{equation}
\end{lemma}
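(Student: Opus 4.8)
The plan is to obtain the ODE (\ref{eqn_FandGode}) directly: differentiate $G$ along the segment $u(t)=tu+(1-t)u_L$, substitute the maximality identity (\ref{eqn_lem22_maximality}) of Lemma \ref{lem22} evaluated at the running state $u(t)$, and then bound the residual. Throughout I would abbreviate $u^+(t)=u^+(u(t))$, $s^*(t)=s^*(u(t))$, and use that the whole segment lies inside $R_{C,s_0}\setminus R_{C,s_0}^{bd}$ for $t\in[0,1]$ (this follows from the convexity of $\Pi$ together with the ordering of the subregions in Figure \ref{fig:small_scale}), so that every estimate of Lemma \ref{lem23} holds pointwise along it; in particular $u\mapsto u^+(u)$ is $C^1$ there, so $G\in C^1$, and $b_1\neq0$ (the only case used later).

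First I would record the initial condition $G(0)=0$. At $t=0$ one has $u(0)=u_L$ and $-\tilde\eta(u_L)=\eta(u_L\mid u_R)$, while $(u_R,u_L,\sigma_{LR})$ is an entropic $1$-shock; hence by the characterization of $u^+$ in Lemma \ref{lem15} and the monotonicity $\frac{d}{ds}\eta(u_L\mid\S^1_{u_L}(s))>0$ of Assumption \ref{assum}(j), $s^*(u_L)=s_0$, so $u^+(u_L)=u_R$ and $\nabla\eta(u^+(0))-\nabla\eta(u_R)=0$.

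Next I would compare two formulas. The chain rule and $\dot u(t)=u-u_L$ (constant in $t$) give $G'(t)=sgn(b_1)\,[\nabla^2\eta(u^+(t))\nabla u^+(u(t))(u-u_L)]\cdot r_1(u_L)$. On the other hand, rewriting (\ref{eqn_lem22_maximality}) at $u(t)$ in the transposed form $[\nabla\eta(u^+(t))-\nabla\eta(u_R)]-(1+Cs_0)[\nabla\eta(u(t))-\nabla\eta(u_L)]=(u^+(t)-u(t))^t\nabla^2\eta(u^+(t))\nabla u^+(u(t))$ and projecting onto $sgn(b_1)r_1(u_L)$ yields $G(t)-F(t)=sgn(b_1)\,(u^+(t)-u(t))^t\nabla^2\eta(u^+(t))\nabla u^+(u(t))\,r_1(u_L)$. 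Setting $M(t)=\nabla^2\eta(u^+(t))\nabla u^+(u(t))$, $w(t)=M(t)r_1(u_L)$, and decomposing $u-u_L=b_1r_1(u_L)+w_\perp$ with $|w_\perp|\le C^{-1/3}s_0$, one finds $G(t)-F(t)=sgn(b_1)\langle u^+(t)-u(t),w(t)\rangle$ and $e(t)G'(t)=sgn(b_1)|u^+(t)-u(t)|\langle r_1(u_L),w(t)\rangle+\frac{|u^+(t)-u(t)|}{|b_1|}r_1(u_L)^tM(t)w_\perp$.

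Finally I would use the Hugoniot asymptotics of Lemma \ref{lem_hugoniot}, $u^+(t)-u(t)=s^*(t)r_1(u(t))+\bigO(s^*(t)^2)$, together with $s^*(t)\sim|u^+(t)-u(t)|\lesssim s_0$ and $|r_1(u(t))-r_1(u_L)|\lesssim|u(t)-u_L|\lesssim s_0$ (Lemma \ref{lem23}), to get $u^+(t)-u(t)=|u^+(t)-u(t)|r_1(u_L)+\bigO(s_0^2)$, and $|M(t)|\lesssim1$ (from $|\nabla u^+|\lesssim1$ and the compactness of the maximal-shock set in Lemma \ref{lem15}). This turns $G(t)-F(t)$ into $sgn(b_1)|u^+(t)-u(t)|\langle r_1(u_L),w(t)\rangle+\bigO(s_0^2)$, so in $\delta(t):=G(t)-F(t)-e(t)G'(t)$ the leading terms cancel and $\delta(t)=\bigO(s_0^2)-\frac{|u^+(t)-u(t)|}{|b_1|}r_1(u_L)^tM(t)w_\perp$, which is bounded by $s_0^2+C^{-1/3}s_0^2/|b_1|$; since $\delta$ is defined so that $-e(t)G'(t)+G(t)=F(t)+\delta(t)$, this is exactly (\ref{eqn_FandGode})--(\ref{eqn_errorbound}). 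The main obstacle is bookkeeping: tracking which tangent frame ($r_i(u_L)$, $r_i(u(t))$, or $r_i(u^*)$) each vector lives in, and confirming the segment never enters $R_{C,s_0}^{bd}$ so Lemma \ref{lem23} applies throughout; the conceptual point, that the $\bigO(s_0)$ parts of $G-F$ and of $e\,G'$ coincide and so drop out of $\delta$, becomes transparent once (\ref{eqn_lem22_maximality}) is written in the transposed form above.
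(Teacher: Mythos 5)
Your proposal is correct and follows essentially the same route as the paper's proof: differentiate $G$ via the chain rule and $\dot u(t)=u-u_L$, project identity (\ref{eqn_lem22_maximality}) evaluated at $u(t)$ onto $sgn(b_1)\,r_1(u_L)$, and then split the residual into the two same error pieces (the $r_1$-misalignment of $u^+(t)-u(t)$ giving the $\bigO(s_0^2)$ part, and the perpendicular component of $u-u_L$ giving the $C^{-1/3}s_0^2/|b_1|$ part), which are exactly the paper's $\delta_1$ and $\delta_2$. The only differences are minor clarifications you add that the paper leaves implicit: an explanation of why $G(0)=0$ (i.e.\ $u^+(u_L)=u_R$ via Lemma \ref{lem15} and Assumption \ref{assum}(j)), writing $|b_1|$ rather than $b_1$ in the error bound (which is in fact more correct, as the lemma is also invoked on $R^-_{C,s_0}$ where $b_1<0$), and a remark that the segment stays in $R_{C,s_0}\setminus R_{C,s_0}^{bd}$ so Lemma \ref{lem23} applies along it — an assumption the paper also uses but states without justification.
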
 

\begin{proof}
Evaluating (\ref{eqn_lem22_RK}) at $u(t)$ and right multiplying by $sgn(b_1)r_1(u_L)$, we obtain
\begin{equation}
G(t) - F(t) = sgn(b_1)(u^+(t) - u(t))^t\nabla^2\eta(u^+(t))\nabla u^+(t)r_1(u_L).
\end{equation}
Next, we compute $G^\prime(t)$ as
\begin{equation}
G^\prime(t) = sgn(b_1)\left[r_1(u_L)\right]^t\nabla^2\eta(u^+(t))\nabla u^+(t)(u - u_L),
\end{equation}
where we have used $\dot{u}(t) = u - u_L$. So $G$ solves the ODE
\begin{equation}
\begin{aligned}
G(t) - F(t) &= e(t)G^\prime(t) + \delta_1(t) + \delta_2(t)\\
G(0) &= 0,
\end{aligned}
\end{equation}
where $\delta_1$ and $\delta_2$ are error terms given as
\begin{equation}
\begin{aligned}
&\delta_1(t):= sgn(b_1)\left[u^+(t) - u(t) - |u^+(t) - u(t)|r_1(u_L)\right]^t\nabla^2\eta(u^+(t))\nabla u^+(t)r_1(u_L)\\
&\delta_2(t):= sgn(b_1)e(t)\left[r_1(u_L)\right]^t\nabla^2\eta(u^+(t))\nabla u^+(t)\left[b_1r_1(u_L) - (u - u_L)\right].
\end{aligned}
\end{equation}
It remains only to show that the error $\delta(t):= \delta_1(t) + \delta_2(t)$ satisfies (\ref{eqn_errorbound}). We estimate $\delta_1$ and $\delta_2$ separately. First, since $(u,u^+,\sigma_\pm)$ is an entropic $1$-shock, it satisfies the expansion
\begin{equation}
u^+(t) = u(t) + |u^+(t) - u(t)|r_1(u(t)) + \bigO(|u^+ - u|^2).
\end{equation}
Combined with the estimates $|\nabla u^+| \lesssim 1$ and $|u^+ - u|\sim s_0$ by Lemma \ref{lem23} and $u(t)\in R\backslash R^{bd}$ for each $t$, we obtain the uniform bound
\begin{equation}\label{eqn_lem26_eqn1}
|\delta_1(t)| \lesssim s_0|u^+(t) - u(t)| + |u^+(t) - u(t)|^2 \lesssim s_0^2.
\end{equation}
On the other hand, since $u \in R\backslash R^{bd}$, with $r_1(u_L)$ coefficient $b_1 = b_1(u)$, 
\begin{equation}
|b_1r_1(u_L) - (u - u_L)| \lesssim C^{-1/3}s_0.
\end{equation}
Again using $|\nabla u^+|\lesssim 1$ and $|u^+ - u| \lesssim s_0$ on $R\backslash R^{bd}$, we obtain the uniform bound
\begin{equation}\label{eqn_lem26_eqn2}
|\delta_2(t)| \lesssim C^{-1/3}s_0|e(t)| \lesssim \frac{C^{-1/3}s_0^2}{b_1}.
\end{equation}
Together (\ref{eqn_lem26_eqn1}) and (\ref{eqn_lem26_eqn2}) imply (\ref{eqn_errorbound}).
\end{proof}

\begin{lemma}\label{lem27}
For any $C$ sufficiently large and $s_0$ sufficiently small, for each $u\in R_{C,s_0}^+$, $u$ is not a critical point of $D_{max}$.
\end{lemma}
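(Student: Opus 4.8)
The plan is to combine the characterization of critical points from Lemma \ref{lem25} with the scalar ODE from Lemma \ref{lem26}. By Lemma \ref{lem25}, a state $u$ in the interior of $\Pi$ is a critical point of $D_{max}$ if and only if the vector
\[
V(u) := \left[\nabla\eta(u^+) - \nabla\eta(u_R)\right] - (1+Cs_0)\left[\nabla\eta(u) - \nabla\eta(u_L)\right]
\]
vanishes, so it suffices to exhibit one direction along which $V(u) \neq 0$. Since $b_1(u) > 0$ for $u \in R_{C,s_0}^+$, the component of $V(u)$ along $r_1(u_L)$ is exactly $G(1) - F(1)$ in the notation of Lemma \ref{lem26}, applied to the segment $u(t) = tu + (1-t)u_L$. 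Thus it is enough to show $G(1) - F(1) < 0$, with the gap depending only on the system.

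First I would record the size estimates valid on $R_{C,s_0}^+$. Since $u \notin R_{C,s_0}^0$ we have $|u - u_L| \gtrsim s_0$, while $u \in R_{C,s_0}$ bounds the transverse coefficients of $u - u_L$ in the $r_i(u_L)$ basis ($i \ge 2$) by $\lesssim C^{-1/3}s_0$; comparing with $|u - u^*| \lesssim s_0$ this forces $b_1 = b_1(u) \sim s_0$. Next, by the concavity of $d(\cdot, \bd\Pi)$ on the convex set $\Pi$, together with $d(u_L, \bd\Pi) \gtrsim s_0$ (Lemma \ref{lem5}) and $d(u, \bd\Pi) \gtrsim s_0$ (which holds on $R_{C,s_0}\setminus R_{C,s_0}^{bd}$, cf. the proof of Proposition \ref{prop_Rbd}), the whole segment $u(t)$ remains in $R_{C,s_0}\setminus R_{C,s_0}^{bd}$; hence Lemma \ref{lem23} gives $|u^+(t) - u(t)| \sim s_0$ uniformly in $t$, so $e(t) = |u^+(t) - u(t)|/b_1 \sim 1$. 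Finally, since $\dot u(t) = u - u_L = b_1 r_1(u_L) + \bigO(C^{-1/3}s_0)$ and $\nabla^2\eta$ is uniformly positive definite near $u_L$, one gets $F'(t) = (1+Cs_0)\nabla^2\eta(u(t))(u - u_L)\cdot r_1(u_L) = b_1\,\nabla^2\eta(u_L)r_1(u_L)\cdot r_1(u_L) + \bigO(C^{-1/3}s_0 + s_0^2) \gtrsim s_0$, whereas the error term of Lemma \ref{lem26} obeys $|\delta(t)| \lesssim C^{-1/3}s_0 + s_0^2 \ll s_0$ once $C$ is large and $s_0$ small.

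With these estimates I would run an elementary ODE comparison. Put $H(t) := G(t) - F(t)$. From $-e(t)G'(t) + G(t) = F(t) + \delta(t)$ and $G(0) = F(0) = 0$ we get $H(0) = 0$ and $H'(t) = \big(H(t) - \delta(t)\big)/e(t) - F'(t)$. At any $t$ where $H(t) = 0$ (in particular at $t = 0$) this yields $H'(t) = -\delta(t)/e(t) - F'(t) < 0$, because $F'(t) \gtrsim s_0$ dominates $|\delta(t)/e(t)| \lesssim C^{-1/3}s_0 + s_0^2$. Hence $H$ becomes strictly negative just to the right of $0$ and can never return to $0$ on $(0,1]$: a first return at $t_1$ would require $H'(t_1) \ge 0$, contradicting $H'(t_1) < 0$. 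Therefore $H(1) = G(1) - F(1) < 0$, so $V(u) \cdot r_1(u_L) \neq 0$ and $u$ is not a critical point of $D_{max}$.

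The step I expect to be the main obstacle is the uniform-in-$t$ control along the segment $u(t)$: one must verify $u(t)$ never enters $R_{C,s_0}^{bd}$, so that the lower bound $|u^+(t) - u(t)| \sim s_0$ of Lemma \ref{lem23} is available for every $t$ (keeping $e(t)$ bounded above and below), and that the forcing $F'(t)$ stays quantitatively above the error $\delta(t)/e(t)$ throughout; once these are in place the ODE argument is essentially immediate. A secondary bookkeeping nuisance is reconciling the two coordinate expansions — around $u^*$ (used to define $R_{C,s_0}$ and $R_{C,s_0}^{bd}$) and around $u_L$ (used to define the $b_i$ and $R_{C,s_0}^0$) — which is what underlies the estimate $b_1 \sim s_0$.
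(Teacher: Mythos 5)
Your proof is correct and reuses the same two key inputs as the paper's -- the critical-point characterization of Lemma \ref{lem25} and the scalar ODE of Lemma \ref{lem26} -- but you close the ODE step differently. The paper solves $-e(t)G'(t)+G(t)=F(t)+\delta(t)$, $G(0)=0$, explicitly by the integrating-factor method, writing $G(1)$ as a Duhamel-type integral, and then uses that $F\geq 0$ on $[0,1]$ together with $\int_0^1 F \gtrsim s_0$ and $|\delta|\lesssim C^{-1/3}s_0$ to conclude $G(1)<0<F(1)$. You instead introduce $H:=G-F$, observe $H(0)=0$ and $H'(t)=\bigl(H(t)-\delta(t)\bigr)/e(t)-F'(t)$, and run a first-crossing/barrier argument: at any zero of $H$ the derivative is dominated by $-F'(t)\lesssim -s_0$, which (after checking $F'(t)\gtrsim s_0$ pointwise and $|\delta/e|\lesssim C^{-1/3}s_0 + s_0^2$) forbids $H$ from returning to zero on $(0,1]$. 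The hypotheses are essentially interchangeable here since $F(0)=0$ and $F'\gtrsim s_0$ implies $\int_0^1 F\gtrsim s_0$; your version avoids writing down the exponential weight $E(t)$ and is arguably cleaner, at the cost of needing the pointwise rather than averaged bound on $F$. One small point of precision: your concavity-of-distance argument proves $d(u(t),\bd\Pi)\gtrsim s_0$ uniformly in $t$, which is exactly what Lemmas \ref{lem23} and \ref{lem26} actually use; but since $R_{C,s_0}^{bd}$ is defined by $|u-u^*|\leq K_{bd}s_0$ and the implication in Proposition \ref{prop_Rbd} runs only one way, the assertion ``the segment remains in $R_{C,s_0}\setminus R_{C,s_0}^{bd}$'' is a slight overstatement -- harmless, and one the paper itself glosses over in the proof of Lemma \ref{lem26}.
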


\begin{proof}
Fix $u \in R^+$ and define $u(t)$, $F(t)$, $G(t)$, $e(t)$, and $\delta(t)$ as in (\ref{eqn_FandG_defn}) so that $G$ solves the ODE (\ref{eqn_FandGode}) and $\delta$ satisfies the bound (\ref{eqn_errorbound}). Solving (\ref{eqn_FandGode}) by the integrating factor method, we have the explicit formula
\begin{equation}\label{eqn_lem27_eqn0}
G(t) = \frac{-1}{E(t)} \int_0^t \frac{E(s)}{e(s)}\left[F(s) + \delta(s)\right]\ ds, \quad \text{where }\quad E(t) = \exp\left(\int_0^t -e(s)^{-1} \ ds\right).
\end{equation}
We note that since $e(t) = \frac{|u^+(t) - u(t)|}{b_1} \sim 1$ for $u \in R_{C,s_0}^+$ and $t\in [0,1]$, $E(t) \sim 1$ over the same domain.
Next, by (\ref{eqn_FandG_defn}) and Lemma \ref{lem25}, $u$ is a critical point of $D_{max}$ if and only if $F(1) = G(1)$. However, $F(t) \ge 0$ for all $0 \le t \le 1$ and using that $e(t),E(t) \ge 0$, the preceding formula for $G$ yields $F(1) \ge 0 \ge G(1)$, provided the error term involving $\delta$ is sufficiently small relative to $F$. In particular, we will show $\delta$ is sufficiently small and $F(1)$ is sufficiently large, so that $u$ is not a critical point of $D_{max}$.

More formally, we estimate the error term involving $\delta$ using Lemma \ref{lem26}, $E(t) + e(t) \sim 1$, and $b_1 \sim s_0$ for $u\in R^+$:
\begin{equation}\label{eqn_lem27_eqn1}
\begin{aligned}
\left|\frac{-1}{E(t)} \int_0^t \frac{E(s)\delta(s)}{e(s)}\ ds\right| \lesssim \sup_{0 < s < 1}|\delta(s)| \lesssim C^{-1/3}s_0. %&\lesssim \frac{C^{-1/3}s_0^2 + b_1s_0^2}{E(t)b_1}\left(\int_0^t \frac{E(s)}{e(s)}\ ds\right)\\
	%&\lesssim \frac{C^{-1/3}s_0^2 + b_1s_0^2}{b_1E(t)}\left[E(0) - E(t)\right]\\
	%&\lesssim C^{-1/3}s_0 + s_0^2 \lesssim C^{-1/3}s_0.
\end{aligned}
\end{equation}
We also estimate the contribution of $F$ in the formula for $G$ using $E(t) \sim 1$ and the definition of $F$,
\begin{equation}\label{eqn_lem27_eqn2}
\begin{aligned}
\left|\frac{-1}{E(1)} \int_0^1 \frac{E(s)F(s)}{e(s)}\ ds\right| \gtrsim \int_0^1 F(s) \ ds &= \int_0^1 \left[\nabla\eta(u(s)) - \nabla\eta(u_L)\right] \cdot r_1(u_L)\ ds\\
	&= \int_0^1 sr_1(u_L) \cdot \nabla^2\eta(u(s))(u-u_L) + \bigO(ss_0^2) \ ds\\
	&= \int_0^1 sb_1 r_1(u_L) \cdot \nabla^2\eta(u_L)r_1(u_L) + \bigO(C^{-1/3}s_0s) \ ds\\
	&\sim s_0.
\end{aligned}
\end{equation}
By (\ref{eqn_lem27_eqn1}) and (\ref{eqn_lem27_eqn2}), there exist universal constants $K_1 > 0$ and $K_2 > 0$ such that 
\begin{equation}\label{eqn_lem27_eqn3}
\left|\frac{1}{E(t)} \int_0^t \frac{E(s)\delta(s)}{e(s)}\ ds\right| \le K_1C^{-1/3}s_0 \quad \text{and} \quad \frac{1}{E(t)} \int_0^t \frac{E(s)f(s)}{e(s)}\ ds \ge K_2s_0.
\end{equation}
Therefore, taking $C$ sufficiently large, depending only on $K_1$ and $K_2$, (\ref{eqn_lem27_eqn0}) and (\ref{eqn_lem27_eqn3}) imply $G(1) < 0 < F(1)$. In particular, $F(1) \neq G(1)$, and $u = u(1)$ cannot be a critical point of $D_{max}$ for $C$ sufficiently large and $s_0$ sufficiently small.
\end{proof}

\begin{lemma}\label{lem28}
For any $C$ sufficiently large and $s_0$ sufficiently small, for each $u\in R_{C,s_0}^-$, $u$ is not a critical point of $D_{max}$.
\end{lemma}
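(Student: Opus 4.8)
The plan is to follow the structure of the proof of Lemma \ref{lem27} almost verbatim, using the ODE comparison of Lemma \ref{lem26} and the critical point characterization of Lemma \ref{lem25}, but keeping track of the fact that on $R_{C,s_0}^-$ the coefficient $b_1 = b_1(u)$ in the expansion \eqref{eqn_expansion} is \emph{negative}. For a fixed $u \in R_{C,s_0}^-$, I would introduce $u(t)$, $F(t)$, $G(t)$, $e(t)$, and $\delta(t)$ as in Lemma \ref{lem26}, so that $G$ solves $-e(t)G'(t) + G(t) = F(t) + \delta(t)$ with $G(0) = 0$ and $|\delta(t)| \lesssim C^{-1/3}s_0^2/|b_1| + s_0^2 \lesssim C^{-1/3}s_0$; here I use that $|b_1| \sim |u - u_L| \sim s_0$ on $R_{C,s_0}^-$, the lower bound coming from $u \notin R_{C,s_0}^0$ and the upper bound from $u \in R_{C,s_0}$. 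Exactly as in Lemma \ref{lem27}, projecting the vector identity of Lemma \ref{lem25} onto $sgn(b_1)r_1(u_L)$ shows it is enough to prove $F(1) \neq G(1)$, and I will in fact establish the quantitative gap $F(1) - G(1) \gtrsim s_0$.

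The essential structural change is that now $e(t) = |u^+(t) - u(t)|/b_1 < 0$, so that after writing $|e| = -e$ the equation becomes the dissipative ODE $|e(t)|G'(t) + G(t) = F(t) + \delta(t)$. I would solve it with the integrating factor $\mu(t) = \exp\bigl(\int_0^t |e(s)|^{-1}\,ds\bigr)$, giving
\[
G(1) = \frac{1}{\mu(1)}\int_0^1 \frac{\mu(s)}{|e(s)|}\bigl[F(s) + \delta(s)\bigr]\,ds .
\]
The crucial point is that $|e(t)| \sim 1$ uniformly on $[0,1]$: Lemma \ref{lem23} gives $|u^+(t) - u(t)| \sim s_0$ on $R_{C,s_0} \backslash R_{C,s_0}^{bd} \supseteq R_{C,s_0}^-$, and $|b_1| \sim s_0$. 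Hence $\mu(1)$ is bounded above by a universal constant, so using $\mu' = \mu/|e|$ one gets $\mu(1)^{-1}\int_0^1 \mu(s)|e(s)|^{-1}\,ds = 1 - \mu(1)^{-1} \le 1 - c_0$ for some universal $c_0 > 0$.

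The remaining ingredient is an estimate on $F$. Expanding $u(t) - u_L = t(u - u_L)$, using $u - u_L = b_1 r_1(u_L) + \bigO(C^{-1/3}s_0)$ and Taylor's theorem, I expect $F(t) = t|b_1|\,\nabla^2\eta(u_L)r_1(u_L)\cdot r_1(u_L) + \bigO(C^{-1/3}s_0 + Cs_0^2)$, which by strict convexity of $\eta$ and $|b_1| \sim s_0$ gives $F(s) \le F(1) + \bigO(C^{-1/3}s_0)$ for all $s \in [0,1]$ and $F(1) \gtrsim s_0$ (for $C$ large and then $s_0$ small depending on $C$). Plugging $F \ge 0$, $F(s) \le F(1) + \bigO(C^{-1/3}s_0)$, $|\delta| \lesssim C^{-1/3}s_0$, and $\mu(1)^{-1}\int_0^1\mu|e|^{-1} \le 1 - c_0$ into the formula for $G(1)$ yields $G(1) \le (1-c_0)F(1) + KC^{-1/3}s_0$ for a universal $K$, whence $F(1) - G(1) \ge c_0 F(1) - KC^{-1/3}s_0 \gtrsim s_0 - C^{-1/3}s_0 > 0$ once $C$ is large enough. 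Thus $F(1) \neq G(1)$ and $u$ is not a critical point of $D_{max}$.

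I expect the main obstacle to be the uniform two-sided bound $|e(t)| \sim 1$ on $R_{C,s_0}^-$, and in particular the lower bound on $|e|$: it is precisely this that keeps $\mu(1)$ bounded and produces the genuine gap $1 - \mu(1)^{-1} \le 1 - c_0 < 1$, without which the argument would collapse (the positivity mechanism of Lemma \ref{lem27} is unavailable here since $e$ has the ``wrong'' sign and both $F(1)$ and $G(1)$ are positive of size $\sim s_0$). The lower bound on $|e|$ rests on $|u^+ - u| \gtrsim s_0$ from Lemma \ref{lem23} and on $|b_1| \gtrsim s_0$, the latter being exactly where excluding $R_{C,s_0}^0$ is used; one must also make sure that all error terms --- the ODE error $\delta$, the error in the expansion of $F$, and the discrepancy between $r_i(u^*)$ and $r_i(u_L)$ --- are controlled by $C^{-1/3}s_0$, which is the purpose of the $C^{-1/3}s_0$ width of $R_{C,s_0}$ in the transverse directions.
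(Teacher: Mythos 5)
Your proof is correct and follows essentially the same route as the paper: same integrating-factor solution of the ODE from Lemma \ref{lem26}, same exploitation of $|e(t)|\sim 1$ to keep the weight $1-\mu(1)^{-1}$ strictly below $1$ by a universal margin, and the same $\bigO(C^{-1/3}s_0)$ absorption of the error $\delta$. The only cosmetic difference is that the paper shows $F$ is strictly increasing by computing $F'$ directly rather than Taylor expanding $F$ itself, but these give the same bound $F(s)\le F(1)+\bigO(C^{-1/3}s_0)$.
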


\begin{proof}
Fix $u\in R^-$ and define $u(t)$, $F(t)$, $G(t)$, $e(t)$, and $\delta(t)$ as in (\ref{eqn_FandG_defn}) so that $G$ satisfies the ODE (\ref{eqn_FandGode}) and $\delta$ satisfies the bound (\ref{eqn_errorbound}). Since $b_1 = b_1(u) < 0$, $e(t) < 0$, solving (\ref{eqn_FandGode}) yields the the explicit formula for $G$,
\begin{equation}\label{eqn_lem28_eqn1}
G(t) = \frac{1}{E(t)} \int_0^t \frac{E(s)}{|e(s)|}\left[F(s) + \delta(s)\right]\ ds, \quad \text{where }\quad E(t) = \exp\left(\int_0^t |e(s)|^{-1} \ ds\right).
\end{equation}
We note that since $e(t) = \frac{|u^+(t) - u(t)|}{b_1} \sim -1$ for $u \in R_{C,s_0}^-$ and $t\in [0,1]$, $E(t) \sim 1$ over the same domain.
As in the preceding section, we note by (\ref{eqn_FandG_defn}) Lemma \ref{lem26}, $u$ is a critical point of $D_{max}$ if and only if $F(1) = G(1)$. Moreover, we note that by (\ref{eqn_errorbound}),
\begin{equation}\label{eqn_lem28_eqn2}
\left|\frac{1}{E(t)} \int_0^t \frac{E(s)\delta(s)}{|e(s)|}\ ds\right| \lesssim C^{-1/3}s_0.
\end{equation}
Now, we estimate $F$ via
\begin{equation}\label{eqn_lem28_eqn3}
F^\prime(t) = -(1+Cs_0)r_1(u_L)\cdot\nabla^2\eta(u(t))(u - u_L) = r_1(u_L) \cdot \nabla^2\eta(u_L)r_1(u_L) + \bigO(s_0).
\end{equation}
Therefore, taking $s_0$ sufficiently small, $F$ is strictly increasing.
Next, we estimate $G(1)$ using (\ref{eqn_lem28_eqn1}), (\ref{eqn_lem28_eqn2}), and $F$ increasing, to obtain
\begin{equation}\label{eqn_lem28_eqn4}
G(1) \le \left[\frac{1}{E(1)} \int_0^1 \frac{E(s)}{|e(s)|}\ ds\right] F(1) + \bigO(C^{-1/3}s_0).
\end{equation}
Then, by the fundamental theorem of calculus, as $E^\prime(s) = \frac{E(s)}{|e(s)|}$,
\begin{equation}
\left[\frac{1}{E(1)} \int_0^1 \frac{E(s)}{|e(s)|}\ ds\right] = 1 - \frac{E(0)}{E(1)}.
\end{equation}
Let $\alpha = 1 - \frac{E(0)}{E(1)}$. Since $E(s) \ge 0$, $E(0) = 1$, and $E^\prime(s) \sim 1$, $0 < \alpha < K_1 < 1$, where $K_1$ is a universal constant. Therefore, (\ref{eqn_lem28_eqn4}) implies the inequality
\begin{equation}
G(1) \le K_1 F(1) + \bigO(C^{-1/3}s_0).
\end{equation}
Finally, taking $K_1 < K_2 < 1$ and $C$ sufficiently large and $s_0$ sufficiently small, we obtain $G(1) \le K_2 F(1) < F(1)$. Therefore, $G(1) \neq F(1)$ and $u = u(1)$ cannot be a critical point for $D_{max}$.
\end{proof}

\subsection{Proof of Proposition \ref{prop_shock}}

\begin{proof}[Proof of Proposition \ref{prop_shock}]
First, we fix $C$ sufficiently large and $s_0$ sufficiently small such that each of Proposition \ref{prop_Qs0}, Proposition \ref{prop_R}, Proposition \ref{prop_Rbd}, Proposition \ref{prop_R0}, and Proposition \ref{prop_R+} hold. Propositions \ref{prop_Qs0} and \ref{prop_R} guarantee we obtain a decomposition of $\Pi_{C,s_0}$ into
$$\Pi_{C,s_0} = R_{C,s_0} \cup \left[\Pi_{C,s_0}\backslash R_{C,s_0}\right],$$
where $D_{max}(u) \lesssim -s_0^3$ on $\Pi_{C,s_0}\backslash R_{C,s_0}$. Propositions \ref{prop_Rbd}, \ref{prop_R0}, and \ref{prop_R+} guarantee we obtain a decomposition of $R_{C,s_0}$ into
$$R_{C,s_0} = R_{C,s_0}^{bd} \cup R_{C,s_0}^+ \cup R_{C,s_0}^0 \cup R_{C,s_0}^-,$$
where $D_{max}(u) \lesssim -s_0^3$ on $\bd R_{C,s_0} \cup R_{C,s_0}^{bd} \cup \bd R_{C,s_0}^0$, $D_{max}(u) \le 0$ on $R_{C,s_0}^0$, and $\nabla D_{max} \neq 0$ on $R_{C,s_0}^+ \cup R_{C,s_0}^-$. As there are no critical points in $R_{C,s_0}^+ \cup R_{C,s_0}^-$, it follows that 
\begin{equation}
    \max_{u\in R^+ \cup R^-} D_{max}(u) = \max_{u\in \bd R^+ \cup \bd R^-} D_{max}(u) \lesssim -s_0^3.
\end{equation}
Since $u_L$ is a strict maximum of $D_{max}$ on $R_{C,s_0}^0$, we conclude $D_{max}(u) \le 0$ on $\Pi_{C,s_0}$ with $D_{max}(u) = 0$ if and only if $u = u_L$. Finally, recalling the definiton of $D_{max}$ in (\ref{eqn_Dmax_defn}), for any $(u_-, u_+, \sigma_\pm)$ an entropic $1$-shock with $u_-\in \Pi_{C,s_0}$,
\begin{equation}
D_{RH}(u_-,u_+,\sigma_\pm) \le D_{max}(u_-) \le 0.
\end{equation}
Moreover, equality holds only if $u_- = u_L$ and $u_+ = u_R$.
\end{proof}

%%%%%%%%%%%%%%%%%%%%%%%%%%%%%%%%%%%%%%%%%%%%%%%%%%%%%%%%%%%%%%%%%%%%%%%%%%%%%

\bibliographystyle{apalike}
\bibliography{references-2}

\end{document}